\theoremstyle{definition}
\theoremstyle{plain}
\DeclareMathOperator{\Res}{Res}
\DeclareMathOperator{\Hom}{Hom}
\newcommand{\Ahat}{\widehat{A}}
\newcommand{\Atilde}{\widetilde{A}}
\newcommand{\bbar}{\overline{b} }
\newcommand{\bfm}{\bm{m}}
\newcommand{\bfy}{\bm{y}}
\newcommand{\C}{\mathbb{C}}
\newcommand{\calA}{\mathcal{A}}
\newcommand{\calC}{\mathcal{C}}
\newcommand{\calL}{\mathcal{L}}
\newcommand{\calQ}{\mathcal{Q}}
\newcommand{\calY}{\mathcal{Y}}
\newcommand{\cbar}{\overline{c} }
\newcommand{\Cg}{\mathfrak{C}_{g}}
\newcommand{\Chat}{\widehat{\C}}
\newcommand{\D}{\mathcal{D}}
\newcommand{\del}{\partial}
\newcommand{\delx}{\nabla(x)}
\newcommand{\delMg}{\nabla_{\hspace{-1 mm}\Mg}}
\newcommand{\dx}{\partial_{x}}
\newcommand{\dy}{\partial_{y}}
\newcommand{\E}{e}
\newcommand{\End}{\textup{End}}
\newcommand{\F}{\mathcal{F}}
\newcommand{\Fg}{\F} %{\mathcal{F}^{[g]}}
\newcommand{\FgMa}{\Fg_{\bm{M_{\alpha}}}}
\newcommand{\Fzero}{\F^{(0)}}
\newcommand{\half}{\frac{1}{2}}
\newcommand{\I}{\mathcal{I}}
\newcommand{\Id}{\textup{Id}}
\newcommand{\im}{\textup{i}}
\newcommand{\Ip}{\I_{+}}
\newcommand{\Ltilde}{\widetilde{L}}
\newcommand{\M}{\mathcal{M}}
\newcommand{\Mg}{\M_{g}}
\newcommand{\N}{\mathbb{N}}
\newcommand{\nabmy}[2]{\nabla^{(#1)}_{#2}}
\newcommand{\nua}{\nu_{\bm{\alpha}}}
\newcommand{\nuag}{\nu_{\bm{\alpha,\gamma}}}
\newcommand{\Omo}{\Omega_{0}}
\newcommand{\Pcal}{\mathcal {P}}
\newcommand{\Poly}{\mathfrak {P}}
\newcommand{\Psihat}{\widehat{\Psi}}
\newcommand{\Schg}{\mathfrak{S}_{g}}
\newcommand{\shalf}{\sfrac{1}{2}}
\newcommand{\Sg}{\mathcal{S}^{(g)}}
\newcommand{\Sone}{\mathcal{S}^{(1)}}
\newcommand{\Szero}{\mathcal{S}^{(0)}}
\newcommand{\SL}{\textup{SL}}
\newcommand{\slLie}{\mathfrak{sl}}
\newcommand{\Sym}{\textup{Sym}}
\newcommand{\tpi}{2\pi \im} 
\newcommand{\Tr}{\textup{tr}}
\newcommand{\vac}{\mathbbm{1}}
\newcommand{\Wmod}{\mathcal W}
\newcommand{\wt}{\textup{wt}}
\newcommand{\Z}{\mathbb{Z}}
\newcommand{\Zg}{Z} % {Z^{[g]}}
\newcommand{\Zhu}{\textup{Zhu}}
\newcommand{\Zzero}{Z^{(0)}}
\newtheorem{corollary}{Corollary}[section]
\newtheorem{example}{Example}[section]
\newtheorem{lemma}{Lemma}[section]
\newtheorem{proposition}{Proposition}[section]
\newtheorem{remark}{Remark}[section]
\newtheorem{theorem}{Theorem}[section]
\title{Genus $g$ Zhu Recursion for Vertex Operator Algebras and Their Modules
}
\date{}
\begin{document}
	\author{Michael P. Tuite}
	\address{School of Mathematical and Statistical Sciences\\ 
	University of Galway, Galway H91 TK33, Ireland}
	\email{michael.tuite@universityofgalway.ie}
	\author[Michael Welby]{Michael Welby$^{\dagger}$}
	\thanks{$^{\dagger}$Supported by an Irish Research Council Government of Ireland Postgraduate Scholarship.}
%	\address{School of Mathematical and Statistical Sciences\\ 
%	University of Galway, Galway H91 TK33, Ireland}
\email{michael.welby@universityofgalway.ie}

\begin{abstract}
We describe Zhu recursion for a vertex operator algebra (VOA) and its modules on a 
genus $g$ Riemann surface in the Schottky uniformisation. 
We show that $n$-point (intertwiner) correlation functions can be written as linear combinations of $(n-1)$-point functions with universal coefficients given by derivatives of the differential of the third kind, the Bers quasiform and certain holomorphic forms. 
We use this formula to describe conformal Ward identities framed in terms of a canonical differential operator which acts with respect to the Schottky moduli and to the insertion points of the $n$-point function. We consider the generalised Heisenberg VOA and determine all its correlation functions by Zhu recursion. We also use Zhu recursion to derive linear partial differential
equations for the Heisenberg VOA partition function and various structures such as the bidifferential of the second kind, holomorphic $1$-forms, the prime form and the period matrix. Finally, we compute the genus $g$ partition function for any rational Euclidean lattice generalised  VOA.
\end{abstract}
\maketitle
\section{Introduction}
Conformal field theories on Riemann surfaces have been studied by physicists for over 35 years e.g. \cite{AGMV, EO,Ma,V}. The purpose of this paper is to make some of these ideas explicitly realisable within vertex operator algebra (VOA) theory e.g.~\cite{K,LL,MT1}. 
There has been significant progress recently to establish convergence properties of VOA conformal blocks using algebraic geometry methods \cite{FBZ}. It has now been established by Gui in \cite{G} that sewn conformal blocks are absolutely and locally uniformly convergent for simple self-dual $C_{2}$-cofinite VOAs of CFT type \cite{Z2, C, DGT1,DGT2,DGT3}.
This paper describes genus $g$ Zhu recursion in the Schottky sewing scheme for similar VOAs but without assuming $C_{2}$-cofiniteness.  Zhu recursion then provides for the explicit computation of formal correlation functions.  Our approach  follows earlier results for Zhu recursion at genus two \cite{MT2,MT3,GT} and higher \cite{TZ1,T,TW2}.
Here we also make substantial use of the Bers quasiform \cite{Be1,Be2} using recent results in the companion paper \cite{TW1}. The present paper  contains many significant improvements on the results of~\cite{TW2} in relation to the geometric meaning of Zhu recursion, along with a treatment of modules and intertwiners.

In his seminal paper~\cite{Z1}, Zhu describes a recursion/reduction formula for genus one (and zero) $n$-point correlation functions for VOAs and their modules in terms of a finite linear combination of $(n-1)$-point functions with universal coefficients given by Weierstrass elliptic functions. 
Zhu recursion provides a powerful calculational tool which, for example, was used by Zhu to prove convergence and modularity of the simple module partition functions for rational $C_{2}$-cofinite VOAs~\cite{Z1}. Zhu recursion was  extended to correlation functions on a genus two surface formed by sewing two tori~\cite{GT}. 
In this work we describe Zhu recursion 
%for VOA and module correlation functions 
on a genus $g$ Riemann surface in the Schottky uniformisation where $n$-point correlation functions for a VOA (of CFT type) and its modules are expressed in terms of a finite linear combination of $(n-1)$-point functions with universal coefficients given by derivatives of either the Bers quasiform \cite{Be1,Be2} or the differential of the third kind and holomorphic forms. Furthermore, general genus Zhu recursion is shown to be a formal version of a residue expansion for meromorphic forms described in \cite{TW1}.
We describe conformal Ward identities for $n$-point correlation functions for VOAs and their modules expressed in terms of a canonical differential operator.
Applying these conformal Ward identities to the generalised Heisenberg VOA, we obtain differential equations in terms of the canonical derivative of the Heisenberg  partition function but also for classical objects such as the bidifferential of the second kind, the projective connection, holomorphic 1-forms and the prime form, confirming results in \cite{TW1}. 

Section~2 begins with a brief review of the classical Schottky uniformisation of a genus $g$ marked Riemann surface $\Sg$~\cite{Fo,FK, Bo}. 
%In particular, we use non-standard Schottky parameters suitable for our later VOA calculations. 
We define standard objects such as the bidifferential of the second kind, the projective connection, holomorphic 1-forms, the prime form and the differential of the third kind.
% on $\Sg$ which feature in Section~7. 
We review properties of the Bers quasiform $\Psi_{N}(x,y)$~\cite{Be1,Be2} for $N\ge 2$ and $g\ge 2$, a meromorphic $N$-differential in $x$ and a $(1-N)$-quasidifferential in $y$ with a unique simple pole at $x=y$ and a related spanning set of holomorphic $N$-forms $\{\Theta_{N,a}^{\ell}(x)\}$ discussed in~\cite{TW1}. We further define $\Psi_{1}$ to be a differential of the third kind which shares many properties in common with the Bers quasiform.
We review the expansion of a meromorphic form on $\Sg$ in terms of $\Psi_{N}$ and $\Theta_{N,a}^{\ell}$ which parallels 
%later formal expansions of $n$-point differentials obtained via 
genus $g$ Zhu recursion formulas. 
We also review  some sewing formulas for $\Psi_{N}$ and $\Theta_{N,a}^{\ell}$ that are applied in deriving Zhu recursion in Sections~4 and 5.

Section~3 begins with some standard VOA theory e.g.~\cite{K,LL,MT1}.
We consider a VOA $V$ which is simple, self-dual and of strong CFT-type which therefore admits a unique non-degenerate symmetric invariant bilinear form that we call the Li-Z metric \cite{FHL,L}.
We introduce the rational genus zero $n$-point (correlation) function and determine its M\"obius invariance properties. 
We describe a general Ward identity for $n$-point functions associated with any weight $N$ quasiprimary vector. 
We then develop genus zero Zhu recursion which features prominently in the later genus $g$ analysis. We find the most general form of genus zero Zhu recursion~\cite{Z1} for an $n$-point function expressed as a sum of $(n-1)$-point functions where we reduce with respect to a quasiprimary vector of weight $N$.  The expansion coefficients depend on $N$ but not on $V$. Although the coefficients are not unique, we obtain the same Zhu expansion due to the Ward identity. Finally, we extend this result to any quasiprimary descendant.

Section~4 is concerned with genus $g$ Zhu recursion for $V$. 
We define the genus $g$ partition and $n$-point functions 
in terms of infinite sums of certain genus zero $(n+2g)$-point correlation functions for $n$ given $V$ vectors, $g$ independent $V$-basis vectors and their Li-Z duals where we sum over all basis vectors. We prove that this definition agrees with Zhu's trace functions at genus one~\cite{Z1}. We describe the formal M\"obius-invariance properties of genus $g$ correlation functions. Theorem~\ref{theor:ZhuGenusg} describes the genus $g$ Zhu recursion formula for $V$ where $n$-point functions are expanded as linear combinations of $(n-1)$-point functions. The proof relies partly on genus zero Zhu recursion but also on another recursion property following from the invariance of the Li-Z metric. 
We find universal coefficients in our Zhu recursion formula expressed in terms of derivatives of $\Psi_{N}$ and $\Theta_{N,a}^{\ell}$ for $N\ge 1$ (identified by the sewing formulas of Section~2), where $N$ is the conformal weight of the reduced quasiprimary vector. Similarly to the genus zero case, we can extend this formula to quasiprimary descendants.

In Section~5 we define genus zero and genus $g$ partition and correlation functions for intertwiner vertex operators associated with irreducible $V$-modules \cite{FHL}. We describe genus zero Zhu recursion for general intertwining $n$-point functions where we reduce with respect to an element of $V$. We obtain M\"obius invariance for genus zero correlation functions containing at most two intertwiner operators which allows us to reproduce Zhu's module trace correlation function at genus one. We assume certain conditions on the dimension of the space of intertwiners (that hold for VOAs satisfying the Verlinde formula \cite{H1} or for the Heisenberg abelian intertwiner algebra). We define the genus $g$ partition and $n$-point functions with intertwiner bases and general insertions. Finally, in the main result of the paper, Theorem~\ref{theor:ZhuGenusgInt}, we derive a genus $g$ Zhu recursion formula for $V$, its modules and intertwiners, where the reducing state is a vector in $V$ and the remaining insertions are arbitrary.

Section~6 discusses conformal Ward identities for genus $g$ intertwiner $n$-point correlation functions derived from Zhu recursion.
We find that the Virasoro $1$-point function is given by the action of a canonical differential operator $\nabla(x)$ on the partition function
describing its variation with respect to the Schottky surface moduli.
The general Ward identities for $n$-point functions involve a canonical differential operator $\nabla_{\bm{y}}^{(\bm{m})}(x)$  (constructed from $\Psi_{2}$) describing variations with respect to punctured Riemann surface moduli given by Schottky parameters and $n$ insertion points $\bm{y}=(y_1,\ldots,y_n)$. 
The operator, which  maps meromorphic  $(\bm{m})=(m_1,\ldots,m_n)$-forms in $\bm{y}$ to meromorphic $(2,\bm{m})$-forms in $(x,\bm{y})$  is discussed in more depth in~\cite{TW1}. 
We also derive a commutative property of this operator discussed in \cite{TW1} and a recursive Ward identity for Virasoro $n$-point generating functions.

Section~7 discusses applications of genus $g$ Zhu recursion for the rank one generalised Heisenberg VOA~\cite{TZ2}. We use a formula for the generalised partition function, derived via combinatorial means~\cite{T}, and Zhu recursion to determine an explicit expression for all $n$-point functions for the Heisenberg VOA and all its modules. We derive  differential equations describing the action of $\nabla(x)$ on the partition function $Z_M$ and that of $\nabla_{\bm{y}}^{(\bm{m})}(x)$ on several standard objects on $\Sg$, such as the bidifferential of the second kind, the projective connection, the prime form and the period matrix~\cite{Fa,Mu,Bo}. 
We finish with the example of the genus $g$ partition function for the generalised VOA for a rational Euclidean lattice expressed in terms of the lattice Siegel theta function and the Heisenberg VOA partition function.  

\noindent \textbf{Acknowledgements.} 
	We thank Tom Gilroy, Geoff Mason, Hiroshi Yamauchi and Giulio Codogni for a number of very helpful comments and suggestions.

\section{Differential Structures on Riemann Surfaces}
\label{sec:Genus g}
\subsection{Notational conventions}
Define the following indexing sets for integers $g,N\ge 1$ 
\begin{align}
	\label{eq:ILconv}
	\I:=\{-1,\ldots,- g,1,\ldots,g\},\quad
	\Ip:=\{1,\ldots,g\},\quad 
	\calL:=\{0,1,\ldots,2N-2\}.
\end{align}
For functions $f(x)$ and $g(x,y)$ and integers $i,j\ge 0$ we define
\begin{align}
	\notag
	&f^{(i)}(x):=\del^{(i)} f(x) :=\del_{x}^{(i)} f(x):= \frac{1}{i!}\frac{\del^{i}}{\del x^{i}}f(x),
	\\
	\label{eq:delconv}
	&g^{(i,j)}(x,y):=\del_{x}^{(i)} \del_{y}^{(j)}  g(x,y).
\end{align} 
We let $\Poly_{n}$ denote the space of polynomials with complex coefficients of degree at most $n$.

\subsection{The Schottky uniformisation of a Riemann surface}
\label{subsec:Schottky}
Consider a compact marked Riemann surface $\Sg$ of genus $g$
with canonical homology basis $\alpha_{a}$, $\beta_{b}$ for $a,b\in\Ip$ e.g.~\cite{FK,Mu,Fa,Bo}. 
We review the  construction of a genus $g$ Riemann surface $\Sg$ using the Schottky uniformisation where we sew $g$ handles to the Riemann sphere $\Szero\cong\Chat:=\C\cup \{\infty\}$ e.g.~\cite{Fo, Bo}. Every Riemann surface can be (non-uniquely) Schottky uniformised~\cite{Be2}.

For  $a\in\I$  let $\calC_{a}\subset \Szero$ be $2g$ non-intersecting Jordan curves where $z\in \calC_{a}$, $z'\in \calC_{-a}$ for $a\in\Ip$ are identified by a sewing relation
\begin{align}\label{eq:SchottkySewing}
	\frac{z'-W_{-a}}{z'-W_{a}}\cdot\frac{z-W_{a}}{z-W_{-a}}=q_{a},\quad a\in\Ip,
\end{align}
for $q_{a}$ with $0<|q_{a}|<1$ and $W_{\pm a}\in\Chat$. 
Thus  $z'=\gamma_{a}z$  for $a\in\Ip$ with 
\begin{align}\label{eq:gammaa}
	\gamma_{a}:=\sigma_{a}^{-1}
	\begin{pmatrix}
		q_{a}^{1/2} &0\\
		0 &q_{a}^{-1/2}
	\end{pmatrix}
	\sigma_{a},\quad 
	\sigma_{a}:=(W_{-a}-W_{a})^{-1/2}\begin{pmatrix}
		1 & -W_{-a}\\
		1 & -W_{a}
	\end{pmatrix}.
\end{align}
$\sigma_{a}(W_{-a})=0$ and $\sigma_{a}(W_{a})=\infty$ are, respectively, attractive and repelling fixed points of $Z\rightarrow Z'=q_{a}Z$ for  $Z=\sigma_{a} z$ and $Z'=\sigma_{a} z'$.
$W_{-a}$ and  $W_{a}$  are the corresponding fixed points for $\gamma_{a}$. We identify the standard homology cycles $\alpha_{a}$  with $\calC_{-a}$ and  $\beta_{a}$ with a path connecting  $z\in  \calC_{a}$ to $z'=\gamma_{a}z\in  \calC_{-a}$.

The genus $g$ Schottky group $\Gamma$  is   the free group with generators $\gamma_{a}$ for $a \in \Ip$.
Define $\gamma_{-a}:=\gamma_{a}^{-1}$. The independent elements of $\Gamma$ are reduced words of length $k$
% in the alphabet $\{\gamma_{a}|a\in\Ip  \}$ 
of the form $\gamma=\gamma_{a_{1}}\ldots \gamma_{a_{k}}$ where $a_{i}\neq -a_{i+1}$ for each $i=1,\ldots,k-1$. 

Let $\Lambda(\Gamma)$ denote the  limit set\footnote{Note that for $g=1$, $\Lambda(\Gamma)$ is the empty set.}  of $\Gamma$ i.e. the set of limit points of the action of $\Gamma$ on $\Chat$. Then $\Sg\simeq\Omo/\Gamma$ where $\Omo:=\Chat-\Lambda(\Gamma)$.
We let $\D\subset\Chat$  denote the standard connected fundamental region with oriented boundary curves $\calC_{a}$. 

Define $w_{a}:=\gamma_{-a}.\infty$. Using~\eqref{eq:SchottkySewing} we find 
\begin{align}
	\label{eq:wa}
	w_{a}=\frac{W_{a}-q_{a}W_{-a}}{1-q_{a}},\quad a\in\I,
\end{align}
where we define $q_{-a}:=q_{a}$.
Then~\eqref{eq:SchottkySewing} is equivalent to
\begin{align}\label{eq:SchottkySewing2}
	(z'-w_{-a})(z-w_{a})=\rho_{a},\quad a\in\Ip,
\end{align}
with 
\begin{align}
	\label{eq:rhoa}
	\rho_{a}:=&-\frac{q_{a}(W_{-a}-W_{a})^{2}}{(1-q_{a})^{2}}
	=-\frac{q_{a}(w_{-a}-w_{a})^{2}}{(1+q_{a})^{2}}.
\end{align}
Equation~\eqref{eq:SchottkySewing2} implies
\begin{align*}
%	\label{eq:gamma_a.z}
	\gamma_{a}z=w_{-a}+\frac{\rho_{a}}{z-w_{a}}.
\end{align*}
It is convenient (but not necessary) to choose the Jordan curve $\calC_{a}$ to be the  boundary of the disc $\Delta_{a}$ with centre $w_{a}$ and radius $|\rho_{a}|^{\frac{1}{2}}$. Then 
$\gamma_{a}$ maps the exterior (interior) of $\Delta_{a}$  to the interior (exterior) of $\Delta_{-a}$ since
\begin{align*}
	|\gamma_{a}z-w_{-a}||z-w_{a}|=|\rho_{a}|.
\end{align*}
Furthermore,  the discs $\Delta_{a},\Delta_{b}$ are non-intersecting if and only if
\begin{align}
	\label{eq:JordanIneq}
	|w_{a}-w_{b}|>|\rho_{a}|^{\frac{1}{2}}+|\rho_{b}|^{\frac{1}{2}}
	\mbox{ for all } a\neq b.
\end{align} 
We define $\Cg$ to be the set 
$\{ (w_{a},w_{-a},\rho_{a})|a\in\Ip\}\subset \C^{3g}$ satisfying~\eqref{eq:JordanIneq}. We refer to $\Cg$ as the Schottky parameter space.

The relation~\eqref{eq:SchottkySewing} is M\"obius  invariant for $\gamma =\left(\begin{smallmatrix}A&B\\C&D\end{smallmatrix}\right)\in\SL_{2}(\C)$ with $(z,z',W_{a},q_{a})\rightarrow(\gamma z,\gamma z',\gamma W_{a},q_{a})$ giving
an $\SL_{2}(\C)$ action on $\Cg$ as follows
\begin{align}
	\label{eq:Mobwrhoa}
	\gamma:(w_{a},\rho_{a})\mapsto & 
	\left(	\frac { \left( Aw_{a}+B \right)  \left( Cw_{-a}+D \right) -\rho_{a}
		\,AC}{ \left( Cw_{a}+D \right)  \left( Cw_{-a}+D \right) -\rho_{a}\,{
			C}^{2}},
	{\frac {\rho_{a}}{ \left(  \left( Cw_{a}+D \right)  \left( Cw_{-a}+D
			\right) -\rho_{a}\,{C}^{2} \right) ^{2}}}\right).
\end{align}
We define Schottky space as $\mathfrak{S}_{g}:=\Cg/\SL_{2}(\C)$ which provides a natural covering space for the moduli space of genus $g$ Riemann surfaces (of dimension 1 for $g=1$ and $3g-3$ for $g\ge 2$).  %Despite the more complicated M\"obius action,  
We exploit the $\Cg$  parameterization throughout  because the sewing relation~\eqref{eq:SchottkySewing2} is more  readily implemented in the theory of vertex operators.

\subsection{Some differential forms on a Riemann surface}
Let $\Sg$  be a marked compact genus $g$ Riemann surface with canonical homology basis $\alpha_{a}, \beta_{a}$ for $a\in \Ip$. We define some standard objects on $\Sg$ e.g. \cite{Mu,Fa, Bo}.
The meromorphic bidifferential form of the second kind is a unique symmetric form  
\begin{align*}
	\omega(x,y)=\left(\frac{1}{(x-y)^{2}}+\text{regular terms}\right)\, dxdy,
\end{align*}%
for local coordinates $x,y$ where
$
\oint_{\alpha_{a}}\omega(x,\cdot )=0$ for all $a\in \Ip$.
It follows that
\begin{align}
	\nu_{a}(x)=\oint\limits_{\beta_{a}}\omega(x,\cdot ),  \quad a\in \Ip,
	\label{eq:nu}
\end{align}%
is the holomorphic differential of the first kind, a  1-form normalised by
$\oint\limits_{\alpha_{a}}\nu_{b}=\tpi\, \delta_{ab}$. 
The meromorphic differential of the third kind for $y,z\in \Sg$ is defined by \begin{align}
	\omega_{y-z}(x):=\int^{y}_{z}\omega(x,\cdot )=\left(\frac{1}{x-y}-\frac{1}{x-z}+\text{regular terms}\right)\, dx,
	\label{eq:omp2p1}
\end{align}
with $
\oint_{\alpha_{a}}\omega_{y-z}(\cdot )=0$ for all $a\in \Ip$.
The  period matrix $\Omega$ is defined by 
\begin{align}
	\Omega_{ab}=\frac{1}{\tpi}\oint\limits_{\beta_{a}}\nu_{b},\quad \quad a,b\in \Ip.
	\label{eq:period}
\end{align}%
$\omega(x,y)$ can  be
expressed in terms of the prime form $E(x,y)=
K(x,y)dx^{-1/2}dy^{-1/2}$, a holomorphic form of weight
$(-\frac{1}{2},-\frac{1}{2})$ (with a multiplier system) where 
\begin{align}
	\omega (x,y) = &\dx\dy\log K(x,y)\,dxdy,
	\label{eq:prime} 
	%\\
	%\omega_{p_{2}-p_{1}}(x) = &
	%\partial_{x}\left(\log \frac{K (x,p_{2})}{K (x,p_{1})}\right)dx.  
	%\label{eq:omp2p1prime}
\end{align}%
where $K(x,y)=(x-y)+O\left((x-y)^{3}\right)$ and  $K(x,y)=-K(y,x)$.
Furthermore the projective connection $s(x)$ is defined by
\begin{align}\label{eq:projcon}
	s(x)=\lim_{y\rightarrow x}\left(\omega(x,y)-\frac{dxdy}{(x-y)^{2}}\right).
\end{align} 
In the Schottky parametrisation for $A_{0}\in \Omega_{0}$  we have Poincar\'{e} sum expressions \cite{Bu}
\begin{align}
	\label{eq:Psi1def}
	\omega_{y-A_{0}}(x)&=\Psi_{1}(x,y):=
	\sum_{\gamma\in\Gamma} \Pi_{1}(\gamma x,y),
	\\
	\label{eq:omPoincare2}
	\omega(x,y)&=\Psi_{1}^{(0,1)}(x,y)dy=\sum_{\gamma\in\Gamma}\Pi_{1}^{(0,1)}(\gamma x,y)dy,	
\end{align}
where
\begin{align}\label{eq:Pi1def}
	\Pi_{1}(x,y):=\left(\frac{1}{x-y}-\frac{1}{x-A_{0}}\right)dx.
\end{align} 
$\Psi_{1}(x,y)$ is a 1-form in $x$ with simple poles at $x=y$ and $x=A_{0}$ but is a weight zero  quasiform in $y$ where for each Schottky group generator $\gamma_{a}$ 
\begin{align}\label{eq:Psi1diff}
	\Psi_{1}(x,y)-\Psi_{1}(x,\gamma_{a} y)=\nu_{a}(x),\quad a\in\Ip.
\end{align}
There are also Poincar\'e sum formulas for $\nu_{a}(x)$, $\Omega_{ab}$, $ E(x,y)$ and $s(x)$ e.g. \cite{Bo}.

\subsection{The Bers quasiform}
We consider the Bers quasiform $\Psi_{N}(x,y)$ which is defined for $g\ge 2$ and $N\ge 2$ and shares some  properties in common with $\Psi_{1}$ of~\eqref{eq:Psi1def}. $\Psi_{N}(x,y)$ was introduced by Bers \cite{Be1,Be2} in order to construct the Bers potential for each holomorphic $N$-form.
It also appears in the description of the  Laplacian determinant line bundle associated with $N$-forms \cite{McIT}. 
We review how a meromorphic $N$-form has a canonical expansion in terms of derivatives of $\Psi_{N}$ and an associated spanning set of holomorphic $N$-forms for $N\ge 2$ \cite{TW1}. In fact this result also holds for $N=1$ for  $\Psi_{1}$ and 1-form basis $\{\nu_{a}\}$. 
Later in \S\ref{sec:gZhu}  and \S\ref{sec:Genus g Inter} we will show that  $\Psi_{N}(x,y)$ and the associated  $N$-form spanning set play a similar role in genus $g$ Zhu recursion for VOAs and their modules.
It also shown in \cite{TW1} that $\Psi_{2}(x,y)$ enters into natural differential operators acting on meromorphic forms in multiple variables that appear in the conformal Ward identities described in \S\ref{sec:Wardids}.

The Bers quasiform of weight $(N,1-N)$ for $g\ge 2$ and $N\ge2$ is defined by the Poincar\'e series \cite{Be1,Be2,McIT,TW1} 
\begin{align}
	\label{eq:PsiNdef}
	\Psi_{N}(x,y):&=
	\sum_{\gamma\in\Gamma} \Pi_{N}(\gamma x,y),\quad x,y\in \Omo,
	\\
	\label{eq:PipiN}
	\Pi_{N}(x,y)&:=\Pi_{N}(x,y;\bm{A}):=\frac{1}{x-y}\prod_{\ell\in\calL}\frac{y-A_{\ell}}{x-A_{\ell}}dx^{N}dy^{1-N},
\end{align}
for index set $\calL$ of~\eqref{eq:ILconv} and
where $\bm{A}:=A_{0},\ldots,A_{2N-2}\in\Lambda(\Gamma)$ are distinct limit points of $\Gamma$.
$\Pi_{N}$ is M\"obius invariant with 
\begin{align}\label{eq:PiN_Mobius}
	\Pi_{N}(\gamma x, \gamma y;\bm{\gamma A})=\Pi_{N}(x,y;\bm{A}),
\end{align} 
for all $\gamma\in\SL_{2}(\C)$. 
$\Psi_{N}(x,y)$ is meromorphic  for $x,y\in \Omo$ with  simple poles  of residue one at $y=\gamma x$ for all $\gamma\in\Gamma$.
$\Psi_{N}$ was introduced by Bers  to construct a Bers potential $F(y)=f(y)dy^{1-N}$, with $y\in\Omega_{0}$, for any holomorphic $N$-differential $\Phi $ given by \cite{Be1,Be2}
\begin{align*}
	F(y):=-\langle \Psi_{N}(\cdot,y), \Phi \rangle_{P},
\end{align*}
for $N$-differential Petersson product $\langle \cdot,\cdot \rangle_{{P}}$ \cite{TW1}. 
$\Psi_{N}(x,y)$ is a bidifferential $(N,1-N)$-quasiform as follows.
It is an $N$-differential in $x$ since
\begin{align*}
	\Psi_{N}(\gamma x,y) &= \Psi_{N}(x,y),\quad \gamma\in\Gamma,
%	\label{eq:Bersx}
\end{align*} 
by construction and is a quasiperiodic $(1-N)$-form  in $y$ with  
\begin{align*}%\label{eq:Bersgamy}
	\Psi_{N}( x,\gamma y) - \Psi_{N}(x,y)&=\chi[\gamma](x,y),\quad \gamma\in\Gamma,
\end{align*}
where $\chi[\gamma](x,y)$ is a holomorphic  $N$-form in $x$ and an Eichler 1-cocycle in $y$ \cite{Be1},\cite{TW1}. In particular, for a Schottky group generator $\gamma_{a}$, $a\in\Ip$, we find 
\begin{align}
	\label{eq:chiTheta}
	\chi[\gamma_{a}](x,y)=-\sum_{\ell\in\calL}\Theta_{N,a}^{\ell}(x)y_{a}^{\ell}dy^{1-N},
\end{align}
with $y_{a}:=y-w_{a}$ and where 
$	\{\Theta_{N,a}^{\ell}(x)\}_{a\in\Ip}^{\ell\in\calL}$
spans the $d_{N}=(g-1)(2N-1)$-dimensional space of holomorphic $N$-forms \cite{TW1}.  

The Bers quasiform \eqref{eq:PipiN} depends on the choice of limit set points $\{A_{\ell}\}$. Thus with an alternative choice  $\{\Ahat_{\ell}\}$ we find $\Psihat_{N}(x,y)-\Psi_{N}(x,y)$ is a holomorphic $N$-form in $x$ with
\begin{align}\label{eq:Psitilde}
	\Psihat_{N}(x,y)-\Psi_{N}(x,y)= -\sum_{r=1}^{d_{N}}\Phi_{r}(x)P_{r}(y), 
\end{align}
for a holomorphic $N$-form basis $\{ \Phi_{r}(x)\}_{r=1}^{d_{N}}$ and $P_{r}(y):=p_{r}(y)dy^{1-N}$ for some polynomials $p_{r}\in \mathfrak{P}_{2N-2}$. For the corresponding $N$-form spanning set of~\eqref{eq:chiTheta} we find
\begin{align}\label{eq:Thetatilde}
	\widehat{\Theta}_{N,a}^{\ell}(x)-\Theta_{N,a}^{\ell}(x)
	=\sum_{r=1}^{d_{N}} (p_{r})_{a}^{\ell}\Phi_{r}(x),
\end{align}
where for any $P(y)=p(y)dy^{1-N}$ with $p\in \mathfrak{P}_{2N-2}$,
we determine $p_{a}^{\ell}$ from  the trivial coboundary Eichler cocycle $P(\gamma_{a} y)-P(y)=\sum_{\ell\in\calL}p_{a}^{\ell}y_{a}^{\ell}dy^{1-N}$ \cite{Be1,TW1}. In particular, we may expand
$p(y)=\sum_{\ell\in\calL}p^{(\ell)}(w_{a})y_{a}^{\ell}$ and also find
\begin{align*}
	%\sum_{\ell\in\calL}p_{a}^{\ell}z_{a}^{\ell}=&
	&p(\gamma_{a}y)\left(\frac{d(\gamma_{a} y)}{dy}\right)^{1-N}
	=p\left(w_{-a}+\frac{\rho_{a}}{y_{a}}\right)
	\left(-\frac{\rho_{a}}{y_{a}^{2} }\right)^{1-N}
	= (-1)^{N-1}\sum_{\ell\in\calL}
	p^{(\ell)}(w_{-a})\rho_{a}^{\ell+1-N}y_{a}^{2N-2-\ell},
\end{align*}
so  that
\begin{align}
	\label{eq:pal}
	p_{a}^{\ell} = (-1)^{N+1} \rho_{a}^{N-\ell-1}p^{(2N-2-\ell)}(w_{-a})-p^{(\ell)}(w_{a}),\quad a\in\Ip,\ell\in \calL.
\end{align}
\begin{remark}\label{rem:Psi1}
	Note that \eqref{eq:PipiN}  reproduces $\Psi_{1}$  of \eqref{eq:Psi1def} for $N=1$ provided $A_{0}\in \Omega_{0}$ i.e. $A_{0}$ is not a limit point.
	Thus $\Psi_{1}(x,y)$ is not a Bers quasiform since it has an extra simple pole at $x=A_{0}$. Nevertheless,  \eqref{eq:chiTheta} agrees with \eqref{eq:Psi1diff} for $N=1$ with 1-form basis $\Theta^{0}_{1,a}:=\nu_{a}$ for $a\in\Ip$. 
For an alternative choice of $\Ahat_{0}$ in \eqref{eq:Psi1def} we find that
	%\eqref{eq:Thetatilde} and 	\eqref{eq:Psitilde} are replaced by
	\begin{align}	\label{eq:Theta1tilde}		
		&\widehat{\Theta}_{1,a}^{0}(x)-\Theta_{1,a}^{0}(x)=0,
		\\
	\label{eq:Psi1tilde}
		&\Psihat_{1}(x,y)-\Psi_{1}(x,y)= \omega_{A_{0}-\Ahat_{0}}(x).
	\end{align}
Eqn.~\eqref{eq:Theta1tilde} agrees with \eqref{eq:Thetatilde} for $N=1$ (since $p^{0}_{a}=0$) but \eqref{eq:Psi1tilde} does not agree with \eqref{eq:Psitilde}.
\end{remark}
Lastly, we define a canonical symmetric meromorphic form $\omega_{N}(x,y)$ of weight $(N,N)$, generalising the classical bidifferential   $\omega(x,y)=\omega_{1}(x,y)$, as follows:
\begin{align}
	\label{eq:omegaN}
	\omega_{N}(x,y):=&
	\Psi_{N}^{(0,2N-1)}(x,y)dy^{2N-1}
	%	=\left(\frac{1}{(x-y)^{2N}}+\text{regular terms}\right)\,dx^{N}dy^{N}.
	=\sum_{\gamma \in\Gamma} \dfrac{d(\gamma x)^{N}dy^{N}}{(\gamma x-y)^{2N}}.
\end{align}

\subsection{Expansion of a meromorphic $N$-form}
We describe the expansion of a meromorphic $N$-form in terms of finite sums of standard objects for Riemann surfaces of genus $g=0,1$ and $g\ge 2$. These expansions are reproduced in VOA formal Zhu recursion formulas at each genus. We begin with the cases $g=0,1$ before reviewing generalisation to the Schottky uniformisation for $g\ge2$.  
\subsubsection{Genus zero}
Let $H_{N}(x)$ denote a meromorphic $N$-form with poles at  $y_{1},\ldots,y_{n}\in\Chat:=\C \cup \{\infty\}$, the Riemann sphere, with $y_{k}\neq \infty$. We define
\begin{align}
	\label{eq:resHN}
	\Res_{y_{k}}^{j}H_{N}:=\frac{1}{\tpi}\oint_{\calC_{k}}(x-y_{k})^{j}H_{N}(x)dx^{1-N}, \quad j\ge 0,\, k=1,\ldots,n,
\end{align}
where $\calC_{k}$ is a simple Jordan curve surrounding $x=y_{k}$ but no other poles of $H_{N}(x)$. 
Note that $\Res_{y_k}^{j}H_{N}=0$ for $j\gg  0$.
We then find
\begin{proposition}\label{prop:HNzero}
	Let $H_{N}(x)$ be a meromorphic $N$-form  on $\Chat$ with poles at  $x=y_{k}\neq \infty$ for $k=1,\ldots,n$. 
	\begin{enumerate}
		\item [(i)] 		
		$H_{N}(x)$	can be expanded
		\begin{align*}
			H_{N}(x)=\sum_{k=1}^{n}\sum_{j\ge 0}(x-y_{k})^{-1-j}dx^{N}
			\Res_{y_{k}}^{j}H_{N}.
		\end{align*} 
		\item[(ii)] 
		Let $P(x)=p(x)dx^{1-N}$ with $p\in\Poly_{2N-2}$. Then 
		\begin{align*}
			\sum_{k=1}^{n}\sum_{\ell\in\calL}p^{(\ell)}(y_{k})
			\Res_{y_{k}}^{\ell}H_{N}=0.
		\end{align*} 
		\item[(iii)] Let $\pi_{N}(x,y):=(x-y)^{-1}+\sum_{\ell\in\calL}f_{\ell}(x)y^{\ell}$ for any Laurent series $f_{\ell}(x)$. Then
		\begin{align*}
			H_{N}(x)=dx^{N}\sum_{k=1}^{n}\sum_{j\ge 0}\pi_{N}^{(0,j)}(x,y_{k})
			\Res_{y_{k}}^{j}H_{N}.
		\end{align*}
	\end{enumerate}
\end{proposition}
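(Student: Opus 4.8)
The plan is to treat the three parts in order, deriving (iii) from (i) and (ii). Throughout I would write $H_{N}(x)=h(x)\,dx^{N}$ for a rational function $h$, so that $H_{N}(x)P(x)$ and $(x-y_{k})^{j}H_{N}(x)\,dx^{1-N}$ are genuine meromorphic $1$-forms on $\Chat$ whose contour integrals are unambiguous. The hypothesis that all poles lie at finite points $y_{k}\neq\infty$ means precisely that $H_{N}$ is holomorphic at $\infty$; since $N\ge 1$ this forces $h(x)=O(x^{-2N})$ as $x\to\infty$, a decay estimate I would use repeatedly.

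For (i), I would first identify $\Res_{y_{k}}^{j}H_{N}$ with the Laurent coefficients of $h$. Writing $H_{N}\,dx^{1-N}=h\,dx$, the residue theorem gives $\Res_{y_{k}}^{j}H_{N}=\frac{1}{\tpi}\oint_{\calC_{k}}(x-y_{k})^{j}h(x)\,dx$, which is the coefficient of $(x-y_{k})^{-1-j}$ in the Laurent expansion of $h$ at $y_{k}$; this also shows that the sum over $j$ terminates. The difference between $h$ and the sum of its principal parts $\sum_{k,j}(x-y_{k})^{-1-j}\Res_{y_{k}}^{j}H_{N}$ is then holomorphic on all of $\C$, and since both $h$ and each principal part are $O(x^{-1})$ (indeed $h=O(x^{-2N})$) this difference tends to $0$ at $\infty$; Liouville's theorem forces it to vanish identically, which is (i).

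For (ii), the key observation is that $H_{N}(x)P(x)=h(x)p(x)\,dx$ is a meromorphic $1$-form on $\Chat$ whose only possible poles are the $y_{k}$: near $\infty$ the decay $h=O(x^{-2N})$ together with $p\in\Poly_{2N-2}$ gives $h(x)p(x)=O(x^{-2})$, so there is no residue at $\infty$. The residue theorem on the sphere then gives $\sum_{k}\Res_{y_{k}}(hp\,dx)=0$. Expanding the polynomial $p$ in its terminating Taylor series $p(x)=\sum_{\ell\in\calL}p^{(\ell)}(y_{k})(x-y_{k})^{\ell}$ about each $y_{k}$ and integrating term by term identifies $\Res_{y_{k}}(hp\,dx)$ with $\sum_{\ell\in\calL}p^{(\ell)}(y_{k})\Res_{y_{k}}^{\ell}H_{N}$, which yields (ii).

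For (iii), I would substitute the definition of $\pi_{N}$ and differentiate, using $\del_{y}^{(j)}(x-y)^{-1}=(x-y)^{-1-j}$ and $\del_{y}^{(j)}y^{\ell}=\binom{\ell}{j}y^{\ell-j}$. The $(x-y_{k})^{-1-j}$ contribution reproduces exactly the expansion of (i), so it remains to show that the correction terms $\sum_{\ell\in\calL}f_{\ell}(x)\sum_{k}\sum_{j}\binom{\ell}{j}y_{k}^{\ell-j}\Res_{y_{k}}^{j}H_{N}$ vanish. This is where (ii) enters: for each fixed $\ell\in\calL$ the inner double sum is precisely the left-hand side of (ii) applied to the monomial $p(x)=x^{\ell}\in\Poly_{2N-2}$, since then $p^{(j)}(y_{k})=\binom{\ell}{j}y_{k}^{\ell-j}$ and $\binom{\ell}{j}=0$ for $j>\ell$. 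Hence every correction term vanishes and (iii) follows, the arbitrariness of the Laurent series $f_{\ell}$ notwithstanding. The main obstacle is this last recognition step --- seeing that the coefficients $f_{\ell}(x)$ decouple and that what multiplies each of them is annihilated by the orthogonality relation (ii) evaluated on monomials; once that is spotted, the remainder is routine bookkeeping of binomial derivatives.
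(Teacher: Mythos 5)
Your proposal is correct and follows essentially the same route as the paper: part (ii) is the residue theorem applied to the $1$-form $p\,H_{N}\,dx^{1-N}$ on the sphere, and (iii) is deduced from (i) and (ii) by precisely the observation that each $f_{\ell}(x)$ multiplies the identity of (ii) evaluated on the monomial $x^{\ell}$. The only cosmetic difference is in (i), where you subtract principal parts and invoke Liouville while the paper integrates the Cauchy kernel $h(z)/(x-z)$ around a large contour; both hinge on the same decay $h=O(x^{-2N})$ at infinity.
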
 
\begin{proof}
	Write $H_{N}=h(z)dz^{N}$ for some rational function $h$ on $\Chat$ with poles at $z=y_{k}$.  
	Consider a simple Jordan curve  $\calC$ whose interior contains $x,y_{1},\ldots,y_{n}$. Then 
	\begin{align*}
		0=& \oint_{\calC}\frac{h(z)}{x-z}dz = \oint_{\calC_{x}}\frac{h(z)}{x-z}dz
		+\sum_{k=1}^{n}\oint_{\calC_{k}}\frac{h(z)}{x-z}dz,
	\end{align*}
	where $\calC_{x}$ is a contour about $x$. Using
	$(x-z)^{-1}=\sum_{j\ge 0}(x-y_{k})^{-1-j}(z-y_{k})^{j}$ then part (i) follows from Cauchy's theorem.
	
	$P(x)H_{N}(x)$ is a meromorphic 1-form with poles at $x=y_{k}$. Then we find (ii):
	\begin{align*}
		0=&\oint_{\calC}P(x)H_{N}(x)
		=\sum_{k=1}^{n}\oint_{\calC_{k}}p(x)H_{N}(x)dx^{1-N}
		=\tpi \sum_{k=1}^{n}\sum_{\ell\in\calL}p^{(\ell)}(y_{k})
		\Res_{y_{k}}^{\ell}H_{N},
	\end{align*}
	using
	$p(x)=\sum_{\ell\in\calL}p^{(\ell)}(y_{k})(x-y_{k})^{\ell}$.
	(iii) follows directly from (i) and (ii).
\end{proof} 

\subsubsection{Genus one}
Using the standard elliptic function uniformisation, every meromorphic $N$-form on a torus $\Sone$ with modulus $\tau$ is of the form $h(z)dz^{N}$ for some elliptic function $h(z)$ with periods $\tpi$ and $\tpi\tau$ along standard homology cycles $\alpha$ and $\beta$. Consider the quasiperiodic Weierstrass function (related to the Weierstrass sigma function e.g.~\cite{WW}) 
\begin{align}\label{eq:P1def}
P_{1}(z):=P_{1}(z,\tau):=-\half -\sum_{m \in \Z,m\neq 0}\frac{e^{mz}}{1-q^{m}}=z^{-1}+\mbox{regular terms},
\end{align}
where $q=e^{\tpi \tau}$. $P_{1}(z)$ is odd in $z$ with periodicities\footnote{For  genus one we have $\omega_{y-A_{0}}(x)=\Psi_{1}(x,y)=\left(P_{1}(x-y)-P_{1}(x-A_{0})\right)dx$.}
\begin{align}\label{eq:P1_periods}
	P_{1}(z+\tpi)-P_{1}(z)=0,\quad 
	P_{1}(z+\tpi\tau)-P_{1}(z)=-1.
\end{align}
Define $\Res_{y_{k}}h$ for each pole $y_{k}\in \Sone$ of $h$ in a similar fashion to~\eqref{eq:resHN}. We then find (c.f. \S20.52 of~\cite{WW})
\begin{proposition}\label{prop:ellipticP1}
	Let $h(x)$ be a meromorphic elliptic function with poles at $x=y_{k}$ for $k=1,\ldots,n$. Then
	\begin{align}
		\label{eq:ellipticexp}
		h(x)=\frac{1}{\tpi}\oint_{\alpha}h(z)dz
		+\sum_{k=1}^{n}\sum_{j\ge 0}(-1)^{j}P_{1}^{(j)}(x-y_{k})\Res_{y_{k}}^{j}h.
	\end{align}
\end{proposition}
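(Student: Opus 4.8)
The plan is to adapt the contour argument of Proposition~\ref{prop:HNzero} to the torus, replacing the Cauchy kernel $(x-z)^{-1}$ by the quasiperiodic kernel $P_{1}(x-z)$. First I would fix a fundamental parallelogram $\Pi$ for $\Sone$ with sides parallel to the cycles $\alpha$ and $\beta$, chosen (by translating a corner to a generic base point) so that none of $x,y_{1},\ldots,y_{n}$ lies on the boundary $\partial\Pi$, and consider the integral
\begin{align*}
	\frac{1}{\tpi}\oint_{\partial\Pi} h(z)\,P_{1}(x-z)\,dz.
\end{align*}
Regarded as a function of $z$, the factor $P_{1}(x-z)$ is genuinely periodic under $z\mapsto z+\tpi$ but only quasiperiodic under $z\mapsto z+\tpi\tau$, where by~\eqref{eq:P1_periods} it increases by $1$. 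Since $h$ is doubly periodic, this is precisely the feature that prevents the integrand from being elliptic, and it is what will produce the constant term in~\eqref{eq:ellipticexp}.

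Next I would evaluate the boundary integral by pairing opposite sides of $\Pi$. The two sides running in the $\beta$-direction differ by the shift $z\mapsto z+\tpi$, under which both $h$ and $P_{1}(x-z)$ are invariant, so (accounting for opposite orientations) their contributions cancel. The two sides running in the $\alpha$-direction differ by the shift $z\mapsto z+\tpi\tau$; here $h$ is unchanged but $P_{1}(x-z)$ increases by $1$, so their combined contribution collapses to $-\frac{1}{\tpi}\oint_{\alpha}h(z)\,dz$. Extracting this constant term from the quasiperiodicity is the one genuinely new ingredient compared with the genus zero computation, and I expect the bookkeeping of orientations and signs here to be the main obstacle; everything else is a routine transcription of the sphere argument.

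Finally I would compute the same integral by residues. Inside $\Pi$ the integrand has a simple pole at $z=x$ arising from $P_{1}(x-z)=(x-z)^{-1}+\text{regular}$ (see~\eqref{eq:P1def}), contributing $-h(x)$, together with poles at each $z=y_{k}$ coming from $h$. At $z=y_{k}$ I would Taylor expand
\begin{align*}
	P_{1}(x-z)=\sum_{j\ge 0}(-1)^{j}P_{1}^{(j)}(x-y_{k})\,(z-y_{k})^{j}
\end{align*}
in powers of $(z-y_{k})$, so that the local residue is $\sum_{j\ge 0}(-1)^{j}P_{1}^{(j)}(x-y_{k})\Res_{y_{k}}^{j}h$ by the definition of $\Res_{y_{k}}^{j}h$. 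Equating the two evaluations of $\frac{1}{\tpi}\oint_{\partial\Pi}$ and solving for $h(x)$ then yields~\eqref{eq:ellipticexp}. Convergence is not an issue, since each $\Res_{y_{k}}^{j}h$ vanishes for large $j$ (the poles have finite order), so every sum over $j$ is finite; and the result extends from $x\in\Pi$ to all of $\Sone$ by double periodicity.
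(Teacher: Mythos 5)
Your proposal is correct and follows essentially the same route as the paper: integrating $P_{1}(x-z)h(z)$ over the boundary of the fundamental parallelogram, extracting the constant term $-\frac{1}{\tpi}\oint_{\alpha}h$ from the quasiperiodicity \eqref{eq:P1_periods}, and matching against the residues at $z=x$ and $z=y_{k}$ with the same Taylor expansion of $P_{1}(x-z)$. The signs and orientation bookkeeping you flag as the main obstacle work out exactly as you describe, so no changes are needed.
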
 
\begin{proof}
	Let $\calC$ be the counter clockwise boundary contour of the torus fundamental parallelogram. Then
	\begin{align*}
		\oint_{\calC}P_{1}(x-z)h(z)dz=& \int_{0}^{\tpi}(P_{1}(x-z)-P_{1}(x-z-\tpi \tau))h(z)dz
		\\
		&
		+ \int_{0}^{\tpi\tau }(P_{1}(x-z-\tpi)-P_{1}(x-z))h(z)dz
		= -\oint_{\alpha}h(z)dz,
	\end{align*}
	using \eqref{eq:P1_periods}. Since 
	$P_{1}(x-z)=1/(x-z)+\ldots$ we find
	\begin{align*}
		\frac{1}{\tpi}\oint_{\calC}P_{1}(x-z)h(z)dz=& -h(x)+ 
		\sum_{k=1}^{n} 	\frac{1}{\tpi}\oint_{\calC_{k}}P_{1}(x-z)h(z)dz.
	\end{align*}
	Expanding $P_{1}(x-z)$ about $z=y_{k}$ we find 
	\[
	\frac{1}{\tpi}\oint_{\calC_{k}}P_{1}(x-z)h(z)dz
	%	=\sum_{j\ge 0}P_{1}^{(j)}(x-y_{k}) \oint_{\calC_{k}}(y_{k}-z)^{j}h(z)dz
	=\sum_{j\ge 0}(-1)^{j}P_{1}^{(j)}(x-y_{k})\Res_{y_{k}}^{j}h.
	\]
	The proof follows by combining these results.
\end{proof}
\begin{remark}
	The expansion~\eqref{eq:ellipticexp} precisely matches the formal Zhu expansion for torus $n$-point functions \cite{Z1} where the residues are replaced by formal VOA residues.
\end{remark}
\subsubsection{Genus $g\ge 2$}
Let $H_{N}(x)$ denote a meromorphic $N$-form for $N\ge 1$ on the Riemann surface $\Sg$ in the Schottky parametrisation. Define $\Res_{y_{k}}^{j}H_{N}$ for each pole $y_{k}\in\D$ similarly to 	\eqref{eq:resHN} and also define
\begin{align*}
	\Res_{w_{a}}^{\ell}H_{N}:=\frac{1}{\tpi}\oint_{\calC_{a}}z_{a}^{\ell}
	H_{N}(z)dz^{1-N}, \quad \ell\in\calL,\, a\in\I,
\end{align*}
for $z_{a}:=z-w_{a}$
for Schottky scheme parameter $w_{a}$ and contour $\calC_{a}$. From the sewing relation~\eqref{eq:SchottkySewing2} we also note that 
\begin{align}\label{eq:ResRel}
	\Res_{w_{a}}^{\ell}H_{N}=(-1)^{N}\rho_{a}^{1-N+\ell}\Res_{w_{-a}}^{2-2N-\ell}H_{N}.
\end{align}
We obtain the following generalisation of Proposition~\ref{prop:HNzero} and Proposition~\ref{prop:ellipticP1}:
\begin{proposition}\label{prop:GNexp}
	For $N\ge 1$ let $H_{N}(x)$ be a meromorphic $N$-form on $\Sg$ with poles at $x=y_{k}\neq \infty$ for $k=1,\ldots,n$ in the Schottky parametrisation. 	
	\begin{enumerate} 
\item[(i)]	
For holomorphic $N$-form spanning set  $\{\Theta_{N,a}^{\ell}\}$ of~\eqref{eq:chiTheta} we find
	\begin{align}
		\label{eq:HNexp}
		H_{N}(x)=&\sum_{a\in\Ip}\sum_{\ell\in \calL}\Theta_{N,a}^{\ell}(x)
		%\frac{1}{\tpi}\oint_{\calC_{a}(z)}(z-w_{a})^{\ell}G_{N}(z)dz^{1-N}
		\Res_{w_{a}}^{\ell}H_{N}
		+\sum_{k=1}^{n}\sum_{j\ge 0}\Psi_{N}^{(0,j)}(x,y_{k})dy_{k}^{N-1}
		%\frac{1}{\tpi}\oint_{\calC_{k}(z)}(z-y_{k})^{j}G_{N}(z)dz^{1-N}
		\Res_{y_{k}}^{j}H_{N}.
	\end{align}
\item[(ii)] Let $P(z)=p(z)dz^{1-N}$ for $p\in\mathfrak{P}_{2N-2}$. Then for  $p_{a}^{\ell}$ of~\eqref{eq:pal} we have
\begin{align*}
	-\sum_{a\in\Ip}\sum_{\ell\in\calL}p_{a}^{\ell}
	\Res_{w_{a}}^{\ell}H_{N}
	+\sum_{k=1}^{n}\sum_{\ell\in\calL}p^{(\ell)}(y_{k})
	\Res_{y_{k}}^{\ell}H_{N}=0.
\end{align*} 
\end{enumerate}	
\end{proposition}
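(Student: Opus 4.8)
The plan is to prove both parts by a single contour-integral argument on the fundamental region $\D$, evaluated in two ways, exactly paralleling the proofs of Propositions~\ref{prop:HNzero} and~\ref{prop:ellipticP1}: residues inside $\D$ will produce the $H_{N}(x)$ and $\Psi_{N}$-terms, while folding the boundary under the Schottky group will produce the $\Theta_{N,a}^{\ell}$-terms. Fix $x\in\D$; the asserted identity of $N$-forms in $x$ then holds everywhere by meromorphic continuation. For part~(i) I would consider the meromorphic $1$-form in $z$ obtained by pairing $\Psi_{N}(x,\cdot)$ with $H_{N}$,
\[
\eta(z):=\Psi_{N}(x,z)\,H_{N}(z),
\]
in which the $(1-N)$-weight of $\Psi_{N}$ in $z$ and the $N$-weight of $H_{N}$ combine to a genuine $1$-form (the factor $dx^{N}$ in $x$ being carried along), and compute $\tfrac{1}{\tpi}\oint_{\partial\D}\eta$ over the oriented boundary $\partial\D=\bigcup_{a\in\I}\calC_{a}$.

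For the first evaluation I would apply the residue theorem inside $\D$. The only poles of $\eta$ in $\D$ are at $z=x$ and at $z=y_{1},\ldots,y_{n}$: by~\eqref{eq:PsiNdef} the poles of $\Psi_{N}(x,z)$ in $z$ lie precisely on the orbit $\{\gamma x:\gamma\in\Gamma\}$, of which only $z=x$ lies in $\D$, while the remaining poles come from $H_{N}$. The simple pole of $\Psi_{N}$ at $z=x$ contributes the term $H_{N}(x)$; expanding $\Psi_{N}(x,z)$ in powers of $(z-y_{k})$ and recognising the coefficients through the definition of $\Res_{y_{k}}^{j}H_{N}$ turns the residue at each $y_{k}$ into $\sum_{j\ge 0}\Psi_{N}^{(0,j)}(x,y_{k})\,dy_{k}^{N-1}\Res_{y_{k}}^{j}H_{N}$. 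A small but essential point is to check that $\eta$ has vanishing residue at $z=\infty$: since $H_{N}$ is holomorphic there (its poles are the finite $y_{k}$) and $\Psi_{N}(x,z)$ grows only polynomially in $z$, the form $\eta$ is $O(z^{-2})\,dz$ at infinity, so no spurious term appears and the residue theorem closes up on $\D$.

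For the second evaluation I would fold the boundary by pairing $\calC_{a}$ with $\calC_{-a}$ for $a\in\Ip$, which are identified by $z'=\gamma_{a}z$. Because $H_{N}$ is a genuine $N$-form it is $\Gamma$-invariant, $H_{N}(\gamma_{a}z)=H_{N}(z)$, whereas $\Psi_{N}$ is only quasiperiodic: by~\eqref{eq:chiTheta}, $\Psi_{N}(x,\gamma_{a}z)-\Psi_{N}(x,z)=\chi[\gamma_{a}](x,z)=-\sum_{\ell\in\calL}\Theta_{N,a}^{\ell}(x)\,z_{a}^{\ell}\,dz^{1-N}$. Transporting the $\calC_{-a}$-integral back to $\calC_{a}$ via $\gamma_{a}$ (which reverses the induced boundary orientation) therefore replaces $\eta$ by this cocycle difference, and $\tfrac{1}{\tpi}\oint_{\calC_{a}}z_{a}^{\ell}H_{N}(z)\,dz^{1-N}=\Res_{w_{a}}^{\ell}H_{N}$ collapses each pair to $\sum_{\ell\in\calL}\Theta_{N,a}^{\ell}(x)\Res_{w_{a}}^{\ell}H_{N}$; only $a\in\Ip$ is needed, since the $\Gamma$-invariance of $H_{N}$ already encodes the relation~\eqref{eq:ResRel} between $\Res_{w_{a}}$ and $\Res_{w_{-a}}$. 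Equating this with the residue evaluation and rearranging yields~\eqref{eq:HNexp}.

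Part~(ii) is the same computation with $\Psi_{N}(x,z)$ replaced by $P(z)=p(z)\,dz^{1-N}$, $p\in\Poly_{2N-2}$. Now $P(z)H_{N}(z)$ has no pole at $z=x$ and, since $\deg p\le 2N-2$, is again $O(z^{-2})\,dz$ at infinity, so its residues inside $\D$ give $\sum_{k}\sum_{\ell\in\calL}p^{(\ell)}(y_{k})\Res_{y_{k}}^{\ell}H_{N}$; folding uses the trivial coboundary $P(\gamma_{a}z)-P(z)=\sum_{\ell}p_{a}^{\ell}z_{a}^{\ell}\,dz^{1-N}$ with $p_{a}^{\ell}$ as in~\eqref{eq:pal}, producing $-\sum_{a\in\Ip}\sum_{\ell}p_{a}^{\ell}\Res_{w_{a}}^{\ell}H_{N}$, and equating the two gives the stated vanishing identity. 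I expect the main obstacle to be bookkeeping rather than substance: keeping the orientation of each $\calC_{a}$ consistent between the definition of $\Res_{w_{a}}^{\ell}$ and the induced orientation of $\partial\D$ (this is the sign asymmetry that sends the $w_{a}$-terms and the $y_{k}$-terms to opposite sides of the identity), together with justifying the termwise handling of the defining Poincar\'e series for $\Psi_{N}$ under the contour integral and the vanishing at $z=\infty$. Both are controlled once the genus zero and genus one templates are in hand, with the quasiperiodicity~\eqref{eq:chiTheta} playing exactly the role that the periods~\eqref{eq:P1_periods} of $P_{1}$ played at genus one.
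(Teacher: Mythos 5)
Your proposal is correct and follows essentially the same route as the paper: pair $\Psi_{N}(x,\cdot)$ (resp.\ $P(\cdot)$) with $H_{N}$ to get a meromorphic $1$-form, apply the residue theorem so that the pole at $z=x$ and the poles at $z=y_{k}$ produce the left-hand side and the $\Psi_{N}^{(0,j)}$ terms, and fold each $\calC_{-a}$ onto $\calC_{a}$ via $\gamma_{a}$ so that the quasiperiodicity \eqref{eq:chiTheta} (resp.\ the coboundary \eqref{eq:pal}) yields the $\Theta_{N,a}^{\ell}$ (resp.\ $p_{a}^{\ell}$) terms. The paper writes this out only for $N=1$ (citing \cite{TW1} for $N\ge 2$), using one large Jordan curve enclosing $x$, the $y_{k}$ and all $\calC_{a}$ in place of your $\partial\D$, but the two bookkeepings are equivalent.
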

\begin{proof}
The proof for $N\ge 2$ is given in \cite{TW1}. We give  the proof for $N=1$ which is very similar. 
Consider a simple Jordan curve $\calC$ surrounding $x,y_{1},\ldots,y_{n}$ and the Schottky contours $\calC_{a}$ for all $a\in\I$ so that $\oint_{\calC}\Psi_{1}(x,\cdot)H_{1}(\cdot)=0$. 
%	Let $\calC_{k}$ be a contour about $y_{k}$.
Since $\Psi_{1}(x,z)\sim -(z-x)^{-1}\, dx $ for $x\sim z$ we find by Cauchy's theorem that
\begin{align*}
	0= -\tpi H_{1}(x)+\sum_{a\in\I}\oint_{\calC_{a}}\Psi_{1}(x,\cdot)H_{1}(\cdot)
	+\sum_{k=1}^{n}\oint_{\calC_{k}}\Psi_{1}(x,\cdot)H_{1}(\cdot).
\end{align*}
Since $\calC_{-a}=-\gamma_{a} \calC_{a}$ we find using \eqref{eq:Psi1diff} that
\begin{align*}
	\sum_{a\in\I}\oint_{\calC_{a}}\Psi_{1}(x,\cdot)H_{1}(\cdot)=&
	\sum_{a\in\Ip}\oint_{\calC_{a}(z)}\left(\Psi_{1}(x,z)-\Psi_{1}(x,\gamma_{a}z)\right)H_{1}(z)=
	\tpi \sum_{a\in\Ip}\nu_{a}(x)\Res_{w_{a}}^{0}H_{1}.
\end{align*}
Using $\Psi_{1}(x,z)=\sum_{j\ge 0}(z-y_{k})^{j} \Psi_{1}^{(0,j)}(x,y_{k})$ we find 
\begin{align}
	\notag
	\oint_{\calC_{k}}\Psi_{1}(x,\cdot)H_{N}(\cdot)=
	 \tpi\sum_{j\ge 0}\Psi_{1}^{(0,j)}(x,y_{k})
	\Res_{y_{k}}^{j}H_{1}.
\end{align}
Thus (i) holds for $N=1$. 
(ii) follows for $N=1$ from a similar analysis of $\oint_{\calC}H_{1}=0$. Since $\Res_{w_{a}}^{0}H_{1}=-\Res_{w_{-a}}^{0}H_{1}$ from \eqref{eq:ResRel} we find that 
$\sum_{k=1}^{n}\Res_{y_{k}}^{0}H_{1}=0$. This is equivalent to (ii) noting that $p^{0}_{a}=0$ for $N=1$ from \eqref{eq:pal}.
\end{proof}
\begin{remark}
Proposition~\ref{prop:GNexp}~(ii) implies that the expansion~\eqref{eq:HNexp} holds for all choices of $\Psi_{N}$ described in~\eqref{eq:Psitilde}, \eqref{eq:Thetatilde}, \eqref{eq:Theta1tilde}	 and~\eqref{eq:Psi1tilde}.
\end{remark} 
The main results of this paper, Theorems~\ref{theor:ZhuGenusg}
and \ref{theor:ZhuGenusgInt}, show that genus $g$ Zhu reduction follows the structure of~\eqref{eq:HNexp} for formal VOA and module residues. 

\subsection{Schottky sewing formulas for $\Psi_{N}(x,y)$ and $\Theta_{N,a}^{\ell}(x)$}\label{subsec:SchottkySewing}
In this section we review results of \cite{TW1} where expansion formulas for the Bers quasiform $\Psi_{N}(x,y)$ and $\Theta_{N,a}^{\ell}(x)$ are given in terms of the sewing parameters $\rho_{a}$ for $N\ge 2$. These results also hold for $N=1$ with $\Psi_{1}$ of \eqref{eq:Psi1def}. These expressions are much more useful in VOA theory than Poincar\'e sums.  Let $\Pi_{N}(x,y)=\pi_{N}(x,y)dx^{N}dy^{1-N}$  for $N\ge 1$ with
\begin{align}
	\label{eq:piN}
	\pi_{N}(x,y):=\frac{1}{x-y}\prod_{i\in\calL}\frac{y-A_{i}}{x-A_{i}}=\frac{1}{x-y}
	-\sum_{i\in\calL}\frac{1}{x-A_{i}}\calQ_{i}(y)
	%	\\&
	=\frac{1}{x-y}+\sum_{\ell\in\calL} f_{\ell}(x)y^{\ell},
\end{align}
for Lagrange polynomial $\calQ_{i}(y):=\prod_{j\neq i}\frac{y-A_{j}}{A_{i}-A_{j}}\in \Poly_{2N-2}(y)$ (where $\sum_{i\in\calL} \calQ_{i}(y)=1$) and
\begin{align}
	\label{eq:flx}
	f_{\ell}(x):=-\sum_{i\in\calL}\frac{1}{x-A_{i}}\calQ_{i}^{(\ell)}(0), \quad \ell\in\calL.	
\end{align}
It is useful to define the following forms  labelled by $a,b\in\I$ and integers $m,n\ge 0$ constructed from moment integrals of $\Pi_{N}(x,y)$  as follows:
\begin{align}
	L_{b}^{n}(x):=&\frac{\rho_{b} ^{\half n}}{\tpi}
	\oint\limits_{\mathcal{C}_{b}(y)}
	\Pi_{N}(x,y)y_{b}^{-n-1}\,dy^{N}
	\label{eq:Ldef}
	=\rho_{b}^{\half n} \pi_{N}^{(0,n)}(x,w_{b})dx^{N},  
	\\
	R_{a}^{m}(y):=&\frac{(-1)^{N}\rho_{a} ^{\half (m+1)}}{\tpi}
	\oint\limits_{\mathcal{C}_{-a}(x)}
	\Pi_{N}(x,y)x_{-a}^{-m-1}\,dx^{1-N}
	\label{eq:Rdef}
	\\
	=&(-1)^{N}\rho_{a}^{\half (m+1)} \pi_{N}^{(m,0)}(w_{-a},y)dy^{1-N},  \notag
\end{align}
where $y_{b}:=y-w_{b}$ and $x_{-a}:=x-w_{-a}$.
We let $L(x)=(L_{b}^{n}(x))$ and 
$R(x)=(R_{a}^{m}(y))$ denote doubly indexed
infinite row and column vectors, respectively, indexed by $a,b\in \I$ and $m,n\ge 0$.
We also define the doubly indexed matrix $A=(A_{ab}^{mn})$ with components 
\begin{align}
	\notag
	A_{ab}^{mn} :=&\frac{(-1)^{N}\rho_{a} ^{\half (m+1)}}{\tpi}
	\oint\limits_{\mathcal{C}_{-a}(x)}
	L_{b}^{n}(x)x_{-a}^{-m-1}\,dx^{1-N}
	=
	\frac{\rho_{b}^{\half n}}{\tpi}
	\oint\limits_{\mathcal{C}_{b}(y)} R_{a}^{m}(y)  y_{b}^{-n-1}\,dy^{N} 
	\\
	\label{eq:Adef}
	= &
	\begin{cases}(-1)^{N}\rho_{a}^{\half (m+1)}\rho_{b}^{\half n}\pi_{N}^{(m,n)}(w_{-a},w_{b}),&a\neq-b,\\ 
		(-1)^{N}\rho_{a}^{\half(m+n+1)}e_{m}^{n}(w_{-a}),&a=-b,
	\end{cases}
\end{align}
where $e_{m}^{n}(y):=\sum_{\ell\in\calL}\binom{\ell}{n}f_{\ell}^{(m)}(y)y^{\ell-n}$ for $f_{\ell}$ of~\eqref{eq:flx}.
We note that $A_{a,-a}^{mn}=0$ for all $n> 2N-2$.
We also define the matrix $\Delta$ with components
\begin{align}
	\Delta_{ab}^{mn}:=\delta_{m,n+2N-1}\delta_{ab}.\label {eq:Deltadef}
\end{align}
Let  $\Ltilde(x):=L(x)\Delta$ and $\Atilde:=A\Delta $. These are independent of the $f_{\ell}(x)$ terms with
\begin{align}
	\label{eq:Ptilde}
	\Ltilde_{b}^{n}(x)&=%L_{b}(x;n+2N-1)=
	\frac{\rho_{b}^{\half (n+2N-1)}}{(x-w_{b})^{n+2N}}dx^{N},
	\\
	\label{eq:Rtilde}
	\Atilde_{ab}^{mn}&=%A_{ab}(m,n+2N-1)
	\begin{cases}
		\displaystyle{(-1)^{m+N}\binom{m+n+2N-1}{m}
			\frac{\rho_{a}^{\half (m+1)}\rho_{b}^{\half (n+2N-1)}}
			{(w_{-a}-w_{b})^{m+n+2N}}},&a\neq-b\\ 
		0,&a=-b.
	\end{cases}
\end{align}
We define $(I-\Atilde)^{-1}:=\sum_{k\geq 0}\Atilde^{k}$ where $I$ denotes the infinite identity matrix. Then 
$\Psi_{N}(x,y)$ can be expressed in terms of $\Pi_{N},\Ltilde,R,\Atilde$ as follows \cite{TW1}
\begin{theorem}
	\label{theor:Psisew} 
	For all $N\ge 1$ and $x,y\in \D$ 
	\begin{align}
		\Psi_{N} (x,y)=\Pi_{N}(x,y)+\Ltilde(x)
		\left(I-\Atilde \right)^{-1}R(y),
		\label{eq:Psisew}
	\end{align}
	where $\left(I-\Atilde \right)^{-1}$ is convergent for all $(\bm{w,\rho})\in \Cg$. 
\end{theorem}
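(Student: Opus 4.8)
The plan is to resum the Poincar\'e series \eqref{eq:PsiNdef} defining $\Psi_{N}$ by grouping the sum over the free Schottky group $\Gamma$ according to reduced words, and to recognise the moment data $\Ltilde$, $R$, $\Atilde$ as the building blocks of that resummation. Splitting off the identity element gives $\Psi_{N}(x,y)=\Pi_{N}(x,y)+\sum_{\gamma\neq 1}\Pi_{N}(\gamma x,y)$, so it suffices to show that the non-identity part equals $\Ltilde(x)(I-\Atilde)^{-1}R(y)$.

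First I would establish two local expansion identities. Inserting the Schottky map $\gamma_{a}z=w_{-a}+\rho_{a}/(z-w_{a})$ and expanding in the small quantity $\rho_{a}/z_{a}$ (with $z_{a}=z-w_{a}$), valid for $z$ outside the disc $\Delta_{a}$, a direct computation gives the single-generator identity $\Pi_{N}(\gamma_{a}z,y)=\sum_{m\ge 0}\Ltilde_{a}^{m}(z)\,R_{a}^{m}(y)$ and the propagation identity $\Ltilde_{b}^{n}(\gamma_{a}z)=\sum_{m\ge 0}\Ltilde_{a}^{m}(z)\,\Atilde_{ab}^{mn}$ for $a\neq -b$. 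Both follow by substituting $\gamma_{a}z$, expanding $(\gamma_{a}z-w_{-a})^{m}$ respectively $(\gamma_{a}z-w_{b})^{-n-2N}$, collecting powers of $z_{a}$, and matching against the closed forms \eqref{eq:Rdef}, \eqref{eq:Ptilde} and \eqref{eq:Rtilde}. The index shift by $2N-1$ built into $\Delta$ of \eqref{eq:Deltadef}, hence into $\Ltilde=L\Delta$ and $\Atilde=A\Delta$, is exactly what annihilates the polynomial term $\sum_{\ell\in\calL}f_{\ell}(x)y^{\ell}$ of $\pi_{N}$ in \eqref{eq:piN}, so these identities are independent of the auxiliary points $\{A_{\ell}\}$.

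Next I would iterate: for a reduced word $\gamma=\gamma_{a_{1}}\cdots\gamma_{a_{k}}$, applying the single-generator identity to the outermost generator $\gamma_{a_{1}}$ and then the propagation identity repeatedly to peel off $\gamma_{a_{2}},\ldots,\gamma_{a_{k}}$ yields
\[
\Pi_{N}(\gamma x,y)=\sum_{m_{1},\ldots,m_{k}}\Ltilde_{a_{k}}^{m_{k}}(x)\,\Atilde_{a_{k}a_{k-1}}^{m_{k}m_{k-1}}\cdots\Atilde_{a_{2}a_{1}}^{m_{2}m_{1}}\,R_{a_{1}}^{m_{1}}(y),
\]
where at each step the point $\gamma_{a_{j+1}}\cdots\gamma_{a_{k}}x$ lies outside $\Delta_{a_{j}}$ by the disc disjointness \eqref{eq:JordanIneq} together with $a_{j}\neq -a_{j+1}$, so every expansion is legitimate. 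The crucial observation is that $\Atilde_{a,-a}^{mn}=0$ from \eqref{eq:Rtilde}, so the reduced-word constraint $a_{i}\neq -a_{i+1}$ is enforced automatically by the vanishing of these matrix entries. Summing over reduced words of all lengths $k\ge 1$ therefore coincides with summing $\Ltilde(x)\Atilde^{\,k-1}R(y)$ over all index sequences, which sums to $\Ltilde(x)\big(\sum_{j\ge 0}\Atilde^{\,j}\big)R(y)=\Ltilde(x)(I-\Atilde)^{-1}R(y)$, as required.

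The main obstacle is analytic rather than algebraic: one must justify absolute convergence of the Poincar\'e series and the interchange of the group sum with the moment expansions. The entries $\Atilde_{ab}^{mn}$ decay like $(w_{-a}-w_{b})^{-(m+n+2N)}$ carrying weights $\rho_{a}^{(m+1)/2}\rho_{b}^{(n+2N-1)/2}$, and the heart of the matter is to bound the operator norm of $\Atilde$ on a suitable weighted sequence space by the separation of the discs, so that $\sum_{j\ge 0}\Atilde^{\,j}$ converges for every $(\bm{w},\bm{\rho})\in\Cg$; this single estimate simultaneously gives convergence of the series and legitimises the rearrangement. For $N\ge 2$ this bound is carried out in \cite{TW1}; for $N=1$ the only changes are that $\calL=\{0\}$ and that $\Pi_{1}$ of \eqref{eq:Pi1def} carries the extra regularising pole at $A_{0}\in\Omega_{0}$, which does not enter the resummation but is precisely what renders the $N=1$ Poincar\'e series convergent.
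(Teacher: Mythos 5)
The paper does not actually prove Theorem~\ref{theor:Psisew}: it is quoted verbatim from the companion paper \cite{TW1}, so there is no in-text argument to compare against. Your reconstruction is nevertheless correct and is the standard resummation argument behind such sewing formulas: I checked that $\Pi_{N}(\gamma_{a}z,y)=\sum_{m\ge 0}\Ltilde_{a}^{m}(z)R_{a}^{m}(y)$ follows from Taylor-expanding $\pi_{N}(w_{-a}+\rho_{a}/z_{a},y)$ in its first argument together with the Jacobian factor $(-\rho_{a}/z_{a}^{2})^{N}$, and that $\Ltilde_{b}^{n}(\gamma_{a}z)=\sum_{m\ge 0}\Ltilde_{a}^{m}(z)\Atilde_{ab}^{mn}$ for $a\neq -b$ matches \eqref{eq:Rtilde} exactly, while the vanishing $\Atilde_{a,-a}^{mn}=0$ does enforce the reduced-word constraint so that summing over word lengths produces the geometric series $\Ltilde(x)\bigl(\sum_{j\ge 0}\Atilde^{j}\bigr)R(y)$. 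The only substantive point you defer --- the domain of convergence of the local Taylor expansions (the singularities of $\pi_{N}(\cdot,y)$ at the limit points $A_{\ell}$ may lie inside the discs $\Delta_{\pm a}$) and the operator-norm bound giving convergence of $(I-\Atilde)^{-1}$ on all of $\Cg$ --- is precisely what is deferred to \cite{TW1} by the paper itself, so this is not a gap relative to the source.
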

We also find the holomorphic $N$-form $\Theta_{N,a}^{\ell}(x)$ of~\eqref{eq:chiTheta} is given by \cite{TW1}:
\begin{proposition}\label{prop:Thetaexp}
	Let $a\in\Ip$ and $\ell\in\calL$. Then
	\begin{align}
		\label{eq:Thetaexp}
		\Theta_{N,a}^{\ell}(x)=T_{a}^{\ell}(x)+(-1)^{N }\rho_{a}^{N-1-\ell}T_{-a}^{2N-2-\ell}(x),
	\end{align}
	where $
	T_{a}^{\ell}(x):=
	\rho_{a}^{-\half \ell}
	(L(x) +\Ltilde(x)(I-\Atilde)^{-1}A  )_{a}^{\ell}$.
\end{proposition}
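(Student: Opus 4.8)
The plan is to characterise both $\Theta_{N,a}^{\ell}(x)$ and the auxiliary quantity $T_{a}^{\ell}(x)$ as $y$-moment integrals of the Bers quasiform around the Schottky contour $\calC_{a}$, and then to relate the two using the defining quasiperiodicity of $\Psi_{N}(x,y)$ in $y$ together with the sewing relation~\eqref{eq:SchottkySewing2}. The sewing expansion of Theorem~\ref{theor:Psisew} converts both moments into the row/column vector data $L,\Ltilde,A$, after which the identification is a bookkeeping exercise.

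First I would establish the moment identity
\[
	T_{a}^{n}(x)=\frac{1}{\tpi}\oint_{\calC_{a}(y)}\Psi_{N}(x,y)\,y_{a}^{-n-1}\,dy^{N},\qquad a\in\I,\ n\ge 0.
\]
This follows by substituting the sewing expansion~\eqref{eq:Psisew}: the $\Pi_{N}(x,y)$ term reproduces $\rho_{a}^{-n/2}L_{a}^{n}(x)$ by the definition~\eqref{eq:Ldef}, while the $\Ltilde(x)(I-\Atilde)^{-1}R(y)$ term produces $\rho_{a}^{-n/2}\bigl(\Ltilde(x)(I-\Atilde)^{-1}A\bigr)_{a}^{n}$, since taking the $y_{a}^{-n-1}$ moment of $R_{b}^{m}(y)$ around $\calC_{a}$ yields exactly $A_{ba}^{mn}$ by~\eqref{eq:Adef}. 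Clearing the common factor $\rho_{a}^{-n/2}$ recovers the definition of $T_{a}^{n}(x)$.

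Next, the cocycle formula~\eqref{eq:chiTheta} gives immediately, for $a\in\Ip$ and $\ell\in\calL$,
\[
	\Theta_{N,a}^{\ell}(x)=-\frac{1}{\tpi}\oint_{\calC_{a}(y)}\chi[\gamma_{a}](x,y)\,y_{a}^{-\ell-1}\,dy^{N},
\]
because this moment picks out the coefficient of $y_{a}^{\ell}$. I would then substitute the quasiperiodicity $\chi[\gamma_{a}](x,y)=\Psi_{N}(x,\gamma_{a}y)-\Psi_{N}(x,y)$; the second term contributes $-T_{a}^{\ell}(x)$ by the moment identity above. For the first term I would change variables to $y'=\gamma_{a}y$. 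The sewing relation~\eqref{eq:SchottkySewing2} gives $y_{a}(y'-w_{-a})=\rho_{a}$ and hence the Jacobian $d(\gamma_{a}y)/dy=-(y'-w_{-a})^{2}/\rho_{a}$; feeding these, together with the weight $(1-N)$ pullback of $\Psi_{N}(x,\cdot)$, into the integral converts the $\calC_{a}$-moment of weight $-\ell-1$ into a $\calC_{-a}$-moment of weight $-(2N-2-\ell)-1$, which by the moment identity is a multiple of $T_{-a}^{2N-2-\ell}(x)$. Adding the two contributions then yields the claimed formula.

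The main obstacle will be the sign and orientation bookkeeping in this change of variables. Three signs enter: the factor $(-1)^{1-N}$ from raising the Jacobian to the power $1-N$, the sign in $dy=-\rho_{a}(y'-w_{-a})^{-2}\,dy'$, and the orientation reversal $\gamma_{a}\calC_{a}=-\calC_{-a}$ of the image contour (as used earlier in the proof of Proposition~\ref{prop:GNexp}). The first two combine to the overall factor $(-1)^{N}\rho_{a}^{N-1-\ell}$, while the orientation reversal is exactly what flips the sign of the second term from minus to plus, producing $\Theta_{N,a}^{\ell}(x)=T_{a}^{\ell}(x)+(-1)^{N}\rho_{a}^{N-1-\ell}T_{-a}^{2N-2-\ell}(x)$ rather than with a minus sign. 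One must also check that for $x\in\D$ the quasiform $\Psi_{N}(x,y)$ is holomorphic in $y$ throughout $\Delta_{a}$, so that the moments genuinely extract Taylor coefficients, and that $\ell\mapsto 2N-2-\ell$ stays within $\calL$. The case $N=1$, where $\calL=\{0\}$ and $\Theta_{1,a}^{0}=\nu_{a}$, is then a direct specialisation.
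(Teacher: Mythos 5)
The paper gives no proof of this proposition --- it is quoted from the companion paper \cite{TW1} --- so there is nothing internal to compare against; your argument must stand on its own, and it does. Taking Theorem~\ref{theor:Psisew} as input (as the paper itself does), all three of your steps check out: the moment identity $T_{a}^{n}(x)=\frac{1}{\tpi}\oint_{\calC_{a}(y)}\Psi_{N}(x,y)\,y_{a}^{-n-1}dy^{N}$ follows by term-by-term integration of \eqref{eq:Psisew} using the integral characterisations in \eqref{eq:Ldef} and \eqref{eq:Adef}; the $\calC_{a}$-moment of $\chi[\gamma_{a}]$ extracts $-\Theta_{N,a}^{\ell}(x)$ exactly, since by \eqref{eq:chiTheta} the integrand is a polynomial of degree $2N-2$ in $y_{a}$; and in the substitution $y'=\gamma_{a}y$ the sewing relation $y_{a}\,y'_{-a}=\rho_{a}$ gives $dy'/dy=-(y'_{-a})^{2}/\rho_{a}$, so the integrand acquires the factor $(-1)^{N}\rho_{a}^{N-\ell-1}(y'_{-a})^{-(2N-2-\ell)-1}$ while the orientation reversal $\gamma_{a}\calC_{a}=-\calC_{-a}$ supplies the last sign --- I have verified that this bookkeeping produces the stated $+(-1)^{N}\rho_{a}^{N-1-\ell}T_{-a}^{2N-2-\ell}(x)$. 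One side remark in your final paragraph is false, though harmlessly so: for $x\in\D$ the quasiform $\Psi_{N}(x,\cdot)$ is \emph{not} holomorphic throughout $\Delta_{a}$, since it has simple poles at $y=\gamma x$ for every reduced word $\gamma$ beginning with $\gamma_{-a}$, and these points lie inside $\calC_{a}$. No step of your argument actually requires that holomorphy --- the moment identity is derived from the sewing series on the contour itself, not from a Taylor expansion of $\Psi_{N}$ in $\Delta_{a}$ --- so this ``check'' should simply be deleted rather than attempted.
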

\section{Vertex Operator Algebras and Genus Zero Zhu Recursion}
\label{sec:VOAZhu0}
\subsection{Vertex operator algebras} 
For indeterminates  $x,y$ we adopt the binomial expansion convention that for $m\in\Z$ 
\begin{align}
	\label{eq:binexp}
	(x+y)^{m}:=\sum_{k\ge 0}\binom{m}{k}x^{m-k}y^{k},
\end{align} 
i.e. we expand in non-negative powers of the second indeterminate $y$.
We  review some aspects of vertex operator algebras e.g.~\cite{K,FHL,LL,MT1}. A vertex operator algebra (VOA) is a quadruple $(V,Y(\cdot,\cdot),\vac,\omega)$ consisting of a graded vector space $V=\bigoplus_{n\ge 0}V_{n}$, with $\dim V_{n}<\infty$,  with two distinguished elements: the vacuum vector $\vac\in V_{0}$  and the Virasoro conformal vector $\omega\in V_{2}$.  
For each $u\in V$ there exists a vertex operator, a formal Laurent series in $z$, given by   
\begin{align*}
	Y(u,z)=\sum_{n\in\Z}u(n)z^{-n-1},
\end{align*}
for \emph{modes} $u(n)\in\End(V)$. 
For each  $u,v\in V$ we have $u(n)v=0$ for all $n\gg 0$, known as \emph{lower truncation}, and 
$u=u(-1)\vac $ and $u(n)\vac =0$ for all $n\ge 0$,
known as \emph{creativity}.
The vertex operators also obey  \emph{locality}:
\begin{align*}
	(x-y)^{N}[Y(u,x),Y(v,y)]=0,\quad N\gg 0.
\end{align*}
For the Virasoro conformal vector  $\omega$
\begin{align*}
	Y(\omega,z)&=\sum_{n\in\Z}L(n)z^{-n-2},
\end{align*}
where the operators  $L(n)=\omega(n+1)$  satisfy the Virasoro  algebra
\begin{align*}
	[L(m),L(n)]=(m-n)L(m+n)+\frac{C}{2}\binom{m+1}{3}\delta_{m,-n}\Id_{V},
\end{align*}
for a constant \emph{central charge} $C\in\C$.   Vertex operators  satisfy the \emph{translation property}: 
\begin{align*}
	Y(L(-1)u,z)=\del Y(u,z).
\end{align*}
Finally, $V_{n}=\{v\in V:L(0)v=nv\}$ where
$v\in V_{n}$ is the \emph{(conformal) weight} $\wt(v)=n$. 

\medskip
We quote a number of basic VOA properties e.g.~\cite{K,FHL,LL,MT1}. For $u\in V_{N}$ 
\begin{align}
	\label{eq:vjVm}
	u(j):V_{k}\rightarrow V_{k+N-j-1}.
\end{align}
The commutator identity: for all $u,v\in V$ we have 
\begin{align}
	[u(k),Y(v,z)]=\left (\sum_{j\ge 0}Y(u(j)v,z)\partial_{z} ^{(j)}\right)z^{k}.
	\label{eq:ComId}
\end{align} 
The associativity identity: for each $u,v\in V$  there exists $M\ge 0$ such that
\begin{align}
	(x+y)^{M}Y(Y(u,x)v,y)&=(x+y)^{M}Y(u,x+y)Y(v,y).
	\label{eq:AssocId}
\end{align}
M\"obius maps: $\{L(0),L(\pm 1)\}$ generates an $\slLie_{2}$ Lie algebra where for all $u\in V$ there are the following associated formal M\"obius maps
\begin{align}
	e^{xL(-1)}Y(u,z)e^{-xL(-1)}&=Y(e^{xL(-1)}u,z)=Y(u,z+x),
	\label{eq:Trans}
	\\
	x^{L(0)}Y(u,z)x^{-L(0)}&=Y(x^{L(0)}u, xz),\quad x\neq 0,
	\label{eq:Dil}
	\\
	e^{xL(1)}Y(u,z)e^{-xL(1)}&=
	Y\left(e^{x(1-xz)L(1)}(1-xz)^{-2L(0)}u,\frac{z}{1-xz}\right).
	\label{eq:Inv}
\end{align}
$L(\pm 1),L(0)$ obey the following standard identities e.g. \cite{FHL}, Lemma~5.2.2:
\begin{align}
	x^{L(0)} L(\pm 1)x^{-L(0)}&=x^{\mp 1}L(\pm 1),\quad x\neq 0,
	\label{eq:L0Lpm1conj}
	\\[3pt]
	e^{x L(\pm 1)}L(0)e^{-x L(\pm 1)}&=L(0)\pm xL(\pm 1),
	\label{eq:LpmL0conj}
	\\[3pt]
	e^{x L(1)}L(-1)e^{-x L(1)}&=L(-1)+2xL(0)+x^{2}L(1).
	\label{eq:L1Lm1conj}
\end{align}	
We also note the following useful identities:
\begin{proposition}
With $z=1-xy\neq 0$ we have
\begin{align}
	e^{x L(-1)}e^{yL(1)}&=z^{L(0)}e^{y L(1)} e^{x L(-1)}z^{L(0)}
	\label{eq:Lm1L1com1}
	\\
	&= 
	z^{2L(0)}e^{yzL(1)}e^{xz^{-1} L(-1)}.
	\label{eq:Lm1L1com2}
\end{align}
\end{proposition}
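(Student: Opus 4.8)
The plan is to read both identities as relations inside the three‑dimensional Lie algebra $\slLie_2(\C)$ spanned by $L(-1),L(0),L(1)$, which closes with brackets $[L(0),L(\pm 1)]=\mp L(\pm 1)$ and $[L(1),L(-1)]=2L(0)$ (consistent with \eqref{eq:L0Lpm1conj}--\eqref{eq:L1Lm1conj}). Writing $z^{L(0)}=e^{\log z\,L(0)}$ and $z^{2L(0)}=e^{2\log z\,L(0)}$ with $\log z=\log(1-xy)$, each side of \eqref{eq:Lm1L1com1} and \eqref{eq:Lm1L1com2} is a finite product of exponentials of scalar multiples of the three generators, the scalars $\log z,\ yz,\ xz^{-1},\ x,\ y$ all lying in $\C[[x,y]]$ with vanishing constant term. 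First I would record that every operator involved is a well‑defined formal power series in $x,y$ with honest operator coefficients: $e^{xL(-1)}=\sum_{k\ge 0}\tfrac{x^{k}}{k!}L(-1)^{k}$, while $L(1)$ lowers conformal weight and $V$ is $\Z_{\ge 0}$‑graded, so $e^{yL(1)}$ acts as a terminating sum on each homogeneous vector, and $z^{L(0)}$ acts as multiplication by $z^{n}=(1-xy)^{n}$ on $V_{n}$. At $x=y=0$ all three expressions reduce to the identity.

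The computational heart is the faithful two‑dimensional representation
\[
L(-1)\mapsto\begin{pmatrix}0&1\\0&0\end{pmatrix},\quad
L(0)\mapsto\begin{pmatrix}\half&0\\0&-\half\end{pmatrix},\quad
L(1)\mapsto\begin{pmatrix}0&0\\-1&0\end{pmatrix},
\]
in which the brackets above are checked directly and $e^{xL(-1)}\mapsto\left(\begin{smallmatrix}1&x\\0&1\end{smallmatrix}\right)$, $e^{yL(1)}\mapsto\left(\begin{smallmatrix}1&0\\-y&1\end{smallmatrix}\right)$ and $z^{L(0)}\mapsto\left(\begin{smallmatrix}z^{1/2}&0\\0&z^{-1/2}\end{smallmatrix}\right)$. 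A short matrix multiplication then shows that all three expressions in \eqref{eq:Lm1L1com1}--\eqref{eq:Lm1L1com2} map to the single matrix $\left(\begin{smallmatrix}z&x\\-y&1\end{smallmatrix}\right)$, the square roots of $z$ cancelling in each product.

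The main obstacle is transferring this matrix identity to operators on the infinite‑dimensional $V$, since the two‑dimensional representation is not faithful on the enveloping algebra. I would resolve this using the universality of the Baker--Campbell--Hausdorff series: because every exponent has coefficients in the ideal $(x,y)\C[[x,y]]$, each side equals a single exponential $e^{Z}$ with $Z\in\slLie_2(\C)[[x,y]]$ a universal Lie series in $L(-1),L(0),L(1)$ determined by BCH and independent of the representation. The matrix computation gives equality of the two resulting single exponentials in the faithful representation; since both are power series in $x,y$ equal to the identity at the origin and $\exp$ is a formal isomorphism there, the exponents agree in $M_{2}(\C)[[x,y]]$, hence in $\slLie_2(\C)[[x,y]]$ by faithfulness, and therefore the two products agree when evaluated in the representation on $V$.

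Finally I note that \eqref{eq:Lm1L1com2} can be obtained directly from \eqref{eq:Lm1L1com1} using only \eqref{eq:L0Lpm1conj}: the conjugation relations $e^{yzL(1)}=z^{-L(0)}e^{yL(1)}z^{L(0)}$ and $e^{xz^{-1}L(-1)}=z^{-L(0)}e^{xL(-1)}z^{L(0)}$ let the outer factor $z^{2L(0)}$ in \eqref{eq:Lm1L1com2} absorb one $z^{-L(0)}$ and collapse the inner $z^{L(0)}z^{-L(0)}$, returning exactly the middle expression of \eqref{eq:Lm1L1com1}. Thus only \eqref{eq:Lm1L1com1}, which requires the cross‑bracket $[L(1),L(-1)]=2L(0)$ rather than mere conjugation by $L(0)$, is genuinely substantive, and it is there that the representation‑theoretic transfer argument does the real work.
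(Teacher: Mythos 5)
Your proof is correct, but it takes a genuinely different route from the paper's. The paper proves \eqref{eq:Lm1L1com1} by a derivation argument: it sets $K(x,y):=z^{L(0)}e^{yL(1)}e^{xL(-1)}z^{L(0)}$ and uses the conjugation identities \eqref{eq:L0Lpm1conj}--\eqref{eq:L1Lm1conj} to verify $\del_{x}K(x,y)=L(-1)K(x,y)$, whence $K(x,y)=e^{xL(-1)}K(0,y)=e^{xL(-1)}e^{yL(1)}$; it then obtains \eqref{eq:Lm1L1com2} from \eqref{eq:Lm1L1com1} exactly as you do in your final paragraph, by conjugating with $z^{L(0)}$. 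Your route instead reduces the identity to a three-line computation in the fundamental representation of $\slLie_{2}(\C)$ and then transfers it to $\End(V)[[x,y]]$ via Baker--Campbell--Hausdorff universality: since every exponent lies in $\slLie_{2}(\C)\otimes(x,y)\C[[x,y]]$, each side is $e^{Z}$ for a Lie-algebra-valued series $Z$, the formal logarithm identifies the two exponents in $M_{2}(\C)[[x,y]]$, and faithfulness of the two-dimensional representation \emph{on the Lie algebra} (not the enveloping algebra --- you correctly flag this as the crux) lifts the equality to $\slLie_{2}(\C)[[x,y]]$ and hence to any module. Interestingly, the paper's own remark immediately after the proposition exhibits precisely your matrix identifications but only as a consistency check; your BCH argument is exactly the missing ingredient that upgrades that check to a proof. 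What each approach buys: the paper's is entirely self-contained, using nothing beyond the listed conjugation identities and a first-order ODE in a formal variable; yours makes the group-theoretic content transparent (the identity is just a factorisation in $\SL_{2}$) and reduces all computation to matrix multiplication, at the cost of invoking the universality of BCH and the formal-logarithm argument, both of which you state carefully enough for the transfer to be airtight.
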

\begin{proof}
Let $K(x,y):=z^{L(0)}e^{y L(1)} e^{x L(-1)}z^{L(0)}$, the right hand side of~\eqref{eq:Lm1L1com1}. We will show that $\del_{x}K(x,y)=L(-1)K(x,y)$.
Using~\eqref{eq:L0Lpm1conj}--\eqref{eq:L1Lm1conj}  we find $\del_{x}K(x,y)$ is given by
	\begin{align*}
	& z^{L(0)-1}\Big(
	-yL(0)e^{y L(1)} e^{x L(-1)}
	+ze^{y L(1)}L(-1) e^{x L(-1)}
 -ye^{y L(1)} e^{x L(-1)}L(0)\Big)z^{L(0)}
	\\
	&= z^{L(0)-1}\Big(
	-yL(0)e^{y L(1)} e^{x L(-1)}
	+z(L(-1)+2yL(0)+y^{2}L(1))e^{y L(1)}e^{x L(-1)}
	\\
	&\quad   -ye^{y L(1)}\left(L(0)-xL(-1)\right) e^{x L(-1)}\Big)z^{L(0)}
	\\
	&= z^{-1}\Big(
	z^{2}L(-1)+(y-2xy^{2})L(0)+y^{2}L(1)\Big)K(x,y)
	\\
	&\quad 
	 +z^{L(0)-1}\Big(
	 xy L(-1)-(y-2xy^{2})L(0)-y^2zL(1) \Big)z^{-L(0)}K(x,y)
=L(-1)K(x,y).
	\end{align*}
Thus  $K(x,y)=e^{xL(-1)}K(0,y)=e^{x L(-1)}e^{yL(1)}$.
Eqn.~\eqref{eq:Lm1L1com1} implies~\eqref{eq:Lm1L1com2} using~\eqref{eq:L0Lpm1conj}.
	\end{proof}
	\begin{remark} 
Consider the fundamental $\slLie_{2}(\C)$ representation with the canonical identifications:
 $L(-1)=\left(\begin{smallmatrix}
	   		0 & 1 \\ 0 & 0\end{smallmatrix}\right)$, 
 $L(0)=\frac{1}{2}\left(\begin{smallmatrix}
	   		1 & 0 \\ 0 & -1 \end{smallmatrix}\right)$
 and $L(1)=\left(\begin{smallmatrix}
	   		0 &  0 \\ -1 & 0\end{smallmatrix}\right)$
 giving   $\SL_{2}(\C)$  elements 
 $e^{x L(-1)}=\left(\begin{smallmatrix}
	   		1 & x \\ 0 & 1\end{smallmatrix}\right)$, 
 $e^{yL(1)}=\left(\begin{smallmatrix}
	   		1&  0 \\ -y & 1\end{smallmatrix}\right)$
 and 
 $
x^{L(0)} = \left(\begin{smallmatrix}
 		\sqrt{x}  & 0 \\ 0 & 1/\sqrt{x}
 	\end{smallmatrix}\right)$.
 It is straightforward to confirm that 
 	\eqref{eq:L0Lpm1conj}--\eqref{eq:Lm1L1com2} hold.  
	\end{remark}

Associated with the formal M\"obius map $z\rightarrow {\rho}/{z}$, for  a given scalar $\rho\neq 0$,
we define an adjoint vertex operator \cite{FHL, L}
\begin{align}\label{eq:Yadj}
	Y_{\rho}^{\dagger}(u,z):=\sum_{n\in\Z}u_{\rho}^{\dagger}(n)z^{-n-1}=Y\left(e^{\frac{z}{\rho}L(1)}\left(-\frac{\rho}{z^{2}}\right)^{L(0)}u,\frac{\rho}{z}\right).
\end{align}
For quasiprimary $u$ (i.e. $L(1)u=0$) of weight $N$ we have
\begin{align}
	\label{eq:udagger}
	u_{\rho}^{\dagger}(n)=(-1)^{N}\rho^{n+1-N}u(2N-2-n),
\end{align}
e.g. $L_{\rho}^{\dagger}(n)=\rho^{n}L(-n)$.
A bilinear form $\langle \cdot,\cdot\rangle_{\rho}$ on $V$  is said to be invariant if
\begin{align}
	\label{eq:LiZinvar}
	\langle Y (u, z)v, w\rangle_{\rho} = \langle v, Y_{\rho}^{\dagger}
	(u, z)w\rangle_{\rho},\quad \forall\;u,v,w\in V.
\end{align}
When $\rho=1$ we write $Y^{\dagger}(u,z)$ for the adjoint and $\langle \cdot,\cdot\rangle$ for the bilinear form.
$\langle \cdot,\cdot\rangle_{\rho}$ is symmetric where $\langle u,v\rangle_{\rho}=0$ for $\wt(u)\neq\wt(v)$~\cite{FHL} and 
\begin{align}
	\label{eq:uvA}
	\langle u,v\rangle_{\rho}=\rho^{N}\langle u,v\rangle,
	\quad N=\wt(u)=\wt(v).
\end{align} 
% <u(-1)\vac,v>_{\rho}=(-1)^N \rho^N<\vac,u(2N-1)v>_{\rho}
% =(-1)^N \rho^N<\vac,u(2N-1)v>_{1}
We assume throughout this paper that $V$ is of strong CFT-type i.e. $V_{0} = \C\vac$ and $L(1)V_{1}= 0$. Then the bilinear form with normalisation $\langle \vac,\vac\rangle_{\rho}=1$ is unique~\cite{L}. We also assume that $V$ is simple and isomorphic to the contragredient $V$-module $V'$ \cite{FHL}. 
Then the bilinear form is non-degenerate~\cite{L}. We refer to this unique invariant non-degenerate bilinear form as the Li-Zamolodchikov (Li-Z) metric.

\subsection{Genus zero correlation functions}
We consider some general properties of genus zero $n$-point correlation functions including M\"obius transformation  properties and genus zero Zhu recursion \cite{Z1}. 
 We also describe generalised Ward identities for genus zero $n$-point functions associated with any quasiprimary vector of weight $N$. These results are directly related to Proposition~\ref{prop:HNzero} concerning genus zero meromorphic $N$-forms and are exploited in the later genus $g>0$ analysis.

Define the genus zero $n$-point (correlation) function for $\bm{v}:=v_{1},\ldots,v_{n}$ inserted at $\bm{y}:=y_{1},\ldots,y_{n}$, respectively,  by\footnote{The superscript $(0)$ on $\Zzero(\bm{v,y})$ and on $\Fzero(\bm{v,y})$ in \eqref{eq:BFForm} refers to the genus.}
\begin{align*}
	\Zzero(\bm{v,y}):=\Zzero(\ldots;v_{k},y_{k};\ldots)=\langle \vac,\bm{Y(v,y)}\vac\rangle,
\end{align*}
for ($\rho=1$) Li-Z metric $\langle\cdot,\cdot\rangle$ and 
\begin{align*}
	\bm{Y(v,y)}:= Y(v_{1},y_{1}) \ldots Y (v_{n},y_{n}).
\end{align*}
$\Zzero(\bm{v,y})$ can be extended to a rational function %$\Rat{\Zzero(\bm{v,y})}$ 
in $\bm{y}$ in the domain $|y_{1}|>\ldots >|y_{n}|$.  We define the $n$-point correlation differential form for $v_{k}$ of weight $\wt(v_{k})$:
\begin{align}\label{eq:BFForm}
	\Fzero(\bm{v,y}):=\Zzero(\bm{v,y})\bm{dy^{\wt(v)}},
\end{align}
where $\bm{dy^{\wt(v)}}:=\prod_{k=1}^{n}dy_{k}^{\wt(v_{k})}$. We extend this by linearity for non-homogeneous vectors.
\begin{lemma} 
\label{lem:FMobiusQP}	
Let $v_{k}$ be quasiprimary of weight $\wt(v_{k})$ for $k=1,\ldots,n$. Then for all $\gamma=\left(\begin{smallmatrix}a&b\\c&d\end{smallmatrix}\right)\in\SL_{2}(\C)$ we have
	\begin{align}\label{eq:MobF0}
		\Fzero(\bm{v,y})= \Fzero\left(v_{1},\gamma y_{1}
		; \ldots ; v_{n},\gamma y_{n}
		\right).
	\end{align}
\end{lemma}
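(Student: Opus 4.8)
The plan is to verify Möbius invariance of $\Fzero(\bm{v,y})$ by reducing the general $\SL_2(\C)$ transformation to the three one-parameter subgroups generated by $L(-1)$, $L(0)$, and $L(1)$, exploiting the formal Möbius relations~\eqref{eq:Trans}--\eqref{eq:Inv} already available in the excerpt. Since every element of $\SL_2(\C)$ factors as a product of the translations $e^{xL(-1)}\leftrightarrow\left(\begin{smallmatrix}1&x\\0&1\end{smallmatrix}\right)$, the dilations $x^{L(0)}\leftrightarrow\left(\begin{smallmatrix}\sqrt{x}&0\\0&1/\sqrt{x}\end{smallmatrix}\right)$, and the lower-triangular $e^{yL(1)}\leftrightarrow\left(\begin{smallmatrix}1&0\\-y&1\end{smallmatrix}\right)$ (the identifications in the Remark after Proposition), it suffices to establish~\eqref{eq:MobF0} for each of these three families separately; composing them yields the full statement for arbitrary $\gamma$.

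First I would treat translation $\gamma:z\mapsto z+x$. Insert $\vac$ and use creativity together with $e^{xL(-1)}\vac=\vac$ to write $\Zzero(\bm{v,y})=\langle \vac, e^{-xL(-1)}\bm{Y(v,y)}e^{xL(-1)}\vac\rangle$; conjugating each factor by $e^{xL(-1)}$ and applying~\eqref{eq:Trans} shifts every insertion point $y_k\mapsto y_k+x$, and since translation has trivial derivative the Jacobian factors $\bm{dy^{\wt(v)}}$ are unchanged, giving invariance of the form. Next, for dilation $\gamma:z\mapsto \lambda z$ (writing $\lambda=x$), I would similarly conjugate by $x^{L(0)}$, using $x^{L(0)}\vac=\vac$ and~\eqref{eq:Dil}: each $Y(v_k,y_k)$ becomes $Y(x^{L(0)}v_k, xy_k)$, so on a homogeneous quasiprimary $v_k$ of weight $N_k$ we pick up a scalar $x^{N_k}$, which is exactly cancelled by the transformation $dy_k^{N_k}\mapsto (x\,dy_k)^{N_k}$ of the differential; thus the form is dilation-invariant.

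The main obstacle is the inversion generator $e^{yL(1)}$, corresponding to $z\mapsto z/(1-yz)$, because~\eqref{eq:Inv} produces a nontrivial conformal factor. Here the quasiprimary hypothesis $L(1)v_k=0$ is essential: applying~\eqref{eq:Inv} with the conjugation by $e^{yL(1)}$ (again using $e^{yL(1)}\vac=\vac$), the operator $Y(v_k,y_k)$ transforms to $Y\bigl(e^{y(1-yy_k)L(1)}(1-yy_k)^{-2L(0)}v_k, \tfrac{y_k}{1-yy_k}\bigr)$, and the factor $e^{y(1-yy_k)L(1)}$ acts trivially on the quasiprimary $v_k$, leaving $(1-yy_k)^{-2N_k}v_k$ at the transformed point $\gamma y_k=y_k/(1-yy_k)$. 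The surviving scalar $(1-yy_k)^{-2N_k}$ must then be matched against the Jacobian of the coordinate change: since $\tfrac{d(\gamma y_k)}{dy_k}=(1-yy_k)^{-2}$, one has $dy_k^{N_k}\mapsto (1-yy_k)^{-2N_k}\,d(\gamma y_k)^{N_k}$, so the scalar is precisely absorbed and $\Fzero$ is invariant. I expect the bookkeeping of these conformal and Jacobian factors at the inversion step to be the only delicate point; the computation is routine once one commits to the subgroup decomposition and uses quasiprimarity to kill the $L(1)$-exponential.
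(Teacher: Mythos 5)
Your proposal follows essentially the same route as the paper's proof: verify \eqref{eq:MobF0} on the three one-parameter $\SL_{2}(\C)$ generators using $e^{xL(\pm 1)}\vac=x^{L(0)}\vac=\vac$ together with \eqref{eq:Trans}--\eqref{eq:Inv}, with quasiprimarity killing the $L(1)$-exponential and the conformal factor $(1-yy_{k})^{-2\wt(v_{k})}$ cancelling against the Jacobian; the paper merely adds the one-line observation that rationality of $\Zzero(\bm{v,y})$ makes the right-hand side well defined for arbitrary compositions of the generators. (One small slip: since $d(\gamma y_{k})=(1-yy_{k})^{-2}dy_{k}$ one has $dy_{k}^{N_{k}}=(1-yy_{k})^{+2N_{k}}d(\gamma y_{k})^{N_{k}}$, not the $-2N_{k}$ power you wrote, though your conclusion that the scalar is absorbed is correct.)
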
 
\begin{proof}
The RHS of~\eqref{eq:MobF0} is well-defined for all $\gamma\in\SL_{2}(\C)$ since $\Zzero(\bm{v,y})$ can be extended to a rational function and $d(\gamma y_{k})=(cy_{k}+d)^{-2} dy_{k}$.
Since $e^{xL(\pm 1)}\vac=e^{xL(0)}\vac=\vac$ we obtain~\eqref{eq:MobF0} for the $\SL_{2}(\C)$ generators
$\left(\begin{smallmatrix}
	1&x\\0&1
\end{smallmatrix}\right)$, 
$x^{-1/2}\left(\begin{smallmatrix}
	x & 0 \\ 0 & 1
\end{smallmatrix}\right)$ 
and 
$\left(\begin{smallmatrix}
	1 & 0 \\ -x & 1
\end{smallmatrix}\right)$, 
respectively, by~\eqref{eq:Trans}--\eqref{eq:Inv}. 
Thus~\eqref{eq:MobF0} holds for all $\gamma\in\SL_{2}(\C)$.
\end{proof}
\begin{remark}
For quasiprimary $v_{1},\ldots,v_{n}$ we therefore find $\Fzero(\bm{v,y})$ is a genus zero meromorphic form in each $y_{k}$ of weight $\wt(y_{k})$.
\end{remark}
In general, $\Fzero(\bm{v,y})$ is not a meromorphic form and Lemma~\ref{lem:FMobiusQP} generalises as follows:
\begin{proposition}
\label{prop:ZMobius}
Let $v_{k}$ be of weight $\wt(v_{k})$ for $k=1,\ldots,n$. Then for all $\gamma=\left(\begin{smallmatrix}a&b\\c&d\end{smallmatrix}\right)\in\SL_{2}(\C)$ we have
\begin{align}\label{eq:MobZ0}
	\Zzero(\bm{v,y})= \Zzero\left(
	\ldots ;e^{ -c(cy_{k}+d)L(1)}	\left(cy_{k}+d\right)^{-2\wt(v_{k})}v_{k},\gamma y_{k};\ldots
	\right).
\end{align}
\end{proposition}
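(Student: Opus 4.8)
The plan is to establish \eqref{eq:MobZ0} for the three generators of $\SL_2(\C)$ separately and then invoke the fact that they generate the full group. The starting point is the same as in Lemma~\ref{lem:FMobiusQP}: the correlation function $\Zzero(\bm{v,y})$ extends to a rational function, so the right-hand side of \eqref{eq:MobZ0} is well-defined. The essential difference from the quasiprimary case is that we can no longer suppress the $e^{xL(1)}$ conjugation, since a general $v_k$ need not satisfy $L(1)v_k=0$; this is exactly why the explicit operator $e^{-c(cy_k+d)L(1)}(cy_k+d)^{-2\wt(v_k)}$ appears on the right.

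First I would treat the translation generator $\left(\begin{smallmatrix}1&x\\0&1\end{smallmatrix}\right)$, where $c=0$ and $d=1$. Here \eqref{eq:Trans} gives $Y(v_k,y_k+x)=e^{xL(-1)}Y(v_k,y_k)e^{-xL(-1)}$, and since $e^{xL(-1)}\vac=\vac$ these conjugations telescope across the product $\bm{Y(v,y)}$, leaving $\Zzero$ unchanged; this matches the right-hand side because with $c=0$ the inserted operator reduces to the identity. Next I would treat the dilation generator, realised as $x^{-1/2}\left(\begin{smallmatrix}x&0\\0&1\end{smallmatrix}\right)$, again with $c=0$; here \eqref{eq:Dil} together with $x^{L(0)}\vac=\vac$ gives the scaling of each coordinate and produces the weight factor, while the $L(1)$-exponential remains trivial since $c=0$.

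The main work, and the step I expect to be the obstacle, is the inversion generator $\left(\begin{smallmatrix}1&0\\-x&1\end{smallmatrix}\right)$, corresponding to $z\mapsto z/(1-xz)$, where $c=-x\neq 0$ and the nontrivial $L(1)$-conjugation is genuinely present. For this I would use \eqref{eq:Inv}, which states
\begin{align*}
e^{xL(1)}Y(v_k,y_k)e^{-xL(1)}=Y\!\left(e^{x(1-xy_k)L(1)}(1-xy_k)^{-2L(0)}v_k,\frac{y_k}{1-xy_k}\right).
\end{align*}
Inserting $e^{-xL(1)}e^{xL(1)}=\Id$ between consecutive factors of $\bm{Y(v,y)}$ and using $e^{xL(1)}\vac=\vac$ at both ends lets the conjugations telescope, converting each $Y(v_k,y_k)$ into a vertex operator at the transformed point $\gamma y_k$ acting on the modified state $e^{x(1-xy_k)L(1)}(1-xy_k)^{-2L(0)}v_k$. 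The remaining task is purely algebraic: to check that $(1-xy_k)^{-2L(0)}v_k=(cy_k+d)^{-2\wt(v_k)}v_k=(1-xy_k)^{-2\wt(v_k)}v_k$ on the weight-$\wt(v_k)$ vector (using $c=-x$, $d=1$), and that $x(1-xy_k)L(1)$ matches $-c(cy_k+d)L(1)=x(1-xy_k)L(1)$, so that the inserted operator in \eqref{eq:MobZ0} is reproduced exactly. Finally, since these three generators generate $\SL_2(\C)$ and the inserted operator transforms consistently under composition (the cocycle in $(cy+d)$ and the $L(1)$-exponential compose correctly, as encoded in the $\slLie_2$ relations \eqref{eq:L0Lpm1conj}--\eqref{eq:L1Lm1conj}), the identity \eqref{eq:MobZ0} extends to all $\gamma\in\SL_2(\C)$, completing the proof.
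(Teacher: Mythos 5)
Your route is genuinely different from the paper's, and it can be made to work, but the step you pass over most quickly is exactly the one the paper's proof is built to handle. The paper never argues generator-by-generator for general $v_k$: it writes each insertion as a quasiprimary descendant $L(-1)^{r_k}u_k$, forms the generating function $G(\bm{u,y,\xi})$, and shows by an explicit computation with \eqref{eq:L0Lpm1conj} and \eqref{eq:Lm1L1com1} that $Y\bigl(e^{-c(cy+d)L(1)}(cy+d)^{-2L(0)}e^{\xi L(-1)}u,\gamma y\bigr)=\bigl(c(y+\xi)+d\bigr)^{-2\wt(u)}Y\bigl(u,\gamma(y+\xi)\bigr)$ for quasiprimary $u$, so that the general statement collapses onto Lemma~\ref{lem:FMobiusQP} evaluated at the shifted points $y_k+\xi_k$. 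The reason for this detour is the composition step in your argument. For quasiprimary insertions the law is invariance of a rational form, so ``true for generators $\Rightarrow$ true for the group'' is immediate; for general insertions the law carries the $\gamma$-dependent operator $T_\gamma(y):=e^{-c(cy+d)L(1)}(cy+d)^{-2L(0)}$, and iterating over a word in the generators requires (i) restating \eqref{eq:MobZ0} with $L(0)$ in place of $\wt(v_k)$ and extending by linearity, since $T_{\gamma_2}(y_k)v_k$ is no longer homogeneous, and (ii) the cocycle identity $T_{\gamma_1}(\gamma_2 y)\,T_{\gamma_2}(y)=T_{\gamma_1\gamma_2}(y)$ as an operator identity on $V$. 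This is precisely what the Remark following the Proposition is warning about, and your appeal to \eqref{eq:L0Lpm1conj}--\eqref{eq:L1Lm1conj} is not quite enough: the relation needed to reorder $e^{xL(-1)}$ past $e^{yL(1)}$ is \eqref{eq:Lm1L1com1}--\eqref{eq:Lm1L1com2}, from which the cocycle property follows (e.g.\ via $T_\gamma(y)=e^{-(\gamma y)L(-1)}\,\gamma\,e^{yL(-1)}$ in the exponentiated $\slLie_2$-action). With that verification supplied, together with the rationality argument you already invoke to identify the telescoped formal series with the rational extension for each fixed generator, your proof closes; it is the more direct route, while the paper's buys a proof that never has to establish the group law for the automorphy factor. (A minor point: at the left end of the telescoping you need $\langle\vac, e^{-xL(1)}\,\cdot\,\rangle=\langle\vac,\cdot\,\rangle$, which uses $L(1)^\dagger=L(-1)$ and $L(-1)\vac=0$ rather than $e^{xL(1)}\vac=\vac$.)
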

\begin{remark}
Note that although~\eqref{eq:MobZ0} holds automatically for the M\"obius maps generated by $L(0),L(\pm 1)$ from~\eqref{eq:Trans}--\eqref{eq:Dil}, we must confirm that~\eqref{eq:MobZ0} holds for all M\"obius maps.
\end{remark}
\begin{proof}
We prove~\eqref{eq:MobZ0} for all quasiprimary weight $r_{k}+\wt(u_{k})$ descendants $L(-1)^{r_{k}}u_{k}$  of quasiprimary vectors $u_{k}$ for $k=1,\ldots,n$. In particular, we consider the exponential generating function for quasiprimary descendant $n$-point functions 
\begin{align*}
	G(\bm{u,y,\xi}):=&\sum_{r_{1}\ge 0}	\frac{\xi_{1}^{r_{1}}}{r_{1}!}
	\ldots
	\sum_{r_{n}\ge 0}\frac{\xi_{n}^{r_{n}}}{r_{n}!}
	\Zzero\left(
\ldots ;L(-1)^{r_{k}}u_{k},y_{k} ;\ldots
	\right),
\end{align*}
for indeterminates $\bm{\xi}:=\xi_{1},\ldots,\xi_{n}$.
Equation~\eqref{eq:MobZ0} is then equivalent to the identity
\begin{align}\label{eq:GZid}
	G(\bm{u,y,\xi})=
	\Zzero\left(\ldots ;e^{ -c(cy_{k}+d)L(1)}
	\left(cy_{k}+d\right)^{-2L(0)}
	e^{\xi_{k}L(-1)}u_{k},\gamma y_{k};\ldots
	\right),
\end{align}
for all $\gamma\in\SL_{2}(\C)$.
Let $u$ be quasiprimary of weight $\wt(u)$.
Using~\eqref{eq:L0Lpm1conj} and~\eqref{eq:Lm1L1com1} we find
\begin{align*}
&e^{ -c(cy+d)L(1)}
\left(cy+d\right)^{-2L(0)}
e^{\xi L(-1)}u
=
\left(cy+d\right)^{-2L(0)}
e^{ -c(cy+d)^{-1}L(1)}
e^{\xi L(-1)}u
\\
& =\left(cy+d\right)^{-2L(0)}\left(1+\frac{\xi c}{cy+d}\right)^{-L(0)}
e^{\xi L(-1)}
e^{ -c(cy+d)^{-1}L(1)}\left(1+\frac{\xi c}{cy+d}\right)^{-L(0)}u
\\
&=
\left(cy+d\right)^{-2L(0)}\left(1+\frac{\xi c}{cy+d}\right)^{-L(0)-\wt(u)}
e^{\xi L(-1)}u.
\end{align*}
 Applying~\eqref{eq:L0Lpm1conj} twice again we find
\begin{align*}
e^{ -c(cy+d)L(1)}
\left(cy+d\right)^{-2L(0)}
e^{\xi L(-1)}u
=
\left(c(y+\xi)+d\right)^{-2\wt(u)}
%\exp\left( 
e^{(cy+d)^{-2}\left(1+\frac{\xi c}{cy+d}\right)^{-1}\xi L(-1)}u.
\end{align*}
But 
\begin{align*}
 \frac{\xi}{(cy+d)^{2}}\left(1+\frac{\xi c}{cy+d}\right)^{-1}
 =\frac{\xi}{(cy+d)(c(y+\xi)+d)}
 =\gamma (y+\xi)-\gamma y.
\end{align*}
Thus 
\begin{align*}
Y\left(e^{ -c(cy+d)L(1)}
\left(cy+d\right)^{-2L(0)}
e^{\xi L(-1)}u,\gamma y\right)
&=
\left(c(y+\xi)+d\right)^{-2\wt(u)}Y\left( 
e^{(\gamma (y+\xi)-\gamma y) L(-1)}u,\gamma y\right)
\\
&=
\left(c(y+\xi)+d\right)^{-2\wt(u)}Y\left( u,\gamma (y+\xi)\right),
\end{align*}
by translation~\eqref{eq:Trans}.
Hence the RHS of~\eqref{eq:GZid} can be re-expressed as follows:
\begin{align*}
&\Zzero\left(\ldots ;e^{ -c(cy_{k}+d)L(1)}
\left(cy_{k}+d\right)^{-2L(0)}
e^{\xi_{k}L(-1)}u_{k},\gamma y_{k};\ldots
\right)
\\
&=\Zzero\left(
\ldots ; u_{k},\gamma (y_{k}+\xi_{k}) ;\ldots
\right)
\prod_{k=1}^{n}\left(c(y_{k}+\xi_{k})+d\right)^{-2\wt(u_{k})}
\\
&=\Zzero\left(
\ldots ; u_{k},y_{k}+\xi_{k} ;\ldots
\right)
= \Zzero\left(
\ldots ;e^{\xi_{k} L(-1)}u_{k},y_{k} ;\ldots
\right)
=G(\bm{u,y,\xi}),
\end{align*}
by~\eqref{eq:Trans} and~\eqref{eq:MobF0}.
Thus the identity~\eqref{eq:GZid} holds.
\end{proof}
\medskip

Let  $u$ be quasiprimary of weight $N$. Using~\eqref{eq:udagger} we find that for all $\ell\in\calL$
\begin{align*}
u(\ell)\vac=
u^{\dagger}(\ell)\vac=0,
\end{align*}
so that
\begin{align*}
0=\langle u^{\dagger}(\ell)\vac,\bm{Y(v,y)}\vac\rangle=
\langle \vac,u(\ell)\bm{Y(v,y)}\vac\rangle=
\langle \vac,\left[u(\ell),\bm{Y(v,y)}\right]\vac\rangle.
\end{align*}
This implies from the commutator identity~\eqref{eq:ComId} that for $\ell\in\calL$
\begin{align}
\label{ZhuPrepLemma}
\sum_{k=1}^n \left(\sum_{j\in\calL}
\Zzero(\ldots;u(j)v_{k},y_{k};\ldots)\partial_{k} ^{(j)} \right) y_{k}^{\ell}=0,
\end{align}
where $\del_{k}:=\del_{y_{k}}$. 
Thus we find a general genus zero Ward identity:
% a genus zero analogue of~\cite{Z1}, Proposition 4.3.1.
\begin{proposition}\label{prop:ZeroWard}
Let  $u$ be quasiprimary of weight $N$. 
Then for all $p\in \Poly_{2N-2}$ we have
\begin{align*}
	\sum_{k=1}^n \sum_{\ell\in\calL}p^{(\ell)}(y_{k})
	\Zzero(\ldots;u(\ell)v_{k},y_{k};\ldots)
	= 0.
\end{align*}
\end{proposition}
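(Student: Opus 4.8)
The plan is to obtain the statement as the linear span of the monomial relations already recorded in \eqref{ZhuPrepLemma}. Indeed, \eqref{ZhuPrepLemma} is precisely the assertion of the proposition for the special polynomials $p(y)=y^{\ell}$, $\ell\in\calL$, so the work reduces to passing from this spanning family to an arbitrary $p\in\Poly_{2N-2}$ and to matching the differential-operator form with the derivative-evaluation form.

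First I would note that both sides of the target identity are linear in $p$, and that $\{y^{\ell}:\ell\in\calL\}$ is a basis of $\Poly_{2N-2}$. Writing $p(y)=\sum_{\ell\in\calL}c_{\ell}y^{\ell}$ and forming the corresponding linear combination of the relations \eqref{ZhuPrepLemma} (one for each $\ell$) yields
\[
\sum_{k=1}^{n}\sum_{j\in\calL}\Zzero(\ldots;u(j)v_{k},y_{k};\ldots)\,\del_{k}^{(j)}p(y_{k})=0,
\]
where, exactly as in \eqref{ZhuPrepLemma}, the operator $\del_{k}^{(j)}$ acts only on the polynomial factor $p(y_{k})$ to its right and not on the coefficient $\Zzero$.

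The remaining step is notational: by the convention $f^{(i)}=\del^{(i)}f$ fixed in \eqref{eq:delconv} we have $\del_{k}^{(j)}p(y_{k})=p^{(j)}(y_{k})$, so renaming $j\to\ell$ gives the claimed identity. If one prefers to verify the monomial case by hand, the identity $\del_{k}^{(j)}y_{k}^{\ell}=\binom{\ell}{j}y_{k}^{\ell-j}$ together with $p^{(j)}(y)=\sum_{\ell\in\calL}c_{\ell}\binom{\ell}{j}y^{\ell-j}$ shows the two forms agree term by term, the contributions with $\ell<j$ dropping out since $\binom{\ell}{j}=0$.

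I do not expect a genuine obstacle, since the analytic content is already carried by \eqref{ZhuPrepLemma}, which rests on $u(\ell)\vac=u^{\dagger}(\ell)\vac=0$ for $\ell\in\calL$ (valid because $u$ is quasiprimary of weight $N$, so $u^{\dagger}(\ell)$ is proportional to $u(2N-2-\ell)$ by \eqref{eq:udagger} with $2N-2-\ell\ge 0$) combined with the commutator identity \eqref{eq:ComId}. The only point demanding care is the bookkeeping of the two index sets: the outer $\ell\in\calL$ labels the basis monomials of $\Poly_{2N-2}$, whereas the inner $j\in\calL$ labels the modes $u(j)$, and these must be kept distinct through the re-summation.
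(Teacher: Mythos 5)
Your proposal is correct and follows exactly the route the paper takes: the identity \eqref{ZhuPrepLemma} (itself a consequence of $u(\ell)\vac=u^{\dagger}(\ell)\vac=0$ for $\ell\in\calL$ and the commutator identity \eqref{eq:ComId}) is the monomial case $p(y)=y^{\ell}$, and the general statement follows by linearity in $p$ together with the convention $\del_{k}^{(j)}p(y_{k})=p^{(j)}(y_{k})$ of \eqref{eq:delconv}. The paper leaves this last passage implicit, so your explicit bookkeeping of the two index sets is a faithful filling-in rather than a deviation.
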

\begin{remark}
Proposition~\ref{prop:ZeroWard} generalizes the current algebra Ward identity for $N=1$ for $u\in V_{1}$ and the conformal Ward identity for $N=2$ for conformal vector $u=\omega\in V_{2}$.  
\end{remark}	
We now develop a genus zero Zhu recursion formula for $\Zzero(u,x;\bm{v,y})$ for  quasiprimary $u$ of weight $N\ge 1$. First note that for all $v\in V$ and $s<0$
\begin{align*}
0=\langle u(-s-1)\vac,v\rangle=
\langle \vac,u^{\dagger}(-s-1)v\rangle=(-1)^{N}\langle\vac,u(s+2N-1)v\rangle.
\end{align*}
Using~\eqref{eq:ComId} and that $u(s+2N-1)\vac=0$ for $s\ge 0$ and $N\ge 1$ we find
\begin{align*}
\Zzero(u,x;\bm{v,y})
&=\sum_{s\ge 0}x^{-s-2N}\left\langle\vac,u(s+2N-1)\bm{Y(v,y)}\vac\right\rangle
\\
&=\sum_{k=1}^{n}\sum_{s\ge 0}x^{-s-2N}
\left\langle\vac,
\ldots\sum_{j\ge 0}\del_{k}^{(j)}\left(y_{k}^{s+2N-1}\right)Y(u(j)v_{k},y_{k})\ldots \vac\right\rangle
\\
&=\sum_{k=1}^{n}\sum_{j\ge 0}x^{-1}
\del_{k}^{(j)}\left(\sum_{s\ge 0}\left(\frac{y_{k}}{x}\right)^{s+2N-1}
\right)
\Zzero(\ldots;u(j)v_{k},y_{k};\ldots)
\\
&=\sum_{k=1}^{n}\sum_{j\ge 0}\widetilde{\pi}^{(0,j)}_{N}(x,y_{k})\Zzero(\ldots;u(j)v_{k},y_{k};\ldots),
\end{align*}
recalling~\eqref{eq:delconv} and~\eqref{eq:binexp}, where 
\begin{align*}
\widetilde{\pi}_{N}(x,y):=x^{-1}\sum_{s\ge 0}\left(\frac{y}{x}\right)^{s+2N-1}
=\left(\frac{y}{x}\right)^{2N-1}\cdot\frac{1}{x-y} =\frac{1}{x-y}-\sum_{\ell\in\calL}\frac{y^{\ell}}{x^{\ell+1}}.
\end{align*}
Proposition~\ref{prop:ZeroWard} implies that we may generalise $\widetilde{\pi}_{N}(x,y)$ to
\begin{align}\label{eq:piNdef}
\pi_{N}(x,y):=\frac{1}{x-y}+\sum_{\ell\in\calL}f_{\ell}(x)y^{\ell},
\end{align}
for \emph{any} formal Laurent series $f_{\ell}(x)$ for $\ell=0,\ldots,2N-2$.  Thus we find
\begin{proposition}[Quasiprimary Genus Zero Zhu Recursion]
\label{prop:GenusZeroZhu}
Let $u$ be quasiprimary of weight $N\ge 1$. Then
\begin{align}\label{eq:GenusZeroZhu}
	\Zzero(u,x;\bm{v,y})=\sum_{k=1}^{n}\sum_{j\ge 0} \pi_{N}^{(0,j)}(x,y_{k})\Zzero(\ldots;u(j)v_{k},y_{k};\ldots).
\end{align}
\end{proposition}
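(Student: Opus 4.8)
The plan is to first establish the recursion with one explicit choice of kernel and then use the Ward identity of Proposition~\ref{prop:ZeroWard} to promote it to the general kernel $\pi_{N}$ of~\eqref{eq:piNdef}. For the explicit choice I would expand $\Zzero(u,x;\bm{v,y})=\sum_{m\in\Z}x^{-m-1}\langle\vac,u(m)\bm{Y(v,y)}\vac\rangle$ and use invariance of the Li-Z metric together with the adjoint mode formula~\eqref{eq:udagger} and creativity to show $\langle\vac,u(m)w\rangle=0$ for every $w\in V$ whenever $m\le 2N-2$; this truncates the sum to modes $u(s+2N-1)$ with $s\ge 0$. Since $u(s+2N-1)\vac=0$ for $s\ge 0$, each surviving term equals $\langle\vac,[u(s+2N-1),\bm{Y(v,y)}]\vac\rangle$, which the commutator identity~\eqref{eq:ComId} expands as a sum of correlation functions with $v_{k}$ replaced by $u(j)v_{k}$ and weighted by $\del_{k}^{(j)}(y_{k}^{s+2N-1})$, the inner $j$-sum being finite by lower truncation. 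Resumming the geometric series in $s$ collapses the kernel to $\widetilde{\pi}_{N}(x,y)=(y/x)^{2N-1}(x-y)^{-1}=\tfrac{1}{x-y}-\sum_{\ell\in\calL}y^{\ell}x^{-\ell-1}$, giving~\eqref{eq:GenusZeroZhu} with $\pi_{N}$ replaced by this particular $\widetilde{\pi}_{N}$.

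The key remaining step is to show the right-hand side is unchanged if $\widetilde{\pi}_{N}$ is replaced by any $\pi_{N}$ as in~\eqref{eq:piNdef}. Writing $\pi_{N}(x,y)-\widetilde{\pi}_{N}(x,y)=\sum_{\ell\in\calL}g_{\ell}(x)y^{\ell}$ with $g_{\ell}(x):=f_{\ell}(x)+x^{-\ell-1}$ an arbitrary Laurent series, the difference of the two right-hand sides is
\begin{align*}
\sum_{\ell\in\calL}g_{\ell}(x)\sum_{k=1}^{n}\sum_{j\ge 0}\del_{k}^{(j)}(y_{k}^{\ell})\,\Zzero(\ldots;u(j)v_{k},y_{k};\ldots).
\end{align*}
For each fixed $\ell\in\calL$ the inner double sum is exactly the left-hand side of the Ward identity in Proposition~\ref{prop:ZeroWard} applied to the monomial $p(y)=y^{\ell}\in\Poly_{2N-2}$, since $p^{(j)}(y_{k})=\binom{\ell}{j}y_{k}^{\ell-j}=\del_{k}^{(j)}(y_{k}^{\ell})$; hence every term vanishes. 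Therefore the right-hand side of~\eqref{eq:GenusZeroZhu} is independent of the choice of the $f_{\ell}$ and equals $\Zzero(u,x;\bm{v,y})$, which is the assertion.

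The conceptually essential point --- and the only thing separating the proposition from the explicit computation above --- is this last use of the Ward identity, which is precisely what renders the non-unique expansion coefficients harmless. The main obstacle is therefore not a hard estimate but the bookkeeping needed to justify the manipulations: confirming the vanishing $\langle\vac,u(m)\cdot\rangle=0$ for $m\le 2N-2$ that truncates the $s$-sum from below, checking that the reindexing $m=s+2N-1$ and the interchange of the finite $\ell$-sum with the $k$- and $j$-sums are legitimate, and observing that all inner $j$-sums terminate by lower truncation so that no convergence question arises.
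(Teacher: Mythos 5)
Your proposal is correct and follows essentially the same route as the paper: truncate the mode sum using the adjoint relation $\langle\vac,u(m)v\rangle=0$ for $m\le 2N-2$, apply the commutator identity to get the kernel $\widetilde{\pi}_{N}$, and then invoke the Ward identity of Proposition~\ref{prop:ZeroWard} (applied to the monomials $y^{\ell}$, $\ell\in\calL$) to pass to an arbitrary $\pi_{N}$. Your explicit write-out of that last step is just a slightly more detailed version of the paper's one-line appeal to the Ward identity.
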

\begin{remark}
Proposition~\ref{prop:GenusZeroZhu} implies Lemma~2.2.1 of \cite{Z1} for a particular choice of Laurent series $f_{\ell}$ in~\eqref{eq:piNdef}. We note that $\pi_N$ is independent of the VOA $V$ and that the RHS of~\eqref{eq:GenusZeroZhu} is independent of the choice of $f_{\ell}$ due to the Ward identity of Proposition~\ref{prop:ZeroWard}.
\end{remark}
Define $L^{(i)}(-1):=\frac{1}{i!}L(-1)^{i}$ so that  $Y\left(L^{(i)}(-1)u,x\right)=\del ^{(i)}Y(u,x)$. 
\begin{corollary}[General Genus Zero Zhu Recursion]
\label{cor:GenGenusZeroZhu}
Let $L^{(i)}(-1)u$ be a quasiprimary descendant of $u$ of $\wt(u)=N$. Then
\begin{align*}
	&\Zzero \left(L^{(i)}(-1)u,x;\bm{v,y}\right)=\sum_{k=1}^{n}\sum_{j\ge 0}\pi_{N}^{(i,j)}(x,y_{k})\Zzero (\ldots;u(j)v_{k};y_{k};\ldots).
\end{align*}
\end{corollary}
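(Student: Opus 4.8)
The plan is to reduce the corollary directly to Proposition~\ref{prop:GenusZeroZhu} by exploiting the translation property and the definition $Y(L^{(i)}(-1)u,x)=\del^{(i)}Y(u,x)$. Since $L^{(i)}(-1)u$ is assumed quasiprimary as a descendant, the cleanest route is to observe that the vertex operator for the descendant is simply the $i$-th formal derivative in $x$ of the vertex operator for $u$. Concretely, I would start from
\begin{align*}
	\Zzero\left(L^{(i)}(-1)u,x;\bm{v,y}\right)
	=\left\langle \vac, Y\left(L^{(i)}(-1)u,x\right)\bm{Y(v,y)}\vac\right\rangle
	=\del_{x}^{(i)}\left\langle \vac, Y\left(u,x\right)\bm{Y(v,y)}\vac\right\rangle
	=\del_{x}^{(i)}\Zzero(u,x;\bm{v,y}),
\end{align*}
where moving the derivative outside the correlator is justified because $\del_{x}^{(i)}=\frac{1}{i!}\del_x^i$ acts only on the explicit $x$-dependence of the vertex operator $Y(u,x)$ and the pairing with $\vac$ and $\bm{Y(v,y)}\vac$ is linear and $x$-independent.

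Next I would substitute the genus zero Zhu recursion of Proposition~\ref{prop:GenusZeroZhu} for the quasiprimary vector $u$ of weight $N$, namely $\Zzero(u,x;\bm{v,y})=\sum_{k=1}^{n}\sum_{j\ge 0}\pi_{N}^{(0,j)}(x,y_{k})\Zzero(\ldots;u(j)v_{k},y_{k};\ldots)$, and differentiate term by term in $x$. Because the coefficients $\Zzero(\ldots;u(j)v_{k},y_{k};\ldots)$ do not depend on $x$, applying $\del_x^{(i)}$ passes through to act only on the universal coefficients, giving $\del_x^{(i)}\pi_N^{(0,j)}(x,y_k)=\pi_N^{(i,j)}(x,y_k)$ by the derivative convention of~\eqref{eq:delconv}. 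This yields exactly the claimed formula. The only point requiring a word of care is that $u$ itself is quasiprimary of weight $N$, so that Proposition~\ref{prop:GenusZeroZhu} indeed applies to $u$; this is built into the hypothesis that $L^{(i)}(-1)u$ is a quasiprimary descendant \emph{of} $u$.

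The main (and essentially only) obstacle is bookkeeping rather than conceptual: one must be certain that the formal manipulation of $\del_x^{(i)}$ is legitimate at the level of the formal Laurent series in which these correlation functions live, and that term-by-term differentiation of the sum over $j\ge 0$ is valid. Since each $\Zzero(u,x;\bm{v,y})$ is a rational function of $x$ in the expansion domain $|y_1|>\dots>|y_n|$ with $x$ in the outermost position, and the series $\sum_{j\ge 0}\pi_N^{(0,j)}(x,y_k)(\cdots)$ converges to it in that domain with only finitely many nonzero residue coefficients $\Res_{y_k}^{j}$ effectively contributing (by lower truncation, $u(j)v_k=0$ for $j\gg 0$), the differentiation is unambiguous and commutes with the finite sums. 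I would therefore present the argument compactly: establish the derivative identity for the correlator, invoke Proposition~\ref{prop:GenusZeroZhu}, and apply $\del_x^{(i)}$ to the universal coefficients.
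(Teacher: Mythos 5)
Your proof is correct and is essentially the argument the paper intends (the paper leaves it implicit): the corollary follows immediately from $Y(L^{(i)}(-1)u,x)=\del^{(i)}Y(u,x)$ by applying $\del_x^{(i)}$ to the formula of Proposition~\ref{prop:GenusZeroZhu}, with the derivative landing only on the $x$-dependent coefficients $\pi_N^{(0,j)}(x,y_k)$. One small terminological caution: ``quasiprimary descendant'' means a descendant \emph{of} a quasiprimary vector, not that $L^{(i)}(-1)u$ is itself quasiprimary (it generally is not), but as you note the argument only uses quasiprimarity of $u$, so this does not affect the proof.
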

\begin{remark}\label{rem:flzero}
Choosing $f_{\ell}(x)=0$ we obtain the neatest  form of Zhu recursion
(which implies that genus zero $n$-point functions are rational) using
\[
\pi_{N}^{(i,j)}(x,y)=(-1)^{i}\binom{i+j}{i}\frac{1}{(x-y)^{1+i+j}}.
\] 
In the next section when we consider genus $g$ partition functions in the Schottky sewing scheme, we will choose $f_{\ell}(x)$ of~\eqref{eq:flx}.  
\end{remark}
Associativity~\eqref{eq:AssocId} implies (up to a rational multiplier) that
\begin{align}
\Zzero(u,x+y_{1};\bm{v,y})
&=\Zzero(Y(u,x)v_{1},y_{1};v_{2},y_{2}\ldots )
=\sum_{m\in\Z}\Zzero(u(m)v_{1},y_{1};\ldots )x^{-m-1}.
\label{eq:Z0assoc}
\end{align}
But Zhu recursion~\eqref{eq:GenusZeroZhu}  implies that 
\begin{align}
\notag
\Zzero(u,x+y_{1};\bm{v,y})
&=
\sum_{j\ge 0}\pi_{N}^{(0,j)}(y_{1}+x,y_{1})\Zzero(u(j)v_{1},y_{1};\ldots)
\\
\label{eq:Z0Zhu}
&\quad+\sum_{k=2}^{n}\sum_{j\ge 0}\pi_{N}^{(0,j)}(y_{1}+x,y_{k})\Zzero(\ldots;u(j)v_{k},y_{k};\ldots).
\end{align}
Note  the $x$-expansions 
\begin{align*}
\pi_{N}^{(0,j)}(y+x,z)&=
\begin{cases}
	x^{-j-1}
	+\sum_{i\ge 0}\E^{j}_{i}(y)x^{i}, & y=z,
	\\
	\sum_{i\ge 0}\pi_{N}^{(i,j)}(y,z)x^{i}, & y\neq z,
\end{cases}
\end{align*}
where
\begin{align}
\label{eq:Eji}
\E^{j}_{i}(y):=\sum_{\ell\in\calL}f_{\ell}^{(i)}(y)\;\del^{(j)}y^{\ell}.
\end{align}
Comparing the coefficients of $x^{i}$ for $i\ge 0$ between \eqref{eq:Z0assoc} and \eqref{eq:Z0Zhu} we obtain
\begin{proposition}[Quasiprimary Genus Zero Zhu Recursion II]
\label{prop:GenusZeroZhu3}
Let $u$ be quasiprimary of weight $N\ge 1$. Then for all $i\ge 0$ we have
\begin{align*}
	\Zzero(u(-i-1)v_{1},y_{1};\ldots ) 
	=&\sum_{j\ge 0}\E^{j}_{i}(y_{1})\Zzero(u(j)v_{1},y_{1};\ldots )
	\\
	&+\sum_{k=2}^{n}\sum_{j\ge 0}\pi_{N}^{(i,j)}(y_{1},y_{k})\Zzero(\ldots ;u(j)v_{k},y_{k};\ldots).
\end{align*}
\end{proposition}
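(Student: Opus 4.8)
The plan is to obtain the identity by comparing the two power series in $x$ that both compute the shifted correlation function $\Zzero(u,x+y_{1};\bm{v,y})$: the associativity expansion~\eqref{eq:Z0assoc} and the Zhu recursion expansion~\eqref{eq:Z0Zhu}. Since these represent one and the same (rational) function of $x$ expanded about $x=0$, their coefficients of $x^{i}$ must agree for every $i\in\Z$, and extracting the coefficients with $i\ge 0$ is exactly the asserted formula. So the whole argument reduces to careful coefficient bookkeeping, with the real content already supplied by associativity~\eqref{eq:AssocId}, Proposition~\ref{prop:GenusZeroZhu}, and the two-case $x$-expansion of $\pi_{N}^{(0,j)}(y_{1}+x,y_{k})$ recorded just above the statement.

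First I would read off the left-hand side: from $\Zzero(u,x+y_{1};\bm{v,y})=\sum_{m\in\Z}\Zzero(u(m)v_{1},y_{1};\ldots)x^{-m-1}$ the coefficient of $x^{i}$ is found by setting $-m-1=i$, i.e. $m=-i-1$, which gives precisely $\Zzero(u(-i-1)v_{1},y_{1};\ldots)$. Next I would expand the right-hand side~\eqref{eq:Z0Zhu} termwise. For the $k=1$ (diagonal) term the polar part $x^{-j-1}$ carries only negative powers of $x$ since $j\ge 0$, hence is irrelevant for $i\ge 0$, while the regular part contributes $\E^{j}_{i}(y_{1})$ as in~\eqref{eq:Eji}; thus the $k=1$ term contributes $\sum_{j\ge 0}\E^{j}_{i}(y_{1})\Zzero(u(j)v_{1},y_{1};\ldots)$. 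For $k\ge 2$ (off-diagonal, $y_{1}\neq y_{k}$) the function is holomorphic at $x=0$ and expands as $\sum_{i\ge 0}\pi_{N}^{(i,j)}(y_{1},y_{k})x^{i}$, contributing $\sum_{k=2}^{n}\sum_{j\ge 0}\pi_{N}^{(i,j)}(y_{1},y_{k})\Zzero(\ldots;u(j)v_{k},y_{k};\ldots)$. Equating the two coefficients of $x^{i}$ yields the claim. As a built-in consistency check, matching the \emph{negative} powers $x^{-j-1}$ (which arise only from the $k=1$ polar part against the $m\ge 0$ terms of~\eqref{eq:Z0assoc}) simply reproduces Proposition~\ref{prop:GenusZeroZhu}, confirming that no information is lost in the split.

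The main obstacle is legitimising the coefficient comparison rather than performing it. Because associativity only gives~\eqref{eq:Z0assoc} ``up to a rational multiplier'', I would first confirm that both~\eqref{eq:Z0assoc} and~\eqref{eq:Z0Zhu} are expansions of the \emph{same} rational function of $x$ in a common punctured region $0<|x|$ small, with $x+y_{1}$ near $y_{1}$ and away from $y_{2},\ldots,y_{n}$; only then is the termwise comparison of $x$-coefficients valid. The one remaining point needing care is the derivation of the two cases of the $x$-expansion of $\pi_{N}^{(0,j)}(y_{1}+x,y_{k})$: the diagonal splitting into the polar piece $x^{-j-1}$ plus $\sum_{i\ge 0}\E^{j}_{i}(y_{1})x^{i}$ follows by Taylor-expanding the $f_{\ell}(y_{1}+x)$ in $\pi_{N}(x,y)=\tfrac{1}{x-y}+\sum_{\ell\in\calL}f_{\ell}(x)y^{\ell}$ about $x=0$, while the off-diagonal case is an ordinary Taylor expansion of a function holomorphic at $x=0$, identifying the coefficients with $\pi_{N}^{(i,j)}(y_{1},y_{k})$ via the derivative convention~\eqref{eq:delconv}. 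Both are routine once the expansion region is fixed, so the substance of the proof lies entirely in setting up that comparison correctly.
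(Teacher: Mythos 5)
Your proposal is correct and follows essentially the same route as the paper: the paper derives this proposition precisely by equating the coefficients of $x^{i}$, $i\ge 0$, in the associativity expansion~\eqref{eq:Z0assoc} and the Zhu recursion expansion~\eqref{eq:Z0Zhu} of $\Zzero(u,x+y_{1};\bm{v,y})$, using the displayed two-case $x$-expansion of $\pi_{N}^{(0,j)}(y+x,z)$. Your added care about the common region of validity and the rational-multiplier caveat is a reasonable tightening of the same argument, not a different method.
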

\begin{remark}
\label{rem:Zhu genus zero II}
Using locality, we can consider the action of $u(-i-1)$ on any $v_{k}$ for $k = 1, \ldots, n$ by adjusting the order of the $y_{1}$, $y_{k}$ arguments  accordingly. A similar recursion formula for a general quasiprimary descendant can also be found.
\end{remark}
Consider formal differential forms~\eqref{eq:BFForm} with  $\Pi_{N}(x,y):=\pi_{N}(x,y)dx^{N}dy^{1-N}$ and using~\eqref{eq:vjVm} then Proposition~\ref{prop:ZeroWard} and  ~\ref{prop:GenusZeroZhu} are  equivalent to
\begin{proposition}\label{prop:GenusZeroZhuForms}
For $u$ quasiprimary of weight $N\ge 1$ we have
\begin{align}
	\label{eq:F0ZeroSum}
	&\sum_{k=1}^n \sum_{\ell\in\calL}p^{(\ell)}(y_{k})
	\Fzero(\ldots;u(\ell)v_{k},y_{k};\ldots)dy_{k}^{\ell+1-N}
	= 0 \mbox{ for all }p\in \Poly_{2N-2},
	\\
	&\Fzero(u,x;\bm{v,y})=\sum_{k=1}^{n}\sum_{j\ge 0}\Pi_{N}^{(0,j)}(x,y_{k})\Fzero(\ldots;u(j)v_{k},y_{k};\ldots)\,dy_{k}^{j}
	\label{eq:GenusZeroZhuForms}.
\end{align}
\end{proposition}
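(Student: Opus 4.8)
The plan is to recognise both displayed identities as differential-form repackagings of the scalar statements in Proposition~\ref{prop:ZeroWard} and Proposition~\ref{prop:GenusZeroZhu}; by linearity of $\Fzero$ it suffices to treat homogeneous $v_k$. The whole argument reduces to matching differential-form degrees using the weight-shift rule~\eqref{eq:vjVm}, namely $\wt(u(j)v_k)=\wt(v_k)+N-j-1$ for $u$ quasiprimary of weight $N$, and then factoring out a common form so that the scalar identity can be applied directly.

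For~\eqref{eq:F0ZeroSum} I would first observe that, since $\wt(u(\ell)v_k)=\wt(v_k)+N-\ell-1$, the $y_k$-differential attached to $\Fzero(\ldots;u(\ell)v_k,y_k;\ldots)$ is $dy_k^{\wt(v_k)+N-\ell-1}$, so that multiplying by the explicit factor $dy_k^{\ell+1-N}$ restores the power $dy_k^{\wt(v_k)}$. Hence
\[
\Fzero(\ldots;u(\ell)v_k,y_k;\ldots)\,dy_k^{\ell+1-N}
=\Zzero(\ldots;u(\ell)v_k,y_k;\ldots)\,\bm{dy^{\wt(v)}},
\]
with the common form $\bm{dy^{\wt(v)}}$ independent of $k$ and $\ell$. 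Pulling this factor out of the double sum, the remaining bracketed scalar sum vanishes by Proposition~\ref{prop:ZeroWard}, giving~\eqref{eq:F0ZeroSum}.

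For~\eqref{eq:GenusZeroZhuForms} I would use the convention that $\Pi_N^{(0,j)}(x,y_k)=\pi_N^{(0,j)}(x,y_k)\,dx^N dy_k^{1-N}$, the derivative acting on the coefficient while the formal factors $dx^N$ and $dy_k^{1-N}$ are held fixed. Tracking the $y_k$-degree on the right-hand side, the contributions $dy_k^{1-N}$ (from $\Pi_N^{(0,j)}$), $dy_k^{\wt(v_k)+N-j-1}$ (from $\Fzero(\ldots;u(j)v_k;\ldots)$) and the explicit $dy_k^{j}$ sum to $dy_k^{\wt(v_k)}$, so each summand equals $\pi_N^{(0,j)}(x,y_k)\Zzero(\ldots;u(j)v_k;\ldots)\,dx^N\bm{dy^{\wt(v)}}$. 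Summing over $k,j$ and invoking Proposition~\ref{prop:GenusZeroZhu} collapses the scalar sum to $\Zzero(u,x;\bm{v,y})$, and since $\wt(u)=N$ the prefactor $dx^N\bm{dy^{\wt(v)}}$ is exactly what turns this into $\Fzero(u,x;\bm{v,y})$.

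The computations are entirely routine; the only point demanding care is the differential-form bookkeeping in the second identity, where one must respect the convention that $\del_{y}^{(j)}$ in $\Pi_N^{(0,j)}$ differentiates the scalar coefficient only, and then check that the extra factor $dy_k^{j}$ together with the intrinsic $dy_k^{1-N}$ precisely compensates the weight shift $N-j-1$ produced by the mode $u(j)$. Once these degrees are verified to cancel, both identities follow immediately from the scalar propositions.
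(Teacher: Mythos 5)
Your proposal is correct and follows exactly the paper's route: the paper states Proposition~\ref{prop:GenusZeroZhuForms} as the differential-form restatement of Propositions~\ref{prop:ZeroWard} and~\ref{prop:GenusZeroZhu}, with the equivalence resting precisely on the weight-shift rule~\eqref{eq:vjVm} and the convention $\Pi_{N}(x,y)=\pi_{N}(x,y)dx^{N}dy^{1-N}$. Your degree bookkeeping (the powers $(1-N)+(\wt(v_{k})+N-j-1)+j=\wt(v_{k})$, and likewise $(\wt(v_{k})+N-\ell-1)+(\ell+1-N)=\wt(v_{k})$) is exactly the check the paper leaves implicit.
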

\begin{remark}
\eqref{eq:F0ZeroSum} and~\eqref{eq:GenusZeroZhuForms} are formal versions of Proposition~\ref{prop:HNzero} (concerning genus zero meromorphic forms in $x$) with formal residue
\begin{align*}
	%	\label{eq:ResFjk}
	\Res_{x-y_{k}}\left(x-y_{k}\right)^{j}\Fzero(u,x;\bm{v,y})
	=	\Fzero(\ldots;u(j)v_{k},y_{k};\ldots)dy_{k}^{j+1-N},
\end{align*}
by associativity and locality. 
\end{remark}

\section{Genus $g$ Zhu Recursion}
\label{sec:gZhu}
In this section  we introduce formal genus $g$  partition and $n$-point correlation functions for a simple VOA $V$ of strong CFT type with $V$ isomorphic to the contragredient module $V'$. These functions are formally associated to a genus $g$ Riemann surface $\Sg$ in the Schottky  scheme   of \S\ref{sec:Genus g}. The approach taken is a generalisation of the genus two sewing schemes of~\cite{MT2,MT3} and genus two Zhu recursion of~\cite{GT}.
%Initially, all vertex operator expressions are functions of formal variables.
 We  describe a genus $g$ Zhu recursion formula 
with universal  coefficients given by derivatives of $\Psi_{N}(x,y)$ and holomorphic $N$-forms $\Theta_{N,a}^{\ell}(x)$.  The recursion formula is a formal version of  Proposition~\ref{prop:GNexp} concerning the expansion of a genus $g$ meromorphic $N$-form.
This is a generalisation of the genus zero situation of~\S\ref{sec:VOAZhu0} and of that at genus one with elliptic Weierstrass function coefficients~\cite{Z1}. 
We note that the genus $g$ objects in this section are notated without any genus $g$ label except in the cases $g=0$ or $1$.
\subsection{Genus $g$ formal partition and $n$-point functions}\label{sec:Genusgnpt}
We first note an important lemma which we exploit below. Recall that  the Li-Z metric $\langle\cdot, \cdot\rangle_{\rho}$ of~\eqref{eq:LiZinvar}
% (where we suppress the $\rho$ subscript for clarity) 
 is invertible and that $\langle u,v \rangle_{\rho}=0$ for  $\wt(u)\neq \wt(v)$ for homogeneous $u,v$.  
Let $\{ b\}$ be a homogeneous basis for  $V$ with Li-Z dual basis $\{\bbar \}$. 
\begin{lemma}\label{lem:adjoint}
For $u$ quasiprimary of weight $N$ we have
\begin{align*}
	\sum_{b\,\in V_n} \left(u(k)b\right)\otimes \bbar =\sum_{b\,\in V_{n+N-k-1}} b\otimes \left(u^\dagger(k)\bbar \right),
\end{align*}
summing over any basis for the given homogeneous spaces.
\end{lemma}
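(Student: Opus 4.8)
The plan is to prove the identity by comparing the action of both sides against an arbitrary pair of test vectors using the defining property of the Li-Z metric and its dual basis. Since $\{b\}$ and $\{\bbar\}$ are dual with respect to $\langle\cdot,\cdot\rangle_\rho$, the natural strategy is to recognise that an expression of the form $\sum_{b} (Xb)\otimes \bbar$ is really encoding the operator $X$ itself via the canonical ``resolution of the identity'' $\sum_b b\otimes\bbar$. The cleanest way to make this precise is to pair the second tensor factor against an arbitrary homogeneous vector $w$ using the metric, which collapses the dual-basis sum and isolates a single term.

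First I would fix homogeneous degrees. By~\eqref{eq:vjVm}, for $u$ quasiprimary of weight $N$ the mode $u(k)$ maps $V_n\to V_{n+N-k-1}$, and likewise $u^\dagger(k)$ maps $V_{n+N-k-1}\to V_n$ by~\eqref{eq:udagger} (which gives $u^\dagger(k)=(-1)^N\rho^{k+1-N}u(2N-2-k)$, a weight-preserving-up-to-shift mode acting in the opposite direction). This ensures that the left sum over $b\in V_n$ and the right sum over $b\in V_{n+N-k-1}$ are the two ends of the same degree bookkeeping, so the claimed equality is degree-consistent. Then I would take the inner product of the second tensor slot with an arbitrary basis vector: applying $\Id\otimes\langle\cdot,w\rangle_\rho$ to both sides. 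On the left this yields $\sum_{b\in V_n}(u(k)b)\langle\bbar,w\rangle_\rho$, and by the defining duality $\langle\bbar,w\rangle_\rho=\delta_{b,w}$ this collapses to $u(k)w$ for $w\in V_n$. On the right it yields $\sum_{b\in V_{n+N-k-1}} b\,\langle u^\dagger(k)\bbar,w\rangle_\rho$.

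The key step is then to rewrite $\langle u^\dagger(k)\bbar,w\rangle_\rho$ using the invariance of the metric. By the invariance property~\eqref{eq:LiZinvar}, the adjoint mode satisfies $\langle u^\dagger(k)\bbar, w\rangle_\rho = \langle \bbar, u(k)w\rangle_\rho$, since $u^\dagger(k)$ is precisely the adjoint of $u(k)$ with respect to $\langle\cdot,\cdot\rangle_\rho$ (this is the content of~\eqref{eq:Yadj}--\eqref{eq:udagger} read at the level of modes). Substituting, the right-hand side becomes $\sum_{b\in V_{n+N-k-1}} b\,\langle\bbar, u(k)w\rangle_\rho$, which is simply the expansion of the vector $u(k)w$ in the basis $\{b\}$ of $V_{n+N-k-1}$ — and indeed $u(k)w\in V_{n+N-k-1}$ for $w\in V_n$ by~\eqref{eq:vjVm}. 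Hence both sides equal $u(k)w$. Since $w$ was an arbitrary homogeneous vector and the metric is non-degenerate, the two tensors agree.

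The main obstacle is not any deep calculation but ensuring the bookkeeping of homogeneous grades is handled correctly so that the dual-basis sums on the two sides range over genuinely matched spaces, and that the pairing argument is legitimate despite the sums being over possibly infinite-dimensional $V$ — here finiteness of each $\dim V_n$ rescues us, since for fixed $w$ only the single summand in the relevant graded piece survives. I would also remark that the statement is basis-independent (the phrase ``summing over any basis'' in the claim), which follows immediately because the quantity $\sum_b b\otimes\bbar$ is the canonical element of $V\otimes V$ determined by the non-degenerate form and is independent of the chosen basis.
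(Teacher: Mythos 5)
Your proof is correct and rests on exactly the same ingredients as the paper's: the degree bookkeeping from~\eqref{eq:vjVm} and~\eqref{eq:udagger}, the dual-basis expansion, and the single adjoint identity $\langle u^\dagger(k)\cbar,b\rangle_{\rho}=\langle\cbar,u(k)b\rangle_{\rho}$; the paper just reindexes the tensor sums directly where you pair the second slot against a test vector and invoke non-degeneracy. The two arguments are equivalent up to this cosmetic difference.
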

\begin{proof} From~\eqref{eq:vjVm} we have $\wt(u(k)b)=n+N-k+1$ for $b\in V_n$ so that
\begin{align*}
	u(k)b =\sum_{c\,\in V_{n+N-k-1}}\langle \cbar, u(k)b\rangle_{\rho}\, c 
	= \sum_{c\,\in V_{n+N-k-1}}\langle u^\dagger(k)\cbar, b\rangle_{\rho}\, c.
\end{align*}
But $\wt(u^\dagger(k)\cbar )=n$ for $c\in V_{n+N-k-1}$ from~\eqref{eq:udagger} so that  
$
u^\dagger(k)\cbar =\sum_{b\,\in V_n}\langle u^\dagger(k)\cbar, b\rangle_{\rho}\, \bbar
$.
Hence 
\begin{align*}
	\sum_{b\,\in V_n}(u(k)b) \otimes \bbar=\sum_{c\,\in V_{n+N-k-1}}c\otimes\sum_{b\,\in V_n}\langle u^\dagger(k)\cbar, b\rangle_{\rho}\, \bbar=\sum_{c\,\in V_{n+N-k-1}} c\otimes \left(u^\dagger(k)\cbar \right).
\end{align*}
\end{proof}
For each $a\in\Ip$, let $\{b_{a}\}$  denote a homogeneous  $V$-basis and let $\{\bbar _{a}\}$ be the  dual basis with respect to the Li-Z metric $\langle \cdot,\cdot\rangle$  i.e. with $\rho=1$.  Define
\begin{align}
\label{eq:bbar}
b_{-a}:=\rho_{a}^{\wt(b_{a})}\bbar _{a},\quad a\in\Ip,
\end{align}
for formal $\rho_{a}$ (later  identified with a Schottky sewing parameter). 
Then $\{b_{-a}\}$ is a dual basis with respect to the Li-Z metric $\langle \cdot,\cdot\rangle_{\rho_{a}}$ with adjoint (cf.~\eqref{eq:udagger})
\begin{align}\label{eq:RhoAdjoint}
u^{\dagger}_{\rho_{a}}(m)=(-1)^{N}\rho_{a}^{m+1-N}u(2N-2-m),
\end{align}
for $u$ quasiprimary  of weight $N$.
Let $\bm{b}_{+}=b_{1}\otimes\ldots \otimes b_{g}$ denote an element of a $V^{\otimes g}$-basis. Let $w_{a}$ for $a\in\I$ be $2g$ formal variables (later identified with the canonical Schottky parameters).
Consider the genus zero $2g$-point rational function
\begin{align*}
\Zzero(\bm{b,w})=&\Zzero(b_{1},w_{1};b_{-1},w_{-1};\ldots;b_{g},w_{g};b_{-g},w_{-g})
\\
=&\Zzero(b_{1},w_{1};\bbar_{1},w_{-1};\ldots;b_{g},w_{g};\bbar_{g},w_{-g})
\prod_{a\in\Ip}\rho_{a}^{\wt(b_{a})},
\end{align*}
for  
$\bm{b,w}=b_{1},w_{1},b_{-1},w_{-1},\ldots,b_{g},w_{g},b_{-g},w_{-g}$. 
Define the genus $g$ partition function by
\begin{align}\label{eq:Zg}
\Zg_{V}:=\Zg_{V}(\bm{w,\rho})
=\sum_{\bm{b}_{+}}\Zzero(\bm{b,w}),
\end{align}
for $\bm{w,\rho}=w_{1},w_{-1},\rho_{1},\ldots,w_{g},w_{-g},\rho_{g}$  and 
where the sum is over any basis $\{\bm{b}_{+}\}$ of $V^{\otimes g}$.
This definition is motivated by the sewing relation~\eqref{eq:SchottkySewing2} and ideas in~\cite{TW1, MT2,MT3}. This is similar to the sewing analysis  employed in \cite{Z2, C, DGT2,G}.
As noted already, we suppress the genus superscript label $(g)$ except for genus zero and one. 
\begin{remark}\label{rem:Zg_rho_fact} 
$\Zg_{V}$ depends on  $\rho_{a}$ via the dual vectors $\bm{b}_{-}=b_{-1}\otimes\ldots \otimes b_{-g}$ as in~\eqref{eq:bbar}. In particular, setting $\rho_{a}=0$ for some $a\in\Ip$, $\Zg_{V}$ then degenerates to a genus $g-1$ partition function. Furthermore, the genus $g$ partition function for the tensor product $V_{1}\otimes V_{2}$ of two VOAs $V_{1}$ and $V_{2}$ is  $\Zg_{V_{1}\otimes V_{2}}=\Zg_{V_{1}} \Zg_{V_{2}}$.
\end{remark}
We define the genus $g$ formal $n$-point function for $n$ vectors $v_{1},\ldots,v_{n}\in V$ inserted at $y_{1},\ldots,y_{n}$ by
\begin{align}\label{eq:GenusgnPoint}
\Zg_{V}(\bm{v,y}):=\Zg_{V}(\bm{v,y};\bm{w,\rho})
=
\sum_{\bm{b}_{+}}\Zzero(\bm{v,y};\bm{b,w}),
\end{align}
for rational genus zero $(n+2g)$-point functions 
\begin{align*}%\label{eq:Zzerovb}
\Zzero(\bm{v,y};\bm{b,w})=\Zzero(v_{1},y_{1};\ldots;v_{n},y_{n};b_{1},w_{1};\ldots;b_{-g},w_{-g}).
\end{align*}
 We also define the corresponding genus $g$ formal $n$-point correlation differential form
\begin{align*}%\label{eq:FGenusgnPoint}
\F_{V}(\bm{v,y}):=Z_{V}(\bm{v,y})\bm{dy^{\wt(v)}}.
\end{align*}
\begin{remark}\label{rem:Gui_convegence}
If $V$ is also $C_{2}$-cofinite then Gui's Theorem~13.1 \cite{G} implies that $\F_{V}(\bm{v,y})$ is absolutely and locally uniformly convergent on the sewing domain since it is a multiple sewing of correlation functions associated with genus zero conformal blocks. This important result proves Conjecture~8.1 of \cite{Z1}. However, here we treat $\F_{V}(\bm{v,y})$ formally since Zhu recursion does not require $V$ to be $C_{2}$-cofinite.
\end{remark}
\subsection{Comparison to genus one Zhu trace functions} \label{subsec:ZhuTrace}
We  now show  that we obtain Zhu's definition~\cite{Z1} of a genus one $n$-point function (including the partition function\footnote{
An alternative expression for the partition function based on the Catalan series  is described in~\cite{MT3}.})
from our genus $g$ definition~\eqref{eq:GenusgnPoint} when $g=1$.  
In this case there are three Schottky parameters: $W_{\pm 1}$ and $q=q_{1}$ (where $|q|<1$)
with corresponding canonical coordinates $w_{\pm 1}$ and $\rho=\rho_{1}$. 
The Schottky group is $\Gamma=\langle \gamma \rangle $ where 
\begin{align}
\label{eq:gam_sigma}
\gamma=\sigma^{-1}
\begin{pmatrix}
	q^{1/2} &0\\
	0 &q^{-1/2}
\end{pmatrix} 
\sigma,\quad 
\sigma:=W^{-1/2}  \begin{pmatrix}1 & -W_{-1}\\1 & -W_{1}\end{pmatrix},
\end{align}
with $W:=W_{-1}-W_{1}$ from~\eqref{eq:gammaa}.
For $z\in\D$ (the Schottky fundamental domain)  and $Z:=\sigma z=\frac{z-W_{-1}}{z-W_{1}}$ then $\zeta: =\log Z$ lies in a fundamental parallelogram with periods $\tpi$ and $\tpi \tau$  in the standard description of a torus with modular parameter $q=e^{\tpi \tau}$.

The genus one $n$-point function  for $v_{1},\ldots,v_{n}$ is defined by Zhu as the trace~\cite{Z1}
\begin{align*}
%\label{eq:Zhunpt}
Z^{\Zhu}_{V}(\bm{v,\zeta};q):=&
\Tr_{V}
\left(\, Y\left(e^{ \wt(v_{1})\zeta_{1}}v_{1},e^{\zeta_{1}}\right)\ldots  
Y\left(e^{\wt(v_{n})\zeta_{n} }v_{n},e^{\zeta_{n}}\right)q^{L(0)}
\right),
\end{align*}
omitting the standard  $q^{-C/24}$ factor (introduced to enhance the $\SL_{2}(\Z)$ modular properties of trace functions).
Comparing to our definition~\eqref{eq:GenusgnPoint} we find
\begin{theorem}\label{theor:ZZhu}
Let $v_{1},\ldots,v_{n}\in V$ be quasiprimary vectors inserted at $z_{1},\ldots, z_{n}$. Then 
\[
\F_{V}^{(1)}(\bm{v,z})=Z^{\Zhu}_{V}(\bm{v,\zeta};q)\bm{d\zeta^{\wt(v)}},
\]
for  $\zeta_{i} =\log Z_{i}$ with $Z_{i}=\sigma z_{i}=\frac{z_{i}-W_{-1}}{z_{i}-W_{1}}$ for $i=1,\ldots, n$.
\end{theorem}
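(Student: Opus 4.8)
The plan is to strip off the differential-form weights and reduce the statement to a trace identity, then evaluate the basis sum defining the genus one function as a trace of vertex operators. Since the $v_{k}$ are quasiprimary, Lemma~\ref{lem:FMobiusQP} (or the general form~\eqref{eq:MobZ0}) lets me transport the whole genus zero $(n+2)$-point function $\Zzero(\bm{v,z};b_{1},w_{1};b_{-1},w_{-1})$ by the M\"obius map $\sigma$ of~\eqref{eq:gam_sigma} that diagonalises $\gamma$. Under $\sigma$ the insertion points become $Z_{k}=\sigma z_{k}$, the two sewing points $w_{\pm 1}$ go to the fixed-point images $\sigma w_{1}=q^{-1}$ and $\sigma w_{-1}=q$, and each quasiprimary $v_{k}$ contributes the automorphy/Jacobian factor $\sigma'(z_{k})^{\wt(v_{k})}$. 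Because $\zeta_{k}=\log Z_{k}$ gives $Z_{k}^{\wt(v_{k})}(d\zeta_{k}/dz_{k})^{\wt(v_{k})}=\sigma'(z_{k})^{\wt(v_{k})}$, reassembling $\bm{dz^{\wt(v)}}$ at the end will turn $\prod_{k}\sigma'(z_{k})^{\wt(v_{k})}$ together with the trace insertions $Z_{k}^{\wt(v_{k})}Y(v_{k},Z_{k})=Y(e^{\wt(v_{k})\zeta_{k}}v_{k},e^{\zeta_{k}})$ into the claimed $Z^{\Zhu}_{V}(\bm{v,\zeta};q)\,\bm{d\zeta^{\wt(v)}}$. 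Thus it suffices to prove the scalar identity $\Zg_{V}^{(1)}(\bm{v,z})=\prod_{k}\sigma'(z_{k})^{\wt(v_{k})}\,\Tr_{V}\!\big(\bm{Y(v,Z)}\,q^{L(0)}\big)$, where $\bm{Y(v,Z)}:=Y(v_{1},Z_{1})\cdots Y(v_{n},Z_{n})$.

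Next I would carry out the resummation over the $V$-basis $\{b_{1}\}$. After transport the two sewing vectors are $M_{1}b_{1}$ at $q^{-1}$ and $M_{2}\bbar_{1}$ at $q$, where $M_{1},M_{2}$ are the (generally non-quasiprimary) $\slLie_{2}$ operators produced by Proposition~\ref{prop:ZMobius}, and the weighting is $\rho_{1}^{\wt(b_{1})}\bbar_{1}=\rho_{1}^{L(0)}\bbar_{1}$ from~\eqref{eq:bbar}. Using the duality underlying Lemma~\ref{lem:adjoint}, in the operator form $\sum_{b}(M_{1}b)\otimes(M_{2}\rho_{1}^{L(0)}\bbar)=\sum_{b}b\otimes(M_{1}^{\dagger}M_{2}\rho_{1}^{L(0)}\bbar)$, I consolidate both M\"obius factors and the weighting into a single operator $N:=M_{1}^{\dagger}M_{2}\rho_{1}^{L(0)}$ acting on the dual insertion. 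Then I would convert the remaining basis sum into a trace: applying creativity $Y(N\bbar,q)\vac=e^{qL(-1)}N\bbar$, invariance of the Li-Z metric~\eqref{eq:LiZinvar}, the adjoint formula~\eqref{eq:Yadj}, and finally the resolution of identity $\sum_{b}\langle\bbar,T b\rangle=\Tr_{V}(T)$, the sum $\sum_{b}\Zzero(\bm{v,Z};b,q^{-1};N\bbar,q)$ becomes $\Tr_{V}\!\big(\bm{Y(v,Z)}\,S\big)$ for a single $\slLie_{2}$ operator $S$ built from $N$ and the positions $q^{\pm 1}$.

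The heart of the argument, and the step I expect to be the main obstacle, is showing that this consolidated operator is \emph{exactly} $S=q^{L(0)}$. Geometrically this is forced: in the $Z$-frame the gluing map $\gamma$ is the dilation $Z\mapsto qZ$, i.e.\ $\operatorname{diag}(q^{1/2},q^{-1/2})=q^{L(0)}$ in the fundamental $\slLie_{2}$ representation, so the sewn handle must insert precisely $q^{L(0)}$. To prove it I would compute $M_{1},M_{2}$ from Proposition~\ref{prop:ZMobius}, form $N$, and repeatedly apply the relations~\eqref{eq:L0Lpm1conj}--\eqref{eq:Lm1L1com2} together with~\eqref{eq:Yadj} and the quasiprimary adjoint~\eqref{eq:udagger} to collapse the product of $e^{\cdot L(\pm 1)}$ and $(\cdot)^{L(0)}$ factors; the Schottky relation~\eqref{eq:rhoa} between $\rho_{1}$ and $q$ is exactly what forces the residual matrix to equal $q^{L(0)}$. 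One already sees the mechanism for $n=0$: the analogous operator has determinant $1$ and trace $q^{1/2}+q^{-1/2}$, so the partition function equals $\Tr_{V}q^{L(0)}$ by conjugation-invariance of the trace. The delicate point is the bookkeeping of the $\slLie_{2}$ cocycle factors, so that the operator lands on $q^{L(0)}$ on the nose rather than merely inside its conjugacy class, which is what is needed once the spectator insertions $\bm{Y(v,Z)}$ are present.

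Finally, I would note that the basis sum is treated formally throughout (its convergence, when $V$ is in addition $C_{2}$-cofinite, being Gui's theorem recorded in Remark~\ref{rem:Gui_convegence}), and then specialise the resulting operator identity: reading off the coordinate $\zeta_{k}=\log\sigma z_{k}$ and the nome $q=e^{\tpi\tau}$ from~\eqref{eq:gam_sigma}, and reassembling the Jacobians as in the first paragraph, recovers $\F_{V}^{(1)}(\bm{v,z})=Z^{\Zhu}_{V}(\bm{v,\zeta};q)\,\bm{d\zeta^{\wt(v)}}$.
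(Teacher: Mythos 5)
Your proposal is correct and follows essentially the same route as the paper: transport the genus zero $(n+2)$-point function by the diagonalising map $\sigma$ via Proposition~\ref{prop:ZMobius}, convert the Li-Z basis sum into a trace using invariance and the adjoint~\eqref{eq:Yadj}, and then collapse the accumulated $\slLie_{2}$ factors to exactly $q^{L(0)}$ by means of the relations~\eqref{eq:L0Lpm1conj}--\eqref{eq:Lm1L1com2} and the Schottky relation~\eqref{eq:rhoa} between $\rho$ and $q$. The step you flag as the main obstacle is precisely the computation the paper carries out, culminating in the identity $e^{qL(-1)}e^{-\frac{1}{1-q}L(1)}e^{-q(1-q)L(-1)}e^{\frac{1}{(1-q)^{2}}L(1)}(1-q)^{2L(0)}=\Id_{V}$ after a cyclic permutation of the trace.
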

\begin{remark}
One may obtain the Zhu trace $n$-point functions for all quasiprimary descendants by means of the identity 
\[
\del_{\zeta} Y\left(e^{\zeta L(0)}v,e^{\zeta}\right)
=Y\left(e^{\zeta L(0)}L[-1]v,e^{\zeta}\right),\quad v\in V,
\] 
for cylindrical Virasoro mode $L[-1]=L(0)+L(-1)$ \cite{Z1}.
\end{remark}
\begin{proof}[Proof of Theorem~\ref{theor:ZZhu}]
Since the result is trivially true for $q=0$ we assume $q\neq 0$.
Consider  the genus one formal differential form
\begin{align*}
	\F_{V}^{(1)}(\bm{v,z})
	=&\sum_{b_{1}\in V}\left\langle 
	\vac,Y\left(b_{1},w_{1}\right)\bm{ Y\left(v,z\right)}
	Y\left(b_{-1},w_{-1}\right)\vac\right\rangle_{\rho}\bm{dx^{\wt(v)}}
	\\
	=&\sum_{n\ge 0}\rho^{n}\sum_{b\in V_{n}}\left\langle 
	\vac,Y\left(b,w_{1}\right)\bm{Y(v,z)}Y\left(\bbar,w_{-1}\right)\vac\right\rangle \bm{dx^{\wt(v)}},
\end{align*}
recalling~\eqref{eq:bbar}.
For $\sigma$ of~\eqref{eq:gam_sigma} we note that  
\begin{align*}
	&\sigma w_{\pm 1}=q^{\mp 1},
	\quad c(cw_{\pm 1}+d)=\frac{1}{1-q^{\mp 1}},
	%		\\
	%		&c(cw_{\pm 1}+d)=-\frac{q}{1-q},
	\quad (cw_{-1}+d)(cw_{1}+d)=-\frac{qW}{(1-q)^{2}},
\end{align*}
where $c=W^{-1/2}$ and $d=   -W^{-1/2}W_{1}$.
Applying Proposition~\ref{prop:ZMobius} for $\sigma $ with quasiprimary $v_{i}$ states  and  recalling from~\eqref{eq:rhoa} that $\rho =-q W^{2}/(1-q)^{2}$  we obtain
\begin{align*}
	\F_{V}^{(1)}(\bm{v,z})
	&=\sum_{n\ge 0}
	Q^{n}
	\sum_{b\in V_{n}}
	\left\langle 
	\vac,Y\left(e^{\frac{q}{1-q}L(1)}b,q^{-1}\right)
	\bm{ Y\left(v,Z\right)}Y\left(e^{-\frac{1}{1-q}L(1)}\bbar,q\right)\vac\right\rangle 
	\bm{dX^{\wt(v)}},
\end{align*}
for $Q:=-q^{-1}(1-q)^2$ and $Z_{i}:=\sigma z_{i}$.
Using~\eqref{eq:Lm1L1com1} and~\eqref{eq:Yadj} we find
% $Y^{\dagger}\left(e^{\frac{q}{1-q}L(1)}b,q^{-1}\right) 
%= Y\left((-q^2)^{L(0)}e^{\frac{q^2}{1-q}L(1)}b,q\right)$ so that 
\begin{align*}
	& \left\langle 
	\vac,\;Y\left(e^{\frac{q}{1-q}L(1)}b,q^{-1}\right)
	\bm{ Y\left(v,Z\right)}Y\left(e^{-\frac{1}{1-q}L(1)}\bbar,q\right)\vac\right\rangle 
	\\
	&=
	\left\langle 
	Y\left((-q^2)^{L(0)}e^{\frac{q^2}{1-q}L(1)}b,q\right)\vac,\;
	\bm{ Y\left(v,Z\right)}Y\left(e^{-\frac{1}{1-q}L(1)}\bbar,q\right)\vac\right\rangle 
	\\
	&=
	\left\langle 
	e^{qL(-1)}(-q^2)^{L(0)}e^{\frac{q^2}{1-q}L(1)}b,\;
	\bm{ Y\left(v,Z\right)}e^{qL(-1)}e^{-\frac{1}{1-q}L(1)}\bbar\right\rangle 
	\\
	&=
	\left\langle 
	b,\;
	e^{\frac{q^2}{1-q}L(-1)}(-q^2)^{L(0)}e^{qL(1)}
	\bm{ Y\left(v,Z\right)}e^{qL(-1)}e^{-\frac{1}{1-q}L(1)}\bbar\right\rangle.
\end{align*}
Thus it follows that 
\begin{align*}
	\F_{V}^{(1)}(\bm{v,z})
	&=
	\Tr_{V}\left(
	e^{\frac{q^2}{1-q}L(-1)}(-q^2)^{L(0)}e^{qL(1)}\bm{Y\left(v,Z\right)}e^{qL(-1)}e^{-\frac{1}{1-q}L(1)}Q^{L(0)}\right)
	\bm{dX^{\wt(v)}}
	\\
	&= \Tr_{V}\left(\bm{Y\left(v,Z\right)}
	e^{qL(-1)}e^{-\frac{1}{1-q}L(1)}
	\left(-q^{-1}(1-q)^2\right)^{L(0)}
	e^{\frac{q^2}{1-q}L(-1)}(-q^2)^{L(0)}e^{qL(1)}\right)\bm{dX^{\wt(v)}}
	\\
	%&= \Tr_{V}\left(\bm{Y\left(v,Z\right)}
	%e^{qL(-1)}e^{-\frac{1}{1-q}L(1)}
	%e^{ -q(1-q) L(-1)}\left(-q^{-1}(1-q)^2\right)^{L(0)}
	%e^{-q^{-1}L(1)}(-q^2)^{L(0)}\right)\bm{dX^{\wt(v)}}
	%\\
	&= \Tr_{V}\left(\bm{Y\left(v,Z\right)}
	e^{qL(-1)}e^{-\frac{1}{1-q}L(1)}
	e^{ -q(1-q)L(-1)}e^{\frac{1}{(1-q)^2}L(1)}
	(1-q)^{2L(0)}q^{L(0)}\right)\bm{dX^{\wt(v)}},
\end{align*}
using~\eqref{eq:L0Lpm1conj}.  
But from~\eqref{eq:Lm1L1com2} we find that
\[
e^{qL(-1)}e^{-\frac{1}{1-q}L(1)}e^{ -q(1-q)L(-1)}e^{\frac{1}{(1-q)^2}L(1)}\left(1-q\right)^{2L(0)}=\Id_{V}.
\]
Thus we have shown that
\begin{align*}
	\F_{V}^{(1)}(\bm{v,z})
	&=
	\Tr_{V}\left( \bm{ Y\left(v,Z\right)}q^{L(0)}\right) \bm{dX^{\wt(v)}}
	=Z^{\Zhu}_{V}(\bm{v,\zeta};q)\bm{d\zeta^{\wt(v)}},
\end{align*}
since $\bm{dX^{\wt(v)}}=\prod_{k=1}^{n}e^{\wt(v_{k})\zeta_{k}}d\zeta_{k}^{\wt(v_{k})}$. 
\end{proof}

\subsection{Genus $g$ formal Schottky M\"obius invariance} 
For Schottky parameters $W_{a}$ of~\eqref{eq:SchottkySewing} and
$p\in \Poly_{2}$ we define the M\"obius generator
\begin{align*}%\label{eq:Lp}
	\D^{p}:=\sum_{a\in\I}p(W_{a})\del_{W_{a}}.
\end{align*}
% define standard generators of the M\"obius map on $\Cg$~\eqref{eq:Mobwrhoa} %\footnote{$\D_{\pm 1},\D_{0}$  are elements of the tangent space $T(\Cg)$.}
%
%\begin{align*}
%\D_{s}:=\sum_{a\in\I}W_{a}^{s+1}\del_{W_{a}},\quad s\in \{0,\pm 1\}.
%\end{align*}
%
%where $\D_{s}=	\D^{p_{s}}$ for $p_{s}(z)=z^{s+1}$. 
This can be written in terms of the $w_{a},\rho_{a}$ parameters of ~\eqref{eq:wa} and~\eqref{eq:rhoa} as
%\begin{align*}
%\D_{-1} &=\sum_{a\in\I}\del_{w_{a}},\quad 
%\D_{0}=\sum_{a\in\I}\left(w_{a}\del_{w_{a}}+\rho_{a}\del_{\rho_{a}}\right),
%\quad
%\D_{1}&=\sum_{a\in\I}\left((w_{a}^{2}+\rho_{a})\del_{w_{a}}+2w_{a}\rho_{a}\del_{\rho_{a}}\right).
%\end{align*} 
%Thus it follows that
\begin{align}\label{eq:Dpdiff}
\D^{p}=\sum_{a\in\I}\left(  
p(w_{a})\del_{w_{a}}+p^{(1)}(w_{a})\rho_{a}\del_{\rho_{a}}
+p^{(2)}(w_{a}) \rho_{a}\del_{w_{-a}}
\right),
\end{align}
where $p^{(i)}(x)=\partial_{x}^{(i)}p(x)$.  The genus $g$ partition function is formally M\"obius invariant:
\begin{proposition}\label{prop:DpZ}
$\D^{p}\Zg_{V}=0$ for all $p\in \Poly_{2}$.
\end{proposition}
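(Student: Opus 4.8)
The plan is to exploit the linearity of $\D^{p}$ in $p$ to reduce to the three generators $p(z)\in\{1,z,z^{2}\}$ of the $\slLie_{2}$ action, and then to identify $\D^{p}$ acting on each genus zero summand $\Zzero(\bm{b,w})$ in~\eqref{eq:Zg} with the infinitesimal form of the genus zero M\"obius covariance of Proposition~\ref{prop:ZMobius}. Throughout I treat $\Zzero(\bm{b,w})$ as a function of the $2g$ insertion points $w_{a}$ ($a\in\I$) together with the $\rho_{a}$, which enter only through the dual vectors $b_{-a}=\rho_{a}^{\wt(b_{a})}\bbar_{a}$ of~\eqref{eq:bbar}; in particular $\rho_{a}\del_{\rho_{a}}$ acts on $\Zzero(\bm{b,w})$ by extracting the weight, $\rho_{a}\del_{\rho_{a}}\Zzero(\bm{b,w})=\wt(b_{a})\Zzero(\bm{b,w})$, and $\wt(b_{-a})=\wt(\bbar_{a})=\wt(b_{a})$.

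For $p=1$ the operator~\eqref{eq:Dpdiff} reduces to $\sum_{a\in\I}\del_{w_{a}}$, and translation covariance of the genus zero correlator (the $\left(\begin{smallmatrix}1&x\\0&1\end{smallmatrix}\right)$ case of Proposition~\ref{prop:ZMobius}, equivalently $e^{xL(-1)}\vac=\vac$) gives $\sum_{a\in\I}\del_{w_{a}}\Zzero(\bm{b,w})=0$ for each $\bm{b}_{+}$. For $p=z$ the dilation case of Proposition~\ref{prop:ZMobius} yields the Euler identity $\sum_{a\in\I}w_{a}\del_{w_{a}}\Zzero(\bm{b,w})=-\sum_{a\in\I}\wt(b_{a})\Zzero(\bm{b,w})$, and since $p^{(1)}=1$ the term $\sum_{a\in\I}p^{(1)}(w_{a})\rho_{a}\del_{\rho_{a}}\Zzero(\bm{b,w})=\sum_{a\in\I}\wt(b_{a})\Zzero(\bm{b,w})$ cancels it exactly. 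Hence $\D^{1}$ and $\D^{z}$ annihilate each summand $\Zzero(\bm{b,w})$ individually, before any basis sum is taken.

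The case $p=z^{2}$ is the crux and does \emph{not} vanish summand by summand. Differentiating the special conformal case of Proposition~\ref{prop:ZMobius} (the $\left(\begin{smallmatrix}1&0\\-x&1\end{smallmatrix}\right)$ generator) at the identity produces an extra $L(1)$ contribution,
\[
\sum_{a\in\I}\left(w_{a}^{2}\del_{w_{a}}+2w_{a}\wt(b_{a})\right)\Zzero(\bm{b,w})=-\sum_{a\in\I}\Zzero(\ldots;L(1)b_{a},w_{a};\ldots).
\]
The $p(w_{a})\del_{w_{a}}$ and $p^{(1)}(w_{a})\rho_{a}\del_{\rho_{a}}$ terms of $\D^{z^{2}}$ (with $p^{(1)}=2w_{a}$) reproduce the left-hand side, while the remaining term $p^{(2)}(w_{a})\rho_{a}\del_{w_{-a}}=\rho_{a}\del_{w_{-a}}$, after the translation property $\del_{w}Y(v,w)=Y(L(-1)v,w)$ and relabelling $a\leftrightarrow-a$ (using $\rho_{-a}=\rho_{a}$ and $b_{-(-a)}=b_{a}$), becomes $\sum_{a\in\I}\rho_{a}\Zzero(\ldots;L(-1)b_{a},w_{a};\ldots)$. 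Combining these I expect to obtain
\[
\D^{z^{2}}\Zzero(\bm{b,w})=\sum_{a\in\I}\Zzero\left(\ldots;\left(-L(1)+\rho_{a}L(-1)\right)b_{a},w_{a};\ldots\right).
\]

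The final step, which I expect to be the only real obstacle, is to show this vanishes once summed over a basis $\{\bm{b}_{+}\}$ of $V^{\otimes g}$. My plan is to pair the indices $a=c$ and $a=-c$ for each $c\in\Ip$ and carry out the $b_{c}$-sum with the other basis vectors held fixed. Applying Lemma~\ref{lem:adjoint} with $\rho=1$ to the quasiprimary conformal vector $\omega$ of weight $2$ (so $L(1)=\omega(2)$, $L(-1)=\omega(0)$, with $\rho=1$ adjoints $\omega^{\dagger}(2)=L(-1)$ and $\omega^{\dagger}(0)=L(1)$ from~\eqref{eq:udagger}) transfers each $L(\pm1)$ from the slot at $w_{c}$ onto the Li-Z dual slot at $w_{-c}$. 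The delicate point is tracking the explicit scalar $\rho_{c}^{\wt(b_{c})}$ carried by $b_{-c}=\rho_{c}^{\wt(b_{c})}\bbar_{c}$: because $L(\pm1)$ shifts weight by $\mp1$, reindexing the weighted sum $\sum_{b_{c}}\rho_{c}^{\wt(b_{c})}$ produces exactly one compensating power of $\rho_{c}$, so that the $-L(1)$ term at $w_{c}$ cancels the $\rho_{c}L(-1)$ term at $w_{-c}$ (the $a=-c$ contribution), and symmetrically $\rho_{c}L(-1)$ at $w_{c}$ cancels $-L(1)$ at $w_{-c}$. This matching of $\rho$-powers across the Li-Z pairing --- the recursion from invariance of the Li-Z metric alluded to in the introduction --- is the heart of the argument; once it is in place the paired $b_{c}$-sum vanishes, and summing over the remaining handles gives $\D^{z^{2}}\Zg_{V}=0$, completing the proof for all $p\in\Poly_{2}$.
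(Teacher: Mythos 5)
Your proof is correct and follows essentially the same route as the paper: the infinitesimal form of the genus zero M\"obius covariance that you differentiate at the identity is precisely the $u=\omega$ case of the Ward identity (Proposition~\ref{prop:ZeroWard}) that the paper invokes, and your cancellation of the residual $-L(1)+\rho_{a}L(-1)$ terms via Lemma~\ref{lem:adjoint} is exactly the paper's identification $\sum_{\bm{b}_{+}}\Zzero(\ldots;L(1)b_{a},w_{a};\ldots)=\rho_{a}\del_{w_{-a}}\Zg_{V}$. The only (immaterial) difference is that you decompose $p$ into the generators $1,z,z^{2}$, whereas the paper treats general $p\in\Poly_{2}$ in one step.
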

\begin{proof}
Proposition~\ref{prop:ZeroWard} with $u=\omega$ and basis vectors $b_{a}$ inserted at $w_{a}$ implies
\begin{align}\label{eq:WardZ}
	\sum_{\bm{b}_{+}}\sum_{a\in\I}\sum_{\ell=0}^{2}
	p^{(\ell)}(w_{a})	\Zzero(\ldots;L(\ell-1)b_{a},w_{a};\ldots)=0.
\end{align}
But for each $a\in\I$ we have
\begin{align}
	\label{eq:ZL1}
	\sum_{\bm{b}_{+}}\Zzero(\ldots;L(-1)b_{a},w_{a};\ldots)&=\del_{w_{a}}\Zg_{V},
	\\
	\label{eq:ZL2}
	\sum_{\bm{b}_{+}}\Zzero(\ldots;L(0)b_{a},w_{a};\ldots)&=\wt(b_{a})\Zg_{V}
	=\rho_{a}\del_{\rho_{a}}\Zg_{V},
	\\
	\label{eq:ZL3}
	\sum_{\bm{b}_{+}}\Zzero(\ldots;L(1)b_{a},w_{a};\ldots)
	&
	=
	\sum_{\bm{b}_{+}}\Zzero(\ldots;\rho_{a}L(-1)b_{-a},w_{-a};\ldots)
	=\rho_{a}\del_{w_{-a}}\Zg_{V},
\end{align}
recalling~\eqref{eq:bbar} and using Lemma~\ref{lem:adjoint} with $L^{\dagger}_{\rho_{a}}(1)=\rho_{a}L(-1)$ from~\eqref{eq:RhoAdjoint}. Substituting~\eqref{eq:ZL1}--\eqref{eq:ZL3} into~\eqref{eq:WardZ} we obtain the stated result on noting that $p^{(2)}$ is a constant.
\end{proof}
Proposition~\ref{prop:DpZ} can be generalised to an $n$-point formal form $\Fg_{V}(\bm{v,y})$ for $n$ vectors $v_{k}\in V$ of weight $\wt(v_{k})$. For $p\in \Poly_{2}$ we define
\begin{align}\label{eq:Dpy}
\D^{p}_{\bm{y}}:=\D^{p}+\sum_{k=1}^{n}\left(p(y_{k})\del_{y_{k}}+\wt(v_{k}) p^{(1)}(y_{k})\right),
\end{align}
for $\D^{p}$ of~\eqref{eq:Dpdiff}. 
Then a similar analysis implies that
\begin{proposition}\label{prop:DpyF}
$
\D^{p}_{\bm{y}}\Fg_{V}(\bm{v,y})+
\sum_{k=1}^{n}p ^{(2)}(y_{k})\Fg_{V}(\ldots ;L(1)v_{k},y_{k};\ldots)=0$
for all $p\in \Poly_{2}$.
\end{proposition}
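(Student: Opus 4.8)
The plan is to reduce everything to a single application of the genus zero Ward identity, Proposition~\ref{prop:ZeroWard}, applied to the defining genus zero $(n+2g)$-point function $\Zzero(\bm{v,y};\bm{b,w})$ of~\eqref{eq:GenusgnPoint}. I would take $u=\omega$, the conformal vector of weight $N=2$, so that $\calL=\{0,1,2\}$ and $\omega(\ell)=L(\ell-1)$, and use $p\in\Poly_{2}$. Since the single conformal insertion may now be reduced against \emph{both} the fixed vectors $v_{k}$ at $y_{k}$ and the summed basis vectors $b_{a}$ at $w_{a}$, Proposition~\ref{prop:ZeroWard} gives
\begin{align*}
\sum_{k=1}^{n}\sum_{\ell=0}^{2}p^{(\ell)}(y_{k})\Zzero(\ldots;L(\ell-1)v_{k},y_{k};\ldots;\bm{b,w})
+\sum_{a\in\I}\sum_{\ell=0}^{2}p^{(\ell)}(w_{a})\Zzero(\bm{v,y};\ldots;L(\ell-1)b_{a},w_{a};\ldots)=0.
\end{align*}
Summing over the $V^{\otimes g}$-basis $\{\bm{b}_{+}\}$ then produces an identity for $\Zg_{V}(\bm{v,y})$.

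The Schottky sum over $a\in\I$ I would treat exactly as in the proof of Proposition~\ref{prop:DpZ}. The $\ell=0$ term yields $\del_{w_{a}}\Zg_{V}(\bm{v,y})$ by translation as in~\eqref{eq:ZL1}; the $\ell=1$ term yields $\rho_{a}\del_{\rho_{a}}\Zg_{V}(\bm{v,y})$ via $L(0)$-grading and the $\rho_{a}$-dependence of the dual vectors as in~\eqref{eq:ZL2}; and the $\ell=2$ term $L(1)b_{a}$ is traded for $\rho_{a}L(-1)b_{-a}$ at $w_{-a}$ by Lemma~\ref{lem:adjoint} together with the adjoint relation $L^{\dagger}_{\rho_{a}}(1)=\rho_{a}L(-1)$ of~\eqref{eq:RhoAdjoint}, giving $\rho_{a}\del_{w_{-a}}\Zg_{V}(\bm{v,y})$ as in~\eqref{eq:ZL3}. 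Comparing with~\eqref{eq:Dpdiff}, and using that $p^{(2)}$ is constant so that the paired points $w_{a},w_{-a}$ bookkeep correctly, these three contributions assemble into $\D^{p}\Zg_{V}(\bm{v,y})$. The essential feature here is that, because the $b_{a}$ are summed over a complete basis, their $L(1)$-terms convert into derivatives and leave no residual term.

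For the sum over the fixed insertions, the $\ell=0$ term gives $p(y_{k})\del_{y_{k}}\Zg_{V}(\bm{v,y})$ by translation, the $\ell=1$ term gives $\wt(v_{k})p^{(1)}(y_{k})\Zg_{V}(\bm{v,y})$ since $L(0)v_{k}=\wt(v_{k})v_{k}$, while the $\ell=2$ term $p^{(2)}(y_{k})\Zg_{V}(\ldots;L(1)v_{k},y_{k};\ldots)$ cannot be simplified, as $v_{k}$ is neither summed nor assumed quasiprimary. Collecting the Schottky contribution $\D^{p}\Zg_{V}$ with the $\ell=0,1$ insertion contributions and recognising the definition~\eqref{eq:Dpy} of $\D^{p}_{\bm{y}}$, I obtain the scalar identity
\begin{align*}
\D^{p}_{\bm{y}}\Zg_{V}(\bm{v,y})+\sum_{k=1}^{n}p^{(2)}(y_{k})\Zg_{V}(\ldots;L(1)v_{k},y_{k};\ldots)=0.
\end{align*}
The stated form identity then follows by attaching the form factors $\bm{dy^{\wt(v)}}$, noting that the weight term $\wt(v_{k})p^{(1)}(y_{k})$ in $\D^{p}_{\bm{y}}$ is precisely the correction produced by differentiating $dy_{k}^{\wt(v_{k})}$, with the degree bookkeeping of the $L(1)v_{k}$ insertion paralleling the genus zero form version in Proposition~\ref{prop:GenusZeroZhuForms}.

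The main obstacle is exactly this asymmetry between the summed basis vectors and the fixed insertions. Verifying that the $L(1)b_{a}$ terms close up into the $w_{-a}$-derivatives comprising $\D^{p}$ requires careful use of the $\rho_{a}$-scaling of the dual basis \eqref{eq:bbar}, the adjoint identity~\eqref{eq:RhoAdjoint}, and the constancy of $p^{(2)}$; by contrast the $L(1)v_{k}$ terms genuinely survive and constitute the extra term in the statement. Keeping track of these distinct roles, rather than any single computation, is where the care lies.
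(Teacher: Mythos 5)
Your proposal is correct and follows exactly the route the paper intends: it applies Proposition~\ref{prop:ZeroWard} with $u=\omega$ to the genus zero $(n+2g)$-point summands, converts the $L(\ell-1)b_{a}$ contributions into $\D^{p}$ via \eqref{eq:ZL1}--\eqref{eq:ZL3} (using Lemma~\ref{lem:adjoint} and \eqref{eq:RhoAdjoint} for the $\ell=2$ term), and keeps the unreducible $L(1)v_{k}$ terms from the fixed insertions. The paper gives no separate proof beyond saying "a similar analysis" to Proposition~\ref{prop:DpZ}, and your write-up is precisely that analysis carried out in full.
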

\begin{corollary}\label{cor:DpyZ}
$\D^{p}_{\bm{y}}\Fg_{V}(\bm{v,y})=0$ for quasiprimary states $v_{1},\ldots,v_{n}$. 
\end{corollary}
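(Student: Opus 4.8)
The plan is to obtain this corollary as an immediate specialisation of Proposition~\ref{prop:DpyF}, which already establishes, for \emph{arbitrary} $v_{1},\ldots,v_{n}\in V$ and any $p\in\Poly_{2}$, the identity
\[
\D^{p}_{\bm{y}}\Fg_{V}(\bm{v,y})+\sum_{k=1}^{n}p^{(2)}(y_{k})\Fg_{V}(\ldots ;L(1)v_{k},y_{k};\ldots)=0.
\]
Thus the only task is to show that the correction sum vanishes under the quasiprimary hypothesis, after which the stated result is read off directly.

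First I would recall that a vector $v_{k}$ is quasiprimary precisely when $L(1)v_{k}=0$. Since $\Fg_{V}$ is built by linear extension from genus zero correlation functions, which are multilinear in their vector arguments, it is linear in each insertion; hence inserting $L(1)v_{k}=0$ into the $k$-th slot gives $\Fg_{V}(\ldots ;L(1)v_{k},y_{k};\ldots)=\Fg_{V}(\ldots ;0,y_{k};\ldots)=0$ for every $k$. Consequently each summand $p^{(2)}(y_{k})\Fg_{V}(\ldots ;L(1)v_{k},y_{k};\ldots)$ is zero, so the whole correction sum vanishes, and substituting this into the identity of Proposition~\ref{prop:DpyF} leaves $\D^{p}_{\bm{y}}\Fg_{V}(\bm{v,y})=0$ for all $p\in\Poly_{2}$, as required.

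There is no genuine obstacle here: the analytic and algebraic content is entirely carried by Proposition~\ref{prop:DpyF}, and the corollary merely isolates the case in which the anomalous $L(1)$-term drops out. The single point I would make explicit is the multilinearity of $\Fg_{V}$ in its vector arguments, which is what guarantees that a zero insertion produces the zero form; everything else is a direct substitution.
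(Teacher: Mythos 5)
Your proof is correct and follows exactly the route the paper intends: the corollary is stated immediately after Proposition~\ref{prop:DpyF} as its direct specialisation, with the correction sum killed by $L(1)v_{k}=0$ for quasiprimary $v_{k}$ together with linearity of $\Fg_{V}$ in each insertion. Nothing further is needed.
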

Corollary~\ref{cor:DpyZ} is a formal version of Proposition~5.3~(ii) of \cite{TW1} concerning genus $g$ meromorphic forms in $n$ variables.

\subsection{A genus $g$ Ward identity}
For quasiprimary $u\in V$ of weight $N$ and $n$ vectors $v_{k} \in V$ of weight $\wt(v_{k})$ for $k=1,\ldots,n$ we consider 
\[
\F_{V}(u,x;\bm{v,y})=Z_{V}(u,x;\bm{v,y})dx^{N}\bm{dy^{\wt(v)}}.
\] 
Define the formal residue \cite{K,FHL,LL}
\begin{align}
	\label{eq:ResF}
	\Res_{a}^{\ell}\Fg_{V}:=\Res_{a}^{\ell}\Fg_{V}(u;\bm{v,y}):=&
	\Res_{x-w_{a}}\left(x-w_{a}\right)^{\ell}\Fg_{V}(u,x;\bm{v,y})
	\\
	\notag
	=&	\sum_{\bm{b}_{+}}\Zzero(\ldots;u(\ell)b_{a},w_{a};\ldots)\bm{dy^{\wt(v)}},
\end{align}
for $\ell\in\calL$ and $a\in\I$. Equation~\eqref{eq:ResF} follows from VOA locality and  associativity. Equation~\eqref{eq:F0ZeroSum} implies a general Ward identity for genus $g$ correlation functions that formally follows the structure of Proposition~\ref{prop:GNexp}~(ii) as follows:
\begin{proposition}\label{prop:GenusgWard}
Let $u$ be quasiprimary of weight $N$ and let $P(z)=p(z)dz^{1-N}$ for $p\in\mathfrak{P}_{2N-2}$. Then for  $p_{a}^{\ell}$ of~\eqref{eq:pal} we find
\begin{align*}
	-\sum_{a\in\Ip}	\sum_{\ell\in\calL}p_{a}^{\ell}
	\Res_{a}^{\ell}\Fg_{V}(u;\bm{v,y})
	+\sum_{k=1}^{n}p^{(\ell)}(y_{k})\sum_{\ell\in\calL}
	\Fg_{V}(\ldots;u(\ell)v_{k},y_{k};\ldots)dy_{k}^{\ell+1-N}=0.
\end{align*} 
\end{proposition}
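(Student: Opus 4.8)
The plan is to deduce the statement from genus zero Zhu machinery applied to the underlying genus zero function and then to resum over the intermediate basis, exactly paralleling the $N=2$ computation in the proof of Proposition~\ref{prop:DpZ}. I would begin by applying the genus zero Ward identity, Proposition~\ref{prop:ZeroWard}, to the rational $(n+2g)$-point function $\Zzero(\bm{v,y};\bm{b,w})$, with $u$ the reducing quasiprimary of weight $N$ and the insertion points being $y_1,\ldots,y_n$ together with the $2g$ Schottky points $w_a$, $a\in\I$. This gives
\begin{align*}
\sum_{k=1}^{n}\sum_{\ell\in\calL}p^{(\ell)}(y_{k})\Zzero(\ldots;u(\ell)v_{k},y_{k};\ldots)
+\sum_{a\in\I}\sum_{\ell\in\calL}p^{(\ell)}(w_{a})\Zzero(\ldots;u(\ell)b_{a},w_{a};\ldots)=0.
\end{align*}
Summing over all bases $\{\bm{b}_{+}\}$ of $V^{\otimes g}$ and attaching the factor $\bm{dy^{\wt(v)}}$, the first sum produces the $\Fg_{V}(\ldots;u(\ell)v_{k},y_{k};\ldots)\,dy_{k}^{\ell+1-N}$ terms (the $dy_{k}$-power following from $\wt(u(\ell)v_{k})=\wt(v_{k})+N-\ell-1$), while the second becomes $\sum_{a\in\I}\sum_{\ell}p^{(\ell)}(w_{a})\Res_{a}^{\ell}\Fg_{V}$ by the definition~\eqref{eq:ResF}.

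The crucial step is to fold the $2g$ Schottky contributions indexed by $a\in\I$ onto the $g$ contributions indexed by $a\in\Ip$. For $a\in\Ip$ the terms already equal $\Res_{a}^{\ell}\Fg_{V}$. For the dual insertions at $w_{-a}$ I would use that, by~\eqref{eq:bbar}, $\{b_{-a}\}$ is the dual basis of $\{b_{a}\}$ with respect to the metric $\langle\cdot,\cdot\rangle_{\rho_{a}}$, so that Lemma~\ref{lem:adjoint} (in its $\rho_{a}$-form) transfers the action of $u(\ell)$ off the $w_{-a}$ slot onto the $w_{a}$ slot as $u^{\dagger}_{\rho_{a}}(\ell)$. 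Combined with~\eqref{eq:RhoAdjoint}, namely $u^{\dagger}_{\rho_{a}}(\ell)=(-1)^{N}\rho_{a}^{\ell+1-N}u(2N-2-\ell)$, this yields the formal reflection
\begin{align*}
\Res_{-a}^{\ell}\Fg_{V}=(-1)^{N}\rho_{a}^{\ell+1-N}\Res_{a}^{2N-2-\ell}\Fg_{V},
\end{align*}
the formal counterpart of~\eqref{eq:ResRel}. This is precisely the move used for $N=2$ via $L^{\dagger}_{\rho_{a}}(1)=\rho_{a}L(-1)$ in Proposition~\ref{prop:DpZ}.

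To conclude, I would re-index the $-a$ part of the second sum by $\ell\mapsto 2N-2-\ell$ so that both surviving sums run over $\Res_{a}^{\ell}\Fg_{V}$ with $a\in\Ip$ and $\ell\in\calL$ (the range being preserved since $\calL$ is symmetric under $\ell\mapsto 2N-2-\ell$). The total coefficient of $\Res_{a}^{\ell}\Fg_{V}$ then collapses to $p^{(\ell)}(w_{a})+(-1)^{N}\rho_{a}^{N-1-\ell}p^{(2N-2-\ell)}(w_{-a})$, which is exactly $-p_{a}^{\ell}$ by~\eqref{eq:pal}. This delivers the $-\sum_{a\in\Ip}\sum_{\ell}p_{a}^{\ell}\Res_{a}^{\ell}\Fg_{V}$ term and hence the asserted identity, realising the formal version of Proposition~\ref{prop:GNexp}~(ii).

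The step I expect to be the main obstacle is the dual-slot transfer: one must apply Lemma~\ref{lem:adjoint} one tensor factor at a time, keeping all handles other than $a$ untouched while tracking the weight-dependent factor $\rho_{a}^{\wt(b_{a})}$ concealed in $b_{-a}$, and one must check that the degree carried by $dx^{N}$ is consumed consistently so that the reflected term is genuinely $\Res_{a}^{2N-2-\ell}\Fg_{V}$. Once this reflection and the coefficient identification with~\eqref{eq:pal} are verified, the remaining manipulations---the re-indexing and the bookkeeping of the $\bm{dy^{\wt(v)}}$ factors---are routine.
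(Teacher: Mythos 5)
Your proposal is correct and follows essentially the same route as the paper's proof: apply Proposition~\ref{prop:ZeroWard} to the genus zero $(n+2g)$-point summands, use Lemma~\ref{lem:adjoint} with the $\rho_{a}$-adjoint~\eqref{eq:RhoAdjoint} to reflect the $w_{-a}$ insertions onto the $w_{a}$ slots (the paper's equation~\eqref{eq:Zuudag}), then relabel $\ell\mapsto 2N-2-\ell$ and identify the resulting coefficient with $-p_{a}^{\ell}$ via~\eqref{eq:pal}. The ``dual-slot transfer'' you flag as the main obstacle is exactly the step the paper handles with Lemma~\ref{lem:adjoint}, and your reflection identity matches the paper's~\eqref{eq:Xa_ell_Rel}.
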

\begin{proof}
Proposition~\ref{prop:ZeroWard} applied to $\Zzero(\bm{v,y};\bm{b,w})$ of~\eqref{eq:GenusgnPoint} implies 
\begin{align*}
0=&\sum_{\ell\in\calL}
\sum_{a\in\Ip}\left(
p^{(\ell)} (w_{a})\Zzero(\ldots;u(\ell)b_{a},w_{a};\ldots)
+
p^{(\ell)}(w_{-a}) \Zzero(\ldots;u(\ell)b_{-a},w_{-a};\ldots)
\right)
\\
&
+\sum_{\ell\in\calL}
\sum_{k=1}^{n}
p^{(\ell)} (y_{k})\Zzero(\ldots;u(\ell)v_{k},y_{k};\ldots).
\end{align*}
By Lemma~\ref{lem:adjoint} and~\eqref{eq:RhoAdjoint} we find summing over any $V$-basis $\{b_{a}\}$ that
\begin{align}
	\label{eq:Zuudag}
\sum_{b_{a}\in V}	\Zzero(\ldots;u(\ell)b_{-a},w_{-a};\ldots)
=&(-1)^{N}\rho_{a}^{\ell+1-N}\sum_{b_{a}\in V}	\Zzero(\ldots;u(2N-2-\ell)b_{a},w_{a};\ldots).
\end{align}
On relabeling, summing over $\bm{b_{+}}$ bases and multiplying by $\bm{dy^{\wt(v)}}$ we obtain
\begin{align*}
0=&\sum_{\ell\in\calL}\sum_{a\in\Ip}
\left(
p^{(\ell)} (w_{a})+(-1)^{N} \rho_{a}^{N-\ell-1}p^{(2N-2-\ell)}(w_{-a})
\right)
\Res_{a}^{\ell}\F_{V}(u;\bm{v,y})
\\
&
+\sum_{\ell\in\calL}
\sum_{k=1}^{n}
p^{(\ell)} (y_{k})\F_{V}(\ldots;u(\ell)v_{k},y_{k};\ldots)dy_{k}^{\ell+1-N},
\end{align*}
since $\wt(u(\ell)v_{k})=\wt(v_{k})+N-\ell-1$. The result follows from~\eqref{eq:pal}.
\end{proof}

\subsection{Genus $g$ Zhu recursion}
We now derive a genus $g$ Zhu reduction formula generalising Propositions~\ref{prop:GenusZeroZhu} and~\ref{prop:GenusZeroZhuForms}. We define $\Pi_{N}(x,y)=\pi_{N}(x,y)dx^{N}dy^{1-N}$ for $\pi_{N}(x,y)$ of~\eqref{eq:piNdef} to be that determined for $N\ge 2$ by $\Psi_{N}(x,y)$  of~\eqref{eq:PipiN} with $f_{\ell}(x)$ of~\eqref{eq:flx} and for $n=1$ by
$\Pi_{1}(x,y)$ of \eqref{eq:Pi1def} and $\Psi_{1}(x,y)$ of \eqref{eq:Psi1def}.  These choices guarantee the convergence of the coefficient functions appearing in the genus $g$ Zhu reduction in terms of derivatives of $\Psi_{N}(x,y_{k})$  and the $N$-form spanning set $\{ \Theta_{N,a}^{\ell}(x)\}$. In particular, we  exploit the sewing formulas~\eqref{eq:Psisew} for $\Psi_{N}(x,y)$ and~\eqref{eq:Thetaexp} for $\Theta_{N,a}^{\ell}(x)$. 
%described in \S\ref{subsec:SchottkySewing}.
\begin{theorem}[Quasiprimary Genus $g$ Zhu Recursion]\label{theor:ZhuGenusg}
	Let $V$ be a simple VOA of strong CFT-type with $V$ isomorphic  to $V'$.
The genus $g$ correlation  differential form for quasiprimary $u$ of weight $N\ge 1$ inserted at $x$ and  $v_{1},\ldots,v_{n}\in V$ inserted at $y_{1},\ldots,y_{n} $ respectively, satisfies the recursive identity
\begin{align}
	\Fg_{V}(u,x;\bm{v,y})=&\sum_{a\in\Ip}\sum_{\ell\in \calL} \Theta_{N,a}^{\ell}(x)\Res_{a}^{\ell}\Fg_{V}(u;\bm{v,y})
	\label{eq:ZhuGenusg}
	\\
	\notag
	&
	+\sum_{k=1}^{n}\sum_{j\ge 0}\Psi_{N}^{(0,j)}(x,y_{k})\Fg_{V}(\ldots;u(j)v_{k},y_{k};\ldots)\,dy_{k}^{j}.
\end{align}
\end{theorem}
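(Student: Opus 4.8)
The plan is to reduce everything to genus zero and then resum the Schottky-handle contributions by means of the sewing formulas of \S\ref{subsec:SchottkySewing}. First I would expand $\Fg_{V}(u,x;\bm{v,y})$ using its definition~\eqref{eq:GenusgnPoint} as a sum over $V^{\otimes g}$-bases of genus zero $(n+2g+1)$-point forms, with $u$ at $x$, the $v_{k}$ at $y_{k}$, and the basis vectors $b_{a}$ at the Schottky points $w_{a}$. Applying genus zero Zhu recursion~\eqref{eq:GenusZeroZhuForms} to reduce $u$ at $x$, with $f_{\ell}$ chosen as in~\eqref{eq:flx} so that $\pi_{N}$ is the Bers seed~\eqref{eq:piN}, yields two families of terms: insertion-point terms with coefficients $\Pi_{N}^{(0,j)}(x,y_{k})$, and Schottky-point terms with coefficients $\Pi_{N}^{(0,j)}(x,w_{a})$ for $a\in\I$. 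By~\eqref{eq:ResF} the latter coefficients multiply exactly $\Res_{a}^{j}\Fg_{V}$, and by~\eqref{eq:Ldef} the Schottky contribution can be written as $L(x)\mathcal{R}$ for the row vector $L(x)$ and a column vector $\mathcal{R}=(\mathcal{R}_{a}^{j})$ of the suitably $\rho_{a}$-rescaled residues $\Res_{a}^{j}\Fg_{V}$.

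The second, and essential, ingredient is a recursion for the high-index residues following from invariance of the Li-Z metric. Splitting $j$ at each $w_{a}$ into the ranges $j\in\calL$ and $j\ge 2N-1$, I note that the high part of $L(x)$ is exactly $\Ltilde(x)=L(x)\Delta$ of~\eqref{eq:Ptilde}. For a high index $j=n+2N-1$, Lemma~\ref{lem:adjoint} together with~\eqref{eq:RhoAdjoint} gives the formal residue identity $\Res_{a}^{n+2N-1}\Fg_{V}=(-1)^{N}\rho_{a}^{n+N}\Res_{-a}^{-n-1}\Fg_{V}$ (exactly as in~\eqref{eq:Zuudag}), turning it into a negative-index residue at the dual point $w_{-a}$. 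This negative-index residue is a genus zero correlator with $u(-n-1)$ acting on $b_{-a}$, so I would then apply the second genus zero recursion, Proposition~\ref{prop:GenusZeroZhu3} (valid at any insertion by Remark~\ref{rem:Zhu genus zero II}), redistributing $u(-n-1)$ as a sum of nonnegative modes $u(m)$ acting on all remaining insertions. Because $\E_{n}^{m}=e_{n}^{m}$, the self-coupling reproduces the $a=-b$ block of $A$, the couplings to the other Schottky points reproduce the $a\neq-b$ block of $A$ in~\eqref{eq:Adef}, and the couplings to the genuine insertion points reproduce the entries of $R(y_{k})$ of~\eqref{eq:Rdef}.

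In matrix language the previous step says that the high-index residue vector $\widetilde{\mathcal{R}}$, with $\widetilde{\mathcal{R}}_{a}^{n}=\mathcal{R}_{a}^{n+2N-1}$, satisfies a linear system $\widetilde{\mathcal{R}}=\Atilde\,\widetilde{\mathcal{R}}+(\text{low-index and insertion source terms})$, where $\Atilde=A\Delta$ absorbs the index shift. Solving by iteration produces the geometric series $(I-\Atilde)^{-1}=\sum_{k\ge 0}\Atilde^{k}$, which converges on $\Cg$ by Theorem~\ref{theor:Psisew}. Substituting back into $L(x)\mathcal{R}=\sum_{a\in\I}\sum_{\ell\in\calL}L_{a}^{\ell}(x)\mathcal{R}_{a}^{\ell}+\Ltilde(x)\widetilde{\mathcal{R}}$ and regrouping, the part acting on the low residues becomes $\big(L(x)+\Ltilde(x)(I-\Atilde)^{-1}A\big)$, namely the $T_{a}^{\ell}(x)$ of Proposition~\ref{prop:Thetaexp}, while the part acting on the insertion data becomes $\Ltilde(x)(I-\Atilde)^{-1}R(y_{k})$, which is precisely the correction $\Psi_{N}-\Pi_{N}$ of~\eqref{eq:Psisew}. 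The latter upgrades each insertion coefficient from $\Pi_{N}^{(0,j)}(x,y_{k})$ to $\Psi_{N}^{(0,j)}(x,y_{k})$, giving the second sum of~\eqref{eq:ZhuGenusg}. For the former, I would pair $a\in\Ip$ with $-a$ and use the low-index identity $\Res_{-a}^{2N-2-\ell}\Fg_{V}=(-1)^{N}\rho_{a}^{N-1-\ell}\Res_{a}^{\ell}\Fg_{V}$ (again~\eqref{eq:Zuudag}) to combine $T_{a}^{\ell}(x)$ with $(-1)^{N}\rho_{a}^{N-1-\ell}T_{-a}^{2N-2-\ell}(x)$ into $\Theta_{N,a}^{\ell}(x)$ by~\eqref{eq:Thetaexp}, producing the first sum of~\eqref{eq:ZhuGenusg}.

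The hard part will be the index bookkeeping of the second and third paragraphs: verifying that the coefficients delivered by Proposition~\ref{prop:GenusZeroZhu3} line up, entry-by-entry and with the correct powers of $\rho_{a}$ and signs, with the matrix $A_{ab}^{mn}$ and the vector $R_{a}^{m}(y_{k})$ of~\eqref{eq:Adef}--\eqref{eq:Rdef}, so that the formal recursion is literally the geometric resummation of Theorem~\ref{theor:Psisew}. I would treat $N\ge 2$ as the main case and check $N=1$ separately using $\Pi_{1}$ of~\eqref{eq:Pi1def} in place of the Bers seed; there the argument is structurally identical, with $\Theta_{1,a}^{0}=\nu_{a}$ and $\Psi_{1}$ carrying its extra simple pole at $A_{0}$ as in Remark~\ref{rem:Psi1}.
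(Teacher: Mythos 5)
Your proposal is correct and follows essentially the same route as the paper's proof: apply genus zero Zhu recursion (Proposition~\ref{prop:GenusZeroZhu}) to the defining sum, use the Li-Z adjoint relation (Lemma~\ref{lem:adjoint}, giving~\eqref{eq:Xa_ell_Rel}) to convert high-index residues at $w_{a}$ into negative modes at $w_{-a}$, redistribute these via Proposition~\ref{prop:GenusZeroZhu3}, and resum the resulting linear system by the geometric series $(I-\Atilde)^{-1}$ so that Theorem~\ref{theor:Psisew} and Proposition~\ref{prop:Thetaexp} identify the coefficients as $\Psi_{N}^{(0,j)}$ and $\Theta_{N,a}^{\ell}$. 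The index and $\rho_{a}$-power bookkeeping you flag as the remaining work is exactly what the paper carries out with the vectors $X$, $G$ and the matrices $A$, $\Delta$, and your stated residue identities match~\eqref{eq:Xa_ell_Rel}.
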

\begin{remark}
\label{rem:MainTheorem}\leavevmode
\begin{enumerate}
	\item [(i)] The $\Theta_{N,a}^{\ell}(x),\Psi_{N}(x,y )$ terms depend on $N=\wt(u)$ but  are otherwise independent of the VOA $V$ i.e. they are  analogues of the genus zero $\Pi_{N}(x,y )$ coefficients and the genus one Weierstrass $P_{1}$ coefficients  found in~\cite{Z1}. 
	\item [(ii)] For $N\ge 2$, \eqref{eq:ZhuGenusg} is independent of the choice of limit points $\{A_{\ell}\}$ in $\widehat{\Psi}_{N}(x,y)$ of~\eqref{eq:Psitilde} and the holomorphic $N$-form spanning set $\{ \widehat{\Theta}^{\ell}_{N,a}(x) \}$ of~\eqref{eq:Thetatilde}. Likewise, for $N=1$, \eqref{eq:ZhuGenusg} is independent of the choice of $A_{0}$ in \eqref{eq:Psi1tilde}.	%
	\item [(iii)]  The $\Psi_{N}^{(0,j)}$ coefficient can be expressed as the formal residue
	\begin{align*}
		%	\label{eq:ResFjk}
		\Res_{x-y_{k}}\left(x-y_{k}\right)^{j}\Fg_{V}(u,x;\bm{v,y})
		=	\Fg_{V}(\ldots;u(j)v_{k},y_{k};\ldots)dy_{k}^{j+1-N},
	\end{align*}
	by associativity and locality. Thus the formal expansion~\eqref{eq:ZhuGenusg} precisely matches the meromorphic $N$-form expansion of Proposition~\ref{prop:GNexp}.
	\item[(iv)] We may extend the definition of $\Pi_{N}$ in  \eqref{eq:piNdef} to $g=1$ by choosing distinct $A_{\ell}\in\Chat$ so that $\Psi_{N}(x,y)$ is meromorphic with simple poles at $x=y$ and $A_{\ell}$ for all $\ell\in\calL$. 
	For $\sigma$ of \eqref{eq:gam_sigma} we define $X:=\sigma x$, $Y:=\sigma y$ and $ \Ahat_{\ell}:=\sigma A_{\ell}$. 
	Then \eqref{eq:PiN_Mobius} implies 
	\begin{align*}
		\Psi_{N}(x,y)
		%=\sum_{k\in\Z}\Pi_{N}(q^{k}X,Y;\ldots, \Ahat_{\ell},\ldots)
		=\sum_{k\in\Z}\frac{q^{kN}}{q^{k}X-Y}
		\prod_{\ell\in\calL}\frac{Y-\Ahat_{\ell}}{X-\Ahat_{\ell}}dX^{N}dY^{1-N}.
	\end{align*}
	This expression is absolutely convergent for $|q|<|X|<|Y|,|\Ahat_{\ell}|$ and is elliptic in $\xi:=\log X$. The simple pole structure implies that with $P_{1}$ of \eqref{eq:P1def} we find  $
		\Psi_{N}(x,y)=\left(P_{1}(\xi-\eta)-\sum_{\ell\in \calL}P_{1}(\xi-\alpha_{\ell}) \widehat{\calQ}_{\ell}(Y)\right)dX^{N}dY^{1-N}$
with $\xi:=\log X$, $\eta:=\log Y$, $\alpha_{\ell}:=\log \Ahat_{\ell}$ and 
$\widehat{\calQ}_{\ell}(Y):=\prod_{j\neq \ell}\frac{Y-\Ahat_{j}}{\Ahat_{\ell}-\Ahat_{j}}\in \Poly_{2N-2}(Y)$. 
Hence $\Psi_{N}(x,\gamma y)=\Psi_{N}(x,y)+dX^{N}dY^{1-N}$ for $\gamma$ of \eqref{eq:gam_sigma} and using \eqref{eq:P1_periods}. The total contribution of the $P_{1}(\xi-\alpha_{\ell}) \widehat{\calQ}_{\ell}(Y)dY^{1-N}$ terms to \eqref{eq:ZhuGenusg} is zero due to Proposition~\ref{prop:GenusgWard} and because 
$\partial_{y}^{(j)}\left(\widehat{\calQ}_{\ell}(Y)dY^{1-N}\right)=0$ for $j>2N-2$. Thus $\Psi_{N}^{(0,j)}(x,y)$ may be replaced by $\partial^{(j)}_{y}\left(P_{1}(\xi-\eta)dX^{N}dY^{1-N}\right)$ in \eqref{eq:ZhuGenusg}. This Weierstrass expansion matches that of Zhu  once the modes $u(j)$ in \eqref{eq:ZhuGenusg} defined on the sphere are expressed in terms of the more  natural cylindrical ``square bracket" modes  \cite{Z1}.
\end{enumerate}
%	We note that for a quasiprimary 1-point function,~\eqref{eq:ZhuGenusg} implies
%	\begin{align}
	%		\label{eq:1ptfun}
	%		\Fg_{V}(u,x)&=\sum_{a=1}^{g} \Theta_{N,a}(x)O_{a}(u).
	%	\end{align}
\end{remark}
Much as for Corollary~\ref{cor:GenGenusZeroZhu} we may generalise Theorem~\ref{theor:ZhuGenusg} to find:
\begin{corollary}[General Genus $g$ Zhu Recursion]\label{cor:GenGenusgZhu}
The genus $g$ formal $n$-point  differential  for a quasiprimary  descendant $L^{(i)}(-1)u$ for $u$ of weight $N$ inserted at $x $ and general vectors $v_{1},\ldots,v_{n}$ inserted at $y_{1},\ldots,y_{n} $ respectively, satisfies the recursive identity
\begin{align*}
	\Fg_{V}(L^{(i)}(-1)u,x;\bm{v,y})=&\sum_{a=1}^{g}\sum_{\ell\in \calL} (\Theta_{N,a}^{\ell})^{(i)}(x)\Res_{a}^{\ell}\Fg_{V}(u;\bm{v,y})
	\notag
	\\
	&+\sum_{k=1}^{n}\sum_{j\ge 0}\Psi_{N}^{(i,j)}(x,y_{k})\Fg_{V}(\ldots;u(j)v_{k},y_{k};\ldots)\,dx^{i}dy_{k}^{j}.
\end{align*}
\end{corollary}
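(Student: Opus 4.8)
The plan is to deduce the descendant recursion directly from the quasiprimary recursion of Theorem~\ref{theor:ZhuGenusg} by applying the normalised derivative $\partial_{x}^{(i)}=\frac{1}{i!}\partial_{x}^{i}$, exactly mirroring how Corollary~\ref{cor:GenGenusZeroZhu} is obtained from Proposition~\ref{prop:GenusZeroZhu} at genus zero. The single structural input is the translation property in the form $Y(L^{(i)}(-1)u,x)=\partial_{x}^{(i)}Y(u,x)$, combined with the defining expression~\eqref{eq:GenusgnPoint} of the genus $g$ $n$-point function as a formal sum of genus zero $(n+2g)$-point functions.

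First I would prove the identity
\[
\Fg_{V}(L^{(i)}(-1)u,x;\bm{v,y})=\partial_{x}^{(i)}\Fg_{V}(u,x;\bm{v,y}),
\]
in which $\partial_{x}^{(i)}$ differentiates the scalar correlator and raises the $x$-form degree from $dx^{N}$ to $dx^{N+i}=dx^{\wt(L^{(i)}(-1)u)}$. This holds term by term over the $V^{\otimes g}$-basis: for each $\bm{b}_{+}$, inserting $L^{(i)}(-1)u$ at $x$ contributes the operator $Y(L^{(i)}(-1)u,x)=\partial_{x}^{(i)}Y(u,x)$, while all remaining insertions at $\bm{y}$ and $\bm{w}$ are independent of $x$, so $\Zzero(L^{(i)}(-1)u,x;\bm{v,y};\bm{b,w})=\partial_{x}^{(i)}\Zzero(u,x;\bm{v,y};\bm{b,w})$. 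Summing over $\{\bm{b}_{+}\}$ and reinstating the form factors gives the displayed equation.

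Next I would apply $\partial_{x}^{(i)}$ to~\eqref{eq:ZhuGenusg}. The residues $\Res_{a}^{\ell}\Fg_{V}(u;\bm{v,y})$ and the reduced forms $\Fg_{V}(\ldots;u(j)v_{k},y_{k};\ldots)$ carry no $x$-dependence -- the former is a residue at the Schottky parameter $w_{a}$, the latter depends only on $\bm{y}$ -- so the derivative acts only on the universal coefficients. By the convention~\eqref{eq:delconv} this replaces $\Theta_{N,a}^{\ell}(x)$ by $(\Theta_{N,a}^{\ell})^{(i)}(x)$ and $\Psi_{N}^{(0,j)}(x,y_{k})$ by $\Psi_{N}^{(i,j)}(x,y_{k})$, the operator $\partial_{x}^{(i)}$ raising the $x$-form degree by $i$ and thereby producing the explicit $dx^{i}$ appearing in the second sum. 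Combining with the identity from the previous paragraph yields precisely the stated recursion.

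The only point demanding care is the interchange of $\partial_{x}^{(i)}$ with the infinite sum over $j\ge 0$ and with the formal basis sum over $\{\bm{b}_{+}\}$. Since Theorem~\ref{theor:ZhuGenusg} is an identity of formal series in the insertion data, differentiation acts term by term and the interchange is immediate; in the $C_{2}$-cofinite situation of Remark~\ref{rem:Gui_convegence} the same step is licensed by the local uniform convergence, under which term-by-term differentiation is valid. This is the main obstacle, and it is a mild one: the remaining content is the form-degree bookkeeping already fixed by Theorem~\ref{theor:ZhuGenusg}.
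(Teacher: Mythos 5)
Your proposal is correct and matches the paper's intended argument: the paper derives this corollary exactly as at genus zero (cf.\ Corollary~\ref{cor:GenGenusZeroZhu}), namely by using $Y(L^{(i)}(-1)u,x)=\del^{(i)}Y(u,x)$ to identify the descendant correlator with $\del_{x}^{(i)}$ of the quasiprimary one and then differentiating~\eqref{eq:ZhuGenusg}, where the residue terms and the reduced $(n-1)$-point forms are $x$-independent so only $\Theta_{N,a}^{\ell}(x)$ and $\Psi_{N}^{(0,j)}(x,y_{k})$ are hit. Your remarks on the form-degree bookkeeping and on term-by-term differentiation of the formal series are exactly the points the paper leaves implicit.
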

A genus $g$ version of Proposition~\ref{prop:GenusZeroZhu3} can also be developed.  

\begin{proof}[Proof of Theorem~\ref{theor:ZhuGenusg}]
We firstly apply Proposition~\ref{prop:GenusZeroZhu} to  the summands of~\eqref{eq:GenusgnPoint} to find
\begin{align}\label{eq:GenusgZhu1}
	\F_{V}(u,x;\bm{v,y})\notag
	&=\sum_{a\in\I}\sum_{j\ge 0}\pi_{N}^{(0,j)}(x,w_{a})dx^{N}\sum_{\bm{b}_{+}}\Zzero(\ldots ;u(j)b_{a},w_{a};\ldots)\bm{dy^{\wt(v)}}
	\notag
	\\
	&\quad+\sum_{k=1}^{n}\sum_{j\ge 0}\pi_{N}^{(0,j)}(x,y_{k})dx^{N}\sum_{\bm{b}_{+}}\Zzero(\ldots ;u(j)v_{k},y_{k};\ldots)\bm{dy^{\wt(v)}}.
\end{align}
Define a column vector $X=(X_{a}^{m})$ indexed by $ m\ge 0$ and $ a\in\I$  with components 
\begin{align*}%\label{eq:XamDef}
	X_{a}^{m}:=\rho_{a}^{-\frac{m}{2}}\sum_{\bm{b}_{+}}\Zzero(\ldots;u(m)b_{a},w_{a};\ldots)\bm{dy^{\wt(v)}}.
\end{align*}
Thus~\eqref{eq:GenusgZhu1} can be rewritten as
\begin{align}\label{eq:ZhuGenusZero}
	\Fg_{V}(u,x;\bm{v,y})=L(x)X+\sum_{k=1}^{n}\sum_{j\ge 0}\Pi_{N}^{(0,j)}(x,y_{k})\Fg_{V}(\ldots;u(j)v_{k},y_{k};\ldots)\,dy_{k}^{j},
\end{align}
for row vector $L(x)=(L_{a}(x;m))$ of~\eqref{eq:Ldef}.

We next develop a recursive formula for $X$ following the genus two  approach in ~\cite{GT}.
We apply Lemma~\ref{lem:adjoint} to the summands of $X_{a}^{m}$ 
with adjoint~\eqref{eq:RhoAdjoint} to find
\begin{align}\label{XaAdjoint}
	X_{a}^{m}=(-1)^{N}\rho_{a}^{\frac{m}{2}-N+1}\sum_{\bm{b}_{+}}\Zzero(\ldots;u(2N-2-m)b_{-a},w_{-a};\ldots)\bm{dy^{\wt(v)}}.
\end{align}
In particular, for $\ell\in\calL$ we find from~\eqref{eq:Zuudag} that
\begin{align}
	X_{a}^{\ell}&=\rho_{a}^{-\frac{\ell}{2}}
	\Res_{a}^{\ell}\Fg_{V}(u;\bm{v,y})=(-1)^{N}X_{-a}^{2N-2-\ell},
	\label{eq:Xa_ell_Rel}
\end{align}
a formal version of~\eqref{eq:ResRel} for an $N$-form meromorphic in $x$. 

Let $i=m-2N+1\ge 0$ for $m\ge 2N-1$. Proposition~\ref{prop:GenusZeroZhu3},  Remark~\ref{rem:Zhu genus zero II} and~\eqref{XaAdjoint} imply
\begin{align*}
	X_{a}^{m}&=(-1)^{N}\rho_{a}^{\half(i+1)}
	\sum_{\bm{b}_{+}}\Zzero(\ldots;u(-i-1)b_{-a},w_{-a};\ldots)\bm{dy^{\wt(v)}}
	\\
	&=(-1)^{N}\rho_{a}^{\half(i+1)} \sum_{k=1}^{n}\sum_{j\ge 0}\pi_{N}^{(i,j)}(w_{-a},y_{k})
	\Zg_{V}(\ldots;u(j)v_{k},y_{k};\ldots)\bm{dy^{\wt(v)}}
	\\
	&\quad+(-1)^{N}\rho_{a}^{\half(i+1)} \sum_{\substack{b\in\I,\\b\neq -a}}\sum_{j\ge 0}\pi_{N}^{(i,j)}(w_{-a},w_{b})
	\sum_{\bm{b}_{+}}\Zzero(\ldots;u(j)b_{b},w_{b};\ldots)\bm{dy^{\wt(v)}}
	\\
	&\quad+(-1)^{N}\rho_{a}^{\half(i+1)} \sum_{j\ge 0}\E^{j}_{i}(w_{-a})
	\sum_{\bm{b}_{+}}\Zzero(\ldots;u(j)b_{-a},w_{-a};\ldots)\bm{dy^{\wt(v)}}.
\end{align*}
We can rewrite this as
\begin{align}\label{XaRelation}
	X_{a}^{m}=(G+AX)_{a}^{m-2N+1},\quad m\ge 2N-1, 
\end{align}
for  $A=(A_{ab}^{mn})$ of~\eqref{eq:Adef} and column vector $G=(G_{a}^{m})$ for $m\ge 0$, $a\in\I$ with components
\begin{align}
	\label{eq:Gdef}
	G_{a}^{m}:=\sum_{k=1}^{n}\sum_{j\ge 0}\left(R_{a}^{m}\right)^{(j)}(y_{k})\,dy_{k}^{j}\,\Fg_{V}(\ldots;u(j)v_{k},y_{k};\ldots),
\end{align}
for $R_{a}^{m}(y)$ of~\eqref{eq:Rdef}.	
With  $\Delta=(\Delta_{ab}^{mn})$ of~\eqref{eq:Deltadef} then~\eqref{XaRelation} can be written
\begin{align*}
	X_{a}^{m}=(\Delta(G+AX))_{a}^{m},\quad m\ge 2N-1.
\end{align*}
We note the identities
\begin{align}\label{DeltaDelta}
	\Delta^{T}\Delta =I,\quad  
	\Delta\Delta^{T}=I-\Pcal,
\end{align}
for identity matrix $I$ and projection matrix $\Pcal =(\Pcal _{ab}^{mn})$ with components
\begin{align*}
	\Pcal _{ab}^{mn}:=\begin{cases}
		\delta_{ab}\delta_{mn}& \mbox{ for } m,n\in\calL,\\
		0 & \mbox{ otherwise.}
	\end{cases}
\end{align*}
Define
\begin{align*}
	X^{\perp}:=\Delta^{T}X,\quad  
	X^{\Pcal}:=\Pcal X.
\end{align*}
Using~\eqref{DeltaDelta} we have
\begin{align*}
	X^{\perp}&=G+AX.
\end{align*}
But 
\begin{align*}
	X=\Pcal X+(I-\Pcal )X=X^{\Pcal }+\Delta X^{\perp},
\end{align*}
implies
\begin{align*}
	X^{\perp}&%=G+A\left(X^{\Pcal }+\Delta X^{\perp}\right)
	=G+AX^{\Pcal }+\Atilde X^{\perp},
\end{align*}
with $\Atilde=A\Delta$ of~\eqref{eq:Rtilde}. 
Formally solving for $X^{\perp}$ gives:
\begin{align*}
	X^{\perp}=\left(I-\Atilde\right)^{-1}AX^{\Pcal }+\left(I-\Atilde\right)^{-1}G,
\end{align*}
for formal inverse $\left(I-\Atilde\right)^{-1}=\sum_{k\ge 0}\Atilde^{k}$. 
Altogether we have found 
\begin{lemma}\label{XSub}
	Let $u$ be a quasiprimary vector with $\wt(u) = N\ge 1$. Then
	\begin{align*}
		X=\left(I+\Delta \left(I-\Atilde\right)^{-1}A\right)X^{\Pcal }+\Delta\left(I-\Atilde\right)^{-1}G,
	\end{align*}
	for $G$ of~\eqref{eq:Gdef}.
\end{lemma}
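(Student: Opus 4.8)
The plan is to treat the stated identity as a purely formal manipulation of the doubly-indexed infinite matrices, resting on the single-step recursion~\eqref{XaRelation} together with the operator identities~\eqref{DeltaDelta}. The guiding observation is that $\Delta$ raises the superscript index by $2N-1$ and is injective, so its transpose $\Delta^{T}$ reads off precisely those components $X_{a}^{m}$ with $m\ge 2N-1$ for which the recursion is valid, while the complementary components, indexed by $m\in\calL$, are exactly $X^{\Pcal}=\Pcal X$ (these encode the genus $g$ residues $\Res_{a}^{\ell}\Fg_{V}$ via~\eqref{eq:Xa_ell_Rel}). The strategy is therefore to split $X$ into the ``boundary'' part $X^{\Pcal}$ and the ``recursive'' part $\Delta X^{\perp}$, solve a closed linear equation for $X^{\perp}:=\Delta^{T}X$, and reassemble.

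First I would apply $\Delta^{T}$ to~\eqref{XaRelation}. Because $\Delta^{T}$ extracts exactly the components $m\ge 2N-1$ on which $X_{a}^{m}=(\Delta(G+AX))_{a}^{m}$ holds, the identity $\Delta^{T}\Delta=I$ of~\eqref{DeltaDelta} collapses this to the clean relation $X^{\perp}=G+AX$. Next I would use the complementary identity $\Delta\Delta^{T}=I-\Pcal$ to write $X=\Pcal X+(I-\Pcal)X=X^{\Pcal}+\Delta X^{\perp}$. Substituting this decomposition into $X^{\perp}=G+AX$ and recognising that $A\Delta=\Atilde$ by definition yields the self-referential equation
\begin{align*}
	X^{\perp}=G+AX^{\Pcal}+\Atilde X^{\perp}.
\end{align*}
Solving formally with the geometric-series inverse $(I-\Atilde)^{-1}=\sum_{k\ge 0}\Atilde^{k}$ gives $X^{\perp}=(I-\Atilde)^{-1}AX^{\Pcal}+(I-\Atilde)^{-1}G$, and back-substitution into $X=X^{\Pcal}+\Delta X^{\perp}$ produces exactly the asserted formula.

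The main obstacle is not the algebra, which is bookkeeping once the ingredients above are assembled, but justifying the formal inverse and the index manipulations on infinite matrices. The series $(I-\Atilde)^{-1}$ is legitimised by Theorem~\ref{theor:Psisew}, which guarantees its convergence for all $(\bm{w,\rho})\in\Cg$; I would also want to note that lower truncation together with~\eqref{eq:vjVm} ensures only finitely many $u(j)v_{k}$ contribute to each component, so the matrix products are well defined. The genuinely substantive content sits upstream of this lemma, in establishing the recursion~\eqref{XaRelation} itself: this combines the Li-Z adjoint relation of Lemma~\ref{lem:adjoint} and~\eqref{eq:RhoAdjoint} to rewrite the high-index components $X_{a}^{m}$ with $m\ge 2N-1$ as adjoint-mode insertions at $w_{-a}$, and then applies genus zero Zhu recursion of Proposition~\ref{prop:GenusZeroZhu3} via Remark~\ref{rem:Zhu genus zero II}. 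Granting that recursion, the lemma follows entirely by the formal linear algebra outlined here.
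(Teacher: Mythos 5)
Your proposal is correct and follows essentially the same route as the paper: the paper likewise rewrites~\eqref{XaRelation} as $X^{\perp}=G+AX$ via $\Delta^{T}\Delta=I$, decomposes $X=X^{\Pcal}+\Delta X^{\perp}$ using $\Delta\Delta^{T}=I-\Pcal$, solves $X^{\perp}=G+AX^{\Pcal}+\Atilde X^{\perp}$ with the formal geometric series, and back-substitutes. Your added remarks on convergence of $(I-\Atilde)^{-1}$ and on lower truncation are consistent with the paper's treatment.
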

Lemma~\ref{XSub} implies that 
\begin{align*}
	L(x)X=&
	\sum_{\ell\in\calL}\sum_{a\in\I}
	(L(x)+\Ltilde(x)(I-\Atilde)^{-1}A )_{a}^{\ell}X_{a}^{\ell}
	\\
	&+
	\sum_{k=1}^{n}\sum_{j\ge 0}
	\Ltilde(x)(I-\Atilde)^{-1}R^{(j)}(y_{k})\,dy_{k}^{j}\,\Fg_{V}(\ldots;u(j)v_{k},y_{k};\ldots).		
\end{align*}
Therefore the preliminary formula~\eqref{eq:ZhuGenusZero} and equation~\eqref{eq:Xa_ell_Rel} imply
\begin{align}\label{GenusgZhu}
	&\Fg_{V}(u,x;\bm{v,y})=
	\sum_{a\in\I}\sum_{\ell\in\calL}T_{a}^{\ell}(x)\Res_{a}^{\ell}\F_{V}
	+\sum_{k=1}^{n}\sum_{j\ge 0}\Psi_{N}^{(0,j)} (x,y_{k})\Fg_{V}(\ldots;u(j)v_{k},y_{k};\ldots)\,dy_{k}^{j},
\end{align}
for $\Psi_{N}(x,y)$ of~\eqref{eq:Psisew} and $T_{a}^{\ell}(x)$ of~\eqref{eq:Thetaexp}. 
Finally, Proposition~\ref{prop:Thetaexp} and~\eqref{eq:Xa_ell_Rel} imply that 
\begin{align*}
	\sum_{a\in\I}\sum_{\ell\in\calL}T_{a}^{\ell}(x)\Res_{a}^{\ell}\F_{V}= \sum_{a\in \Ip}\sum_{\ell\in\calL} \Theta_{N,a}^{\ell}(x)\Res_{a}^{\ell}\F_{V},
\end{align*}
for holomorphic $N$-form $\Theta_{N,a}^{\ell}(x)$ of~\eqref{eq:Thetaexp}. Thus~\eqref{GenusgZhu} implies~\eqref{eq:ZhuGenusg}.
\end{proof}

\section{Genus $g$ Zhu Recursion with Interwiners}
\subsection{Intertwiners for irreducible ordinary $V$-modules}
\label{sec:Genus g Inter}
In this section we consider the generalisation of \S\ref{sec:Genus g} for intertwiner vertex operators associated with $V$-modules.  We assume some conditions on the dimension of the space of intertwiners that hold for VOAs satisfying the Verlinde formula \cite{H1} and for the Heisenberg abelian intertwiner algebra \cite{TZ2}. Despite the fact that genus zero intertwiner correlation functions are not rational, we find similar Zhu reduction formulas for all genera $g\ge 0$ when reducing with respect to an element of $V$.  We also obtain M\"obius invariance for genus zero correlation functions containing at most two intertwiner operators which allows us to reproduce Zhu's module trace correlation function at genus one \cite{Z1}.

Let $\calA$ be a label set for inequivalent irreducible ordinary $V$-modules  $(Y_{\alpha},\Wmod_{\alpha})$ of conformal weight $h_{\alpha}\in\C$ for $\alpha\in\calA$ \cite{FHL,DLM}. We identify $(Y_{0},\Wmod_{0})$ with $ (Y,V)$.
We assume that $\Wmod_{\alpha}$ is graded with
$\Wmod_{\alpha}=\oplus_{n\ge 0}\Wmod _{\alpha,n+h_{\alpha}}$ where $v_{\alpha}\in \Wmod _{{\alpha},n+h_{\alpha}}$ has conformal weight $n+h_{\alpha}$ for integer $n\ge 0$ and $\dim \Wmod _{{\alpha},n+h_{\alpha}}<\infty$.
For $u\in V$  we have $Y_{\alpha}(u,z)=\sum_{n\in\Z}u_{\Wmod_{\alpha}}(n)z^{-n-1}$ for modes $u_{\Wmod_{\alpha}}(n)\in \End\,\Wmod_{\alpha} $. We abbreviate $u_{\Wmod_{\alpha}}(n)$ by $u(n)$ throughout. For $u\in V_{N}$ we assume (cf. \eqref{eq:vjVm})
\begin{align*}
%	\label{eq:vjMm}
	u(j):\Wmod_{{\alpha},k+h_{\alpha}}\rightarrow \Wmod_{{\alpha},k+h_{\alpha}+N-j-1}.
\end{align*}
Let $\Wmod_{\alpha'}:=\Wmod_{\alpha}^{\,\prime} =\oplus_{n\ge 0}\Wmod^{*}_{{\alpha},n+h_{\alpha}}$ denote the canonical graded dual space of $\Wmod_{\alpha}$. A contragredient irreducible ordinary $V$-module $(\Wmod_{\alpha'},Y_{\alpha'})$ exists where \cite{FHL}
\begin{align}
	\label{eq:YMdual}
	\left\langle Y_{\alpha'}(u,z)v_{\alpha'},v_{\alpha}\right\rangle 
	= 	\left\langle v_{\alpha'},Y_{\alpha}
	\left(e^{z L(1)}\left(-z^{-2}\right)^{L(0)}u,z^{-1}\right)v_{\alpha}\right\rangle,
\end{align}
for all $v_{\alpha}\in \Wmod_{\alpha}$, $v_{\alpha'}\in \Wmod_{\alpha'}$, $u\in V$ for canonical pairing $\langle\cdot ,\cdot\rangle$ of  $\Wmod_{\alpha}$ and  $\Wmod_{\alpha'}$ (cf.~\eqref{eq:Yadj} when $\Wmod_{0}=V\cong V'$).
In particular, $\langle L(n)v_{\alpha'},v_{\alpha}\rangle =\langle v_{\alpha'},L(-n)v_{\alpha}\rangle  $ for Virasoro modes \cite{FHL}. 

For $\alpha,\beta,\gamma\in\calA$, define an intertwiner of type $\binom{\gamma}{\alpha\;\beta}$  to be a linear map \cite{FHL}
\begin{align*}
	\calY^{\gamma}_{\alpha\beta} :\Wmod_{\alpha} \otimes \Wmod_{\beta}   & \rightarrow \Wmod_{\gamma}\{z\}
	\\
	(v_{\alpha},v_{\beta}) & \mapsto 	\calY^{\gamma}_{\alpha\beta} (v_{\alpha},z)v_{\beta},
\end{align*}
where $\calY^{\gamma}_{\alpha\beta} (v_{\alpha},z)=\sum_{r\in\C}v_{\alpha}(r)z^{-r-1}$ for $v_{\alpha}(r)\in 
\Hom(\Wmod_{\beta},\Wmod_{\gamma})$ 
(with an appropriate lower truncation condition) satisfies the Jacobi identity:
\begin{align}
	\label{eq:gen_Intertwiner}
	\notag
	&z_{0}^{-1}\delta\left(\frac{z_{1}-z_{2}}{z_{0}}\right)
	Y_{\gamma}\left(u,z_{1}\right)
	\calY^{\gamma}_{\alpha\beta} \left(v_{\alpha},z_{2}\right)
	-z_{0}^{-1}\delta\left(\frac{z_{2}-z_{1}}{-z_{0}}\right)
	\calY^{\gamma}_{\alpha\beta} \left(v_{\alpha},z_{2}\right)
	Y_{\beta} \left(u,z_{1}\right)
	\\
	%\notag
	&=
	z_{2}^{-1}\delta\left(\frac{z_{1}-z_{0}}{z_{2}}\right)
	\calY^{\gamma}_{\alpha\beta}
	\left(Y_{\alpha }\left(u,z_{1}\right)v_{\alpha},z_{2}\right)
	,
	%\\
\end{align}
for all $u\in V$,  $v_{\alpha}\in \Wmod_{\alpha} $ and translation
\begin{align}\label{eq:Lmin1Yint}
	\del_{z} \calY^{\gamma}_{\alpha\beta} (v_{\alpha},z)= \calY^{\gamma}_{\alpha\beta} (L(-1)v_{\alpha},z).
\end{align}
Furthermore,~\eqref{eq:gen_Intertwiner} implies the commutator  formula (cf.~\eqref{eq:ComId}) that for all $u\in V$
\begin{align}\label{eq:Com_calYcom}
u(k) \calY^{\gamma}_{\alpha\beta} (v_{\alpha},z)
- \calY^{\gamma}_{\alpha\beta} (v_{\alpha},z)u(k)
=\sum_{j\ge 0} \calY^{\gamma}_{\alpha\beta} 
\left (u(j)v_{\alpha},z\right)\del_{z} ^{(j)} z^{k}.
\end{align} 
Equation~\eqref{eq:gen_Intertwiner} is the standard Jacobi relation for a $V$-module $(Y_{\beta},\Wmod_{\beta})$ with $Y_{\beta}=\calY^{\beta}_{0\beta}$   for $\alpha=0$ and $\beta=\gamma$. The M\"obius maps~\eqref{eq:Trans}--\eqref{eq:Inv} can be suitably generalised to intertwiners.

Let $N^{\gamma}_{\alpha\beta}$ denote the dimension of the space of intertwining operators of type  $\binom{\gamma}{\alpha\;\beta}$. 
We consider VOAs for which
\begin{align}
	\label{eq:Verlinde}
	N^{\gamma}_{\alpha 0}=\delta_{\alpha\gamma},\quad N^{0}_{\alpha\beta}=\delta_{\alpha\beta'}.
\end{align}
The relations~\eqref{eq:Verlinde} hold for all rational VOAs satisfying the Verlinde formula \cite{H1} but also for Heisenberg VOA modules and allow us to define genus $g\ge 1$ partition functions unambiguously below. 
The unique  intertwining operator $\calY^{\alpha} _{\alpha 0}$ is the creative operator for $\Wmod_{\alpha}$ constructed via skew-symmetry \cite{FHL}
% of type $\binom{\Wmod} {\Wmod  V}$:
\begin{align}
\label{eq:YMMV}
%	\calY^{\alpha} _{\alpha\,  0}:\Wmod_{\alpha}  \otimes V & \rightarrow \Wmod_{\alpha}  [[z,z^{-1}]],
%	\notag
%	\\
%	(v_{\alpha},u) & \mapsto 
		\calY^{\alpha} _{\alpha  0}(v_{\alpha},z)u:=e^{zL(-1)}Y_{\alpha} (u,-z)v_{\alpha},\quad u\in V.
\end{align}
The unique  intertwining operator $\calY^{0} _{\alpha  \alpha'}$ is the adjoint of $\calY^{\alpha} _{\alpha  0}$ \cite{FHL}, \cite{H1} defined by 
\begin{align}
	\label{eq:YVMpM}
	\left\langle u,\calY^{0} _{\alpha  \alpha'} (v_{\alpha},z)v_{\alpha'}\right\rangle = 
	\left\langle \calY^{\alpha}_{\alpha  0}\left(e^{z L(1)}\left(-z^{-2}\right)^{L(0)}v_{\alpha},z^{-1}\right)u,v_{\alpha'}\right\rangle,
\end{align}
where on the LHS we employ the standard Li-Z metric with $\rho=1$ and on the RHS the canonical pairing of  $\Wmod_{\alpha}$ and  $\Wmod_{\alpha'}$.

\subsection{Genus zero intertwining correlation functions}
For notational simplicity, we employ the convention throughout  that $v_{\alpha_{k}}$ denotes an element of $\Wmod_{\alpha_{k}}$ for $k=1,\ldots ,n$ where,  for $j\neq k$, we do not identify  $v_{\alpha_{j}}$  with $v_{\alpha_{k}}$ for repeated module labels  $\alpha_{j}=\alpha_{k}$. 

Define  the following  product of $n$ intertwiners for $v_{\alpha_{k}}\in\Wmod_{\alpha_{k}}$ for $k=1,\ldots, n$   with  intermediate module labels $\beta_{1},\ldots,\beta_{n-1}$ as follows
\begin{align}
	\label{eq:calY_alpha_beta}
	\bm{\calY^{\beta}_{\alpha}(v_{\alpha},y)}:=\calY^{0}_{\alpha_{1}\beta_{1}}(v_{\alpha_{1}},y_{1})
	\ldots
	\calY^{\beta_{k-1}}_{\alpha_{k}\beta_{k}}(v_{\alpha_{k}},y_{k})
	\ldots
	\calY^{\beta_{n-1}}_{\alpha_{n}0}(v_{\alpha_{n}},y_{n}),
\end{align}
with $\bm{v_{\alpha}}:=v_{\alpha_{1}},\ldots,v_{\alpha_{n}}$ labelled by
$\bm{\alpha}:=\alpha_{1},\ldots,\alpha_{n}$ and where, from~\eqref{eq:Verlinde}, we have 
\begin{align}
	\label{eq:beta_alpha}
	\beta_{1}=\alpha_{1}',\quad \beta_{n-1}=\alpha_{n},
\end{align} 
i.e. there are  $n-3$ independent intermediate module labels $\bm{\beta}$ for $n>3$. 
Equation~\eqref{eq:Verlinde} implies that if $\bm{\alpha}=\bm{0}$ then $\bm{\beta}=\bm{0}$.

Define an intertwining genus zero $n$-point correlation function for  
$v_{\alpha_{k}}\in \Wmod_{\alpha_{k}}$  inserted at $y_{k}$ for $k=1,\ldots,n$ by 
\begin{align}\label{eq:Z0Interdefn}
Z^{(0)}(\bm{v_{\alpha},y}\vert\bm{\alpha; \beta})	
	:=\langle \vac,
	\bm{\calY^{\beta}_{\alpha}(v_{\alpha},y)}
	\vac\rangle.
\end{align}
We suppress the $\bm{\alpha; \beta}$ labels if $\bm{\alpha}=\bm{0}$.
\begin{remark}\label{rem:intertwiner_conv}
We mainly consider the correlation function~\eqref{eq:Z0Interdefn} as a formal expression in this paper since it is not rational for non-integral module weights $h_{\alpha}$. 
However, given appropriate conditions on $V$ and its modules \cite{H1, H2, H3}, the correlation function can be extended to a multi-valued analytic function for $|y_{1}|>|y_{2}|>\ldots >|y_{n}|>0$. 
%(We consider a special example in Proposition~\ref{prop:Zalph_rat} below). 
A braided version of intertwiner commutativity can also be demonstrated which plays a central role in the proof of the Verlinde conjecture for $C_{2}$-cofinite VOAs of CFT type for which every $\N$-gradable weak $V$-module is completely reducible \cite{H1}. 
\end{remark}
We may specialise~\eqref{eq:Z0Interdefn} to where $u\in V$ is inserted at $x$ and define
\begin{align}
	\label{eq:Z0uint}
Z^{(0)}(u,x;\bm{v_{\alpha},y}\vert 0,\bm{\alpha};0,\bm{\beta})	
:=\langle \vac, Y(u,x)
\bm{\calY^{\beta}_{\alpha}(v_{\alpha},y)}
\vac\rangle.
\end{align}
Using~\eqref{eq:Com_calYcom} we find Propositions~\ref{prop:ZeroWard}--\ref{prop:GenusZeroZhu3} generalise as follows:
\begin{proposition}
	\label{prop:GenusZeroZhuM}
Let $u\in V$ be a quasiprimary vector of weight $N\ge 1$.
Then
\begin{align}
	\label{eq:Zhu0M1}
&\sum_{k=1}^n \sum_{j\ge 0}
	Z^{(0)}
	(\ldots;u(j)v_{{\alpha}_{k}},y_{k};\ldots|\bm{\alpha; \beta}) p^{(j)}(y_{k})=0, \mbox{ for all } p\in \Poly_{2N-2},
\\
\label{eq:Zhu0M2}
	&Z^{(0)}(u,x;\bm{v_{\alpha},y}|0,\bm{\alpha};0,\bm{\beta})
=
	\sum_{k=1}^{n}\sum_{j\ge 0}
	\pi_{N}^{(0,j)}(x,y_{k})Z^{(0)}(\ldots;u(j){\alpha_{k}},y_{k};\ldots|\bm{\alpha; \beta}),
\\
\label{eq:Zhu0M3}
	&Z^{(0)}(u(-i-1)v_{\alpha_{1}},y_{1};\ldots |\bm{\alpha; \beta})
	\\
	\notag
	&=\sum_{j\ge 0}\E^{j}_{i}(y_{1})
	\Zzero(u(j)v_{\alpha_{1}},y_{1};\ldots |\bm{\alpha; \beta})
	\\
&
\notag
\quad +\sum_{k=2}^{n}\sum_{j\ge 0}
\pi_{N}^{(i,j)}(y_{1},y_{k})\Zzero(\ldots ;u(j)v_{\alpha_{k}},y_{k};\ldots|\bm{\alpha; \beta}),
\end{align}
for $i\ge 0$, $\pi_{N}(x,y)$ of~\eqref{eq:piNdef} and $\E^{j}_{i}(y)$ of~\eqref{eq:Eji}.
\end{proposition}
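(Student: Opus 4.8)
The plan is to transcribe the proofs of Propositions~\ref{prop:ZeroWard}--\ref{prop:GenusZeroZhu3} to the intertwiner setting, replacing the VOA commutator identity by the intertwiner commutator formula~\eqref{eq:Com_calYcom} and exploiting that, by construction in~\eqref{eq:calY_alpha_beta}, the product begins (on the left) with output module $\Wmod_{0}=V$ and ends (on the right) with input module $\Wmod_{0}=V$. Consequently the adjoint and creativity relations on $V$ apply verbatim at the two boundaries, exactly as for $\bm{Y(v,y)}$ in \S\ref{sec:VOAZhu0}.

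For~\eqref{eq:Zhu0M1} I would first note that for $\ell\in\calL$ both $u(\ell)\vac=0$ (creativity) and $u^{\dagger}(\ell)\vac=(-1)^{N}u(2N-2-\ell)\vac=0$ (by~\eqref{eq:udagger}, since $2N-2-\ell\ge 0$). Hence $0=\langle u^{\dagger}(\ell)\vac,\bm{\calY^{\beta}_{\alpha}(v_{\alpha},y)}\vac\rangle=\langle\vac,u(\ell)\bm{\calY^{\beta}_{\alpha}(v_{\alpha},y)}\vac\rangle=\langle\vac,[u(\ell),\bm{\calY^{\beta}_{\alpha}(v_{\alpha},y)}]\vac\rangle$, using invariance of the Li-Z metric on $V$ at the left and $u(\ell)\vac=0$ at the right. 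Telescoping $[u(\ell),\cdot]$ through the $n$ factors by repeated use of~\eqref{eq:Com_calYcom} reproduces~\eqref{ZhuPrepLemma} verbatim, and converting from the monomials $\{y_{k}^{\ell}\}$ to a general $p\in\Poly_{2N-2}$ as in the proof of Proposition~\ref{prop:ZeroWard} gives~\eqref{eq:Zhu0M1}.

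For~\eqref{eq:Zhu0M2} I expand $Z^{(0)}(u,x;\bm{v_{\alpha},y}|0,\bm{\alpha};0,\bm{\beta})=\sum_{m\in\Z}x^{-m-1}\langle\vac,u(m)\bm{\calY^{\beta}_{\alpha}(v_{\alpha},y)}\vac\rangle$. The same invariance argument gives $\langle\vac,u(m)(\cdot)\rangle=0$ for $m\le 2N-2$, so only the terms $m=s+2N-1$ with $s\ge 0$ survive; since $u(m)\vac=0$ for these $m$, I again replace $u(m)\bm{\calY^{\beta}_{\alpha}(v_{\alpha},y)}$ by $[u(m),\bm{\calY^{\beta}_{\alpha}(v_{\alpha},y)}]$ and telescope via~\eqref{eq:Com_calYcom}. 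This matches the computation preceding Proposition~\ref{prop:GenusZeroZhu} and yields the recursion with the specific kernel $\widetilde{\pi}_{N}$; the Ward identity~\eqref{eq:Zhu0M1} then permits replacing $\widetilde{\pi}_{N}$ by the general $\pi_{N}$ of~\eqref{eq:piNdef}, which is~\eqref{eq:Zhu0M2}. For~\eqref{eq:Zhu0M3} I would use intertwiner associativity, namely the specialisation of~\eqref{eq:gen_Intertwiner} giving (with the convention~\eqref{eq:binexp}) $Y(u,x+y_{1})\calY^{0}_{\alpha_{1}\beta_{1}}(v_{\alpha_{1}},y_{1})=\calY^{0}_{\alpha_{1}\beta_{1}}(Y_{\alpha_{1}}(u,x)v_{\alpha_{1}},y_{1})$, so that $Z^{(0)}(u,x+y_{1};\ldots)=\sum_{m\in\Z}Z^{(0)}(u(m)v_{\alpha_{1}},y_{1};\ldots)x^{-m-1}$; equating this with the $x$-expansion of~\eqref{eq:Zhu0M2} at argument $x+y_{1}$ (using the expansions of $\pi_{N}^{(0,j)}(y_{1}+x,\cdot)$ recorded before Proposition~\ref{prop:GenusZeroZhu3}, which produce $\E^{j}_{i}(y_{1})$ of~\eqref{eq:Eji} in the diagonal $k=1$ term and $\pi_{N}^{(i,j)}(y_{1},y_{k})$ for $k\ge 2$) and comparing coefficients of $x^{i}$ for $i\ge 0$ gives~\eqref{eq:Zhu0M3}.

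The main obstacle is that intertwiner correlation functions are not rational, nor single-valued, for non-integral $h_{\alpha}$, so the associativity ``up to a rational multiplier'' used for~\eqref{eq:Z0assoc} is not literally available. I would therefore read all identities formally, as in Remark~\ref{rem:intertwiner_conv}, and justify the associativity step for~\eqref{eq:Zhu0M3} directly from the Jacobi identity~\eqref{eq:gen_Intertwiner} rather than from rational-function manipulation, deferring convergence and analytic continuation to the stated hypotheses on $V$ and its modules. Some care is also needed to check that the telescoping of~\eqref{eq:Com_calYcom} is consistent along the chain, i.e.\ that the mode $u(k)$ emerging on the output module $\Wmod_{\beta_{m-1}}$ of one factor is exactly the one entering the next factor; this is precisely~\eqref{eq:Com_calYcom} applied slot-by-slot, with the only genuine boundary contributions living on $\Wmod_{0}=V$.
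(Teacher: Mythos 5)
Your proposal is correct and takes essentially the same route as the paper, whose entire proof consists of the remark that Propositions~\ref{prop:ZeroWard}--\ref{prop:GenusZeroZhu3} generalise via the intertwiner commutator formula~\eqref{eq:Com_calYcom}; your write-up simply carries out that transcription in detail, including the boundary observations (the outer modules being $\Wmod_{0}=V$, so creativity and the Li-Z adjoint apply there verbatim) and the formal Jacobi-identity treatment of the associativity step for~\eqref{eq:Zhu0M3} that the paper leaves implicit.
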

\begin{remark}\label{rem:Z0uintr}
Equation~\eqref{eq:Zhu0M2} can be modified to describe $u\in V$ inserted after the $r^{\textrm{th}}$ intertwiner in the correlation function
\begin{align}  
\notag
&\langle \vac, 
	\ldots
	\calY^{\beta_{r-1}}_{\alpha_{r}\beta_{r}}(v_{\alpha_{r}},y_{r})
	Y_{\beta_{r}}(u,x)
	\calY^{\beta_{r}}_{\alpha_{r+1}\beta_{r+1}}(v_{\alpha_{r+1}},y_{r+1})
	\ldots
	\vac\rangle\notag\\
	&=
\sum_{k=1}^{r}\sum_{j\ge 0}
\pi_{N}^{(j,0)}(y_{k},x)Z^{(0)}(\ldots;u(j)v_{\alpha_{k}},y_{k};\ldots|\bm{\alpha; \beta})\notag
\\
\label{eq:Zhu0M2r}
&\quad+
\sum_{k=r+1}^{n}\sum_{j\ge 0}
\pi_{N}^{(0,j)}(x,y_{k})Z^{(0)}(\ldots;u(j)v_{\alpha_{k}},y_{k};\ldots|\bm{\alpha;\beta}).
\end{align} 
\end{remark}
Consider  two dual modules $\Wmod_{\alpha}$, $\Wmod_{\alpha'}$.
Let $v_{k}\in V$ for $k=1,\ldots, n$, $v_{\alpha}\in\Wmod_{\alpha}$, $v_{\alpha'}\in \Wmod_{\alpha'}$ inserted at $z_{k}$, $y_{1}$, $y_{2}$ respectively. We define 
\begin{align}
\label{eq:Z0vmm}
	Z^{(0)}(\bm{v,z};v_{\alpha},y_{1};v_{\alpha'},y_{2}):&=
	\left\langle\vac,	
	\bm{Y(v,z)}
	\calY^{0}_{\alpha\alpha'}\left(v_{\alpha},y_{1}\right)
%		\bm{Y_{\alpha'} (v,z)}
	\calY^{\alpha'}_{\alpha'0}\left(v_{\alpha'},y_{2}\right)
	\vac\right\rangle,
	\\
	\label{eq:Z0mvm}
		Z^{(0)}(v_{\alpha},y_{1};\bm{v,z};v_{\alpha'},y_{2}):&=
	\left\langle\vac,	
	%	\bm{Y(v,z)}
	\calY^{0}_{\alpha\alpha'}\left(v_{\alpha},y_{1}\right)
	\bm{Y_{\alpha'} (v,z)}
	\calY^{\alpha'}_{\alpha'0}\left(v_{\alpha'},y_{2}\right)
	\vac\right\rangle.
\end{align}
We then find \cite{H2}
\begin{proposition}
	\label{prop:Zalpha_rational} 
Let $\Wmod_{\alpha},\Wmod_{\alpha'}$ have conformal weight $h_{\alpha}\in\C$. Then 
\[
\left(y_{1}-y_{2}\right)^{2h_{\alpha}}	Z^{(0)}(\bm{v,z};v_{\alpha},y_{1};v_{\alpha'},y_{2}) \mbox{ \textup{and} }	\left(y_{1}-y_{2}\right)^{2h_{\alpha}}Z^{(0)}(v_{\alpha},y_{1};\bm{v,z};v_{\alpha'},y_{2})
\]
can be extended to the same rational function in the domains $|z_{1}|>\ldots >|z_{n}|>|y_{1}|>|y_{2}|$  and $|y_{1}|>|z_{1}|>\ldots >|z_{n}|>|y_{2}|$, respectively.
\end{proposition}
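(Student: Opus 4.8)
The plan is to establish rationality (after the prefactor) in each ordering separately by stripping off the $V$-insertions with genus zero Zhu recursion for intertwiners (Proposition~\ref{prop:GenusZeroZhuM}), reducing to a two-intertwiner ``seed'' that can be handled directly, and then to show that the two resulting rational functions coincide by commuting the $V$-insertions across the intertwiner $\calY^{0}_{\alpha\alpha'}(v_{\alpha},y_{1})$ via the commutator formula \eqref{eq:Com_calYcom}.

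First I would treat the seed case with no $V$-insertions, namely $\langle\vac,\calY^{0}_{\alpha\alpha'}(v_{\alpha},y_{1})\calY^{\alpha'}_{\alpha'0}(v_{\alpha'},y_{2})\vac\rangle$. By creativity \eqref{eq:YMMV} the rightmost factor acting on $\vac$ equals $e^{y_{2}L(-1)}v_{\alpha'}$. Using the $L(-1)$-translation/commutator property (the intertwiner form of \eqref{eq:Lmin1Yint}) together with $L(-1)\vac=0$ and $\langle\vac,L(-1)\,\cdot\,\rangle=\langle L(1)\vac,\cdot\,\rangle=0$, the inner $L(-1)$-contributions telescope and $(\del_{y_{1}}+\del_{y_{2}})$ annihilates the seed, so it depends only on $y_{1}-y_{2}$. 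A mode count (only the weight-zero component pairs with $\vac$) gives homogeneity of degree $-\wt(v_{\alpha})-\wt(v_{\alpha'})$, which together with translation invariance pins down the single monomial $C(y_{1}-y_{2})^{-\wt(v_{\alpha})-\wt(v_{\alpha'})}$. Since $\wt(v_{\alpha})-h_{\alpha}$ and $\wt(v_{\alpha'})-h_{\alpha}$ are non-negative integers, multiplying by $(y_{1}-y_{2})^{2h_{\alpha}}$ yields a Laurent monomial, hence a rational function.

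Next I would induct on the number $n$ of $V$-insertions. For \eqref{eq:Z0vmm} I reduce the leftmost operator $Y(v_{1},z_{1})$ with \eqref{eq:Zhu0M2} (and \eqref{eq:Zhu0M3} for its quasiprimary descendant pieces); for \eqref{eq:Z0mvm} the leftmost $V$-operator sits after the first intertwiner, so I use the variant \eqref{eq:Zhu0M2r} with $r=1$. In either case the coefficient functions $\pi_{N}^{(i,j)}$ and $\E^{j}_{i}$ are rational in the insertion points, the mode sums are finite by lower truncation, and each reduction replaces $v_{\alpha}$ by some $u(j)v_{\alpha}\in\Wmod_{\alpha}$ (and similarly for $v_{\alpha'}$), preserving the module types and hence the weight $h_{\alpha}$. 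Because rationality of $(y_{1}-y_{2})^{2h_{\alpha}}(\cdots)$ is preserved at each step and holds at the seed, both correlation functions are rational after the prefactor in their respective domains.

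Finally, to prove the two rational functions are \emph{equal}, I would move the whole block $\bm{Y(v,z)}$ rightward across $\calY^{0}_{\alpha\alpha'}(v_{\alpha},y_{1})$ using \eqref{eq:Com_calYcom} (equivalently the intertwiner Jacobi identity \eqref{eq:gen_Intertwiner}): the bulk term turns $Y(v_{k},z_{k})$ acting on $V=\Wmod_{0}$ into $Y_{\alpha'}(v_{k},z_{k})$ acting on $\Wmod_{\alpha'}$, while the contact terms $\calY^{0}_{\alpha\alpha'}(v_{k}(j)v_{\alpha},y_{1})$ account for the residues at $z_{k}=y_{1}$. As an operator identity this is domain-independent and realizes exactly the passage from $|z_{k}|>|y_{1}|$ to $|y_{1}|>|z_{k}|$, identifying the two expansions as one rational function. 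The hard part is the rigorous passage from this formal commutation to a genuine analytic continuation: one must know that all branching is carried by the single factor $(y_{1}-y_{2})^{2h_{\alpha}}$ and that the dependence on each $z_{k}$ is single-valued, so that continuation across $z_{k}=y_{1}$ is unambiguous. This single-valuedness and convergence is precisely the generalised rationality and commutativity of intertwiners established by Huang \cite{H2,H3}, which I would invoke for the analytic conclusion, the algebraic skeleton being supplied by the reduction above and by \eqref{eq:Com_calYcom}.
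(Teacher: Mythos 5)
Your skeleton for the rationality claims is essentially the paper's: handle the two\--intertwiner seed explicitly, then strip off the $V$\--insertions by the intertwiner Zhu recursion, using \eqref{eq:Zhu0M2} for the ordering~\eqref{eq:Z0vmm} and \eqref{eq:Zhu0M2r} with $r=1$ for the ordering~\eqref{eq:Z0mvm}, noting that $u(j)$ preserves the module labels so the prefactor $(y_{1}-y_{2})^{2h_{\alpha}}$ is the right one throughout. For the seed the paper computes directly from \eqref{eq:YMMV} and \eqref{eq:YVMpM} that $\Zzero(u_{\alpha},y_{1};u_{\alpha'},y_{2})=(-1)^{\wt(u_{\alpha})}(y_{1}-y_{2})^{-2\wt(u_{\alpha})}\langle u_{\alpha},u_{\alpha'}\rangle$ for quasiprimary vectors and passes to descendants via \eqref{eq:Lmin1Yint}; your translation\--invariance\--plus\--homogeneity argument lands on the same monomial and is acceptable, though the explicit adjoint computation is cleaner for a formal series with complex exponents.

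Where you part company with the paper is the identification of the two rational functions, and this is the weak point of your proposal. You treat equality as a separate analytic\--continuation problem, commute the block $\bm{Y(v,z)}$ across $\calY^{0}_{\alpha\alpha'}(v_{\alpha},y_{1})$ via \eqref{eq:Com_calYcom}, and then invoke Huang's generalised rationality and commutativity \cite{H2,H3} to justify the continuation. That appeal imports convergence and associativity hypotheses on $V$ and its modules which are not assumed in the proposition and which the paper deliberately avoids (cf.\ Remark~\ref{rem:intertwiner_conv}: such analytic statements hold only ``given appropriate conditions on $V$''); as written, your proof would not establish the proposition in the stated generality. No such input is needed: your own induction already closes the argument once you compare the two reductions term by term. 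Both \eqref{eq:Zhu0M2} and \eqref{eq:Zhu0M2r} express the respective correlation functions as the \emph{same} finite linear combination of reduced correlation functions --- the term in which $v_{1}(j)$ hits a given insertion appears in both, and the two reduced functions are again the two orderings of a common lower\--point function, equal by the inductive hypothesis --- while the coefficients are the expansions of one rational function $\pi_{N}^{(0,j)}$ in the two respective domains. Iterating down to the $2$\--point seeds, both sides are the same rational combination of the same seeds, so the two extensions coincide with no analytic continuation and no appeal to \cite{H2,H3}. I would replace your final paragraph with this observation.
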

\begin{proof}
Consider $Z^{(0)}(u_{\alpha},y_{1};u_{\alpha'},y_{2})$ where $u_{\alpha}\in\Wmod_{\alpha}$ and $u_{\alpha'}\in \Wmod_{\alpha'}$ are quasiprimary of weight $\wt(u_{\alpha})=h_{\alpha}+n_{\alpha}$ and $\wt(u_{\alpha'})=h_{\alpha}+n_{\alpha'}$, respectively, for some integers $n_{\alpha},n_{\alpha'}\ge 0$. Using~\eqref{eq:YMMV} and~\eqref{eq:YVMpM}, translation and the fact that $u_{\alpha},u_{\alpha'}$ are quasiprimary to find
\begin{align*}
	Z^{(0)}(u_{\alpha},y_{1};u_{\alpha'},y_{2})
	=&\left\langle \vac,\calY^{0}_{\alpha\alpha'}\left(u_{\alpha},y_{1}\right)
	e^{y_{2}L(-1)}u_{\alpha'}\right\rangle
	=\left\langle \vac,\calY^{0}_{\alpha\alpha'}\left(u_{\alpha},y_{1}-y_{2}\right)
u_{\alpha'}\right\rangle
	\\
	=&
	\left(-\left(y_{1}-y_{2}\right)^{-2}\right)^{\wt(u_{\alpha})}
	\left\langle 
	e^{(y_{1}-y_{2})^{-1}L(-1)}u_{\alpha}, u_{\alpha'} \right\rangle
	\\
	=&
	(-1)^{\wt(u_{\alpha})}
	\left(y_{1}-y_{2}\right)^{-2\wt(u_{\alpha})}\langle u_{\alpha},	u_{\alpha'} \rangle.
\end{align*}
Note that $\langle u_{\alpha},	u_{\alpha'} \rangle\neq 0$ implies $\wt(u_{\alpha})= \wt(u_{\alpha'})$.
Thus $(y_1-y_2)^{2h_{\alpha}}Z^{(0)}(u_{\alpha},y_{1};u_{\alpha'},y_{2})$ is rational in $y_{1},y_{2}$. By the translation property~\eqref{eq:Lmin1Yint} the same holds for all quasiprimary descendants. Thus $(y_1-y_2)^{2h_{\alpha}}Z^{(0)}(v_{\alpha},y_{1};v_{\alpha'},y_{2})$ is rational for all
$v_{\alpha}\in\Wmod_{\alpha}$ and $v_{\alpha'}\in \Wmod_{\alpha'}$.
Equation~\eqref{eq:Zhu0M2} of Proposition~\ref{prop:GenusZeroZhuM} implies that 
$Z^{(0)}(\bm{v,z};v_{\alpha},y_{1};v_{\alpha'},y_{2})$ can be written as  a linear combination of $2$-point correlation functions of the form $Z^{(0)}(u_{\alpha},y_{1};u_{\alpha'},y_{2})$ for some 
$u_{\alpha}\in\Wmod_{\alpha}$ and $u_{\alpha'}\in \Wmod_{\alpha'}$ with coefficients which are rational in $z_{k}$, $y_{1}$, $y_{2}$. Hence for $|z_{1}|>\ldots >|z_{n}|>|y_{1}|>|y_{2}|$ we find $\left(y_{1}-y_{2}\right)^{2h_{\alpha}}	Z^{(0)}(\bm{v,z};v_{\alpha},y_{1};v_{\alpha'},y_{2})$ can be extended to a rational function. By~\eqref{eq:Zhu0M2r} we find that
$ \left(y_{1}-y_{2}\right)^{2h_{\alpha}}Z^{(0)}(v_{\alpha},y_{1};\bm{v,z};v_{\alpha'},y_{2})$ can be extended to the same rational function for $|y_{1}|>|z_{1}|>\ldots >|z_{n}|>|y_{2}|$. 
\end{proof}
We may generalise Lemma~\ref{lem:FMobiusQP} and 	Proposition~\ref{prop:ZMobius} concerning M\"obius maps to correlation functions of the form~\eqref{eq:Z0mvm}. 
For $\Wmod_{\alpha},\Wmod_{\alpha'}$ of conformal weight $h_{\alpha}\in\C$ let
\begin{align*}%\label{eq:ZalphaForm}
\Fzero(v_{\alpha},y_{1};\bm{v,z};v_{\alpha'},y_{2}):=
\Zzero(v_{\alpha},y_{1};\bm{v,z};v_{\alpha'},y_{2})\bm{dz^{\wt(v)}}
dy_{1}^{h_{\alpha}} dy_{2}^{h_{\alpha}}.
\end{align*}
\begin{lemma} 
	\label{lem:FalphaMobiusQP}	
Let $\Wmod_{\alpha},\Wmod_{\alpha'}$ have conformal weight $h_{\alpha}\in\C$ and let  $v_{\alpha},v_{\alpha'},v_{1},\ldots,v_{n}$ be quasiprimary. Then 
	\begin{align}\label{eq:MobFalpha0}
		\Fzero(v_{\alpha},y_{1};\bm{v,z};v_{\alpha'},y_{2})= \Fzero(v_{\alpha},\gamma y_{1};\bm{v,\gamma z};v_{\alpha'},\gamma y_{2}),
	\end{align}
for all $\gamma=\left(\begin{smallmatrix}a&b\\c&d\end{smallmatrix}\right)\in\SL_{2}(\C)$
\end{lemma}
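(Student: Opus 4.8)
The plan is to follow the proof of Lemma~\ref{lem:FMobiusQP}, verifying \eqref{eq:MobFalpha0} on a generating set of $\SL_2(\C)$ rather than on the whole group at once. First I would record that the right-hand side of \eqref{eq:MobFalpha0} is well-defined: by Proposition~\ref{prop:Zalpha_rational} the quantity $(y_1-y_2)^{2h_\alpha}\Zzero(v_\alpha,y_1;\bm{v,z};v_{\alpha'},y_2)$ extends to a rational function, so the only non-rational feature is the overall factor $(y_1-y_2)^{-2h_\alpha}$; under $\gamma$ this factor acquires $(cy_1+d)^{2h_\alpha}(cy_2+d)^{2h_\alpha}$, which is exactly cancelled by the Jacobians $d(\gamma y_i)^{h_\alpha}=(cy_i+d)^{-2h_\alpha}dy_i^{h_\alpha}$ carried by the form. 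Hence the branch ambiguity is immaterial and it suffices to prove the identity for the three generators $\left(\begin{smallmatrix}1&x\\0&1\end{smallmatrix}\right)$, $x^{-1/2}\left(\begin{smallmatrix}x&0\\0&1\end{smallmatrix}\right)$ and $\left(\begin{smallmatrix}1&0\\-x&1\end{smallmatrix}\right)$.

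For each generator I would insert the identity $e^{-\xi L(1)}e^{\xi L(1)}$ (and its analogues for $L(-1)$ and $L(0)$) between consecutive operators in \eqref{eq:Z0mvm}, use $e^{\xi L(\pm1)}\vac=\xi^{L(0)}\vac=\vac$ together with the corresponding facts $\langle\vac,L(\pm1)\,\cdot\,\rangle=0$ for the bra vacuum, and then conjugate each operator. The VOA fields $\bm{Y_{\alpha'} (v,z)}$ transform exactly as in \eqref{eq:Trans}--\eqref{eq:Inv}, since those M\"obius identities hold verbatim on any module. The new ingredient is the conjugation of the two boundary intertwiners $\calY^{0}_{\alpha\alpha'}(v_\alpha,y_1)$ and $\calY^{\alpha'}_{\alpha'0}(v_{\alpha'},y_2)$. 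Their translation and scaling covariances, $e^{\xi L(-1)}\calY(v_\alpha,y)e^{-\xi L(-1)}=\calY(v_\alpha,y+\xi)$ and $x^{L(0)}\calY(v_\alpha,y)x^{-L(0)}=\calY(x^{L(0)}v_\alpha,xy)$, follow from the translation property \eqref{eq:Lmin1Yint} and the grading-shift of intertwiner modes, so the translation and scaling generators are immediate. The inversion covariance of the \eqref{eq:Inv} type is derived from the Virasoro commutator \eqref{eq:Com_calYcom} with $u=\omega$ and the $\slLie_2$ relations \eqref{eq:L0Lpm1conj}--\eqref{eq:Lm1L1com2}, paralleling the VOA derivation. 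Quasiprimarity enters decisively here: because $L(1)v_\alpha=L(1)v_{\alpha'}=0$, the factor $e^{x(1-xy)L(1)}$ in the transformed argument acts trivially and $(1-xy)^{-2L(0)}$ collapses to the scalar $(1-xy)^{-2h_\alpha}$, giving $e^{xL(1)}\calY^{0}_{\alpha\alpha'}(v_\alpha,y_1)e^{-xL(1)}=(1-xy_1)^{-2h_\alpha}\calY^{0}_{\alpha\alpha'}(v_\alpha,\gamma y_1)$ and similarly for $v_{\alpha'}$. These scalars, together with the factors $(1-xz_k)^{-2\wt(v_k)}$ from the VOA fields, cancel exactly against the Jacobians $d(\gamma y_i)^{h_\alpha}$ and $d(\gamma z_k)^{\wt(v_k)}$ after multiplying by $dy_1^{h_\alpha}dy_2^{h_\alpha}\bm{dz^{\wt(v)}}$, which yields \eqref{eq:MobFalpha0} for the inversion generator.

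I expect the main obstacle to be establishing the inversion ($L(1)$) covariance of the two special intertwiners, since $\calY^{0}_{\alpha\alpha'}$ and $\calY^{\alpha'}_{\alpha'0}$ are not ordinary module vertex operators but are defined through skew-symmetry \eqref{eq:YMMV} and the adjoint pairing \eqref{eq:YVMpM}; one must check that the $\slLie_2$-covariance threads correctly through the contragredient structure \eqref{eq:YMdual} and the canonical pairing of $\Wmod_\alpha$ with $\Wmod_{\alpha'}$. Once these generalised M\"obius maps for intertwiners are in hand, the weight bookkeeping is routine, the key point being that the homogeneity weight of the module insertions, here $h_\alpha$, is precisely what matches the $dy_i^{h_\alpha}$ factors in the definition of $\Fzero$, while the contributions of the intermediate VOA fields reproduce the computation already carried out in Lemma~\ref{lem:FMobiusQP}.
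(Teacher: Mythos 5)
Your proposal is correct and follows essentially the same route as the paper: verify \eqref{eq:MobFalpha0} on the three $\SL_{2}(\C)$ generators via the M\"obius covariance of the intertwiners (the paper likewise just invokes the generalisation of \eqref{eq:Trans}--\eqref{eq:Inv} to intertwiners), and then use Proposition~\ref{prop:Zalpha_rational} to reduce the statement to an identity of rational functions after clearing the multi-valued factor $(y_{1}-y_{2})^{-2h_{\alpha}}$, so that it extends from generators to all compositions. Your observation that the branch factor is exactly cancelled by the Jacobians $d(\gamma y_{i})^{h_{\alpha}}$ is precisely the content of the paper's equation \eqref{eq:gamZ}.
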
 
\begin{proof}
	Applying~\eqref{eq:Trans}--\eqref{eq:Inv} for intertwiners we find~\eqref{eq:MobFalpha0} holds
	for each $\SL_{2}(\C)$ generator $\gamma\in\{ \left(\begin{smallmatrix}
		1&x\\0&1
	\end{smallmatrix}\right), 
	x^{-1/2}\left(\begin{smallmatrix}
		x & 0 \\ 0 & 1
	\end{smallmatrix}\right), 
	\left(\begin{smallmatrix}
		1 & 0 \\ -x & 1
	\end{smallmatrix}\right) \}$.	
Thus for those generators we find
\begin{align}
&\left(y_{1}-y_{2}\right)^{2h_{\alpha}}\Zzero(v_{\alpha},y_{1};\bm{v,z};v_{\alpha'},y_{2})
\notag
\\
\notag
&=
\left(y_{1}-y_{2}\right)^{2h_{\alpha}}\Zzero(v_{\alpha},\gamma y_{1};\bm{v,\gamma z};v_{\alpha'},\gamma y_{2})\prod_{k}\left(\frac{d(\gamma z_{k})}{dz_{k}}\right)^{\wt(v_{k})}
\left(\frac{d(\gamma y_{1})}{dy_{1}}
\frac{d(\gamma y_{2})}{dy_{2}}\right)^{h_{\alpha}}
\\
\label{eq:gamZ}
&=
(\gamma y_{1}-\gamma y_{2})^{2h_{\alpha}}\Zzero(v_{\alpha},\gamma y_{1};\bm{v,\gamma z};v_{\alpha'},\gamma y_{2})
\prod_{k}\left(\frac{d(\gamma z_{k})}{dz_{k}}\right)^{\wt(v_{k})},
\end{align}
using 
$\gamma y_{1}-\gamma y_{2}
=\frac{y_{1}-y_{2}}{(cy_{1}+d)(cy_{2}+d)}$.
By Proposition~\ref{prop:Zalpha_rational} the RHS of~\eqref{eq:gamZ} can be extended to a rational function for all $\gamma\in \SL_{2}(\C)$ so that~\eqref{eq:gamZ} holds for all compositions of the generators.
Hence~\eqref{eq:MobFalpha0} holds.
\end{proof}
We may use Lemma~\ref{lem:FalphaMobiusQP} to repeat the proof of Proposition~\ref{prop:ZMobius} to find 
\begin{proposition}
	\label{prop:Z_alpha_Mobius}
	Let $v_{\alpha}\in \Wmod_{\alpha}$ and $v_{\alpha'}\in \Wmod_{\alpha'}$ of weight $h_{\alpha}\in\C$ and $v_{k}\in V$ of weight $\wt(v_{k})$ for $k=1,\ldots,n$. For all $\gamma=\left(\begin{smallmatrix}a&b\\c&d\end{smallmatrix}\right)\in\SL_{2}(\C)$ we have
	\begin{align*}
		&\Zzero(v_{\alpha},y_{1};\bm{v,z};v_{\alpha'},y_{2})
		\\
		&= 
		\Zzero\left(
		\frac{e^{ -c(cy_{1}+d)L(1)}	}
		{(cy_{1}+d)^{2h_{\alpha}}}v_{\alpha},\gamma y_{1};\ldots; 
%		\notag
%		\\
%		\notag
%		&
		\frac{e^{ -c(cz_{k}+d)L(1)}	}
		{(cz_{k}+d)^{2h_{k}}}v_{k},\gamma z_{k};
		%		\notag
		\ldots;
		\frac{e^{ -c(cy_{2}+d)L(1)}	}
		{(cy_{2}+d)^{2h_{\alpha}}}v_{\alpha'},\gamma y_{2}
		\right).
	\end{align*}
\end{proposition}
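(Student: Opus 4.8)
The plan is to transcribe essentially verbatim the exponential-generating-function argument that proves Proposition~\ref{prop:ZMobius}, substituting its quasiprimary base case Lemma~\ref{lem:FMobiusQP} by the intertwiner analogue Lemma~\ref{lem:FalphaMobiusQP}. The key observation is that the earlier proof draws only on the $\slLie_{2}$ conjugation identities \eqref{eq:L0Lpm1conj}--\eqref{eq:Lm1L1com2}, the translation property, and the M\"obius maps \eqref{eq:Trans}--\eqref{eq:Inv}. The conjugation identities are purely algebraic statements about $L(0),L(\pm 1)$ and therefore hold verbatim on every module $\Wmod_{\alpha}$; the translation property for the intertwiner slots is supplied by \eqref{eq:Lmin1Yint}, and \eqref{eq:Trans}--\eqref{eq:Inv} hold in their generalised intertwiner forms. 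Consequently the central computation in the proof of Proposition~\ref{prop:ZMobius},
\[
e^{-c(cy+d)L(1)}(cy+d)^{-2L(0)}e^{\xi L(-1)}u
=(c(y+\xi)+d)^{-2\wt(u)}e^{(\gamma(y+\xi)-\gamma y)L(-1)}u,
\]
carries over for a quasiprimary $u$ inserted in any of the three species of slot appearing in \eqref{eq:Z0mvm}.

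Concretely, I would introduce auxiliary variables $\xi_{\alpha},\xi_{1},\dots,\xi_{n},\xi_{\alpha'}$ and form the exponential generating function packaging the quasiprimary descendants $e^{\xi L(-1)}u$ inserted by $\calY^{0}_{\alpha\alpha'}$, by each $Y_{\alpha'}(v_{k},z_{k})$, and by $\calY^{\alpha'}_{\alpha'0}$, exactly as in \eqref{eq:GZid}. Applying the displayed conjugation identity slot by slot and then invoking translation (\eqref{eq:Lmin1Yint} on the two intertwiner slots and its module analogue on the interior slots) converts every factor $e^{(\gamma(y+\xi)-\gamma y)L(-1)}$ acting at the transformed point $\gamma y$ into a genuine insertion at $\gamma(y+\xi)$. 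This leaves a correlation function of purely quasiprimary insertions evaluated at $\gamma$-images of shifted points, to which Lemma~\ref{lem:FalphaMobiusQP} applies directly. Unwinding the generating-function bookkeeping then recovers the stated transformation law, with the dilation factor $(cy+d)^{-2L(0)}$ producing the $h_{\alpha}$-power prefactors on the two intertwiner slots and the $\wt(v_{k})$-power prefactors on the interior slots.

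The main obstacle is the base case rather than the operator algebra. At genus zero Lemma~\ref{lem:FMobiusQP} was immediate, whereas Lemma~\ref{lem:FalphaMobiusQP} already needed Proposition~\ref{prop:Zalpha_rational} to promote the three generator identities \eqref{eq:Trans}--\eqref{eq:Inv} to all of $\SL_{2}(\C)$, precisely because the bare function \eqref{eq:Z0mvm} is multivalued with branching governed by the non-integral weight $h_{\alpha}$. The care required here is therefore to run the three-species reduction so that the $h_{\alpha}$-dependent prefactor $(y_{1}-y_{2})^{2h_{\alpha}}$ is tracked consistently through each generator; only after this prefactor is cleared does the quasiprimary invariance of Lemma~\ref{lem:FalphaMobiusQP} hold, so I must invoke it before unwinding the generating-function reduction rather than after.
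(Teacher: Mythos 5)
Your proposal is correct and follows essentially the same route as the paper, whose entire proof of this proposition is the single remark that one may ``use Lemma~\ref{lem:FalphaMobiusQP} to repeat the proof of Proposition~\ref{prop:ZMobius}''; your write-up simply fills in the details the paper leaves implicit. Your closing caution about tracking the $(y_{1}-y_{2})^{2h_{\alpha}}$ prefactor is sensible but not strictly needed at this stage, since that issue is already absorbed into the statement of Lemma~\ref{lem:FalphaMobiusQP} (via Proposition~\ref{prop:Zalpha_rational}), which applies directly to the quasiprimary form $\Fzero$ exactly as Lemma~\ref{lem:FMobiusQP} does in the genus zero argument.
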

\subsection{Genus $g$ intertwining correlation functions}
We next define the genus $g$ partition and $n$-point correlation functions for modules with intertwiners. In order to define the most general partition function, we consider $g$ modules $\Wmod_{\alpha_{a}}$ for $a=1,\ldots,g$ together with corresponding contragredient modules $\Wmod_{\alpha_{a}'}$. 
Let $\{b_{a}\}$ be any $\Wmod_{\alpha_{a}}$-basis and  $\{b_{a}'\}$ the canonical dual $\Wmod_{\alpha'_{a}}$-basis. It is also convenient as in \S\ref{sec:Genusgnpt} to define a scaled $\Wmod_{\alpha'_{a}}$-basis with elements  $b_{-a}:=\rho_{a}^{\wt(b_{a})}b_{a}'$ (with $\rho_{a}$ the usual sewing factor as before).

 Consider the product of $2g$  intertwiners for $\Wmod_{\alpha_{a}}$-basis vectors $b_{ a}$ inserted at $w_{a}$ for $a\in\I$ and with $2g-1$ intermediate module labels $\bm{\beta}:=\beta_{1},\ldots \beta_{2g-1}$ 
\begin{align}
	\label{eq:Y_alph_alphp_beta}
	\bm{\calY^{\beta}_{\alpha \alpha'}(b,w)}:=
	\calY^{0}_{\alpha_{1}\beta_{1}}(b_{1},w_{1})
	\calY^{\beta_{1}}_{\alpha'_{1}\beta_{2}}(b_{-1},w_{-1})
	\ldots
%	\calY^{\beta_{2a-2}}_{\alpha_{a}\beta_{2a-1}}(b_{-a},w_{-a})
%	\calY^{\beta_{2a-1}}_{\alpha'_{a}\beta_{2a}}(b_{a},w_{a})
%	\ldots
	\calY^{\beta_{2g-2}}_{\alpha_{g}\beta_{2g-1}}(b_{g},w_{g})
	\calY^{\beta_{2g-1}}_{\alpha'_{g}0}(b_{-g},w_{-g}),
\end{align}
where applying~\eqref{eq:beta_alpha} we  find $\beta_{1}=\alpha'_{1}$ and $\beta_{2g-1}=\alpha'_{g}$.

Let $\{\bm{b_{+}}\}=\{b_{1}\otimes\ldots\otimes b_{g}\}$ denote a basis for $\bm{\Wmod_{\alpha}}:=\bigotimes_{a=1}^{g}\Wmod_{\alpha_{a}}$.
Then we can define a genus $g$ partition function for $g$ simple ordinary $V$-modules labelled by $\bm{\alpha}$ and $2g-1$ intermediate modules labelled by $\bm{\beta}$:
\begin{align}
	\label{eq:Zginter}
Z_{\bm{\alpha \beta}}:=Z_{\bm{\alpha \beta}}(\bm{w,\rho}):=
\sum_{\bm{b_{+}}\in \bm{\Wmod_{\alpha}}}\langle \vac,
\bm{\calY^{\beta}_{\alpha \alpha'}(b,w)}
\vac\rangle,
\end{align}
where the sum is taken over any $\bm{\Wmod_{\alpha}}$-basis. Note that Remark~\ref{rem:Zg_rho_fact} applies to the expression~\eqref{eq:Zginter} similarly to $Z_{V}$ and that $Z_{\bm{\alpha \beta}}=Z_{V}$ when $\bm{\alpha}=\bm{0}$ (so that $\bm{\beta}=\bm{0}$). 
\begin{remark}\label{rem:int_order}
Note that we have made one ordering choice for the intertwiners in~\eqref{eq:Y_alph_alphp_beta} and~\eqref{eq:Zginter}. Following Remark~\ref{rem:intertwiner_conv} we conjecture that on identifying the formal parameters $\bm{w,\rho}$ with Schottky sewing parameters then all such partition functions are convergent on $\Cg$ for $C_{2}$-cofinite VOAs of CFT-type and that the set of partition functions for all $\bm{\alpha, \beta}$ is unique up to braiding scalar multiple factors under reordering of the intertwiners.
\end{remark}
We can finally define a genus $g$ general intertwiner $n$-point correlation function and formal differential by the  insertion of an additional intertwiner product operator $\bm{\calY^{\delta}_{\gamma}(v_{\gamma},y)}$ of~\eqref{eq:calY_alpha_beta} for $n$ module vectors with labels $\bm{\gamma}$ and $n-1$ intermediate modules labelled by $\bm{\delta}$
 \begin{align}
 	\label{eq:Zg_gen_inter}
 	Z_{\bm{\alpha \beta}}
 	(\bm{v_{\gamma},y}\,|\bm{\gamma;\delta}):&=
 	\sum_{\bm{b_{+}}}\langle \vac,
 	\bm{\calY^{\delta}_{\gamma}(v_{\gamma},y)}
 	\bm{\calY^{\beta}_{\alpha \alpha'}(b,w)}
 	\vac\rangle,
 	\\
 	\label{eq:Fg_gen_inter}
 	\F_{\bm{\alpha \beta}}(\bm{v_{\gamma},y}\,|\bm{\gamma;\delta})
 	:&=Z_{\bm{\alpha \beta}}(\bm{v_{\gamma},y}\,|\bm{\gamma;\delta})\bm{dy^{\wt(v_{\gamma})}},
 \end{align}
with $v_{\gamma_{k}}$ of weight $\wt(v_{\gamma_{k}})$ inserted at $y_{k}$ for $k=1,\ldots,n$ and $n-1$ intermediate module labels $\delta_{1},\ldots,\delta_{n-1}$ subject to $\delta_{1}=\gamma'_{1}$ and $\delta_{n-1}=\gamma_{n}$. We suppress the $\bm{\gamma;\delta}$ labels if $\bm{\gamma}=\bm{0}$.
The $\bm{\alpha},\bm{\beta}$ labels are subject to the same condition as in \eqref{eq:Y_alph_alphp_beta}.
\begin{remark}
\label{rem:MobWardInt}
\leavevmode
\begin{enumerate}
\item 
Using Proposition~\ref{prop:GenusZeroZhuM} we find that  Propositions~\ref{prop:DpZ} and~\ref{prop:DpyF}, which concern Schottky M\"obius invariance, and~\ref{prop:GenusgWard}, which concerns a general Ward identity, can be suitably generalised in an obvious way to~\eqref{eq:Zg_gen_inter} and~\eqref{eq:Fg_gen_inter}. 
\item 
Gui's Theorem~13.1 \cite{G} again implies that 
$\F_{\bm{\alpha \beta}}(\bm{v_{\gamma},y}\,|\bm{\gamma;\delta})$ is absolutely and locally uniformly convergent on the sewing domain if $V$ is $C_{2}$-cofinite cf. Remark~\ref{rem:Gui_convegence}.
\end{enumerate}
\end{remark}

\subsection{Comparison to genus one  $V$-module trace functions}
The genus one $n$-point function  for $v_{k}\in V$, $k=1,\ldots,n$ for a simple ordinary  module $\Wmod_{\alpha}$ with lowest weight $h_{\alpha}$ is defined by Zhu by the trace~\cite{Z1}
\begin{align*}
%	\label{eq:ZhunptM}
	Z^{\Zhu}_{\alpha}(\bm{v,\zeta};q):=&
	\Tr_{\Wmod_{\alpha}}Y_{\alpha}\left(e^{ \wt(v_{1})\zeta_{1}}v_{1},e^{\zeta_{1}}\right)\ldots  
	Y_{\alpha}\left(e^{\wt(v_{n})\zeta_{n} }v_{n},e^{\zeta_{n}}\right)q^{L(0)}.
\end{align*}
Consider the genus one $n$-point correlation function of the form~\eqref{eq:Zg_gen_inter} with $\alpha_{1}=\alpha$ and $\beta_{1}=\alpha'$  and   $v_{k}\in V$ inserted at $z_{k}$ for $k=1,\ldots,n$   given by (with $\bm{\gamma}=\bm{\delta}=\bm{0}$)
\begin{align*}
	Z_{\alpha \alpha'}(\bm{v,z})=&
	\sum_{b_{1}\in \Wmod_{\alpha}}\langle \vac,
	\bm{Y(v,z)}
	\calY^{0}_{\alpha \alpha'}(b_{1},w_{1})
%\bm{Y_{\alpha'} (v,z)}
	\calY^{\alpha'}_{\alpha' 0}(b_{-1},w_{-1})
	\vac\rangle 
	\\
	=&\sum_{n\ge 0}\rho^{n+h_{\alpha}}\sum_{b\in \Wmod_{\alpha,n+h_{\alpha}}}
	Z^{(0)}(\bm{v,z};b,w_{1};b',w_{-1}),
%	\left\langle 
%	\vac,	
%	\bm{Y(v,z)}
%	\calY^{0}_{\alpha\alpha'}\left(b,w_{1}\right)
%%	\bm{Y_{\alpha'} (v,z)}
%	\calY^{\alpha'}_{\alpha'0}\left(b',w_{-1}\right)\vac\right\rangle,
\end{align*}
in the notation of~\eqref{eq:Z0vmm}. From~\eqref{eq:rhoa} we have $\rho^{h_{\alpha}} =\left(-q /(1+q)^{2}\right)^{h_{\alpha}}\left(w_{-1}-w_{1}\right)^{2h_{\alpha}}$. 
We may extend $\left(w_{-1}-w_{1}\right)^{2h_{\alpha}}Z^{(0)}(\bm{v,z};b,w_{1};b',w_{-1})$ and 
$\left(w_{-1}-w_{1}\right)^{2h_{\alpha}}Z^{(0)}(b,w_{1};\bm{v,z};b',w_{-1})$ to the same rational function by  Proposition~\ref{prop:Zalpha_rational} so that we define (cf. Remark~\ref{rem:int_order})
\begin{align*}
	Z_{\alpha}^{(1)}(\bm{v,z}):=\sum_{n\ge 0}\rho^{n+h_{\alpha}}\sum_{b\in \Wmod_{\alpha,n+h_{\alpha}}}
	Z^{(0)}(b,w_{1};\bm{v,z};b',w_{-1}).
\end{align*}
 We also define the corresponding formal differential
\begin{align*}
\F_{\alpha}^{(1)}(\bm{v,z}):=	&Z_{\alpha}^{(1)}(\bm{v,z})\bm{dz^{\wt(v)}}.
\end{align*}
Using M\"obius symmetry Proposition~\ref{prop:Z_alpha_Mobius}, we may repeat the arguments of \S\ref{subsec:ZhuTrace} to relate the Zhu definition of a $V$-module trace function $\F_{\alpha}(\bm{v,z})$ as follows:
\begin{theorem}\label{theor:ZZhuM}
	Let $v_{1},\ldots,v_{n}$ be quasiprimary vectors inserted at $z_{1},\ldots, z_{n}$. Then 
	\[
	\F_{\alpha}^{(1)}(\bm{v,z})=Z^{\Zhu}_{\alpha'}(\bm{v,\zeta};q)\bm{d\zeta^{\wt(v)}},
	\]
	for  $\zeta_{i} =\log Z_{i}$ with $Z_{i}=\sigma z_{i}=\frac{z_{i}-W_{-1}}{z_{i}-W_{1}}$ for $i=1,\ldots, n$
	and $\sigma$ of~\eqref{eq:gam_sigma}.
\end{theorem}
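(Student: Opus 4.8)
The plan is to mirror the proof of Theorem~\ref{theor:ZZhu}, replacing each vertex-operator ingredient by its module and intertwiner counterpart. I would begin from the definition
\begin{align*}
Z_{\alpha}^{(1)}(\bm{v,z})=\sum_{n\ge 0}\rho^{n+h_{\alpha}}\sum_{b\in\Wmod_{\alpha,n+h_{\alpha}}}\left\langle\vac,\calY^{0}_{\alpha\alpha'}(b,w_{1})\bm{Y_{\alpha'}(v,z)}\calY^{\alpha'}_{\alpha'0}(b',w_{-1})\vac\right\rangle,
\end{align*}
in the notation of~\eqref{eq:Z0mvm}, and apply the module M\"obius invariance of Proposition~\ref{prop:Z_alpha_Mobius} (extended to all weights as in the proof of Proposition~\ref{prop:ZMobius}) for the map $\sigma$ of~\eqref{eq:gam_sigma}, using $\sigma w_{\pm 1}=q^{\mp 1}$ and $\sigma z_{k}=Z_{k}$. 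Since the $v_{k}$ are quasiprimary this introduces exactly the scalings of~\S\ref{subsec:ZhuTrace}, now applied through the conformal weight to the module vectors $b,b'$ of weight $n+h_{\alpha}$. Because $Z^{(0)}(b,w_{1};\bm{v,z};b',w_{-1})$ is only rational after multiplication by $(w_{-1}-w_{1})^{2h_{\alpha}}$ (Proposition~\ref{prop:Zalpha_rational}), I would track this prefactor through $\sigma$ using $\rho^{h_{\alpha}}=(-q/(1+q)^{2})^{h_{\alpha}}(w_{-1}-w_{1})^{2h_{\alpha}}$ from~\eqref{eq:rhoa}; combined with the weight-$(n+h_{\alpha})$ scaling this turns $\rho^{n+h_{\alpha}}$ into an overall factor $Q^{n+h_{\alpha}}$ with $Q=-q^{-1}(1-q)^{2}$, exactly as in the $\alpha=0$ case.

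Next I would convert the correlation function into a trace using the intertwiner formulas~\eqref{eq:YMMV} and~\eqref{eq:YVMpM}. Since $Y_{\alpha'}(\vac,\cdot)=\Id$, the creative operator gives $\calY^{\alpha'}_{\alpha'0}(b',w_{-1})\vac=e^{w_{-1}L(-1)}b'$, while the adjoint relation~\eqref{eq:YVMpM} with $u=\vac$ moves $\calY^{0}_{\alpha\alpha'}(b,w_{1})$ across the canonical pairing to the vector $e^{w_{1}^{-1}L(-1)}e^{w_{1}L(1)}(-w_{1}^{-2})^{L(0)}b$. Computing the adjoints of these exponentials via $\langle L(n)\,\cdot\,,\cdot\rangle=\langle\cdot,L(-n)\,\cdot\,\rangle$ (stated after~\eqref{eq:YMdual}), the sum over the dual bases $\{b\}$ and $\{b'\}$ collapses to a trace over $\Wmod_{\alpha'}$ weighted by $Q^{L(0)}$. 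This is precisely the step at which the trace lands on $\Wmod_{\alpha'}$ rather than $\Wmod_{\alpha}$, and hence yields $Z^{\Zhu}_{\alpha'}$ on the right-hand side.

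Finally I would apply the $\slLie_{2}$ identities~\eqref{eq:L0Lpm1conj}, \eqref{eq:Lm1L1com1} and~\eqref{eq:Lm1L1com2} to collapse the accumulated factors $e^{\pm L(\pm 1)}$ inside the trace to the identity --- the same cancellation that closes the proof of Theorem~\ref{theor:ZZhu}, now with all Virasoro modes acting on $\Wmod_{\alpha'}$ --- reducing $Q^{L(0)}$ to $q^{L(0)}$ and leaving $\Tr_{\Wmod_{\alpha'}}\!\left(\bm{Y_{\alpha'}(v,Z)}\,q^{L(0)}\right)$. Since $\bm{dX^{\wt(v)}}=\prod_{k}e^{\wt(v_{k})\zeta_{k}}\,d\zeta_{k}^{\wt(v_{k})}$, this is exactly $Z^{\Zhu}_{\alpha'}(\bm{v,\zeta};q)\,\bm{d\zeta^{\wt(v)}}$. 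The main obstacle is the bookkeeping of the $h_{\alpha}$-dependent data: one must transport the non-meromorphic prefactor $(w_{-1}-w_{1})^{2h_{\alpha}}$ correctly under $\sigma$ (using Propositions~\ref{prop:Zalpha_rational} and~\ref{prop:Z_alpha_Mobius}), and verify that the fractional powers such as $(-q^{2})^{L(0)}$ and $(-w_{1}^{-2})^{L(0)}$ appearing at intermediate stages recombine, under consistent branch choices for non-integral $h_{\alpha}$, into the single well-defined factor $q^{L(0)}$ on $\Wmod_{\alpha'}$. The purely algebraic $\slLie_{2}$ manipulations are identical to those in the $\alpha=0$ case.
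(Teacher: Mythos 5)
Your proposal is correct and follows essentially the same route as the paper: apply the module M\"obius invariance of Proposition~\ref{prop:Z_alpha_Mobius} for $\sigma$ (with the $\rho^{h_{\alpha}}$ and $(w_{-1}-w_{1})^{2h_{\alpha}}$ bookkeeping producing $Q^{n+h_{\alpha}}$), convert to a pairing via~\eqref{eq:YMMV} and~\eqref{eq:YVMpM} so the basis sum collapses to a trace over $\Wmod_{\alpha'}$, and then cancel the exponential factors with the $\slLie_{2}$ identities exactly as in Theorem~\ref{theor:ZZhu}. This matches the paper's proof step for step.
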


\begin{proof}
Let	$\Wmod_{\alpha}$ be of conformal weight $h_{\alpha}$.	Consider  
	\begin{align*}
		\F_{\alpha}^{(1)} (\bm{v,z})
		=
		%&\sum_{b_{1}\in V}\left\langle 
		%\vac,Y\left(b_{1},w_{1}\right)\bm{ %Y\left(v,z\right)}Y\left(b_{-1},w_{-1}\right)\vac\right\rangle_{\rho}%\bm{dz^{\wt(v)}}
		%\\
		&\sum_{n\ge 0}\rho^{n+h_{\alpha}}\sum_{b\in \Wmod_{\alpha,n+h_{\alpha}}}
%	\left\langle 
%		\vac,\calY^{0}_{\alpha\alpha'}\left(b,w_{1}\right)
%		\bm{Y_{\alpha'} (v,z)}
%		\calY^{\alpha'}_{\alpha'0}\left(b',w_{-1}\right)\vac\right\rangle
\Zzero(b,w_{1};\bm{v,z};b',w_{-1}) 
\bm{dz^{\wt(v)}},
	\end{align*}
	for $\Wmod_{\alpha}$-basis $\{b\}$ and $\Wmod_{\alpha'} $ dual basis $\{b'\}$. 
	Applying  Proposition~\ref{prop:Z_alpha_Mobius} for $\sigma $ we obtain
	\begin{align*}
		&\F_{\alpha}^{(1)}(\bm{v,z})=
		\sum_{n\ge 0}
		Q^{n+h_{\alpha}}
		\sum_{b\in \Wmod_{\alpha,n+h_{\alpha}}}
%		\left\langle 
%		\vac,\calY^{0}_{\alpha\alpha'} \left(e^{\frac{q}{1-q}L(1)}\;b,q^{-1}\right)
%		\bm{ Y_{\alpha'} \left(v,Z\right)}
%		\calY^{\alpha'}_{\alpha'0}\left(e^{-\frac{1}{1-q}L(1)}\;b',q\right)\vac\right\rangle 
\Zzero\left(
e^{\frac{q}{1-q}L(1)}\;b,q^{-1};\bm{v,Z}; e^{-\frac{1}{1-q}L(1)}\;b',q
\right)
		\bm{dZ^{\wt(v)}},
	\end{align*}
for $Q:=-q^{-1}(1-q)^2$  and $Z_{i}:=\sigma z_{i}$.
	From~\eqref{eq:YMMV} and~\eqref{eq:YVMpM} we find that
	\begin{align*}
		& 
		\Zzero\left(
		e^{\frac{q}{1-q}L(1)}\;b,q^{-1};\bm{v,Z}; e^{-\frac{1}{1-q}L(1)}\;b',q
		\right)
%		\left\langle 
%		\vac,\;\calY^{0}_{\alpha\alpha'} \left(e^{\frac{q}{1-q}L(1)}\; b,q^{-1}\right)
%		\bm{ Y_{\alpha'} \left(v,Z\right)}\calY^{\alpha'}_{\alpha'0}\left(e^{-\frac{1}{1-q}L(1)}\;b',q\right)\vac\right\rangle 
		\\
		&=
		\left\langle 
		b,\;
		e^{\frac{q^2}{1-q}L(-1)}(-q^2)^{L(0)}e^{qL(1)}
		\bm{ Y_{\alpha'} \left(v,Z\right)}e^{qL(-1)}e^{-\frac{1}{1-q}L(1)}\;b'\right\rangle.
	\end{align*}
Using similar arguments as given in the proof of Theorem~\ref{theor:ZZhu} we find
	\begin{align*}
		\F_{\alpha}^{(1)}(\bm{v,z})
		&=
		\Tr_{\Wmod_{\alpha'}} \left(\bm{Y_{\alpha'} \left(v,Z\right)}e^{qL(-1)}e^{-\frac{1}{1-q}L(1)}
		Q^{L(0)}
		e^{\frac{q^2}{1-q}L(-1)}(-q^2)^{L(0)}e^{qL(1)}\right)
		\bm{dZ^{\wt(v)}}
		\\
		&
		=\Tr_{\Wmod_{\alpha'}} \left( \bm{ Y_{\alpha'} \left(v,Z\right)}q^{L(0)}\right) \bm{dZ^{\wt(v)}}=Z^{\Zhu}_{\Wmod_{\alpha'}} (\bm{v,\zeta};q)\bm{d\zeta^{\wt(v)}}.
	\end{align*}
\end{proof}

\subsection{Genus $g$ Zhu recursion for intertwiner $n$-point correlation functions}
We give an overview of a genus Zhu recursion formula involving intertwining operators. The proof parallels that for VOAs so we will not discuss it in detail. 
We first give a generalisation of Lemma~\ref{lem:adjoint}. Let $\{ b\}$ be a homogeneous basis for a module $\Wmod$ of conformal weight $h$ with contragredient module $\Wmod^{\,\prime}$ basis $\{b'\}$. 
Using~\eqref{eq:YMdual} for quasiprimary $u\in V$ we find:
\begin{lemma}\label{AdjointLemmaInt}
	For quasiprimary $u\in V$ of weight $N$ and irreducible module $\Wmod$ we have
	\begin{align*}
%	\label{eq:interadjointrel}
	\sum_{b\in \Wmod_{n+h}}
	\left(u(k)b\right)\otimes b' =(-1)^{N}\sum_{b\in \Wmod_{n+N-k-1+h}} b\otimes \left(u(2N-2-k)b' \right),
	\end{align*}
summing over any basis for the given homogeneous spaces.
\end{lemma}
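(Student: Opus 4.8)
The plan is to reproduce the proof of Lemma~\ref{lem:adjoint} line for line, with the Li-Z metric on $V$ replaced by the canonical pairing $\langle\cdot,\cdot\rangle$ between $\Wmod$ and its contragredient $\Wmodp$, and with the VOA adjoint $u^{\dagger}(k)$ of~\eqref{eq:udagger} replaced by the appropriate module adjoint. The only new ingredient is this module adjoint relation, which I would read off from~\eqref{eq:YMdual}; everything after that is the same linear-algebra bookkeeping as in Lemma~\ref{lem:adjoint}.

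First I would specialise~\eqref{eq:YMdual} to a quasiprimary $u$ of weight $N$. Since $L(1)u=0$ and $L(0)u=Nu$, the inserted vector collapses to $e^{zL(1)}(-z^{-2})^{L(0)}u=(-1)^{N}z^{-2N}u$, so that~\eqref{eq:YMdual} becomes
\begin{align*}
\langle Y_{\alpha'}(u,z)v_{\alpha'},v_{\alpha}\rangle=(-1)^{N}z^{-2N}\langle v_{\alpha'},Y_{\alpha}(u,z^{-1})v_{\alpha}\rangle.
\end{align*}
Writing $Y_{\alpha'}(u,z)=\sum_{m}u(m)z^{-m-1}$ on $\Wmodp$ and $Y_{\alpha}(u,z^{-1})=\sum_{m}u(m)z^{m+1}$ on $\Wmod$ and comparing the coefficient of $z^{-k-1}$ gives, for every $k\in\Z$,
\begin{align*}
\langle u(k)v_{\alpha'},v_{\alpha}\rangle=(-1)^{N}\langle v_{\alpha'},u(2N-2-k)v_{\alpha}\rangle.
\end{align*}
This is the sought module analogue of~\eqref{eq:udagger}; equivalently, in the abbreviated notation of the statement, $\langle b',u(k)b\rangle=(-1)^{N}\langle u(2N-2-k)b',b\rangle$, where on the left $u(k)$ acts on $\Wmod$ and on the right $u(2N-2-k)$ acts on $\Wmodp$.

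Next I would run the two-step expansion. For $b\in\Wmod_{n+h}$ the module grading stated earlier (the analogue of~\eqref{eq:vjVm}) gives $u(k)b\in\Wmod_{n+h+N-k-1}$, so expanding in the dual basis and applying the adjoint relation,
\begin{align*}
u(k)b=\sum_{c\in\Wmod_{n+h+N-k-1}}\langle c',u(k)b\rangle\,c=(-1)^{N}\sum_{c\in\Wmod_{n+h+N-k-1}}\langle u(2N-2-k)c',b\rangle\,c.
\end{align*}
A grading check on $\Wmodp$ shows $u(2N-2-k)c'\in\Wmodp_{n+h}$, so it expands as $u(2N-2-k)c'=\sum_{b\in\Wmod_{n+h}}\langle u(2N-2-k)c',b\rangle\,b'$. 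Substituting this into $\sum_{b\in\Wmod_{n+h}}(u(k)b)\otimes b'$, interchanging the two finite sums, and relabelling $c\to b$ then yields exactly the claimed identity.

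The only real obstacle is bookkeeping: one must keep the two distinct module actions ($Y_{\alpha}$ on $\Wmod$ versus $Y_{\alpha'}$ on $\Wmodp$) and the shifted gradings $\Wmod_{n+h}$, $\Wmodp_{n+h}$ straight, and verify both that $u(k)$ sends $\Wmod_{n+h}$ into $\Wmod_{n+h+N-k-1}$ and that the dual mode $u(2N-2-k)$ sends $\Wmodp_{n+h+N-k-1}$ back into $\Wmodp_{n+h}$, so that every sum over a homogeneous subspace is finite and the two basis expansions are compatible. No convergence or rationality subtleties enter, since the whole argument lives inside fixed finite-dimensional homogeneous pieces even when $h$ is non-integral.
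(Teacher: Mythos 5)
Your proposal is correct and matches the paper's intended argument: the paper states this lemma with only the remark that it follows from~\eqref{eq:YMdual} by generalising Lemma~\ref{lem:adjoint}, which is precisely what you carry out (the mode relation $\langle u(k)v_{\alpha'},v_{\alpha}\rangle=(-1)^{N}\langle v_{\alpha'},u(2N-2-k)v_{\alpha}\rangle$ extracted from~\eqref{eq:YMdual} for quasiprimary $u$, followed by the same dual-basis expansion). Your grading checks are accurate and the weight bookkeeping closes correctly.
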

We now describe a 
generalisation of Theorem~\ref{theor:ZhuGenusg} for an intertwiner formal $(n+1)$-point differential (cf.~\eqref{eq:Fg_gen_inter})
\begin{align*}
\F_{\bm{\alpha \beta}}(u,x;\bm{v_{\gamma},y}\,|0,\bm{\gamma};0,\bm{\delta})
=
\sum_{\bm{b_{+}}}\langle \vac,Y(u,x)
\bm{\calY^{\delta}_{\gamma}(v_{\gamma},y)}
\bm{\calY^{\beta}_{\alpha \alpha'}(b,w)}
\vac\rangle\,
dx^{N}\bm{dy^{\wt(v_{\gamma})}},
\end{align*} 
for $g$ modules labelled by $\bm{\alpha}$ and $2g-1$ intermediate modules labelled by $\bm{\beta}$. The first vector is $u\in V$ with module label $0$, which is quasiprimary of weight $N$, is inserted at $x$ via  $Y(u,x)$ with module label $0$ as notated above.
 $v_{\gamma_{k}}\in\Wmod_{\gamma_{k}}$ is inserted at $y_{k}$ for $k=1,\ldots, n$ with $n-1$ intermediate module labels
 $\bm{\delta}=\delta_{1},\ldots,\delta_{n-1}$. 
Proposition~\ref{prop:GenusZeroZhuM} and Lemma~\ref{AdjointLemmaInt} imply that we may proceed in a manner identical to that of the VOA case to generalise Theorem~\ref{theor:ZhuGenusg} as follows:
\begin{theorem}
	\label{theor:ZhuGenusgInt}
For quasiprimary vector $u\in V$ of weight $\wt(u)=N$ and with $v_{\gamma_{k}}\in\Wmod_{\gamma_{k}}$ inserted at $y_{k}$ for $k=1,\ldots,n$ with intermediate module labels $\bm{\delta}=\delta_{1},\ldots,\delta_{n-1}$, we have
\begin{align}
\F_{\bm{\alpha \beta}}(u,x;\bm{v_{\gamma},y}\,|0,\bm{\gamma};0,\bm{\delta})=&
\sum_{a\in\Ip}\sum_{\ell\in \calL} \Theta_{N,a}^{\ell}(x) \Res_{a}^{\ell}\F_{\bm{\alpha \beta}}(u;\bm{v_{\gamma},y}\,|0,\bm{\gamma};0,\bm{\delta})
\notag
\\
&
\label{eq:ZhuGenusgInter}
+\sum_{k=1}^{n}\sum_{j\ge 0}\Psi_{N}^{(0,j)}(x,y_{k})
\F_{\bm{\alpha \beta}}\left(\ldots;u(j)v_{\gamma_{k}},y_{k};\ldots |\bm{\gamma;\delta}\right)dy_{k}^{j},
\end{align}
 for $\Psi_{N}$ of~\eqref{eq:PsiNdef} and $\Theta_{N,a}^{\ell}$ of~\eqref{eq:Thetaexp} and where
\begin{align}
	\label{eq:ZhuGenusgInterRes}
\Res_{a}^{\ell}\F_{\bm{\alpha \beta}}(u;\bm{v_{\gamma},y}\,|0,\bm{\gamma};0,\bm{\delta}):=
\Res_{x-w_{a}}\left(x-w_{a}\right)^{\ell}\F_{\bm{\alpha \beta}}(u,x;\bm{v_{\gamma},y}\,|0,\bm{\gamma};0,\bm{\delta}).
\end{align}
\end{theorem}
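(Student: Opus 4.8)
The plan is to follow the proof of Theorem~\ref{theor:ZhuGenusg} essentially verbatim, substituting the two intertwiner ingredients in place of their VOA counterparts. First I would apply the intertwiner genus zero Zhu recursion \eqref{eq:Zhu0M2} of Proposition~\ref{prop:GenusZeroZhuM} to the genus zero summands defining $\F_{\bm{\alpha \beta}}(u,x;\bm{v_{\gamma},y}\,|0,\bm{\gamma};0,\bm{\delta})$ in \eqref{eq:Zg_gen_inter}; this is legitimate because $u\in V$ sits at the front of the correlator via $Y(u,x)$ with module label $0$, so reduction proceeds through all $2g+n$ intertwiner insertions. This splits the sum into contributions from the $2g$ Schottky insertions $b_{a}$ at $w_{a}$ and the $n$ insertions $v_{\gamma_{k}}$ at $y_{k}$, yielding the exact analogue of the preliminary formula~\eqref{eq:GenusgZhu1}.

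Next I would assemble the Schottky contributions into a column vector $X=(X_{a}^{m})$ defined exactly as in the VOA case but with the genus zero intertwiner correlator in place of $\Zzero$. The first key ingredient is Lemma~\ref{AdjointLemmaInt}, the intertwiner analogue of Lemma~\ref{lem:adjoint}: because we reduce with respect to $u\in V$ rather than a module element, the contragredient relation~\eqref{eq:YMdual} produces the same adjoint $(-1)^{N}u(2N-2-k)$ and the same scaling in $\rho_{a}$ as the VOA adjoint~\eqref{eq:RhoAdjoint}, with $b_{-a}=\rho_{a}^{\wt(b_{a})}b_{a}'$ the scaled dual basis. Consequently the reflection identity~\eqref{eq:Xa_ell_Rel}, namely $X_{a}^{\ell}=(-1)^{N}X_{-a}^{2N-2-\ell}$, holds verbatim, as does the high-mode rewriting of $u(m)b_{a}$ into $u(-i-1)b_{-a}$ with $i=m-2N+1$ for $m\ge 2N-1$.

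The second key ingredient, and the \emph{main obstacle}, is the matrix recursion~\eqref{XaRelation}, $X_{a}^{m}=(G+AX)_{a}^{m-2N+1}$. In the VOA proof one derives this from Proposition~\ref{prop:GenusZeroZhu3} together with locality (Remark~\ref{rem:Zhu genus zero II}), which lets $u(-i-1)$ act on an arbitrarily placed insertion. Intertwiners do not commute freely, so locality is unavailable. The resolution is that the inserted state is $u\in V$: the commutator~\eqref{eq:Com_calYcom} is valid for any $u\in V$ passing through the intertwiner product, and this yields exactly the ``insert-anywhere'' reduction~\eqref{eq:Zhu0M2r} of Remark~\ref{rem:Z0uintr}. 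Applying~\eqref{eq:Zhu0M3} and~\eqref{eq:Zhu0M2r} to the correlator with $u(-i-1)b_{-a}$ sitting after the relevant intertwiner reproduces~\eqref{XaRelation} with the identical matrix $A$ and with $G$ the obvious intertwiner version of~\eqref{eq:Gdef}, requiring no braiding factors.

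From this point the argument is purely linear-algebraic and geometric, hence identical to the VOA case: the matrices $\Delta$, $\Pcal$, $\Atilde=A\Delta$ and the vectors $L(x)$, $R(y)$ depend only on the Schottky data $(\bm{w},\bm{\rho})$, not on $V$ or its modules. I would solve the system formally to obtain the intertwiner analogue of Lemma~\ref{XSub}, substitute back into the preliminary formula, and then invoke the sewing formula~\eqref{eq:Psisew} for $\Psi_{N}$ together with Proposition~\ref{prop:Thetaexp} and the reflection~\eqref{eq:Xa_ell_Rel} to convert the coefficients $T_{a}^{\ell}(x)$ into the holomorphic $N$-forms $\Theta_{N,a}^{\ell}(x)$. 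This yields~\eqref{eq:ZhuGenusgInter}, with~\eqref{eq:ZhuGenusgInterRes} following from VOA associativity applied to $Y(u,x)$ exactly as in Remark~\ref{rem:MainTheorem}(iii).
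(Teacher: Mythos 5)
Your proposal is correct and follows essentially the same route as the paper, which itself only sketches the argument by stating that Proposition~\ref{prop:GenusZeroZhuM} and Lemma~\ref{AdjointLemmaInt} allow one to proceed identically to the VOA case of Theorem~\ref{theor:ZhuGenusg}. Your identification of the commutator~\eqref{eq:Com_calYcom} and the insert-anywhere reduction~\eqref{eq:Zhu0M2r}/\eqref{eq:Zhu0M3} as the substitute for locality in deriving the matrix recursion~\eqref{XaRelation} is precisely the point the paper leaves implicit.
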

Remark~\ref{rem:MainTheorem} similarly applies to Theorem~\ref{theor:ZhuGenusgInt}. 
We may also find a natural generalisation of Corollary~\ref{cor:GenGenusgZhu}.

\section{Genus $g$ Conformal Ward Identities}
\label{sec:Wardids}
In this section we analyse general genus $g$ intertwiner correlation functions where we apply Zhu recursion to  conformal vector $\omega$ insertions. 
 We determine various  genus $g$ Virasoro Ward identities which involve variations with respect to the Schottky parameters $w_{a},\rho_{a}$ and the insertion points of any other correlation function vectors.
Thus we explicitly realise  the relationship between Virasoro correlation functions and variations in moduli space exploited in conformal field theory  e.g.~\cite{EO, Ma}. 
We obtain a canonical differential operator $\nabla(x)$ which maps differentiable functions of the Schottky parameters  to the space of holomorphic 2-forms in $x$ (and naturally leads to a similar operator acting on differentiable functions on moduli space). We generalise this to a differential operator $\nabmy{\bfm}{\bfy}(x)$ that utilises  $\Psi_{2}$ and which maps any meromorphic form  $H_{\bfm}(\bfy)=h_{\bfm}(\bfy)dy_{1}^{m_{1}} \ldots dy_{n}^{m_{n}}$ of weight $(\bfm):=(m_{1},\ldots ,m_{n})$ in $\bfy:=y_{1},\ldots,y_{n}$ to a meromorphic form of weight $(2,\bfm)$ in $x,\bfy$. 
The geometric properties of these operators are further described in \cite{TW1}. Many of these can be recovered in the VOA setting.

\subsection{Virasoro $1$-point differentials and variations of moduli}
Consider~\eqref{eq:Fg_gen_inter} for the  1-point differential, for $g$ modules labelled by $\bm{\alpha}$ and $2g-1$ intermediate modules labelled by $\bm{\beta}$, for the Virasoro vector
\begin{align*}
	\F_{\bm{\alpha\beta}}(\omega,x)=\sum_{a\in\Ip} \sum_{\ell=0}^{2}\Theta_{2,a}^{\ell}(x)
	\Res_{a}^{\ell}\F_{\bm{\alpha\beta}}(\omega),
\end{align*}
where $\{ \Theta_{2,a}^{\ell}(x) \}$ spans the $(3g-3)$-dimensional space of holomorphic $2$-forms. We introduce $3g$ (first-order) differential operators
$\del_{a}^{\,\ell}$  for $a\in\Ip$ and $\ell=0,1,2$ as follows:
\begin{align*}
%	\label{eq:delael}
	\del_{a}^{0}:=\del_{w_{a}},\quad
	\del_{a}^{1}:=\rho_{a} \del_{\rho_{a}},\quad
	\del_{a}^{2}:=\rho_{a} \del_{w_{-a}}.
\end{align*}
These form a canonical basis for the tangent space $T(\Cg)$.
Using this basis we may rewrite the M\"obius generator $\D^{p}$ of~\eqref{eq:Dpdiff} for $p\in \Poly_{2}$ with  $p_{a}^{\ell}$ of~\eqref{eq:pal} as follows:
\begin{align*}
	\D^{p}=-\sum_{a\in\Ip}\sum_{\ell=0}^{2}p_{a}^{\ell}\del_{a}^{\,\ell}.
\end{align*} 
\begin{lemma} 
	\label{lem:omRes}
	For $a\in\Ip$ and $\ell=0,1,2$ we find
	\begin{align*}%\label{eq:Oaomega}
		\Res_{a}^{\ell}\F_{\bm{\alpha\beta}}(\omega)=\del_{a}^{\,\ell}\Zg_{\bm{\alpha\beta}}.
	\end{align*}
\end{lemma}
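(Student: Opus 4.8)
The plan is to evaluate each residue $\Res_{a}^{\ell}\F_{\bm{\alpha\beta}}(\omega)$ separately for $\ell=0,1,2$ and identify it with $\del_{a}^{\,\ell}\Zg_{\bm{\alpha\beta}}$, following the template of Proposition~\ref{prop:DpZ} (equations~\eqref{eq:ZL1}--\eqref{eq:ZL3}) but with the Li-Z metric replaced by the canonical module pairing and Lemma~\ref{lem:adjoint} replaced by its module analogue Lemma~\ref{AdjointLemmaInt}. By locality and associativity, exactly as in the derivation of~\eqref{eq:ResF} and the definition~\eqref{eq:ZhuGenusgInterRes}, the formal residue extracts the mode $\omega(\ell)=L(\ell-1)$ acting on the inserted basis vector $b_{a}$ at $w_{a}$, so that
\[
\Res_{a}^{\ell}\F_{\bm{\alpha\beta}}(\omega)=\sum_{\bm{b_{+}}}\langle\vac,\ldots \calY(L(\ell-1)b_{a},w_{a})\ldots\vac\rangle,
\]
where the scaled dual insertion $b_{-a}=\rho_{a}^{\wt(b_{a})}b_{a}'$ sits at $w_{-a}$.

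For $\ell=0$ I would apply the intertwiner translation property~\eqref{eq:Lmin1Yint} to recognise the $\omega(0)=L(-1)$ insertion as $\del_{w_{a}}$ acting on $\calY(b_{a},w_{a})$; summing over the basis gives $\del_{w_{a}}\Zg_{\bm{\alpha\beta}}=\del_{a}^{0}\Zg_{\bm{\alpha\beta}}$. For $\ell=1$, $\omega(1)=L(0)$ acts by the conformal weight, $L(0)b_{a}=\wt(b_{a})b_{a}$ with $\wt(b_{a})=n+h_{\alpha_{a}}$ at level $n$; since $\Zg_{\bm{\alpha\beta}}$ carries the factor $\rho_{a}^{\wt(b_{a})}$ through the scaled dual $b_{-a}$, this is reproduced by $\rho_{a}\del_{\rho_{a}}$, giving $\del_{a}^{1}\Zg_{\bm{\alpha\beta}}$.

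The substantive case is $\ell=2$, which parallels~\eqref{eq:ZL3}. Here I would invoke Lemma~\ref{AdjointLemmaInt} with $u=\omega$, $N=2$ and $k=2$ (so that $(-1)^{N}=1$ and $2N-2-k=0$) to replace the $\omega(2)=L(1)$ insertion on $b_{a}$ by an $L(-1)$ insertion on the dual $b_{a}'$ at $w_{-a}$, and then use translation~\eqref{eq:Lmin1Yint} to turn the latter into $\del_{w_{-a}}$. The delicate point is the power of $\rho_{a}$: because Lemma~\ref{AdjointLemmaInt} is stated for the canonical pairing (the $\rho_{a}$-weighting living externally in $b_{-a}$), the weight-lowering action of $L(1)$ shifts the summation level from $n$ to $n-1$, so re-indexing the sum converts the accompanying factor $\rho_{a}^{n+h_{\alpha_{a}}}$ into $\rho_{a}\cdot\rho_{a}^{(n-1)+h_{\alpha_{a}}}$ and produces exactly one extra factor of $\rho_{a}$; the lowest level term vanishes since $L(1)$ annihilates lowest-weight vectors. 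This yields $\rho_{a}\del_{w_{-a}}\Zg_{\bm{\alpha\beta}}=\del_{a}^{2}\Zg_{\bm{\alpha\beta}}$.

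I expect this $\rho_{a}$-power tracking in the $\ell=2$ case to be the main obstacle, since it is the one place where the module (canonical-pairing) adjoint Lemma~\ref{AdjointLemmaInt} differs superficially from the $\rho$-adjoint used in the VOA computation~\eqref{eq:ZL3}: one must check that the level shift induced by $L(1)$, combined with the $\rho_{a}^{\wt(b_{a})}$ weighting, conspires to give precisely the single power of $\rho_{a}$ demanded by $\del_{a}^{2}=\rho_{a}\del_{w_{-a}}$.
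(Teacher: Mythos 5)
Your proposal is correct and follows essentially the same route as the paper: the paper's proof simply extracts the $L(\ell-1)$ insertion via the formal residue \eqref{eq:ZhuGenusgInterRes} and then appeals to ``generalisations of \eqref{eq:ZL1}--\eqref{eq:ZL3}'', which is exactly the case-by-case analysis ($L(-1)$ as translation, $L(0)$ as $\rho_{a}\del_{\rho_{a}}$ via the weight grading, and $L(1)$ moved to $L(-1)$ on the dual insertion via the module adjoint) that you carry out. Your tracking of the single extra power of $\rho_{a}$ in the $\ell=2$ case through the level shift is the right accounting and correctly reproduces $\del_{a}^{2}=\rho_{a}\del_{w_{-a}}$.
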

\begin{proof}
	From~\eqref{eq:ZhuGenusgInterRes} we have 
	\begin{align*}
		\Res_{a}^{\ell}\F_{\bm{\alpha\beta}}(\omega)
		&=\sum_{\bm{b_{+}}\in \bm{\Wmod_{\alpha}}}
		\Zzero(\ldots;L(\ell-1)b_{a},w_{a};\ldots),
	\end{align*}
	for $\ell=0,1,2$. The result follows from  generalisations  of~\eqref{eq:ZL1}--\eqref{eq:ZL3}, cf. Remark~\ref{rem:MobWardInt}.
\end{proof}
Define the first order differential operator ~\cite{GT, TW1}
\begin{align*}%\label{eq:NablaDef}
	\delx:=\sum_{a\in\Ip}\sum_{\ell=0}^{2}\Theta_{2,a}^{\ell}(x)\del_{a}^{\,\ell}.
\end{align*}
\begin{proposition}\label{prop:Virasoro1pt}
	The Virasoro $1$-point differential for module labels $\bm{\alpha,\beta}$ is given by 
	\begin{align}\label{eq:VirasoroNabla}
		\F_{\bm{\alpha\beta}}(\omega,x)
		=\delx Z_{\bm{\alpha\beta}}.
	\end{align}
\end{proposition}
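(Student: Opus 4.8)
The plan is to obtain~\eqref{eq:VirasoroNabla} by combining the Virasoro specialisation of genus $g$ Zhu recursion with the moduli interpretation of the residues furnished by Lemma~\ref{lem:omRes}. First I would record that the conformal vector $\omega$ is quasiprimary of weight $N=2$, since $L(1)\omega=L(1)L(-1)\vac=2L(0)\vac=0$; hence Theorem~\ref{theor:ZhuGenusgInt} applies to $\omega$ with $N=2$ and index set $\calL=\{0,1,2\}$. Applying it to the $1$-point differential $\F_{\bm{\alpha\beta}}(\omega,x)$ --- that is, with no further module insertions, so $n=0$ --- the second double sum of~\eqref{eq:ZhuGenusgInter} (indexed by the insertion points $y_k$) is empty, so the $\Psi_2$ contribution drops out and only the holomorphic part survives:
\begin{align*}
	\F_{\bm{\alpha\beta}}(\omega,x)=\sum_{a\in\Ip} \sum_{\ell=0}^{2}\Theta_{2,a}^{\ell}(x)\,\Res_{a}^{\ell}\F_{\bm{\alpha\beta}}(\omega),
\end{align*}
with $\{\Theta_{2,a}^{\ell}(x)\}$ spanning the $(3g-3)$-dimensional space of holomorphic $2$-forms. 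This is precisely the expansion already displayed at the start of the section.

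It then remains to substitute Lemma~\ref{lem:omRes}, which identifies $\Res_{a}^{\ell}\F_{\bm{\alpha\beta}}(\omega)=\del_{a}^{\,\ell}Z_{\bm{\alpha\beta}}$ for every $a\in\Ip$ and $\ell\in\{0,1,2\}$. This gives
\begin{align*}
	\F_{\bm{\alpha\beta}}(\omega,x)=\sum_{a\in\Ip} \sum_{\ell=0}^{2}\Theta_{2,a}^{\ell}(x)\,\del_{a}^{\,\ell}Z_{\bm{\alpha\beta}}=\delx Z_{\bm{\alpha\beta}},
\end{align*}
the final equality being the definition of the operator $\delx$, which completes the argument.

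I expect no substantial obstacle: the two supporting results carry the whole proof. The only points deserving a line of care are verifying that $\omega$ is quasiprimary --- so that the $N=2$ form of Theorem~\ref{theor:ZhuGenusgInt} is the relevant one and its holomorphic coefficients are exactly the $2$-forms $\Theta_{2,a}^{\ell}$ spanning the $(3g-3)$-dimensional space --- and observing that the vanishing of the $\Psi_2$ term is forced by the absence of any insertion points in the $1$-point case.
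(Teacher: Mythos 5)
Your argument is exactly the paper's: the displayed expansion of $\F_{\bm{\alpha\beta}}(\omega,x)$ at the start of \S\ref{sec:Wardids} is precisely Theorem~\ref{theor:ZhuGenusgInt} applied to $u=\omega$ with $n=0$ (so the $\Psi_{2}$ sum is empty), and the proposition then follows by substituting Lemma~\ref{lem:omRes} and invoking the definition of $\nabla(x)$. One small slip in your side verification: $\omega=L(-2)\vac$, not $L(-1)\vac$ (indeed $L(-1)\vac=0$), so the quasiprimarity check should read $L(1)\omega=[L(1),L(-2)]\vac=3L(-1)\vac=0$; the conclusion is of course still correct.
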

\begin{remark}
	\eqref{eq:VirasoroNabla} is a  generalisation of the  formula  
	$Z^{\Zhu}_{V}({\omega},x;q)=q\del_{q} Z^{\Zhu}_{V}(q)$ at genus one~\cite{Z1}
	and a similar genus two result  in~\cite{GT}.  
\end{remark}
The geometric meaning of $\delx$ in terms of variations in the Schottky parameter space $\Cg$ and the dependence of $\delx$ on the choice of Bers quasiform $\Psi_{2}$ is  discussed in \S5 of \cite{TW1}. 
From~\eqref{eq:Psitilde} for a new Bers quasiform 
$\widehat{\Psi}_{2}(x,y)=\Psi_{2}(x,y)-\sum_{r=1}^{3g-3}\Phi_{r}(x)P_{r}(y)$ for quadratic form basis $\{\Phi_{r}(x)\}$ and any $P_{r}(y)=p_{r}(y)dy^{-1}$ with $p_{r}\in \mathfrak{P}_{2}$ we find
\begin{align}\label{eq:nabtilde}
	\widehat{\nabla}(x)=\delx+ \sum_{r=1}^{3g-3} \Phi_{r}(x) \D^{p_{r}},
\end{align}
for $\D^{p}$ of~\eqref{eq:Dpdiff}. 
\eqref{eq:VirasoroNabla} holds for all choices of  $\Psi_{2}$ by Remark~\ref{rem:MainTheorem}~(ii). As a result, $\nabla(x)$ determines a unique tangent vector field, independent of the choice of Bers quasiform, on the moduli tangent space $T(\Mg)=T(\Schg)$, 
 for Schottky space $\Schg=\Cg/\SL_{2}(\C)$.
 We denote this tangent vector field by $\delMg(x)$.   For any local coordinates $\{\eta_{r}\}  $ on  moduli space $\Mg$, with local $T(\Mg)$ basis $\{\partial_{\eta_{r}}\}_{r=1}^{3g-3}$, we can find a corresponding holomorphic 2-form basis $\{\Phi_{r}\} _{r=1}^{3g-3}$ where \cite{O,TW1}
\begin{align}
	\label{eq:nablaMg}
	\delMg(x):=\sum_{r=1}^{3g-3}\Phi_{r}(x)\partial_{\eta_{r}}.
\end{align}

\subsection{Genus $g$ conformal Ward identities}
$\nabla(x)$ maps differentiable functions on $\Cg$ to holomorphic 2-forms in $x$. This generalises to an operator which acts on meromorphic multi-variable forms as follows \cite{TW1}. 
Let $H_{\bfm}(\bfy)=h_{\bfm}(\bfy)dy_{1}^{m_{1}} \ldots dy_{n}^{m_{n}}$, for $\bfm:=m_{1},\cdots ,m_{n}$, denote a meromorphic form of weight $(\bfm)$ in  $\bfy:=y_{1},\cdots ,y_{n}\in \Sg$. 
Define the differential operator 
\begin{align}
	\label{eq:nabla_xym}
	\nabmy{\bfm}{\bfy}(x):=&
	\nabla(x)
	+\sum_{k=1}^{n}\left(\Psi_{2}(x,y_{k})\,d_{y_{k}}+m_{k} d_{y_{k}}\Psi_{2}(x,y_{k})\right).
\end{align} 
\begin{proposition}\cite{TW1}\label{prop:nablaHN}
 $\nabmy{\bfm}{\bfy}(x)\,H_{\bfm}(\bfy)$ is a meromorphic form of weight $(2,\bfm)$ in $(x,\bfy)$ with additional pole structure for  $x\sim y_{k}$ given by 
		\begin{align*}
			\nabmy{\bfm}{\bfy}(x)\,H_{\bfm}(\bfy)\sim 
			dx^{2}\left(
			\frac{m_{k}H_{\bfm}(\bfy)}{(x-y_{k})^{2}}
			+
			\frac{\del_{y_{k}}H_{\bfm}(\bfy)}{x-y_{k}}
			\right).
		\end{align*}
\end{proposition}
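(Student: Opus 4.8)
The plan is to check three properties of $G(x,\bfy):=\nabmy{\bfm}{\bfy}(x)\,H_{\bfm}(\bfy)$ in turn: that it is a meromorphic $2$-form in $x$, that its only new poles in $x$ occur at $x=\gamma y_{k}$ with the stated principal part near $x=y_{k}$, and that it remains a meromorphic $m_{k}$-form in each $y_{k}$. The first two are quick. Each summand of \eqref{eq:nabla_xym} is a genuine $2$-form in $x$: the coefficients $\Theta_{2,a}^{\ell}(x)$ of $\nabla(x)$ are holomorphic $2$-forms, and $\Psi_{2}(x,y_{k})$, $d_{y_{k}}\Psi_{2}(x,y_{k})$ are invariant under $x\mapsto\gamma x$ by the Poincar\'e construction \eqref{eq:PsiNdef}. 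Thus $G$ is a meromorphic $2$-form in $x$ whose poles are those of $\Psi_{2}(x,y_{k})$. Near $x=y_{k}$ one inserts $\Psi_{2}(x,y_{k})\sim(x-y_{k})^{-1}dx^{2}dy_{k}^{-1}$: the term $\Psi_{2}(x,y_{k})\,d_{y_{k}}H_{\bfm}$ yields the simple pole $(x-y_{k})^{-1}\del_{y_{k}}H_{\bfm}\,dx^{2}$, the term $m_{k}\,d_{y_{k}}\Psi_{2}(x,y_{k})\,H_{\bfm}$ yields the double pole $m_{k}(x-y_{k})^{-2}H_{\bfm}\,dx^{2}$, and $\nabla(x)H_{\bfm}$ is holomorphic at $x=y_{k}$; together these give the claimed principal part.

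The substance is the $y_{k}$-form property. Since $\Gamma$ is freely generated by the $\gamma_{a}$, $a\in\Ip$, it suffices to show $G$ transforms with weight $m_{k}$ under $y_{k}\mapsto\gamma_{a}y_{k}$ for each $a$. Fix $k$, write $y:=y_{k}$, $m:=m_{k}$, $h:=h_{\bfm}$ and $y_{a}=y-w_{a}$. The summands indexed by $j\neq k$ transform correctly because $H_{\bfm}$ is a form and $d_{y_{j}}$ commutes with the $y_{k}$-substitution; so I would localise on the three terms $\nabla(x)H_{\bfm}$, $\Psi_{2}(x,y)\,d_{y}H_{\bfm}$ and $m\,d_{y}\Psi_{2}(x,y)\,H_{\bfm}$ and compute the failure of each to be a weight-$m$ form. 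The first acquires an anomaly because $\gamma_{a}$ depends on the moduli: differentiating the invariance relation $h(\gamma_{a}y)\,(\gamma_{a}'(y))^{m}=h(y)$ by the operators $\del_{a}^{\ell}$ produces corrections built from $\del_{a}^{\ell}(\gamma_{a}y)$ and $\del_{a}^{\ell}\gamma_{a}'(y)$. The second acquires an anomaly from the Bers cocycle $\chi[\gamma_{a}]$ of \eqref{eq:chiTheta} and from the non-tensorial part of $d_{y}$ (via $h'(\gamma_{a}y)(\gamma_{a}'(y))^{m+1}=h'(y)-m\,h(\gamma_{a}y)(\gamma_{a}'(y))^{m-1}\gamma_{a}''(y)$). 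The third acquires an anomaly from the transformation of $d_{y}\Psi_{2}$.

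The computational inputs are $\chi[\gamma_{a}](x,y)=-\sum_{\ell=0}^{2}\Theta_{2,a}^{\ell}(x)\,y_{a}^{\ell}\,dy^{-1}$ together with $\del_{a}^{0}(\gamma_{a}y)=\rho_{a}y_{a}^{-2}$, $\del_{a}^{1}(\gamma_{a}y)=\rho_{a}y_{a}^{-1}$, $\del_{a}^{2}(\gamma_{a}y)=\rho_{a}$ and their $y$-derivatives, all derivatives $\del_{b}^{\ell}\gamma_{a}$ with $b\neq a$ vanishing. Substituting these and repeatedly using the invariance relation and its $y$-derivative to remove $h(\gamma_{a}y)$ and $h'(\gamma_{a}y)$, I expect the anomaly of $\nabla(x)H_{\bfm}$ to reduce to $\big[(\Theta_{2,a}^{0}+\Theta_{2,a}^{1}y_{a}+\Theta_{2,a}^{2}y_{a}^{2})h'+m(\Theta_{2,a}^{1}+2\Theta_{2,a}^{2}y_{a})h\big]dy^{m}$, and the combined anomaly of the two $\Psi_{2}$-terms to be exactly its negative; crucially the two contributions proportional to the coefficient of $\Psi_{2}$ itself cancel between them, leaving an expression purely in the $\Theta_{2,a}^{\ell}(x)$. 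The total anomaly then vanishes and $G$ is a weight-$m_{k}$ form in $y_{k}$, completing the proof. The main obstacle is precisely this cancellation: it works only because the same holomorphic $2$-forms $\Theta_{2,a}^{\ell}(x)$ appear both as the coefficients of $\nabla(x)$ and in $\chi[\gamma_{a}]$, with the three powers $y_{a}^{\ell}$ matching the three moduli directions $\del_{a}^{\ell}$; arranging the Schwarzian $d_{y}$-bookkeeping so that this matching is visible is the delicate part.
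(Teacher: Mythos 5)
The paper does not actually prove this proposition --- it is imported verbatim from the companion paper \cite{TW1} --- so there is no internal proof to compare against; I can only assess your argument on its own terms, and it is correct. The two easy parts ($x$-covariance and the principal part at $x=y_{k}$) are handled exactly as one should, and your treatment of the substantive part checks out quantitatively: writing $\gamma_{a}y=w_{-a}+\rho_{a}/y_{a}$ one indeed gets $\del_{a}^{\ell}(\gamma_{a}y)=\rho_{a}y_{a}^{\ell-2}$ and $\gamma_{a}'(y)=-\rho_{a}y_{a}^{-2}$, and eliminating $h(\gamma_{a}y)$, $h'(\gamma_{a}y)$ via the invariance relation gives the $\nabla(x)$-anomaly $\sum_{\ell\in\calL}\Theta_{2,a}^{\ell}(x)\left(y_{a}^{\ell}\,\del_{y}h+m\,\ell\,y_{a}^{\ell-1}h\right)dy^{m}$, exactly your expression. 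A slightly slicker way to see the other half is to note that $\Psi_{2}(x,\cdot)\,d_{y}+m\,d_{y}\Psi_{2}(x,\cdot)$ is the Lie derivative of the $m$-form $H$ along the quasi-vector field $\Psi_{2}(x,\cdot)$; covariance of the Lie derivative under the substitution $y\mapsto\gamma_{a}y$ shows the anomaly of these two terms is precisely the Lie derivative along the cocycle $\chi[\gamma_{a}](x,\cdot)=-\sum_{\ell\in\calL}\Theta_{2,a}^{\ell}(x)y_{a}^{\ell}dy^{-1}$, i.e. the negative of the above --- this packages your ``two contributions proportional to $\Psi_{2}$ cancel'' observation without tracking the Schwarzian bookkeeping by hand. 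Two small points you should make explicit in a final write-up: freeness of $\Gamma$ on the $\gamma_{a}$ is what lets you check only the generators (the anomaly is a cocycle, so vanishing on generators suffices), and for $b\neq a$ in $\Ip$ the operators $\del_{b}^{\ell}$ genuinely miss $\gamma_{a}$ because $\gamma_{a}$ depends only on $(w_{a},w_{-a},\rho_{a})$, which are exactly the three variables hit by $\del_{a}^{0},\del_{a}^{2},\del_{a}^{1}$.
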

 Using general intertwiner Zhu recursion (Theorem~\ref{theor:ZhuGenusgInt}) we obtain genus $g$ conformal Ward identities (e.g.~\cite{EO}) involving $\nabmy{\bfm}{\bfy}(x)$  as follows:
\begin{proposition}\label{prop:nptWard}
	For primary $v_{\gamma_{k}}\in\Wmod_{\gamma_{k}}$ of weight $m_{k}\in\C$ inserted at $y_{k}$ for $k=1,\ldots,n$, we find 
	\begin{align}\label{eq:Ward1}
		\F_{\bm{\alpha\beta}}(\omega,x;\bm{v_{\gamma},y}
		|0,\bm{\gamma};0,\bm{\delta})=
		\nabla_{\bm{y}}^{(\bm{m})}(x)
		\F_{\bm{\alpha\beta}}(\bm{v_{\gamma},y}|\bm{\gamma};\bm{\delta}).
	\end{align}
\end{proposition}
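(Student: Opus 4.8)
The plan is to apply the intertwiner Zhu recursion of Theorem~\ref{theor:ZhuGenusgInt} to the reducing vector $u=\omega$ of weight $N=2$, and then to match the two resulting sums against the two pieces of the operator $\nabmy{\bfm}{\bfy}(x)$ in~\eqref{eq:nabla_xym}. With $N=2$ we have $\calL=\{0,1,2\}$, and Theorem~\ref{theor:ZhuGenusgInt} specialises to
\begin{align*}
\F_{\bm{\alpha\beta}}(\omega,x;\bm{v_{\gamma},y}\,|0,\bm{\gamma};0,\bm{\delta})
=\sum_{a\in\Ip}\sum_{\ell=0}^{2}\Theta_{2,a}^{\ell}(x)\,\Res_{a}^{\ell}\F_{\bm{\alpha\beta}}(\omega;\bm{v_{\gamma},y}\,|0,\bm{\gamma};0,\bm{\delta})
+\sum_{k=1}^{n}\sum_{j\ge 0}\Psi_{2}^{(0,j)}(x,y_{k})\,\F_{\bm{\alpha\beta}}(\ldots;\omega(j)v_{\gamma_{k}},y_{k};\ldots|\bm{\gamma;\delta})\,dy_{k}^{j}.
\end{align*}
The whole proof then consists of recognising the first sum as $\nabla(x)\F_{\bm{\alpha\beta}}(\bm{v_{\gamma},y}\,|\bm{\gamma};\bm{\delta})$ and the second sum as $\sum_{k}\bigl(\Psi_{2}(x,y_{k})d_{y_{k}}+m_{k}d_{y_{k}}\Psi_{2}(x,y_{k})\bigr)\F_{\bm{\alpha\beta}}(\bm{v_{\gamma},y}\,|\bm{\gamma};\bm{\delta})$.

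For the holomorphic part I would first establish the $n$-point generalisation of Lemma~\ref{lem:omRes}, namely $\Res_{a}^{\ell}\F_{\bm{\alpha\beta}}(\omega;\bm{v_{\gamma},y}\,|0,\bm{\gamma};0,\bm{\delta})=\del_{a}^{\,\ell}\F_{\bm{\alpha\beta}}(\bm{v_{\gamma},y}\,|\bm{\gamma};\bm{\delta})$ for $\ell=0,1,2$. By~\eqref{eq:ZhuGenusgInterRes} this residue equals $\sum_{\bm{b_{+}}}\Zzero(\ldots;L(\ell-1)b_{a},w_{a};\ldots)$ taken with all the $v_{\gamma_{k}}$ insertions present, and the three cases follow from the intertwiner analogues of~\eqref{eq:ZL1}--\eqref{eq:ZL3}, which are valid by Remark~\ref{rem:MobWardInt} together with the module adjoint relation of Lemma~\ref{AdjointLemmaInt}: the translation property gives $\del_{w_{a}}\leftrightarrow L(-1)b_{a}$, the $\rho_{a}$-scaling of $b_{-a}$ gives $\rho_{a}\del_{\rho_{a}}\leftrightarrow L(0)b_{a}$ with eigenvalue $\wt(b_{a})$, and Lemma~\ref{AdjointLemmaInt} gives $\rho_{a}\del_{w_{-a}}\leftrightarrow L(1)b_{a}$. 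Since these operators act only on the Schottky parameters and leave the points $y_{k}$ and the factor $\bm{dy^{\wt(v_{\gamma})}}$ untouched, summing against $\Theta_{2,a}^{\ell}(x)$ reproduces $\nabla(x)\F_{\bm{\alpha\beta}}(\bm{v_{\gamma},y}\,|\bm{\gamma};\bm{\delta})$ by the definition of $\nabla(x)$.

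For the singular part the decisive input is that each $v_{\gamma_{k}}$ is \emph{primary}: since $\omega(j)=L(j-1)$ and $L(n)v_{\gamma_{k}}=0$ for all $n\ge 1$, only $j=0$ and $j=1$ survive, with $\omega(0)v_{\gamma_{k}}=L(-1)v_{\gamma_{k}}$ and $\omega(1)v_{\gamma_{k}}=L(0)v_{\gamma_{k}}=m_{k}v_{\gamma_{k}}$. Using the intertwiner translation identity~\eqref{eq:Lmin1Yint}, the $j=0$ contribution is $\Psi_{2}(x,y_{k})$ times $\del_{y_{k}}$ of the correlator; tracking the form weights (the insertion $L(-1)v_{\gamma_{k}}$ raises the weight by one and the explicit $dy_{k}^{j}$ factor with $j=0$ leaves it as is) identifies this with $\Psi_{2}(x,y_{k})d_{y_{k}}\F_{\bm{\alpha\beta}}$. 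The $j=1$ contribution is $m_{k}\Psi_{2}^{(0,1)}(x,y_{k})=m_{k}\del_{y_{k}}\Psi_{2}(x,y_{k})$ (by the scaled-derivative convention~\eqref{eq:delconv}) times the unchanged correlator, which is precisely $m_{k}d_{y_{k}}\Psi_{2}(x,y_{k})$ acting on the form. Summing the two parts yields $\nabmy{\bfm}{\bfy}(x)\F_{\bm{\alpha\beta}}(\bm{v_{\gamma},y}\,|\bm{\gamma};\bm{\delta})$, as required.

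I expect the main obstacle to be the bookkeeping in the singular part rather than any conceptual difficulty: one must reconcile the scaled-derivative convention for $\Psi_{2}^{(0,j)}$, the differential-form weight $dy_{k}^{\wt(\omega(j)v_{\gamma_{k}})}$ produced by each reduced insertion, and the explicit $dy_{k}^{j}$ factor, so that the $j=0$ and $j=1$ terms land exactly on the two matching summands of~\eqref{eq:nabla_xym} and assemble into a genuine $(2,\bfm)$-form. This consistency can be cross-checked against the pole structure for $x\sim y_{k}$ recorded in Proposition~\ref{prop:nablaHN}, whose $m_{k}/(x-y_{k})^{2}$ and $\del_{y_{k}}H_{\bfm}/(x-y_{k})$ terms correspond respectively to the $j=1$ and $j=0$ reductions above.
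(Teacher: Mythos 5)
Your proposal is correct and follows essentially the same route as the paper's proof: apply Theorem~\ref{theor:ZhuGenusgInt} with $u=\omega$, identify the holomorphic sum with $\nabla(x)$ via the $n$-point generalisation of Lemma~\ref{lem:omRes}, and use primarity to reduce the singular sum to the $j=0,1$ terms matching the two pieces of $\nabla_{\bm{y}}^{(\bm{m})}(x)$. Your additional bookkeeping remarks on the derivative convention and form weights are consistent with what the paper leaves implicit.
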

\begin{proof}
	Using Theorem~\ref{theor:ZhuGenusgInt} with $u=\omega$ we obtain
	\begin{align*}
		\F_{\bm{\alpha\beta}}(\omega,x;\bm{v,y}
		|0,\bm{\gamma};0,\bm{\delta})&
		=\sum_{a\in\Ip}\sum_{\ell=0}^{2}\Theta_{a}^{\ell}(x)
		\Res_{a}^{\ell}\F_{\bm{\alpha\beta}}(\omega;\bm{v,y}|0,\bm{\gamma};0,\bm{\delta})\\
		&\quad+\sum_{j\geq 0}\sum_{k=1}^{n}\Psi_{2}^{(0,j)}(x,y_k)
		\F_{\bm{\alpha\beta}}(\ldots;L(j-1)v_{\gamma_{k}},y_k;\ldots|\bm{\gamma};\bm{\delta})\,dy_{k}^{j}.
	\end{align*}
Similarly to Lemma~\ref{lem:omRes} we have $\Res_{a}^{\ell}\F_{\bm{\alpha\beta}}(\omega;\bm{v,y}|0,\bm{\gamma};0,\bm{\delta})=\del_{a}^{\,\ell}\F_{\bm{\alpha\beta}}(\bm{v_{\gamma},y}|\bm{\gamma};\bm{\delta})$. 	Furthermore, since $v_{\gamma_{k}}$ is  primary, we have $L(n)v_{\gamma_{k}}=0$ for all $n>0$. Thus
	\begin{align*}
		\F_{\bm{\alpha\beta}}(\omega,x;\bm{v_{\gamma},y}
		|0,\bm{\gamma};0,\bm{\delta})
		&=\nabla(x)\F_{\bm{\alpha\beta}}(\bm{v,y})+\sum_{k=1}^{n}\Psi_{2}(x,y_k)\F_{\bm{\alpha\beta}}(\ldots;L(-1)v_{\gamma_{k}},y_{k};\ldots
		|\bm{\gamma};\bm{\delta})\\
		&\quad+\sum_{k=1}^{n}\Psi_{2}^{(0,1)}(x,y_k)dy_{k}\,
		\F_{\bm{\alpha\beta}}(\ldots;L(0)v_{\gamma_{k}},y_{k};\ldots
		|\bm{\gamma};\bm{\delta}).
	\end{align*}
	Using $\wt(v_k)=m_k$ and translation covariance, we obtain the result.
\end{proof}
\begin{proposition}[Commutativity]\label{prop:Delcom}
For $v_{\gamma_{k}}\in\Wmod_{\gamma_{k}}$ inserted at $z_{k}$ for $k=1,\ldots,n$ with intermediate module labels $\bm{\delta}=\delta_{1},\ldots,\delta_{n-1}$ we find, up to a multiplier $(x-y)^{4}$, that
	\begin{align*}%\label{eq:nablaCommute}
\left(\nabla_{y,\bm{z}}^{(2,\bm{m})}(x)\nabla_{\bm{z}}^{(\bm{m})}(y)
		-\nabla_{x,\bm{z}}^{(2,\bm{m})}(y)\nabla_{\bm{z}}^{(\bm{m})}(x)\right)
		\F_{\bm{\alpha\beta}}(\bm{v_{\gamma},z}
		|\bm{\gamma};\bm{\delta})=0.
	\end{align*}
\end{proposition}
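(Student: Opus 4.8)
The plan is to apply intertwiner Zhu recursion (Theorem~\ref{theor:ZhuGenusgInt}) twice and to isolate the single correction term that arises because the conformal vector $\omega$ is quasiprimary but not primary. First I would reduce the two-Virasoro insertion $\F_{\bm{\alpha\beta}}(\omega,x;\omega,y;\bm{v_{\gamma},z}\,|0,\bm{\gamma};0,\bm{\delta})$ with respect to the $\omega$ inserted at $x$. The moduli terms $\sum_{a,\ell}\Theta_{2,a}^{\ell}(x)\Res_{a}^{\ell}$ reproduce $\nabla(x)$ acting on $\F_{\bm{\alpha\beta}}(\omega,y;\bm{v_{\gamma},z})$ via the generalisation of Lemma~\ref{lem:omRes}; the terms attached to $v_{\gamma_k}$ at $z_k$ collapse, using primarity of $v_{\gamma_k}$ and translation covariance, into the $\Psi_{2}(x,z_k)d_{z_k}+m_k d_{z_k}\Psi_{2}(x,z_k)$ pieces of $\nabla_{y,\bm{z}}^{(2,\bm{m})}(x)$ in~\eqref{eq:nabla_xym}; and the terms attached to $\omega$ at $y$ are governed by $\omega(j)\omega$, which by the Virasoro relations equals $L(-1)\omega,\,2\omega,\,0,\,\tfrac{C}{2}\vac,\,0,\dots$ for $j=0,1,2,3,\dots$. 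The $j=0,1$ contributions assemble (again by translation covariance, with a weight-$2$ slot at $y$) into the $\Psi_{2}(x,y)d_{y}+2d_{y}\Psi_{2}(x,y)$ piece of $\nabla_{y,\bm{z}}^{(2,\bm{m})}(x)$, while the lone non-primary contribution $j=3$ produces $\tfrac{C}{2}\Psi_{2}^{(0,3)}(x,y)dy^{3}\,\F_{\bm{\alpha\beta}}(\bm{v_{\gamma},z})$, since inserting $\vac$ at $y$ deletes that point.

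Combining these and using the Ward identity $\F_{\bm{\alpha\beta}}(\omega,y;\bm{v_{\gamma},z})=\nabla_{\bm{z}}^{(\bm{m})}(y)\F_{\bm{\alpha\beta}}(\bm{v_{\gamma},z})$ of Proposition~\ref{prop:nptWard}, I would obtain
\begin{align*}
\nabla_{y,\bm{z}}^{(2,\bm{m})}(x)\,\nabla_{\bm{z}}^{(\bm{m})}(y)\,\F_{\bm{\alpha\beta}}(\bm{v_{\gamma},z})
=\F_{\bm{\alpha\beta}}(\omega,x;\omega,y;\bm{v_{\gamma},z})-\tfrac{C}{2}\,\Psi_{2}^{(0,3)}(x,y)\,dy^{3}\,\F_{\bm{\alpha\beta}}(\bm{v_{\gamma},z}),
\end{align*}
together with the $x\leftrightarrow y$ swapped identity from reducing $\omega$ at $y$ instead. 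Subtracting the two then writes the commutator as a ``correlator part'' $\F_{\bm{\alpha\beta}}(\omega,x;\omega,y;\cdots)-\F_{\bm{\alpha\beta}}(\omega,y;\omega,x;\cdots)$ plus a ``central part'' $-\tfrac{C}{2}\bigl(\Psi_{2}^{(0,3)}(x,y)dy^{3}-\Psi_{2}^{(0,3)}(y,x)dx^{3}\bigr)\F_{\bm{\alpha\beta}}(\bm{v_{\gamma},z})$.

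The two parts vanish for different reasons. The central part is identically zero: by~\eqref{eq:omegaN} we have $\Psi_{2}^{(0,3)}(x,y)dy^{3}=\omega_{2}(x,y)$, and $\omega_{2}$ is symmetric, so the two central contributions cancel exactly. The correlator part is the genus $g$ correlation function of the formal bracket $[Y(\omega,x),Y(\omega,y)]$ at the level of the genus zero summands defining $\F_{\bm{\alpha\beta}}$; by VOA locality $(x-y)^{4}[Y(\omega,x),Y(\omega,y)]=0$ (weight $2$ forcing $N=4$, consistent with the $\tfrac{C}{2}(x-y)^{-4}$ leading singularity), so this part vanishes after multiplication by $(x-y)^{4}$. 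Together these yield the claimed identity up to the multiplier $(x-y)^{4}$.

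I expect the main difficulty to be bookkeeping rather than conceptual: one must verify precisely that the moduli, $z_k$ and $y$ contributions of the outer Zhu reduction reassemble into the operator $\nabla_{y,\bm{z}}^{(2,\bm{m})}(x)$ of~\eqref{eq:nabla_xym} acting on the weight-$(2,\bm{m})$ form $\nabla_{\bm{z}}^{(\bm{m})}(y)\F_{\bm{\alpha\beta}}$, in particular that the weight-$2$ slot at $y$ is handled correctly by translation covariance, and that the sole residue of $\omega$ not being primary is the single $\omega(3)\omega=\tfrac{C}{2}\vac$ term. The crux that makes the central terms cancel cleanly is the identification $\Psi_{2}^{(0,3)}(x,y)dy^{3}=\omega_{2}(x,y)$ and the symmetry of $\omega_{2}$.
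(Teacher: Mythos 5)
Your proposal is correct and follows essentially the same route as the paper: both apply intertwiner Zhu recursion to the $(n+2)$-point form with $\omega$ at $x$ and $y$ to obtain $\F_{\bm{\alpha\beta}}(\omega,x;\omega,y;\cdots)=\nabla_{y,\bm{z}}^{(2,\bm{m})}(x)\nabla_{\bm{z}}^{(\bm{m})}(y)\F_{\bm{\alpha\beta}}(\cdots)+\tfrac{C}{2}\omega_{2}(x,y)\F_{\bm{\alpha\beta}}(\cdots)$, then conclude by the symmetry of $\omega_{2}$ and Virasoro locality. Your write-up merely makes explicit the mode computations ($\omega(2)\omega=0$, $\omega(3)\omega=\tfrac{C}{2}\vac$) that the paper leaves implicit in the phrase ``in a way similar to the proof of Proposition~\ref{prop:nptWard}.''
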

\begin{proof}
Apply the intertwiner Zhu recursion Theorem~\ref{theor:ZhuGenusgInt} to the $(n+2)$-point correlation form with a Virasoro vector $\omega$ inserted at both $x$ and $y$  and $v_{\gamma_{k}}$ inserted at $z_{k}$ to find, in way similar to the proof of Proposition~\ref{prop:nptWard}, that
\begin{align*}
&\F_{\bm{\alpha \beta}}(\omega,x;\omega,y;\bm{v_{\gamma},z}\,|0,0,\bm{\gamma};0,0,\bm{\delta})
\\
&=\nabla_{y,\bm{z}}^{(2,\bm{m})}(x)
\nabla_{\bm{z}}^{(\bm{m})}(y)\F_{\bm{\alpha \beta}}(\bm{v_{\gamma},z}\,|\bm{\gamma};\bm{\delta})
+\frac{C}{2}\omega_{2}(x,y)
\F_{\bm{\alpha \beta}}(
\bm{v_{\gamma},z}\,|\bm{\gamma};\bm{\delta}).
\end{align*}
where $\omega_{2}(x,y)=	\Psi_{N}^{(0,3)}(x,y)dy^{3}$ is the symmetric $(2,2)$ form  of~\eqref{eq:omegaN} and using \eqref{eq:Ward1}.
The result follows from the locality of Virasoro vertex operators.
\end{proof} 
\begin{remark}
It is shown in \cite{TW1} that in general,  $\nabla_{y,\bm{z}}^{(2,\bm{m})}(x)\nabla_{\bm{z}}^{(\bm{m})}(y)
=\nabla_{x,\bm{z}}^{(2,\bm{m})}(y)\nabla_{\bm{z}}^{(\bm{m})}(x)$ as differential operators for $x, y,z_{k}\in\mathcal{S}^{(g)}$.
\end{remark}
In a similar fashion to Proposition~\ref{prop:Delcom} we also find
\begin{proposition}\label{prop:omWard}
	The genus $g$ Virasoro $(n+1)$-point differential obeys the identity
	\begin{align}
		\F_{\bm{\alpha \beta}}(\omega,x;\bm{\omega,y})&
		=\nabla_{\bm{y}}^{(\bm{2})}(x)\F_{\bm{\alpha \beta}}(\bm{\omega,y})
		+\frac{C}{2}\sum_{k=1}^{n}\omega_{2}(x,y_{k})\F_{\bm{\alpha \beta}}(\ldots;\widehat{\omega,y_{k}};\ldots),
		\label{eq:Ward2}
	\end{align}
	for $n$-tuple $\bm{(2)}=(2,\ldots,2)$, the symmetric $(2,2)$ form  $\omega_{2}(x,y)$ of~\eqref{eq:omegaN} and where the caret denotes omission of the $\omega$ insertion at $y_{k}$.
\end{proposition}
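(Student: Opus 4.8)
The plan is to apply the intertwiner Zhu recursion of Theorem~\ref{theor:ZhuGenusgInt} to the $(n+1)$-point differential $\F_{\bm{\alpha \beta}}(\omega,x;\bm{\omega,y})$, reducing with respect to the quasiprimary vector $u=\omega$ of weight $N=2$ inserted at $x$. Here all the remaining insertions are copies of $\omega\in V$, so every module label is trivial and the reduction is of the VOA type already used in Proposition~\ref{prop:nptWard}; in particular, reducing with respect to a single $\omega$ means no locality/commutativity argument is needed, unlike in Proposition~\ref{prop:Delcom}. Theorem~\ref{theor:ZhuGenusgInt} then expresses $\F_{\bm{\alpha \beta}}(\omega,x;\bm{\omega,y})$ as a holomorphic sum $\sum_{a\in\Ip}\sum_{\ell=0}^{2}\Theta_{2,a}^{\ell}(x)\Res_{a}^{\ell}\F_{\bm{\alpha \beta}}(\omega;\bm{\omega,y})$ plus a polar sum $\sum_{k=1}^{n}\sum_{j\ge 0}\Psi_{2}^{(0,j)}(x,y_{k})\F_{\bm{\alpha \beta}}(\ldots;\omega(j)\omega,y_{k};\ldots)\,dy_{k}^{j}$.

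For the holomorphic sum I would invoke the generalisation of Lemma~\ref{lem:omRes} (exactly as in the proof of Proposition~\ref{prop:nptWard}), which identifies $\Res_{a}^{\ell}\F_{\bm{\alpha \beta}}(\omega;\bm{\omega,y})=\del_{a}^{\,\ell}\F_{\bm{\alpha \beta}}(\bm{\omega,y})$; the holomorphic sum therefore collapses to $\nabla(x)\F_{\bm{\alpha \beta}}(\bm{\omega,y})$ by the definition of $\nabla(x)$.

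The heart of the argument is the polar sum, which requires the modes $\omega(j)\omega=L(j-1)\omega$. Using the Virasoro relations one finds $L(-1)\omega$, $L(0)\omega=2\omega$, $L(1)\omega=0$, $L(2)\omega=\frac{C}{2}\vac$ and $L(n)\omega=0$ for $n\ge 3$, so only $j\in\{0,1,3\}$ survive. The $j=0$ and $j=1$ contributions are handled precisely as in Proposition~\ref{prop:nptWard}: translation covariance turns the $L(-1)\omega$ term into $\Psi_{2}(x,y_{k})\,d_{y_{k}}$ acting on $\F_{\bm{\alpha \beta}}(\bm{\omega,y})$, while $L(0)\omega=2\omega$ supplies the weight factor $m_{k}=\wt(\omega)=2$ in the term $m_{k}\,d_{y_{k}}\Psi_{2}(x,y_{k})$; combined with $\nabla(x)$ these assemble into $\nabla_{\bm{y}}^{(\bm{2})}(x)\F_{\bm{\alpha \beta}}(\bm{\omega,y})$ via the definition~\eqref{eq:nabla_xym}. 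The $j=3$ term is the genuinely new piece: since $\omega(3)\omega=\frac{C}{2}\vac$ and $Y(\vac,y_{k})=\Id$, inserting it simply deletes the $y_{k}$ entry, and with $\omega_{2}(x,y)=\Psi_{2}^{(0,3)}(x,y)\,dy^{3}$ from~\eqref{eq:omegaN} this contributes $\frac{C}{2}\,\omega_{2}(x,y_{k})\F_{\bm{\alpha \beta}}(\ldots;\widehat{\omega,y_{k}};\ldots)$.

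The main obstacle, and really the only subtlety beyond Proposition~\ref{prop:nptWard}, is the appearance of the central charge term, which is dictated by the fact that $\omega$ is quasiprimary but not primary ($L(2)\omega\neq 0$); tracking it correctly requires recognising that the $j=3$ mode is the unique surviving higher mode and matching the differential-form weights when the vacuum insertion is removed (weight $0$ at $y_{k}$ on both sides). Summing the holomorphic sum, the $j=0,1$ contributions, and the $j=3$ contribution over $k$ then yields the stated identity~\eqref{eq:Ward2}.
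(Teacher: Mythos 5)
Your proposal is correct and follows essentially the same route the paper intends: the paper derives~\eqref{eq:Ward2} "in a similar fashion to Proposition~\ref{prop:Delcom}", i.e.\ by applying Theorem~\ref{theor:ZhuGenusgInt} with $u=\omega$, using the generalisation of Lemma~\ref{lem:omRes} to collapse the holomorphic sum to $\nabla(x)$, and using $L(-1)\omega$, $L(0)\omega=2\omega$, $L(1)\omega=0$, $L(2)\omega=\tfrac{C}{2}\vac$, $L(n)\omega=0$ for $n\ge 3$ to assemble the polar sum into $\nabla_{\bm{y}}^{(\bm{2})}(x)$ plus the central term $\tfrac{C}{2}\omega_{2}(x,y_{k})$. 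Your mode computation and the identification of the $j=3$ vacuum contribution with $\omega_{2}(x,y_{k})=\Psi_{2}^{(0,3)}(x,y_{k})dy_{k}^{3}$ are exactly the key steps.
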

\begin{remark}\label{rem:Ward}
\leavevmode
\begin{enumerate}
	\item [(i)] Propositions~\ref{prop:nptWard} and~\ref{prop:omWard}  are generalisations of genus two results in~\cite{GT}.
%		\item[(ii)]	Choosing $\bm{\alpha}=\bm{\beta}=\bm{\gamma}=0$, we retrieve the Ward identity~\eqref{eq:Ward1}.
		\item[(ii)] The Ward identities~\eqref{eq:Ward1} and~\eqref{eq:Ward2} are independent of the choice of Bers function described in~\eqref{eq:Psitilde}. In particular, we find that~\eqref{eq:nabtilde} generalises to 
		\begin{align*}
			\widehat{\nabla}^{(\bm{m})}_{\bm{y}}(x)={\nabla}^{(\bm{m})}_{\bm{y}}(x)+ \sum_{r=1}^{3g-3} \Phi_{r}(x) \D^{p_{r}}_{\bm{y}},
		\end{align*}
	for $\D_{\bm{y}}^{p}$  of~\eqref{eq:Dpy}.  Propositions~\ref{prop:nptWard} and~\ref{prop:omWard} hold for $	\widehat{\nabla}^{(\bm{m})}_{\bm{y}}(x)$ by Proposition~\ref{prop:DpyF}.
	Thus the  operator  ${\nabla}^{(\bm{m})}_{\bm{y}}(x)$ appearing in the Ward identity \eqref{eq:Ward1} is describing variations in the moduli space $\M_{g,n}$ of a Riemann surface with $n$ punctures at the insertion points $\bm{y}$ cf. \eqref{eq:nablaMg}.
	\end{enumerate}
\end{remark}

\section{Differential Equations from Zhu Recursion}
\label{sec:Hberg}
\subsection{The generalised Heisenberg VOA}
We consider the rank one Heisenberg VOA $M$ with generator $h$ whose modes obey the bracket
\begin{align*}
	[h(m),h(n)]=m\delta_{m,-n}.
\end{align*}
The Heisenberg VOA has irreducible modules $M_{\alpha}=M \otimes e^{\alpha}$ for $\alpha \in \C$ (with $M_{0}=M$), where for $u \otimes e^{\alpha} \in M_{\alpha}$ of weight $\wt(u)+\half \alpha^{2}$ we have e.g. \cite{K}
\[h(0)(u \otimes e^{\alpha})=\alpha(u \otimes e^{\alpha}),\quad h(n)(u \otimes e^{\alpha}) =  (h(n)u) \otimes e^{\alpha}
\mbox{ for } n\neq 0.\]
For convenience we write $\vac\otimes e^{\alpha}$ as $e^{\alpha}$ throughout.

The Heisenberg intertwiner structure is abelian with $\calY: M_{\alpha}\otimes M_{\beta}\to  M_{\alpha+\beta}$ and the set of all Heisenberg intertwiners form a generalized VOA \cite{TZ2}. In particular, the contragredient label $\alpha'=-\alpha$ and the condition \eqref{eq:Verlinde} automatically holds. Furthermore, for the product of $m$ intertwiners  $\bm{\calY^{\gamma}_{\delta}(v_{\delta},y)}$ of \eqref{eq:calY_alpha_beta}, the module labels satisfy the conditions
\begin{align}
	\label{eq:Hgammadelta}
	\sum_{i=1}^{m}\gamma_{i}=0,
	\quad \delta_k=-\sum_{i=1}^{k}\gamma_{i}, 
\end{align}
for $1\le k\le m-1$, i.e. all of the intermediate module labels $\bm{\delta}$ are determined by $\bm{\gamma}$. 
Similarly, for the operators   $\bm{\calY^{\beta}_{\alpha \alpha'}(b,w)}$ of \eqref{eq:Y_alph_alphp_beta} employed in describing the genus $g$ intertwiner partition function \eqref{eq:Zginter} and correlation function \eqref{eq:Fg_gen_inter}, we find that the intermediate module labels $\beta_{1},\ldots,\beta_{2g-1}$ satisfy the conditions 
\begin{align}
	\label{eq:Hgbeta}
	\beta_{2a-1}=-\alpha_{a} \mbox{ for } a\in\Ip,\quad 
	\beta_{k}=0 \mbox{ for $k$ even}.
\end{align}
%
%Similarly let $
%\bm{b_{+}}=\{b_{\alpha_{1}}\otimes\ldots\otimes b_{\alpha_{g}}\}$
%denote a basis for $\bm{ M_{\alpha}}=\bigotimes_{a=1
	%}^{g}  M_{\alpha_{a}}$ where $ M_{\alpha_{a}}$ are Heisenberg modules labeled by $\alpha_{1},\alpha_{2},\ldots,\alpha_{g}\in\C$ with $h(0)b_{\alpha_{a}}=\alpha_{a}b_{\alpha_{a}}$, for $a\in\I$. For convenience we write $\vac\otimes e^{\gamma}=e^{\gamma}$. 
Let us consider the interwiner $(n+m)$-point form \eqref{eq:Fg_gen_inter} with  $g$ Heisenberg modules $ M_{\alpha_{a}}$ for $n$ Heisenberg states $h$ inserted at $x_i $, $i=1,\ldots, n$ and $m$ module states $e^{\gamma_{j}}\in M_{\gamma_{j}}$ inserted at $z_{j} $ for $j=1,\ldots,m$. The intermediate module labels $\bm{\beta}$ are given in \eqref{eq:Hgbeta} and $\bm{\delta}$ from  \eqref{eq:Hgammadelta}. Thus we define  
\begin{align}	\label{eq:GenHeis}
	&\FgMa(\bm{h,x};\bm{e^{\gamma},z}):=
	\Fg_{\bm{\alpha\beta}}(\bm{h,x};\bm{e^{\gamma},z}| \bm{\gamma;\delta}),
\end{align}
%
%Therefore we can describe the module parameter dependence of the $n$-point functions in terms of $\bm{\alpha}$ and $\bm{\gamma}$ only.
%In particular the case $v_{i}=h$ for all $i=1,\ldots,n$
%\begin{align}
%	&\Fga(\bm{h,x};\bm{e^{\gamma},z}):=
%	\Fga(h,x_{1};\ldots;h,x_{n};e^{\gamma_{1}},z_{1};\ldots;e^{\gamma_{m}},z_{m}),
%	\label{eq:GenHeis}
%\end{align}
where $\sum_{j=1}^{m}\gamma_{j}=0$. $\FgMa(\bm{h,x};\bm{e^{\gamma},z})$ is a generating function for all correlation functions for the rank one generalised Heisenberg VOA~\cite{T}.

Using genus $g$ Zhu recursion for intertwiners Theorem~\ref{theor:ZhuGenusgInt} with respect to the initial state $h$ inserted at $x_{1}$, we find $\Theta_{1,a}^{\ell}(x_{1})=\nu_{a}(x_{1})$  of \eqref{eq:nu}
where $\ell=1$,  $\Psi_{1}(x_{1},z_{j})=\omega_{z_{j}-A_{0}}(x_{1})$ of 
\eqref{eq:Psi1def} and 
$\Psi_{1}^{(0,1)}(x_{1},x_{k})dx_k=\omega(x_{1},x_{k})$ so that we obtain
\begin{align*}
	\FgMa(\bm{h,x};\bm{e^{\gamma},z})=&\nuag(x_{1},\bm{z})\FgMa(h,x_2;\ldots;h,x_n;\bm{e^{\gamma},z})\\
	%&+\sum_{t=1}^{m}\gamma_{t}\omega_{z_{t}-A}(x_{1})\FgMa(h,x_2;\ldots;h,x_n;\bm{e^{\gamma},z})\\
	&+\sum_{k=2}^{n}\omega(x_{1},x_{k})\FgMa(h,x_2;\ldots;\widehat{h,x_k};\ldots;h,x_n;\bm{e^{\gamma},z}),
\end{align*}
where the caret denotes omission of the inserted state  and
\begin{align*}
	\nuag(x,\bm{z}):=\nua(x)+\sum_{j=1}^{m}\gamma_{j}\omega_{z_{j}-z_{0}}(x)
	,\quad \nua(x):=\sum_{a\in\Ip}\alpha_{a}\nu_{a}(x),
\end{align*}
for any $z_{0}$ using $\sum_{j=1}^{m}\gamma_{j}=0$.
Thus the $n$-point function is a linear combination of $(n-1)$- and $(n-2)$-point functions. By induction, we reproduce Corollary 5.6 of~\cite{T} (obtained via combinatorial methods):
\begin{proposition}\label{prop:HbergGenFn}
	\begin{align}\label{eq:HbergGenFn}
		\FgMa(\bm{h,x};\bm{e^{\gamma},z})=
		\Sym_{n}\left(\omega,\nuag\right)
		\FgMa(\bm{e^{\gamma},z}),
	\end{align}
	where
	\begin{align*}
		\Sym_{n}\left(
		\omega,\nuag\right):&=\sum_{\varphi}\prod_{q}\nuag(x_{q},\bm{z})\prod_{(rs)}\omega(x_{r},x_{s}),
	\end{align*}
	summing over all inequivalent involutions $\varphi=(q)\ldots (rs)\ldots$ of the labels $\{1,\ldots,n\}$. 
\end{proposition}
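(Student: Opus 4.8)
The plan is to prove \eqref{eq:HbergGenFn} by induction on the number $n$ of Heisenberg insertions $h$, taking as input the reduction identity displayed immediately above the statement. That identity is itself a consequence of Theorem~\ref{theor:ZhuGenusgInt} together with the $N=1$ specialisations $\Theta^{0}_{1,a}=\nu_{a}$, $\Psi_{1}(x,z_{j})=\omega_{z_{j}-A_{0}}(x)$ and $\Psi_{1}^{(0,1)}(x,y)\,dy=\omega(x,y)$, and the Heisenberg relations $h(0)e^{\gamma_{j}}=\gamma_{j}e^{\gamma_{j}}$, $h(1)h=\vac$, with all higher modes annihilating the respective insertions. The base cases are immediate: for $n=0$ the symmetriser $\Sym_{0}$ is the empty product $1$; for $n=1$ there are no transpositions available, so $\Sym_{1}(\omega,\nuag)=\nuag(x_{1},\bm{z})$, which matches the reduction formula whose $\omega$-sum is then empty.

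For the inductive step I would first record the purely combinatorial recursion satisfied by the symmetriser. Classifying each involution $\varphi$ of $\{1,\ldots,n\}$ by the orbit of the label $1$ leaves exactly two possibilities: either $1$ is a fixed point, or $1$ is paired with a unique $k\in\{2,\ldots,n\}$. The fixed-point case contributes the factor $\nuag(x_{1},\bm{z})$ multiplied by an involution of $\{2,\ldots,n\}$, while the paired case contributes $\omega(x_{1},x_{k})$ multiplied by an involution of $\{2,\ldots,n\}\setminus\{k\}$. Summing these contributions yields
\begin{align*}
	\Sym_{n}(\omega,\nuag)
	&=\nuag(x_{1},\bm{z})\,\Sym_{n-1}(\omega,\nuag)
	+\sum_{k=2}^{n}\omega(x_{1},x_{k})\,\Sym_{n-2}(\omega,\nuag),
\end{align*}
where on the right the symmetrisers run over the variables $x_{2},\ldots,x_{n}$ and over $x_{2},\ldots,\widehat{x_{k}},\ldots,x_{n}$ respectively. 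This is term-by-term identical in structure to the VOA reduction identity.

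Applying the inductive hypothesis to the $(n-1)$- and $(n-2)$-Heisenberg-point forms on the right-hand side of the reduction identity then replaces each reduced correlation form by the corresponding reduced symmetriser times $\FgMa(\bm{e^{\gamma},z})$; crucially, the module data $\bm{e^{\gamma},z}$, and hence the base function $\FgMa(\bm{e^{\gamma},z})$ and the one-form $\nuag(\cdot,\bm{z})$, are unchanged under removal of $h$-insertions. Comparing with the combinatorial recursion above gives \eqref{eq:HbergGenFn}. The only points requiring care — and the closest thing to an obstacle — are bookkeeping ones: verifying that the variable lists of the reduced symmetrisers match those of the reduced correlation forms slot-for-slot, and that $\nuag(x,\bm{z})$ is genuinely independent of the auxiliary base point $z_{0}$ used in its definition, which follows from the neutrality condition $\sum_{j=1}^{m}\gamma_{j}=0$. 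No new analytic input beyond the already-established reduction formula is needed.
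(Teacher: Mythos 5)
Your proposal is correct and follows essentially the same route as the paper: the paper derives the displayed reduction identity from Theorem~\ref{theor:ZhuGenusgInt} and then simply states ``by induction'', which is exactly the induction you carry out, with the orbit-of-label-$1$ decomposition of involutions supplying the matching recursion for $\Sym_{n}$. Your explicit treatment of the base cases and of the $z_{0}$-independence of $\nuag$ (via $\sum_{j}\gamma_{j}=0$) just fills in details the paper leaves implicit.
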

%\begin{example}\label{example}
%For $m=2$ with $\gamma:=\gamma_{1}=-\gamma_{2}$ we have  $\nuag(x,\bm{z})=\nua(x)+\gamma\omega_{z_{1}-z_{2}}(x)$. Define
%$
%T(x_{1},x_{2}):=T(x_1,x_2,\bm{z}):=\omega(x_1,x_2)	+\nuag(x_1,\bm{z})\nuag(x_2,\bm{z})$. 
%Then we find that 
%\begin{align}
%	\label{eq:H2pt}
%	\FgMa(h,x_{1};h,x_{2};\bm{e^{\gamma},z})=T(x_{1},x_{2},\bm{z})\FgMa(\bm{e^{\gamma},z}).
%\end{align}
% Similarly, for $h$ inserted at $\bm{x}=x_{1},\ldots,x_{4}$ we find
%\begin{align}
%	\notag
%\FgMa(\bm{h,x;e^{\gamma},z})	=&\Big(T(x_{1},x_{2})T(x_{3},x_{4})+T(x_{1},x_{3})T(x_{2},x_{4})
%\\
%\label{eq:H4pt}
%&+T(x_{1},x_{4})T(x_{3},x_{2})
%-2\prod_{i=1}^{4}\nuag(x_{i},\bm{z})\Big)\FgMa(\bm{e^{\gamma},z}).
%\end{align}	
%\end{example}
\begin{example}\label{example}
	For $n=1,2,3$ we find
	\begin{align}
		\label{eq:Sym1}
		\Sym_{1}\left(\omega,\nuag\right)
		=&\nuag(x,\bm{z}),
		\\
		\label{eq:Sym2} 
		\Sym_{2}\left(\omega,\nuag\right)
		=&T(x_{1},x_{2}),
		\\
		\label{eq:Sym3} 
		\Sym_{3}\left(\omega,\nuag\right)
		=&T(x_{1},x_{2})\nuag(x_{3},\bm{z})
		+\omega(x_{1},x_{3})\nuag(x_{2},\bm{z})
		\\
		\notag
		&+\omega(x_{2},x_{3})\nuag(x_{1},\bm{z}),
		%		\\
		%		\label{eq:Sym4}
		%		&\Sym_{4}\left(\omega,0\right)
		%		=
		%		\omega(x_{1},x_{2})\omega(x_{3},x_{4})
		%		+\omega(x_{1},x_{3})\omega(x_{2},x_{4})
		%		+\omega(x_{1},x_{4})\omega(x_{2},x_{3}),
	\end{align}	
	where 
	$
	T(x_{1},x_{2}):=T(x_1,x_2,\bm{z}):=\omega(x_1,x_2)+\nuag(x_1,\bm{z})\nuag(x_2,\bm{z})$. 
\end{example}
Finally, using combinatorial methods, it is shown in \cite{T}, Corollary~5.6 that
\begin{align}\label{eq:HbergTwisted}
	\FgMa\left(\bm{e^{\gamma},z}\right)
	=E(z_{1},z_{2})^{-\gamma^{2}}\exp\left(\im\pi \bm{\alpha}.\Omega.\bm{\alpha}+
	\gamma\int_{z_{2}}^{z_{1}}
	\nua\right)Z_{M},
\end{align}
for prime form $E(z_{1},z_{2})$ of \eqref{eq:prime}, period matrix $\Omega$ of \eqref{eq:period} and where $Z_{M}=\det\left(I-\Atilde\right)^{-\half}$ for $\Atilde$ of \eqref{eq:Rtilde} with $N=1$. 
%$\Fg_{M}(\bm{h,x})$ is a generating function for all genus $g$ $n$-point differentials of the Heisenberg VOA much as in~\cite{MT3} (Prop. 3.8). For example, the Virasoro 1-point function can be found as follows. 
%Using VOA associativity \eqref{eq:Z0assoc} we firstly note
%\begin{align}
%\Fg_{M}(h,x;h,y)&= \sum_{m\in\Z}\Fg_{M}(h(m)h,y)(x-y)^{-m-1}
%\notag
%\\
%&=\frac{\Zg_{M}dxdy}{(x-y)^{2}}+\sum_{m\le - 1}\Fg_{M}(h(m)h,y)(x-y)^{-m-1}.
%\label{eq:FMhh}
%\end{align}
%But  the Heisenberg Virasoro vector  $\omega=\frac{1}{2}h(-1)h$ so that 
%\begin{align}
%\Fg_{M}(\omega,x)=&
%\lim_{y\rightarrow x}\frac{1}{2}\left(\Fg_{M}(h,x;h,y)-\frac{\Zg_{M}dxdy}{(x-y)^{2}}\right)
%=\frac{1}{12}s(x)\Zg_{M},
%\label{eq:VirasoroProjConn}
%\end{align}
\subsection{Differential equations for $Z_{M}$ and some forms}\label{DESubsection}	
Let $\nabla_{\bm{y}}^{(\bm{m})}(x)$ be the differential operator of \eqref{eq:nabla_xym} that appears in the Ward identities Propositions~\ref{prop:nptWard} and \ref{prop:omWard}. Considering these Ward identities for Heisenberg correlation functions we obtain
\begin{proposition}\label{prop:delsomnuOm}
	The Heisenberg partition function $Z_{M}$, the prime form	$E(z_1,z_2)$, the period integral $\int_{z_{2}}^{z_{1}}\nu_{a}$,  the period matrix $\Omega_{ab}$, the differential of the third kind $\omega_{z_1-z_2}(y)$, the 1-differential $\nu_{a}(y)$, the bidifferential $\omega(y_{1},y_{2})$ and  the projective connection $s(y)$ satisfy the following differential equations:
	\begin{align}
		&\nabla(x)Z_M=\frac{1}{12}s(x)Z_{M},
		\label{eq:nab_ZM}
		\\
		&\nabla_{z_1,z_2}^{(-\shalf,-\shalf)}(x)E(z_1,z_2)=-\frac{1}{2}\omega_{z_1-z_2}(x)^2 E(z_1,z_2),
		\label{eq:nab_E}
		\\
		&\nabla_{z_1,z_2}^{(0,0)}(x)\int^{z_1}_{z_2}\nu_{a}=\omega_{z_1-z_2}(x)\nu_{a}(x),
		\label{eq:nab_Jac}
		\\
		&\tpi\nabla (x)\Omega_{ab}=\nu_{a}(x)\nu_{b}(x),
		\label{eq:nab_Om}
		\\[2pt]
		&\nabla_{y}^{(1)}(x)\,\nu_{a}(y)=\omega(x,y)\nu_a(x),
		\label{eq:nab_nu}
		\\[2pt]
		&\nabla_{y,z_1,z_2}^{(1,0,0)}(x)\,\omega_{z_1-z_2}(y)
		=\omega(x,y)\omega_{z_1-z_2}(x),
		\label{eq:nab_omthird}
		\\[2pt]
		&\nabla^{(1,1)}_{y_{1},y_{2}}(x)\,\omega(y_{1},y_{2})
		=\omega(x,y_{1})\omega(x,y_{2}),\label{eq:nab_ombidiff}
		\\[2pt]
		&\nabla_{y}^{(2)}(x)\,s(y)=
		6\left(\omega(x,y)^{2}-\omega_{2}(x,y)\right),
		\label{eq:nab_som}
	\end{align}
	for  symmetric $(2,2)$ form $\omega_{2}(x,y)$ of \eqref{eq:omegaN}.
\end{proposition}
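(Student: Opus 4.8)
The plan is to prove all eight identities by one mechanism: compute a Heisenberg correlation function carrying one or two Virasoro insertions in two different ways and equate the results. On one side I apply the conformal Ward identities of Propositions~\ref{prop:Virasoro1pt}, \ref{prop:nptWard} and~\ref{prop:omWard}, which express a correlation function with $\omega$ inserted at $x$ as $\nabla(x)$ (or $\nabla^{(\bm m)}_{\bm y}(x)$) applied to the correlation function with that insertion removed. On the other side I evaluate the \emph{same} object explicitly via the generating formula of Proposition~\ref{prop:HbergGenFn} together with~\eqref{eq:HbergTwisted}, using that for the rank one Heisenberg VOA $\omega=\half h(-1)h$, so that a Virasoro insertion is a regularised coincidence limit of two $h$-insertions. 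Comparing and reading off the terms forced to agree yields the stated equations.

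The core computational input is the explicit evaluation of the left-hand sides. Since $\omega=\half h(-1)h$, associativity and locality (Remark~\ref{rem:MainTheorem}(iii)) identify $\F(\omega,x;\ldots)$ with $\half$ the coefficient of $(x_1-x_2)^{0}$ in the two-extra-$h$ correlation function $\F(h,x_1;h,x_2;\ldots)$. Feeding in $\Sym_n(\omega,\nu_{\bm{\alpha,\gamma}})$ from Proposition~\ref{prop:HbergGenFn} and Example~\ref{example}, the factor $\omega(x_1,x_2)$ contributes, after subtracting its double pole and setting $x_1=x_2=x$, the projective connection $s(x)$ of~\eqref{eq:projcon} (up to the normalisation fixed there), whereas every remaining factor $\omega(x_i,\cdot)$ or $\nu_{\bm{\alpha,\gamma}}(x_i)$ collapses to $\omega(x,\cdot)$ or $\nu_{\bm{\alpha,\gamma}}(x)$. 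With no further insertions this gives $\F_M(\omega,x)=\nabla(x)Z_M$ on one side and a multiple of $s(x)Z_M$ on the other, which is~\eqref{eq:nab_ZM}; inserting one or two further $h$'s in the $\bm\alpha=\bm0$ sector and using the weighted Leibniz rule for $\nabla^{(\bm m)}_{\bm y}(x)$ to cancel the $s(x)$-terms against $\nabla(x)Z_M$ gives~\eqref{eq:nab_ombidiff} and~\eqref{eq:nab_nu}.

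For $E$, $\int_{z_2}^{z_1}\nu_a$ and $\Omega_{ab}$ I would apply Proposition~\ref{prop:nptWard} to the two-module correlation function $\F_{M_{\bm\alpha}}(\omega,x;e^{\gamma},z_1;e^{-\gamma},z_2)$ and substitute~\eqref{eq:HbergTwisted}, whose logarithm is $-\gamma^{2}\log E(z_1,z_2)+\im\pi\,\bm{\alpha}.\Omega.\bm{\alpha}+\gamma\int_{z_2}^{z_1}\nu_{\bm\alpha}+\log Z_M$. Because $\nabla^{(\bm m)}_{\bm y}(x)$ obeys a Leibniz rule compatible with the weights (\cite{TW1}), it distributes over these factors: it acts on $E^{-\gamma^{2}}$, a form of weight $(\half\gamma^{2},\half\gamma^{2})$, through $\nabla^{(-\shalf,-\shalf)}_{z_1,z_2}(x)$, on $\exp(\gamma\int_{z_2}^{z_1}\nu_{\bm\alpha})$ through $\nabla^{(0,0)}_{z_1,z_2}(x)$, and on $\exp(\im\pi\,\bm\alpha.\Omega.\bm\alpha)Z_M$ through $\nabla(x)$. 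The explicit side equals $\half\bigl[s(x)+\nu_{\bm{\alpha,\gamma}}(x)^{2}\bigr]$ times the correlation function; expanding $\nu_{\bm{\alpha,\gamma}}(x)^{2}=(\nu_{\bm\alpha}(x)+\gamma\,\omega_{z_1-z_2}(x))^{2}$ and matching the coefficients of $\gamma^{2}$, of $\gamma\alpha_a$, of $\alpha_a\alpha_b$, and the parameter-independent term produces~\eqref{eq:nab_E}, \eqref{eq:nab_Jac}, \eqref{eq:nab_Om} and (again)~\eqref{eq:nab_ZM}. The same scheme with one extra $h$ at $y$ gives~\eqref{eq:nab_omthird} from the coefficient of $\gamma$. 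A pleasant feature is that the $\gamma$-dependence of the weight $h_\gamma=\half\gamma^{2}$ makes the superscripts collapse to exactly those stated (e.g.\ $(0,0)$ at order $\gamma$, $(-\shalf,-\shalf)$ at order $\gamma^{2}$).

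Finally, \eqref{eq:nab_som} comes from the Virasoro two-point function: Proposition~\ref{prop:omWard} with $n=1$ gives $\F_M(\omega,x;\omega,y)=\nabla^{(2)}_y(x)\F_M(\omega,y)+\tfrac{C}{2}\omega_2(x,y)Z_M$, while the explicit side (a doubly regularised four-$h$ correlation function) yields $\omega(x,y)^{2}$ together with $s(x)s(y)$-terms. Substituting~\eqref{eq:nab_ZM} and cancelling the $s(x)s(y)$-terms isolates $\nabla^{(2)}_y(x)s(y)$ as a multiple of $\omega(x,y)^{2}-\omega_2(x,y)$. The hard part throughout is bookkeeping rather than any single estimate: one must regularise $\omega=\half h(-1)h$ consistently so the double-pole subtraction delivers the projective connection of~\eqref{eq:projcon} with the correct normalisation and the central charge $C=1$ enters~\eqref{eq:nab_som} correctly, and one must justify the weighted Leibniz rule for $\nabla^{(\bm m)}_{\bm y}(x)$ on products of forms of $\gamma$-dependent weight together with convergence of the formal $\gamma,\bm\alpha$ expansions — precisely the operator-theoretic points established in~\cite{TW1}.
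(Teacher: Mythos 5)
Your proposal is correct and follows essentially the same route as the paper: Ward identities (Propositions~\ref{prop:nptWard} and~\ref{prop:omWard}) on one side, explicit evaluation via Proposition~\ref{prop:HbergGenFn}, \eqref{eq:HbergTwisted} and the regularised coincidence limit of $\omega=\half h(-1)h$ on the other, then matching coefficients in $\gamma$ and $\bm{\alpha}$. One small mis-attribution: \eqref{eq:nab_nu} cannot arise from the $\bm{\alpha}=\bm{0}$ sector (where $\nu_{\bm{\alpha}}$ vanishes identically); it comes, as your third paragraph in effect describes and as the paper does it, from the $\alpha_{a}$-coefficient of the mixed computation $\FgMa(\omega,x;h,y;\bm{e^{\gamma},z})$ that simultaneously yields \eqref{eq:nab_omthird}.
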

\begin{remark}\label{TW1DERemark}  
\eqref{eq:nab_Om} is known as Rauch's formula \cite{R}.
\eqref{eq:nab_E}--\eqref{eq:nab_som} are all derived by alternative analytic means in~\cite{TW1}. Some are generalizations of genus two formulas~\cite{GT}.
Similar results appear in \cite{O} but  $\Psi_{2}$ is not defined nor is Proposition~\ref{prop:nablaHN} established there. 
\end{remark}
\begin{proof}	
	We consider correlation functions with Heisenberg VOA state  insertions 
	together with $e^{\pm\gamma}$ of weight $\kappa=\half\gamma^{2}$ inserted at $\bm{z}=z_{1},z_{2}$.  
	Thus  $\nuag(x,\bm{z})=\nua(x)+\gamma\omega_{z_{1}-z_{2}}(x)$. 
	Using Proposition~\ref{prop:nptWard}, \eqref{eq:HbergTwisted} and  the $\nabla_{\bm{y}}^{(\bm{m})}$ Leibniz rule  \cite{TW1} we find 
	\begin{align}
		\notag
		&\frac{\FgMa(\omega,x;\bm{e^{\gamma},z})}
		{\FgMa(\bm{e^{\gamma},z})}= 
		\frac{\nabla_{\bm{z}}^{(\kappa,\kappa)}(x)\FgMa(\bm{e^{\gamma},z})}
		{\FgMa(\bm{e^{\gamma},z})}
		\\
		\label{eq:FMom1}
		&=-\gamma^{2}\frac{\nabla_{\bm{z}}^{(-\shalf,-\shalf)}(x) E(\bm{z})}{E(\bm{z})}
		+\gamma\,\nabla_{\bm{z}}^{(0,0)}(x)\int_{z_1}^{z_2}\nua
		+\im\pi\nabla(x)\bm{\alpha}.\Omega.\bm{\alpha}
		+\nabla(x)\log Z_{M}.
	\end{align} 
	Since $\omega=\half h(-1)h$ we may also compute  this $3$-point function as a formal limit of $\FgMa(\bm{h,x};\bm{e^{\gamma},z})$ for  $h$ inserted at $\bm{x}=x_{1},x_{2}$ and using \eqref{eq:Sym2} to find
	\begin{align}
		\notag
		&\frac{\FgMa(\omega,x;\bm{e^{\gamma},z})}
		{\FgMa(\bm{e^{\gamma},z})}\notag=
		\half\lim_{x_{i}\rightarrow x}
		\left(
		\frac{\FgMa(\bm{h,x};\bm{e^{\gamma},z})}
		{\FgMa(\bm{e^{\gamma},z})}
		-\Delta(\bm{x})
		\right)
		\\
		&=
		\half\lim_{x_{i}\rightarrow x}
		\left(T(x_{1},x_{2})
		-\Delta(\bm{x})
		\right)
		=
		\frac{1}{12}s(x)+\half\nuag(x,\bm{z})^{2},
		\label{eq:FMom2}
	\end{align}
	where $\Delta(\bm{x}):=(x_{1}-x_{2})^{-2}dx_{1} dx_{2}$ and $s(x)$ is the 
	projective connection \eqref{eq:projcon}.  Comparing 
	\eqref{eq:FMom1} to \eqref{eq:FMom2} yields~\eqref{eq:nab_ZM}--\eqref{eq:nab_Om}. Similarly, we consider the correlation function with $\omega$ inserted at $x$ and $h$ at $y$ which using  \eqref{eq:Sym1} is given by
	\begin{align}
		\notag
		\frac{\FgMa(\omega,x;h,y;\bm{e^{\gamma},z})}
		{\FgMa(\bm{e^{\gamma},z})}
		&=
		\frac{\nabla_{y,\bm{z}}^{(1,\kappa,\kappa)}(x)
			\left(\nuag(y,\bm{z})\FgMa(\bm{e^{\gamma},z})\right)}
		{\FgMa(\bm{e^{\gamma},z})}
		\\[2pt]
		&= \nabla_{y,\bm{z}}^{(1,0,0)}(x)\nuag(y,\bm{z})+
		\frac
		{\nabla_{\bm{z}}^{(\kappa,\kappa)}(x)\FgMa(\bm{e^{\gamma},z})}
		{\FgMa(\bm{e^{\gamma},z})}.
		\label{eq:omegaDE1} 
	\end{align}
	Using \eqref{eq:Sym3}  we may compute  this  as a formal limit of the 5-point function:
	\begin{align*}
		\frac{\FgMa(\omega,x;h,y;\bm{e^{\gamma},z})}
		{\FgMa(\bm{e^{\gamma},z})}
		=&
		\half\lim_{x_{i}\rightarrow x}
		\left(
		\frac{\FgMa(\bm{h,x};h,y;\bm{e^{\gamma},z})}
		{\FgMa(\bm{e^{\gamma},z})}
		-\Delta(\bm{x})
		\frac{\FgMa(h,y;\bm{e^{\gamma},z})}
		{\FgMa(\bm{e^{\gamma},z})}
		\right)
		\\
		=&
		\omega(x,y)\nuag(x,\bm{z})
		+\left(\frac{1}{12}s(x)+\half\nuag(x,\bm{z})^{2}\right)\nuag(y,\bm{z}).
	\end{align*}
	Comparing this to \eqref{eq:omegaDE1} and using \eqref{eq:FMom2} we obtain \eqref{eq:nab_nu} and \eqref{eq:nab_omthird}.
	
	In order to prove \eqref{eq:nab_ombidiff} and \eqref{eq:nab_som} we consider two separate formal limits of the Heisenberg VOA $4$-point function for $h$ inserted at $\bm{x}=x_{1},x_{2}$ and $\bm{y}=y_{1},y_{2}$ 
	\begin{align}
		\label{eq:H4pt}
		\frac{\F_{M}(\bm{h,x;h,y})}{Z_{M}}= \omega(x_{1},x_{2})\omega(y_{1},y_{2})
		+\omega(x_{1},y_{1})\omega(x_{2},y_{2})
		+\omega(x_{1},y_{2})\omega(x_{2},y_{1}),
	\end{align}
	from Proposition~\ref{prop:HbergGenFn}.  Proposition~\ref{prop:nptWard},  \eqref{eq:Sym2} and \eqref{eq:nab_ZM} imply that
	\begin{align*}
		\frac{\F_{M}(\omega,x;\bm{h,y})}{Z_{M}}&
		%=\nabla_{\bm{y}}^{(1,1)}(x)\left(\F_{M}(\bm{h,y}))\right)
		=\frac{\nabla_{\bm{y}}^{(1,1)}(x)\left(\omega(y_{1},y_{2})Z_{M}\right)}{Z_{M}}
		= \nabla_{\bm{y}}^{(1,1)}(x)\omega(y_{1},y_{2})
		+\frac{1}{12}\omega(y_{1},y_{2})s(x).
	\end{align*}
	This can be alternatively expressed as
	\begin{align*}
		&\half\lim_{x_{i}\rightarrow x}
		\left(
		\frac{\F_{M}(\bm{h,x;h,y})}
		{Z_{M}}
		-\Delta(\bm{x})
		\frac{\F_{M}(\bm{h,y})}
		{Z_{M}}
		\right)
		=
		\omega(x,y_{1})\omega(x,y_{2})
		+\frac{1}{12}\omega(y_{1},y_{2})s(x)
		,
	\end{align*}
	leading to \eqref{eq:nab_ombidiff}. Lastly, Proposition~\ref{prop:omWard} and \eqref{eq:nab_ZM} imply that 
	\begin{align*}
		\frac{\F_{M}(\omega,x;\omega,y)}{ \Zg_{M}}
		&=\frac{1}{12}\nabla^{(2)}_{y}(x) s(y)+\frac{1}{144}s(x)s(y)+\frac{1}{2}\omega_{2}(x,y).
	\end{align*}
	Using \eqref{eq:H4pt}, this can also be expressed as the formal limit:
	\begin{align*}
		&\frac{1}{4}\lim_{\substack{x_i\rightarrow x\\y_i\rightarrow y}}		 
		\left(
		\frac{\Fg_{M}(\bm{h,x;h,y})}{Z_{M}}
		-\Delta(\bm{x})\frac{\Fg_{M}(\bm{h,y})}{Z_M}
		-\Delta(\bm{y})\frac{\Fg_{M}(\bm{h,x})}{Z_M}
		+\Delta(\bm{x}) \Delta(\bm{y})
		\right)
		\\
		&=\frac{1}{2}\omega(x,y)^2 + \frac{1}{144}s(x)s(y),
	\end{align*}
	which implies \eqref{eq:nab_som}.  Thus the proposition has been proved.
\end{proof}
We can derive $
\FgMa(\bm{e^{\gamma},z})$ of \eqref{eq:HbergTwisted}  without recourse to the combinatorial arguments of \cite{T} by using  \eqref{eq:nab_E}--\eqref{eq:nab_Om} as proved by analytic methods  in~\cite{TW1}. We first note that \eqref{eq:FMom1} and \eqref{eq:FMom2} (which were derived by VOA methods) imply that
\begin{align*}
	\nabla_{\bm{z}}^{(\kappa,\kappa)}(x)
	\FgMa(\bm{e^{\gamma},z})
	=&
	\left(\frac{1}{12}s(x)+\half\nuag(x,\bm{z})^{2} \right)
	\FgMa(\bm{e^{\gamma},z}).
\end{align*}
\eqref{eq:nab_ZM} follows by choosing $\bm{\alpha}=\bm{0}$ and $\bm{\gamma}=\bm{0}$. Lastly, assuming \eqref{eq:nab_E}--\eqref{eq:nab_Om} then clearly 
\begin{align*}
	\FgMa(\bm{e^{\gamma},z})
	E(z_{1},z_{2})^{\gamma^{2}}
	\exp\left(-\im\pi \bm{\alpha}.\Omega.\bm{\alpha}-
	\gamma\int_{z_{2}}^{z_{1}}
	\nua\right)Z_{M}^{-1},
\end{align*}
is a constant which is unity by consideration of the genus zero limit $\rho_{a}\rightarrow 0$.
$\FgMa(\bm{e^{\gamma},z})$ can be utilised to compute all $n$-point correlation functions for any generalised VOA which can finitely  decomposed with respect to the modules of a Heisenberg subalgebra $M^{r}$ of  rank $r\ge 1$. Thus consider the lattice generalised VOA $V_{L}$ associated with a rank $r$ Euclidean rational lattice  $L$. 
 Decomposing $V_{L}$ into  $M^{r}$ modules  
$M_{\lambda}=M^{r}\otimes e^{\lambda}$ we find
$V_{L}=\oplus_{\lambda\in L}M_{\lambda}$ where $\lambda=(\lambda_{1},\ldots,\lambda_{r})$. Following Remark~\ref{rem:Zg_rho_fact} and \eqref{eq:HbergTwisted} we find the $M^{r}$ module genus $g$ partition function for $g$ lattice vectors $\bm{\lambda}=(\lambda^{1},\ldots ,\lambda^{g})\in L^{g}$  is
\begin{align*}
	\Zg_{M_{\bm{\lambda}}}=e^{\im\pi\,\bm{\lambda}.\Omega.\bm{\lambda}}\left(\Zg_{M}\right)^{r},
\end{align*}
where $\bm{\lambda}.\Omega.\bm{\lambda}
=\sum_{i=1}^{r}\sum_{a,b\in\Ip}\lambda_{i}^{a}\Omega_{ab}\lambda^{b}_{i}$.
Thus we find
\begin{align*}
	\Zg_{V_{L}}=\Theta_{L}(\Omega)\left(\Zg_{M}\right)^{r},
\end{align*}
for genus $g$ Siegel lattice theta function $	\Theta_{L}(\Omega):=\sum_{\bm{\lambda}\in L^{g}}e^{\im\pi\,\bm{\lambda}.\Omega.\bm{\lambda}}$. 
This result is confirmed for any even self-dual integral lattice VOA in~\cite{C}.

\end{document}